\setlist{itemsep=1pt,parsep=0pt,topsep=2pt,partopsep=0pt}  
\theoremstyle{break}
\newtheorem{thm}                   {Theorem}
\newtheorem{lem}           [thm] {Lemma}   
\newtheorem{prop}     [thm] {Proposition}
\newtheorem{defi}      [thm] {Definition} 
\theoremstyle{plain}
\newtheorem{fact}            [thm] {Fact} 
\theoremstyle{nonumberbreak}
\newtheorem{lemNN}             {Lemma}
\theoremstyle{nonumberplain}
\newtheorem{claim}      {Claim}[thm]
\newtheorem{remark}     {Remark}
\newcommand{\openbox}{\leavevmode
  \hbox to.77778em{%
  \hfil\vrule
  \vbox to.675em{\hrule width.6em\vfil\hrule}%
  \vrule\hfil}}
\newcommand{\proofname}{Proof}
\newcounter{proof}%
\newenvironment{proof}[1][]{
  \th@nonumberplain
  \def\theorem@headerfont{\itshape}%
  \normalfont
  %\theoremsymbol{\ensuremath{_\blacksquare}}
  \@thm{proof}{proof}{\proofname{} {#1}\unskip.}}%
  {\@endtheorem}
\newcommand{\openboxsmall}{\leavevmode
  \hbox to.57778em{%
  \hfil\vrule
  \vbox to.575em{\hrule width.5em\vfil\hrule}%
  \vrule\hfil}}
\newenvironment{claimproof}[1][Proof]{

  \begin{proof}[#1]
}{
  \end{proof}

}
\newcommand{\hide}[1]{}
\let\version\today
\newcommand{\makeversion}{
  \let\oldfootnote\thefootnote %
  \renewcommand{\thefootnote}{} %
  \footnote{\version} %
  \let\thefootnote\oldfootnote %
}
\renewcommand{\tilde}{\widetilde}
\newcommand{\field}[1]{\mathbb{#1}}
\newcommand{\N}{\field{N}}
\newcommand{\eps}{\epsilon}
\renewcommand{\epsilon}{\varepsilon}
\renewcommand{\phi}{\varphi}
\DeclareMathOperator{\dcup}{\ensuremath{\mathaccent\cdot \cup}}  %{\dot{\mathop{\cup}}}
\newcommand{\deff}{\mathrel{\mathop:}=}
\newcommand{\ffed}{\mathrel{=\mathop:}}
\newcommand{\SP}{\textsc{\MakeTextLowercase{S}}} % needs package textcase
\newcommand{\ORD}{\textsc{\MakeTextLowercase{O}}} % needs package textcase
\newcommand{\cB}{\mathcal{B}}
\newcommand{\cA}{\mathcal{A}}
\newcommand{\cR}{\mathcal{R}}
\newcommand{\cH}{\mathcal{H}}
\newcommand{\cI}{\mathcal{I}}
\newcommand{\cCo}{\mathcal{C}\!o}
\newcommand{\By}[2]{\overset{\mbox{\tiny{#1}}}{#2}}    % Textbegr"undung
\newcommand{\ByRef}[2]{   \By{\eqref{#1}}{#2} }      % Formelreferenzbegr"undung
\newcommand{\leBy}[1]{    \By{#1}{\le} }
\newcommand{\lByRef}[1]{  \ByRef{#1}{<} }
\newcommand{\leByRef}[1]{ \ByRef{#1}{\le} }
\newcommand{\geByRef}[1]{ \ByRef{#1}{\ge} }
\DeclareMathOperator{\EE}{\mathbb{E}}
\DeclareMathOperator{\PP}{\mathbb{P}}
\DeclareMathOperator{\Left}{Left}
\DeclareMathOperator{\Right}{Right}
\newcommand{\Free}{\text{\it Free}}
\newcommand{\Gnp}[1][p]{\mathcal{G}(n,#1)}
\newcommand{\EMAIL}[1]{  \textit{E-mail}: \texttt{#1} } 
\title{An Extension of the Blow-up Lemma to arrangeable graphs%}
 \thanks{
% The second author
    YK was partially supported by CNPq (Proc.~308509/2007-2, 484154/2010-9, 477203/2012-4).
    AT and AW were partially supported by DFG grant TA 309/2-2.
    This work was partially supported by the University of S\~{a}o Paulo, through the MaCLinC project, by the Department of Mathematics at the London School of Economics, and by the Zentrum Mathematik at Technische Universit\"at M\"unchen, through the Women for Math Science-programme. 
The cooperation of the four authors was supported by a joint CAPES-DAAD project (415/ppp-probral/po/D08/11629, Proj. no. 333/09).
 %   The authors are grateful to the Department of Mathematics at the London
 %   School of Economics and the Women for Math Science-programme of the Zentrum Mathematik at Technische Universit\"at M\"unchen for financially supporting this collaboration.
 }
}
  \author{
    Julia B\"ottcher\thanks{%
      Department of Mathematics, London School of Economics, Houghton
      Street, London WC2A 2AE, UK
      \EMAIL{j.boettcher@lse.ac.uk}
    }
    \and Yoshiharu Kohayakawa\thanks{
      Instituto de Matem\'atica e Estat\'{\i}stica, Universidade de S\~ao Paulo, Rua do Mat\~ao 1010, 05508--090~S\~ao Paulo, Brazil
      \EMAIL{yoshi@ime.usp.br}
    }
    \and Anusch Taraz\thanks{%
      Zentrum Mathematik, Technische Universit\"at M\"unchen, 
      Boltzmannstra\ss{}e~3, D-85747 Garching bei M\"unchen, Germany
      \EMAIL{taraz|wuerfl@ma.tum.de}
    }
    \and Andreas W\"urfl\footnotemark[4]
  } 
\begin{document}

\maketitle

\hyphenation{pre-decessors}

%%%% ABSTRACT %%%%%%%%%%%%%%%%%%%%%%%%%%%%%%%%%%

\begin{abstract}
  The Blow-up Lemma established by Koml\'os, S\'ark\"ozy, and Szemer\'edi
  in 1997 is an important tool for the embedding of spanning subgraphs of bounded
  maximum degree. Here we prove several generalisations of this result
  concerning the embedding of $a$-arrangeable graphs, where a graph is called
  $a$-arrangeable if its vertices can be ordered in such a way that the
  neighbours to the right of any vertex $v$ have at most $a$ neighbours to
  the left of $v$ in total. Examples of arrangeable graphs include planar
  graphs and, more generally, graphs without a $K_s$-subdivision for
  constant~$s$. Our main result shows that $a$-arrangeable graphs
  with maximum degree at most $\sqrt{n}/\log n$ can be embedded into
  corresponding systems of super-regular pairs. This is optimal up to the
  logarithmic factor. 

  We also present two applications. We prove that any large
  enough graph~$G$ with minimum degree at least $\big(\frac{r-1}{r}+\gamma\big)n$
  contains an $F$-factor of every $a$-arrangeable $r$-chromatic graph~$F$
  with at most $\xi n$ vertices and maximum degree at most $\sqrt{n}/\log
  n$, as long as $\xi$ is sufficiently small compared to $\gamma/(ar)$. This
  extends a result of Alon and Yuster [J. Combin. Theory Ser. B 66(2),
  269--282, 1996].
  Moreover, we show that for constant~$p$ the random graph $\Gnp$ is universal for the
  class of $a$-arrangeable $n$-vertex graphs~$H$ of maximum degree at most
  $\xi n/\log n$, as long as $\xi$ is sufficiently small compared to $p/a$.
  \end{abstract}

%\makeversion

%%%% TEXT %%%%%%%%%%%%%%%%%%%%%%%%%%%%%%%%%%%%%%

\renewcommand{\theenumi}{\roman{enumi}}
\renewcommand{\labelenumi}{(\theenumi)}

\section{Introduction}

The last 15 years have witnessed an impressive series of results
guaranteeing the presence of spanning subgraphs in dense graphs. In this
area, the so-called \emph{Blow-up Lemma} has become one of the key
instruments. It emerged out of a series of papers by Koml\'os, S\'ark\"ozy,
and Szemer\'edi (see
e.g.~\cite{KSS95,KSS_pos,KSS97,KSS98,KSS_seyweak,KSS_sey,KSS_alon}) and
asserts, roughly spoken, that we can find bounded degree spanning subgraphs
in $\eps$-regular pairs. It was used for determining, among others,
sufficient degree conditions for the existence of
$F$-factors, Hamilton paths and cycles and their powers, spanning trees and
triangulations, and graphs of sublinear bandwidth in graphs, digraphs and
hypergraphs (see the survey~\cite{KueOstSurvey} for an excellent overview
of these and related achievements). 
In this way, the Blowb<>-up Lemma has reshaped extremal graph theory.

However, with very few exceptions, the embedded spanning subgraphs~$H$ considered
so far came from classes of graphs with constant maximum
degree, because the Blow-up Lemma requires the subgraph
it embeds to have constant maximum degree. In fact, the Blow-up Lemma
is usually the only reason why the proofs of the above mentioned results
only work for such subgraphs.

The central purpose of this paper is to overcome this obstacle. We shall
provide extensions of the Blow-up Lemma that can embed graphs whose degrees
are allowed to grow with the number of vertices. These versions
require that the subgraphs we embed are arrangeable.\footnote{We remark
  that it was already suggested in~\cite{Komlos99} to relax the maximum
  degree constraint to arrangeability.}  We will formulate them in the
following and subsequently present some applications.

\paragraph{Blow-up Lemmas.}

We first introduce some notation. Let~$G$,
$H$ and $R$ be graphs with vertex sets $V(G)$, $V(H)$, and
$V(R)=\{1,\dots,r\}\ffed[r]$. 
For $v\in V(G)$ and $S,U\subseteq V(G)$ we define $N(v,S)\deff N(v)\cap S$
and $N(U,S)=\bigcup_{v\in U} N(v,S)$. 
Let $A,B\subset V(G)$ be non-empty and disjoint, and let
$\eps,\delta\in[0,1]$. The \emph{density} of the pair $(A,B)$ is defined to be
$d(A,B):=e(A,B)/(|A||B|)$. The pair $(A,B)$ is \emph{$\eps$-regular}, if
$|d(A,B)-d(A',B')|\le\eps$ for all $A' \subseteq A$ and $B' \subseteq B$
with $|A'|\geq\eps|A|$ and $|B'|\geq\eps|B|$. An $\eps$-regular pair
$(A,B)$ is called \emph{$(\eps,\delta)$-regular}, if $d(A,B)\ge\delta$ and
\emph{$(\eps,\delta)$-super-regular}, if $|N(v,B)| \ge \delta|B|$ for
all $v\in A$ and $|N(v,A)|\ge \delta|A|$ for all $v\in B$.
We say that~$H$ has an \emph{$R$-partition} $V(H)=X_1\dcup\dots\dcup X_r$,
if for every edge $xy\in E(H)$ there are distinct $i,j\in[r]$ with $x\in
X_i$, $y\in X_j$ and $ij\in E(R)$.  $G$ has a \emph{corresponding}
\emph{$(\eps,\delta)$-super-regular $R$-partition} $V(G)=V_1\dcup\dots\dcup
V_r$, if $|V_i|=|X_i|=:n_i$ for all $i\in [r]$ and every pair $(V_i,V_j)$
with $ij\in E(R)$ is $(\eps,\delta)$-super-regular. In this case~$R$ is
also called the \emph{reduced graph} of the super-regular partition.
Moreover, these partitions are \emph{balanced} if $n_1\le n_2\le \dots \le
n_r\le n_1+1$. They are \emph{$\kappa$-balanced} if $n_j\le \kappa n_i$ for
all $i,j\in[r]$. The partition classes $V_i$ are also called \emph{clusters}.
% $|X_i|=n_i$ for all $i\in [r]$ and

With this notation, 
a simple version of the Blow-up Lemma of Koml\'os, S\'ark\"ozy, and Szemer\'edi~\cite{KSS97} 
can now be formulated as follows.

\begin{thm}[Blow-up Lemma~\cite{KSS97}] \label{thm:Blow-up:KSS}
Given a graph $R$ of order $r$ and positive parameters $\delta, \Delta$, there exists a positive $\eps = \eps(r,\delta, \Delta)$ such that the following holds. 
Suppose that $H$ and $G$ are two graphs with the same number of vertices, where $\Delta(H) \le \Delta$ and $H$ has a balanced $R$-partition, and $G$ has a corresponding $(\eps,\delta)$-super-regular $R$-partition. Then there exists an embedding of $H$ into $G$.
\end{thm}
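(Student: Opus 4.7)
The plan is to follow the randomised greedy embedding strategy of Koml\'os, S\'ark\"ozy and Szemer\'edi: embed $H$ into $G$ cluster by cluster, one vertex at a time, while tracking a shrinking \emph{candidate set} $C(x)\subseteq V_i$ for every still-unembedded $x\in X_i$, defined as the set of $v\in V_i$ not yet used that are adjacent to all previously embedded neighbours of $x$. Since $\Delta(H)\le\Delta$ is constant and every pair $(V_i,V_j)$ with $ij\in E(R)$ is $(\eps,\delta)$-regular, one iterated application of regularity per embedded neighbour shows that, initially, a typical candidate set has size at least $(\delta-\eps)^{\Delta}n_i$. The task is to keep the candidate sets this large throughout the embedding.

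First, I would fix a small constant $\alpha$ (with $\eps\ll\alpha\ll\delta$) and, inside every cluster $V_i$, reserve a \emph{buffer} of size $\alpha n_i$ in $X_i$, consisting of vertices with pairwise disjoint neighbourhoods whose neighbours lie in a single cluster; these buffer vertices will be embedded last. I would then order the remaining vertices of $H$ so that neighbours of each buffer vertex are embedded relatively late and all other vertices appear in a reasonable sequence. During the greedy phase, each vertex $x\in X_i$, when it is its turn to be embedded, is placed on a uniformly random element of its current candidate set $C(x)$, provided this set is still of size $\ge (\delta/2)^{\Delta}n_i$; if not, $x$ is declared \emph{bad} and handled separately. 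The key probabilistic step is to prove that, with positive probability, no non-buffer vertex ever becomes bad and each buffer candidate set retains at least $(\delta/2)^{\Delta}n_i$ elements. This is done by showing that, conditioned on the history up to any point, each $C(x)$ behaves like a sum of ``almost independent'' indicators: when a neighbour $y$ of $x$ is embedded, $\eps$-regularity implies that all but an $\eps$-fraction of the vertices in the current $C(y)$ reduce $|C(x)|$ by approximately the expected factor. A martingale (Azuma--Hoeffding) concentration argument then keeps $|C(x)|$ close to its expectation with probability $1-\exp(-\Omega(n))$, and a union bound over the (polynomially many) vertices and times closes the estimate.

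The \textbf{main obstacle} is precisely this concentration step: regularity only controls the size of the neighbourhood of a typical vertex, not of every vertex, so one has to handle the ``atypical'' vertices. The standard remedy, which I would use, is to maintain not only $|C(x)|$ but also that $C(x)$ is itself a \emph{regular set}, i.e.\ the pair $(C(x),V_j)$ remains $2\eps$-regular for every neighbour cluster $V_j$; an ``exceptional-vertex slack'' of order $\eps n_i$ per step is then absorbed by choosing the constants so that $\eps \ll \delta/\Delta$. The relevant quantitative dependence $\eps=\eps(r,\delta,\Delta)$ in the theorem comes out of this inequality.

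Finally, after the greedy phase only the buffer vertices remain, together with their candidate sets $C(b)\subseteq V_i$, each of size $\ge(\delta/2)^{\Delta}n_i$, and for every $i\in[r]$ the buffer in $X_i$ must be matched injectively into the unused vertices of $V_i$. Within each cluster this is an instance of a perfect matching in a bipartite graph whose ``super-regular'' structure (inherited from the original $(\eps,\delta)$-super-regular pairs, since buffer vertices have all their neighbours in a single cluster) guarantees Hall's condition: any set $S$ of buffer vertices with $|N(S)|<|S|$ in the candidate graph would contradict either $|C(b)|\ge(\delta/2)^{\Delta}n_i$ (for $|S|\le\alpha n_i/2$) or the $\eps$-regularity towards the image cluster (for larger $|S|$). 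Applying Hall per cluster completes the embedding. Choosing, in the order $\delta,\Delta,r$ first and then $\alpha$ and finally $\eps$, the parameters so that all slack terms absorb gives the claimed $\eps=\eps(r,\delta,\Delta)>0$.
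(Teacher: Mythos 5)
The paper does not contain a proof of Theorem~\ref{thm:Blow-up:KSS}; it quotes it from~\cite{KSS97} (noting also the alternative proof of~\cite{RodlRuci99}). Your sketch captures the overall two-phase structure that both cited proofs -- and the paper's own proof of the more general Theorem~\ref{thm:Blow-up:arr:full} -- follow: a random greedy embedding of almost everything, then a K\"onig--Hall matching step on a small reserved buffer. That architecture is correct. However, two of the steps as you describe them would not go through.

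First, the buffer requirement that buffer vertices have ``neighbours \dots\ in a single cluster'' is not generally achievable. Take $H$ to be a balanced $r$-partite graph in which every vertex has at least one neighbour in each of the other $r-1$ parts, with $R=K_r$: then no vertex of $H$ has all its neighbours in a single cluster and your buffer would be empty. The cited proofs only require that the buffer vertices have \emph{pairwise disjoint} $H$-neighbourhoods. The candidate set $C(b)=\bigcap_{y\in N_H(b)}N_G(\phi(y))\cap V_i$ then still has the required size and regularity simply by iterated $\eps$-regularity across the (at most $\Delta$) adjacent clusters, so the single-cluster restriction is both unnecessary and, if imposed, a genuine gap.

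Second, your concentration argument cannot deliver the claim that with positive probability \emph{no} non-buffer vertex ever becomes bad. When a neighbour $y$ of $z$ is embedded, $\eps$-regularity only guarantees that all but an $\eps$-fraction of the possible images $\phi(y)$ shrink $|C(z)|$ by the ``right'' factor; picking a bad image happens with probability up to $\eps$, so $z$ becomes bad with probability on the order of $\Delta\eps$, not $\exp(-\Omega(n))$. Azuma--Hoeffding controls the fluctuations of $|C(z)|$ around its conditional expectation, not the small-but-constant chance of an atypical update; a union bound over $n$ vertices therefore fails. This is exactly why both~\cite{KSS97} and the present paper introduce an explicit mechanism for critical vertices: as the paper's Methods paragraph recalls, \cite{KSS97} declares such a vertex critical and embeds it \emph{immediately} (still randomly in its shrunken candidate set), while the paper's arrangeable version reserves a linear-sized set of special vertices $V^\SP_i$ into which critical and important vertices are placed. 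Without some such device your sketch does not close.
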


We remark that R\"odl and Ruci\'nski~\cite{RodlRuci99} gave a different
proof for this result. In addition, Koml\'os,
S\'ark\"ozy, and Szemer\'edi~\cite{KSS98} gave an algorithmic proof.

Our first result replaces the restriction on the maximum degree of $H$ in
Theorem~\ref{thm:Blow-up:KSS} by a restriction on its
arrangeability. This concept was first introduced by Chen and Schelp
in~\cite{CheSch93}.

\begin{defi}[$a$-arrangeable] \label{def:arrangeable} 
  Let $a$ be an integer. A graph is called \emph{$a$-arrangeable} if its vertices
  can be ordered as $(x_1,\dots ,x_n)$ in such a way that
  $\big|N\big(N(x_i,\Right_i),\Left_i\big)\big| \le a$ for each $1 \le i
  \le n$, where $\Left_i = \{x_1, x_2,\dots, x_i \}$ and $\Right_i =
  \{x_{i+1}, x_{i+2},\dots, x_n\}$.
\end{defi}
% \comment{exchange $\Left$ and $\Right$ for $L$ and $R$?}

Obviously, every graph $H$ with $\Delta(H)\le a$ is
$(a^2-a+1)$-arrangeable.  Other examples for arrangeable graphs are planar
graphs: Chen and Schelp showed that planar graphs are 761-arrangeable
\cite{CheSch93}; Kierstead and Trotter \cite{KieTro93} improved this to
10-arrangeable. In addition, R\"odl and Thomas \cite{RodTho97} showed that
graphs without $K_s$-subdivision are $s^8$-arrangeable.  On the other hand,
even 1-arrangeable graphs can have unbounded degree (e.g.~stars).

\begin{thm}[Arrangeable Blow-up Lemma] \label{thm:Blow-up:arr} Given a
  graph $R$ of order $r$, a positive real $\delta$ and a natural number $a$,
	there exists a positive real $\eps = \eps(r,\delta,a)$ such that the following holds. Suppose
  that $H$ and $G$ are two graphs with the same number of vertices, where
  $H$ is $a$-arrangeable, $\Delta(H) \le \sqrt{n}/\log n$ and $H$ has a
  balanced $R$-partition, and $G$ has a corresponding
  $(\eps,\delta)$-super-regular $R$-partition. Then there exists an
  embedding of $H$ into $G$.
\end{thm}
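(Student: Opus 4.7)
The plan is to follow the randomised greedy embedding strategy of Komlós, Sárközy and Szemerédi, but to process the vertices of $H$ in the $a$-arrangeable order $(x_1,\dots,x_n)$. Write $\pi(j)$ for the index with $x_j\in X_{\pi(j)}$. Throughout the partial embedding $\phi$, we maintain for every unembedded vertex $x_j$ a \emph{candidate set} $C(x_j) = V_{\pi(j)} \cap \bigcap_{x_k \in N(x_j) \cap \Left_j} N(\phi(x_k))$ of images in the target cluster that are still adjacent to every previously embedded neighbour. In each cluster $X_i$ we set aside a small buffer $B_i\subseteq X_i$ of $\beta n_i$ vertices; the non-buffer vertices are embedded in the arrangeable order, with $\phi(x_i)$ chosen uniformly at random from $C(x_i)$ minus the images already in use. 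The buffer is postponed to the very end and finished off by a perfect matching in the bipartite graph where $x\sim v$ iff $v\in C(x)$.

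The main analytic statement to prove is that, with positive probability, the random process never aborts and the final candidate sets of the buffer vertices satisfy Hall's condition. Two standard ingredients for regular pairs are combined. First, an \emph{intersection lemma}: in an $(\eps,\delta)$-super-regular pair, for all but an $\eps^{1/2}$-fraction of $k$-tuples on one side, the common neighbourhood on the other side has size essentially $(\delta-\eps)^k$ times the cluster size, provided $k$ is not too large. Second, a martingale concentration argument showing that, provided the history has been typical for~$x_j$, one has $|C(x_j)|\ge \tfrac12(\delta-\eps)^{\ell_j} n_{\pi(j)}$ when $x_j$ is processed, where $\ell_j = |N(x_j)\cap \Left_j|$. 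Arrangeability is the key to controlling the atypical cases: at each step~$i$ the vertices whose candidate sets become coupled through the embedding of~$x_i$ lie inside $N(N(x_i,\Right_i),\Left_i)$, a set of size at most~$a$, which bounds the codegrees in the dependency graph driving the concentration.

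I expect the main obstacle to be this concentration step. In the bounded-degree setting, each random choice affects only $O(1)$ candidate sets and a union bound is immediate; here each $\phi(x_i)$ may tighten up to $\sqrt n/\log n$ candidate sets simultaneously. Propagating these dependencies through the martingale exposure while keeping the failure probability of each bad event well below $1/n$, and at the same time maintaining the invariant $|C(x_i)|\ge \eps n_{\pi(i)}$ so the intersection lemma remains applicable at the next step, is what forces both the square-root degree bound and the logarithmic refinement. Once these invariants survive to the end, Hall's condition for the buffer matching follows from the super-regularity of the reduced pairs restricted to the still-available portions of the clusters, together with a fresh buffer-side randomisation of the kind standard in Blow-up Lemma proofs.
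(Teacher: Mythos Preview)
Your outline is recognisably the right shape --- randomised greedy embedding in the arrangeable order, buffer left to the end, matching to finish --- and it parallels the paper's strategy. But two genuine gaps would stop the argument as written.

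\textbf{Critical vertices.} You maintain the invariant $|C(x_j)|\ge\eps n_{\pi(j)}$, but $C(x_j)$ is the full candidate set, not the \emph{available} one. Since you are filling each cluster almost completely, $C(x_j)\setminus\{\text{used images}\}$ can shrink to nothing even while $|C(x_j)|$ stays large; your process then has no vertex to pick. In the bounded-degree Blow-up Lemma this is handled by jumping such ``critical'' vertices to the front of the queue, but here the arrangeable order is fixed and cannot be perturbed. The paper's fix is to reserve, in advance, a small random subset $V^{\textsc{sp}}_i\subseteq V_i$ of each cluster into which only critical vertices (and a few designated ``important'' ones) may be embedded; ordinary vertices are forced into $V_i\setminus V^{\textsc{sp}}_i$. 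This guarantees that when a vertex becomes critical its available candidate set inside $V^{\textsc{sp}}_i$ is still of linear size. Your sketch has no such mechanism.

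\textbf{The $V$-side of the matching, and where $\sqrt{n}$ really enters.} You attribute the bound $\Delta(H)\le\sqrt n/\log n$ to the concentration step in the greedy phase. In fact the almost-spanning embedding goes through under the weaker hypothesis $\Delta(H)\le\xi n/\log n$ (this is the paper's Theorem~5). The square root is forced by the \emph{matching} step, and specifically by the side you gloss over: showing that every leftover vertex $v\in V_i^{\Free}$ is in $C(x)$ for linearly many buffer vertices~$x$. Since $v\in C(x)$ means $\phi(N^-(x))\subseteq N_G(v)$, you must control the embedding of the predecessors of~$x$, and the probability that a given predecessor~$y$ lands in $N_G(v)$ depends in turn on the embedding of the predecessors of~$y$. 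To decouple these events and apply concentration you need to partition the buffer into classes whose vertices are predecessor-disjoint \emph{to second order}, i.e.\ $N^-(N^-(x))\cap N^-(N^-(x'))=\emptyset$; the relevant auxiliary graph has maximum degree of order $(a\Delta(H))^2$, and for Hajnal--Szemer\'edi to give classes of size $\Omega(\log n)$ you need $(\Delta(H))^2=O(n/\log n)$. Your ``super-regularity of the reduced pairs restricted to the still-available portions, together with a fresh buffer-side randomisation'' does not supply this: the auxiliary matching graph is not super-regular in the usual sense (degrees on the $X$-side vary by a factor $\delta^{-a}$), and no amount of randomising the buffer in~$H$ tells you anything about degrees on the $V$-side.

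A smaller point: your buffer $B_i\subseteq X_i$ must be a stable set in~$H$ for the matching to finish the embedding; this requires a preliminary reordering (the paper's Lemma~2.3) and costs a constant factor in the arrangeability.
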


Koml\'os, S\'ark\"ozy, and Szemer\'edi proved that the Blow-up Lemma allows for
the following strengthenings that are useful in applications. We
allow the clusters to differ in size by a constant factor and
we allow certain vertices of $H$ to restrict their image in~$G$ to be taken
from an a priori specified set of linear size. However, in contrast to the original
Blow-up Lemma, we need to be somewhat more restrictive about the
image restrictions: We still allow linearly many vertices in each cluster
to have image restrictions, but now only a constant number of different
image restrictions is permissible in each cluster (we shall show in
Section~\ref{sec:opt} that this is best possible).  
In the following, we state an extended version of the Blow-up Lemma that 
makes this precise.

\begin{thm}[Arrangeable Blow-up Lemma, full version] 
\label{thm:Blow-up:arr:full} 
  For all $C,a,\Delta_R,\kappa \in \N$ and for all $\delta,c>0$ there exist
  $\eps,\alpha>0$ such that for every integer $r$ there is $n_0$ such that
  the following is true for every $n\ge n_0$.
  Assume that we are given 
  \renewcommand{\theenumi}{\alph{enumi}}
  \begin{enumerate}
    \item a graph~$R$ of order $r$ with $\Delta(R)<\Delta_R$,
    \item\label{item:Blow-up:H} an $a$-arrangeable $n$-vertex graph $H$
      with maximum degree $\Delta(H)\le \sqrt{n}/\log n$, together with a
      $\kappa$-balanced $R$-partition $V(H)=X_1\dcup\dots\dcup X_r$,
    \item a graph $G$ with a corresponding
      $(\eps,\delta)$-super-regular $R$-partition $V(G)=V_1\dcup\dots\dcup
      V_r$ with $|V_i|=|X_i|=:n_i$ for every $i\in[r]$,
    \item for every $i\in [r]$ a set $S_i\subseteq X_i$ of at most
      $|S_i|\le \alpha n_i$ \emph{image restricted vertices}, such that
      $|N_H(S_i)\cap X_j|\le \alpha n_j$ for all $ij\in
      E(R)$,
    \item\label{item:Blow-up:ir} and for every $i\in[r]$ a family
      $\cI_i=\{I_{i,1},\dots,I_{i,C}\}\subseteq 2^{V_i}$ of permissible \emph{image
      restrictions}, of size at least $|I_{i,j}|\ge c n_i$ each, together
      with a mapping $I\colon S_i\to \cI_i$, which assigns a permissible
      image restriction to each image restricted vertex.
  \end{enumerate}
  Then there exists an embedding $\phi\colon V(H) \to V(G)$ such that
  $\phi(X_i)=V_i$ and $\phi(x) \in I(x)$ for every $i\in[r]$ and every
  $x\in S_i$.
\end{thm}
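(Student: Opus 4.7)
The plan is to prove Theorem~\ref{thm:Blow-up:arr:full} by an arrangeability-order random greedy embedding combined with a Hall's-theorem cleanup, extending the argument behind Theorem~\ref{thm:Blow-up:arr} so that it also handles image restrictions, $\kappa$-balancing, and a bounded-degree reduced graph.

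\textbf{Setup.} First fix an arrangeability ordering $(x_1,\dots,x_n)$ of $V(H)$ and choose constants $0<\eps\ll\alpha\ll\eta\ll\min(\delta,c,1/\kappa,1/\Delta_R,1/a,1/C)$, with $n_0$ large. The process splits into a \emph{main phase} (embedding $x_1,\dots,x_{(1-\eta)n}$) and a \emph{cleanup phase} (the remaining $\eta n$ vertices). At step $i$ of the main phase, I embed $x_i$ uniformly at random into its current candidate set
\[
C_i = V_{k(i)} \cap I(x_i) \cap \bigcap_{y\in N(x_i,\Left_{i-1})} N(\phi(y),V_{k(i)}) \setminus \phi(\Left_{i-1}),
\]
where $k(i)$ is defined by $x_i\in X_{k(i)}$, and $I(x_i)\deff V_{k(i)}$ whenever $x_i$ is not image-restricted.

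\textbf{Invariants.} Throughout the main phase, I track, for every not-yet-embedded $x_k$ (with $k>i$) and every $J\in\cI_{k(k)}\cup\{V_{k(k)}\}$, the ``predictive'' set obtained by intersecting $J$ with all committed neighborhood restrictions and removing already-used vertices, and require that its size stays concentrated around its expected value (essentially $|J|\cdot\prod_y d(V_{k(k)},V_{k(y)})$, the product running over already-embedded left-neighbors $y$ of $x_k$). A Doob-martingale concentration argument plus a union bound over the $O(nC)$ tracked quantities is intended to establish these invariants with probability $1-o(1)$.

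\textbf{Cleanup phase.} The remaining $\eta n$ vertices are embedded via Hall's theorem applied to the bipartite availability graph between unembedded $H$-vertices and unused $G$-vertices, where the Hall condition follows from super-regularity of the residual pairs together with the image-restriction survival guaranteed by the invariants.

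\textbf{Main obstacle.} The crux is the concentration of the candidate sets in the main phase, particularly for $x_k$ whose number of already-embedded left-neighbors may grow to $\Delta(H)\le\sqrt{n}/\log n$, where a naive multiplicative estimate $\delta^{\sqrt{n}/\log n}|V_{k(k)}|$ already vanishes. The $a$-arrangeability condition $|N(N(x_i,\Right_i),\Left_i)|\le a$ is what rescues the argument: it bounds how information from the past propagates into the candidate sets of $x_i$'s right-neighbors, so the step size of the Doob martingale at step $i$ is controlled by an $a$-dependent constant (times a density factor), enabling an Azuma/Chernoff estimate. The degree bound $\sqrt{n}/\log n$ and the $\log n$ slack then suffice to close the union bound over all $n$ steps and all tracked vertex/restriction pairs. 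The most delicate points are (i) integrating the image-restriction survival into the arrangeability-based concentration so that each $x\in S_i$ still has a large choice when it is embedded, and (ii) leaving enough slack at the boundary between the two phases that Hall's condition can be verified deterministically; once the invariants hold throughout the main phase, the cleanup is standard.
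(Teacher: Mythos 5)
Your overall plan—greedy random embedding in an arrangeable order followed by a matching-based cleanup—does match the paper's high-level architecture (Section~\ref{sec:almost-spanning} plus Section~\ref{sec:spanning}), but there are three genuine gaps, and the stated ``main obstacle'' rests on a misconception. First, in an $a$-arrangeable ordering every vertex has \emph{at most $a$ left-neighbours} (Definition~\ref{def:arrangeable} forces $|N^-(x)|\le a$), so the candidate set of any yet-unembedded $x$ shrinks by at most a $\delta^a$ factor over the whole process; a $\delta^{\sqrt n/\log n}$ collapse simply never arises. The degree bound $\Delta(H)\le\sqrt n/\log n$ is needed for an entirely different reason: the concentration analysis must partition pairs of $H$-vertices in $X_i$ into large groups that are mutually disjoint in predecessors and, in Lemma~\ref{lem:aux:rmd:Avy}, in predecessors of second order; this uses $(a\Delta(H))^2 = o(n/\log n)$, and is the bottleneck that forces $\sqrt n/\log n$ rather than $\xi n/\log n$.

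Second, the cleanup is not ``standard.'' Your plan leaves the last $\eta n$ vertices in the arrangeable order as the cleanup set, but nothing guarantees they form an independent set; if two of them are adjacent, a bipartite Hall argument cannot embed both while respecting the shared constraint. The paper first applies Lemma~\ref{lem:arr:StableEnding} to \emph{re}order $H$ (at a bounded-multiplicative cost $5a^2\kappa\Delta_R$ to the arrangeability) so that the tail is a stable set, and only then runs the algorithm. Third, your invariants track only first-order quantities (sizes of candidate sets $|C_k\cap J|$), which is not enough to verify Hall's condition: if many unembedded vertices share the same image restriction, their candidate sets could be identical and too small collectively even while each is individually large. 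The paper instead proves that the auxiliary graph $F_i(t)$ is \emph{weighted regular}, a second-order property established via a degree/co-degree characterisation (Lemma~\ref{lem:reg:weighted:deg-codeg}); the co-degree part requires controlling, for almost all pairs $\{y,z\}$, the size of $C_{t,y}\cap C_{t,z}$, which is exactly the delicate concentration step you would need to spell out. Relatedly, you do not address how to guarantee each vertex is embedded from a linearly large set: the paper sets aside a reservoir $V^\SP$ of special vertices and routes all ``critical'' vertices (those whose ordinary candidate set got small) and all predecessors-of-tail vertices through it; without such a reservoir the uniform-random choice can fail. Your Doob-martingale phrasing would also need to cope with the fact that embedding one vertex $x_i$ changes up to $\Delta(H)$ future candidate sets, so a naive Azuma Lipschitz bound is far too weak; the paper circumvents this via the predecessor-disjoint decomposition plus Lemma~\ref{lem:pseudo:Chernoff}, not via a single martingale.
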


As we shall show, the upper bound on the maximum degree of~$H$ in
Theorem~\ref{thm:Blow-up:arr:full} is optimal up to the $\log$-factor (see Section~\ref{sec:opt}).
% as can be seen from Proposition~\ref{prop:UpperBound:deg}
However, if we require additionally that every $(a+1)$-tuple of~$G$ has a big
common neighbourhood then this degree bound can be relaxed to $o(n/\log n)$. 

\begin{thm}[Arrangeable Blow-up Lemma, extended version] 
  \label{thm:Blow-up:arr:ext} 
  Let $a,\Delta_R,\kappa \in \N$ and $\iota,\delta>0$ be given. Then there
  exist $\eps,\xi>0$ such that for every $r$ there is $n_0\in \N$ such that
  the following holds for every $n\ge n_0$.

  Assume that we are given a graph $R$ of order $r$ with
  $\Delta(R)<\Delta_R$, an $a$-arrangeable $n$-vertex graph with $\Delta(H)
  \le \xi n/\log n$, together with a $\kappa$-balanced $R$-partition, and
  a graph~$G$ with a corresponding $(\eps,\delta)$-super-regular $R$-partition
  $V=V_1\dcup\dots\dcup V_r$. Assume that in addition for every $i\in[r]$
  every tuple $(u_1,\dots,u_{a+1})\subseteq V\setminus V_i$ of vertices
  satisfies $|\bigcap_{j\in[a+1]}N_G(u_j) \cap V_i|\ge \iota |V_i|$.
  Then there exists an embedding of $H$ into $G$.
\end{thm}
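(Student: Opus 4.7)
\emph{Plan.} I would prove Theorem~\ref{thm:Blow-up:arr:ext} by reducing it to Theorem~\ref{thm:Blow-up:arr:full}: pre-embed the vertices of $H$ whose degree exceeds the threshold $\sqrt{n}/\log n$ using the stronger common-neighbourhood hypothesis, and then invoke Theorem~\ref{thm:Blow-up:arr:full} to embed the rest, with image restrictions enforcing adjacency to the pre-embedded part. Set $D := \sqrt{n}/\log n$ and $B := \{x \in V(H) : \deg_H(x) > D\}$. Since $H$ is $a$-arrangeable, $|E(H)| \le an$, hence $|B| \le 2an/D = O(\sqrt{n}\,\log n) = o(n)$. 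The graph $H' := H - B$ is still $a$-arrangeable and satisfies $\Delta(H') \le D$, so it meets the degree requirement of Theorem~\ref{thm:Blow-up:arr:full}.

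Next, I would greedily embed $B$ into $G$ by processing its vertices in an order consistent with an $a$-arrangeability ordering of $H$. At each step the hypothesis on $(a+1)$-wise common neighbourhoods in $G$ gives a candidate set of size at least $\iota |V_{\mathrm{cluster}(b)}|$ for the current $b \in B$, from which a standard typicality/averaging argument picks $\phi(b)$ so that $|N_G(\phi(b)) \cap V_j|$ is close to $\delta |V_j|$ for every cluster $V_j$ adjacent to $\mathrm{cluster}(b)$ in $R$, and so that previously-chosen image restrictions remain of linear size. With $B$ placed, define $I(x) := \bigcap_{b \in B \cap N_H(x)} N_G(\phi(b)) \cap V_{\mathrm{cluster}(x)}$ for each $x \in V(H') \cap N_H(B)$ and apply Theorem~\ref{thm:Blow-up:arr:full} to $H'$ and $G \setminus \phi(B)$; super-regularity survives the removal of $\phi(B)$ up to $o(1)$ adjustments of parameters since only $|B| = o(n)$ vertices are removed per cluster.

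\emph{Main obstacle.} The delicate step is verifying the image-restriction hypothesis of Theorem~\ref{thm:Blow-up:arr:full}---that only a bounded number $C$ of \emph{distinct} $I(x)$ occur in each cluster, each of linear size $\ge c|V_i|$. Since $|B|$ is super-constant and a generic low-degree vertex may have many $B$-neighbours, naively constructed image restrictions would come in far too many varieties, and the $(a+1)$-wise common-neighbourhood bound controls intersections of only up to $a+1$ neighbourhoods at a time. The workaround must exploit the arrangeability structure: a natural approach is to process $B$ in blocks tied to the arrangeability sets $N(N(x_i, \Right_i), \Left_i)$ of size $\le a$, and to show that within each block the induced image restrictions consolidate into constantly many essentially distinct sets of linear size. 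An alternative is to adapt the proof of Theorem~\ref{thm:Blow-up:arr:full} directly, replacing the maximum-degree bound wherever it appears by the common-neighbourhood hypothesis. I expect this consolidation (or adaptation) to constitute the main technical work of the proof.
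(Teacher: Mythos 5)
Your primary plan --- split off the high-degree set $B$, pre-embed it, and reduce to Theorem~\ref{thm:Blow-up:arr:full} --- breaks down for exactly the reason you flag, and the ``consolidation'' idea cannot repair it. The obstruction is structural rather than technical: the images $\phi(b)$ for $b\in B$ are pairwise distinct vertices of $G$, so each $N_G(\phi(b))\cap V_i$ is essentially unique to $\phi(b)$. For a target $H$ containing $\Theta(\sqrt n\log n)$ hubs of degree $\sqrt n/\log n$ with private low-degree neighbours, those neighbours genuinely demand $\Theta(\sqrt n\log n)$ pairwise distinct restriction sets per cluster, whereas Theorem~\ref{thm:Blow-up:arr:full} allows only a family $\cI_i$ of constant size $C$ (and Proposition~\ref{prop:UpperBound:landing} shows this constant bound is necessary). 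Worse, a vertex $x\in H-B$ can have up to $\deg_H(x)\le\sqrt n/\log n$ neighbours in $B$ --- these are successors $b\in N^+(x)$, and arrangeability bounds $|N^-(x)|$ and $|N^-(b)|$ but says nothing about how many $b\in B$ may share $x$ as a predecessor --- so $I(x)=\bigcap_{b\in N_H(x)\cap B}N_G(\phi(b))\cap V_i$ can intersect far more than $a+1$ neighbourhoods, and the $(a+1)$-wise hypothesis gives no lower bound on its size.

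The alternative you mention in passing --- adapt the proof of Theorem~\ref{thm:Blow-up:arr:full} directly --- is the paper's route, and the change is quite localised. The RGA and the weighted-regularity analysis of the auxiliary graphs $F_i(t)$ already run under $\Delta(H)\le\xi n/\log n$ (Lemma~\ref{lem:aux:reg}, Theorem~\ref{thm:Blow-up:arr:almost-span}); the stricter bound $\sqrt n/\log n$ is needed only when establishing the minimum degree of $v\in V_i^\Free$ in $F_i(T)$. Concretely, Lemma~\ref{lem:aux:rmd:Avy} partitions $L_i^*$ into classes that are predecessor disjoint \emph{of second order}, which requires $(a\Delta(H))^2\lesssim n/\log n$. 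The tuple condition eliminates exactly this lemma: it forces $|C^\SP_{t(y),y}\cap N_G(v)|\ge(\mu/20)\iota n_j$ outright (after a Proposition~\ref{prop:hdp:predecessors}-type check that $A(y)\approx C^\SP_{t(y),y}$), so one replaces the definition of $\cA_v(y)$ by $|A(y)\cap N_G(v)|/|A(y)|\ge\iota/2$, which then holds for \emph{every} $y\in N^-(L_i^*)$, i.e.\ $L_i^*(v)=L_i^*$ deterministically. After this, only the Chernoff step of Lemma~\ref{lem:aux:rmd:pre} remains, and it needs a partition of $L_i^*$ that is predecessor disjoint of \emph{first} order only, for which $\Delta(H)\le\xi n/\log n$ suffices. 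So the proof is not a reduction to Theorem~\ref{thm:Blow-up:arr:full} but a surgical replacement of one lemma in its proof.
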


Again, the degree bound of $\xi n/\log n$ for $H$ in Theorem~\ref{thm:Blow-up:arr:ext} is optimal up to the constant factor.
The same degree bound can be obtained if we do require~$H$ only to be an almost spanning subgraph, even if the additional condition on $(a+1)$-tuples from Theorem~\ref{thm:Blow-up:arr:ext} is dropped again.

\begin{thm}[Arrangeable Blow-up Lemma, almost spanning
  version] \label{thm:Blow-up:arr:almost-span} Let $\mu>0$ and assume that we have exactly the
  same setup as in Theorem~\ref{thm:Blow-up:arr:full}, but with
  $\Delta(H)\le \xi n/\log n$ instead of the maximum degree bound given
  in~\eqref{item:Blow-up:H}, where~$\xi$ is sufficiently small compared to
  all other constants.  Fix an $a$-arrangeable ordering of~$H$,
  let~$X_i'$
  be the first $(1-\mu)n_i$ vertices of $X_i$ in this ordering, and set
  $H':=H[X_1'\cup\dots\cup X_r']$.

  Then there exists an embedding $\phi\colon V(H') \to V(G)$ such that
  $\phi(X_i')\subseteq V_i$ and $\phi(x) \in I(x)$ for every $i\in[r]$ and
  every $x\in S_i\cap X_i'$.
\end{thm}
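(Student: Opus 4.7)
I would prove this theorem by reducing to the spanning version, Theorem~\ref{thm:Blow-up:arr:full}, after first pre-embedding the small set $W$ of ``heavy'' vertices of $H'$ --- those of degree greater than $d_0 := \sqrt{n}/\log n$ --- by hand. The $\mu n_i$ unused target vertices in each cluster provide room both to host the pre-embedded images of $W$ and to resize $V_i$ so that Theorem~\ref{thm:Blow-up:arr:full} can be applied to the light remainder.

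Since $H$ is $a$-arrangeable we have $e(H') \le e(H) \le an$, and hence $|W| \le 2an/d_0 = O(\sqrt{n}\log n)$, which is $o(\mu n_i)$ per cluster. Moreover $H_L := H' - W$ has $\Delta(H_L) \le d_0$, fitting the maximum-degree hypothesis of Theorem~\ref{thm:Blow-up:arr:full}. I would pre-embed $W$ greedily in the given $a$-arrangeable order: for each $w \in W \cap X_i$ I would select $\phi(w) \in V_i$ satisfying (i)~the original image restriction $I(w)$ if $w \in S_i$, (ii)~adjacency to the images of already-embedded $W$-neighbours of $w$, and (iii)~a typicality condition which guarantees that each pair $(V_j \setminus \phi(W), V_k \setminus \phi(W))$ with $jk \in E(R)$ remains super-regular with only slightly degraded parameters. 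Such a $\phi(w)$ exists at each step by a union bound together with super-regularity, since the constraints of type (ii) come from at most $|W| - 1 = o(n)$ prior embeddings and the typicality condition excludes only a sublinear fraction of $V_i$.

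It then remains to embed $H_L$ into a suitable subset of $G - \phi(W)$. Since $|X_i' \cap V(H_L)| = (1-\mu)n_i - |W\cap X_i'|$ while $|V_i \setminus \phi(W)| = n_i - |W\cap X_i|$, I would trim $V_i \setminus \phi(W)$ down to a subset $V_i^\star$ of size exactly $|X_i' \cap V(H_L)|$, chosen randomly so that the resulting sub-partition is super-regular (possible thanks to the remaining slack). The edges between $V(H_L)$ and $W$ are then translated into additional image restrictions on $H_L$: each $x \in V(H_L)$ with $W$-neighbours $w_1,\dots,w_t$ must be mapped into $\bigcap_{s=1}^{t} N_G(\phi(w_s)) \cap V_{c(x)}^\star$, where $c(x)$ denotes the cluster index of $x$. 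The \emph{main obstacle} is that Theorem~\ref{thm:Blow-up:arr:full} permits only a constant number $C$ of distinct image-restriction classes per cluster, whereas the new restrictions could a priori produce polynomially many classes, one per pattern of $W$-neighbours. To overcome this I would exploit $a$-arrangeability, which via the bound $|N(N(x_i,\Right_i),\Left_i)| \le a$ tightly constrains the possible $W$-neighbour patterns, together with a bucketing argument in which only the $C$ most frequent patterns per cluster are kept as restriction classes in the sense of Theorem~\ref{thm:Blow-up:arr:full}, and the vertices with rare patterns are absorbed through a greedy pre-embedding step into a small reserved portion of $V_i^\star$. Provided $\xi$ is sufficiently small compared to $\mu$, $a$, $\delta$, $\kappa$ and the other constants, a counting argument keeps the rare-pattern vertices per cluster within the reserved budget. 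A final application of Theorem~\ref{thm:Blow-up:arr:full} then completes the embedding.
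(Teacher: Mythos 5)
Your reduction to Theorem~\ref{thm:Blow-up:arr:full} does not work. Consider $H$ equal to a disjoint union of stars $K_{1,k}$ with $k = \lceil\sqrt n/\log n\rceil+1$; ordering centres before leaves shows $H$ is $1$-arrangeable, and $\Delta(H)=k$ is within the permitted range. Every centre lands in your set $W$, and \emph{every leaf} --- a $(1-o(1))$-fraction of $V(H)$ --- has exactly one $W$-neighbour and therefore acquires an image restriction $N_G(\phi(w))\cap V_{c(x)}$. Thus $|S_i|=(1-o(1))n_i$, violating the hypothesis $|S_i|\le\alpha n_i$ of Theorem~\ref{thm:Blow-up:arr:full}, and the number of distinct restriction sets per cluster is $|W|=\Theta(\sqrt n\log n)$, violating the bound by a constant $C$. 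Your bucketing step does not salvage this: here each pattern occurs only $k=o(n)$ times, so the $C$ most frequent patterns cover only $O(\sqrt n/\log n)$ leaves, leaving a linear fraction to be greedily pre-embedded into a sublinear reserve, which is impossible. Decreasing $\xi$ does not help either, since the heavy threshold $\sqrt n/\log n$ is fixed by Theorem~\ref{thm:Blow-up:arr:full}, not by $\xi$. Finally, the arrangeability bound $|N(N(x_i,\Right_i),\Left_i)|\le a$ controls how many left-vertices constrain the embedding of a \emph{single} vertex; it does not bound how many vertices have $W$-predecessors, and hence cannot limit either the number of image-restricted vertices or the number of distinct restriction patterns.

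There is also a structural point: in the paper, Theorem~\ref{thm:Blow-up:arr:almost-span} is established \emph{first}, by a one-pass randomised greedy algorithm (the RGA of Section~\ref{sec:rga:definition}, analysed via weighted regularity of auxiliary graphs in Lemmas~\ref{lem:aux:reg},~\ref{lem:reg-small-queue} and~\ref{lem:rga:success}), and Theorem~\ref{thm:Blow-up:arr:full} is then derived from this almost spanning embedding by a matching argument. Deriving the almost spanning statement from the full one therefore inverts the dependency and, within the paper, would be circular. The RGA avoids a separate heavy/light split entirely: every vertex of $H'$ is embedded in arrangeable order, and vertices whose candidate sets shrink critically are diverted into a small reserved ``special'' portion of each $V_i$.
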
 

Let us point out that one additional essential difference between these three versions 
of the Blow-up Lemma and 
Theorem~\ref{thm:Blow-up:KSS} concerns the order of the quantifiers: the
regularity~$\eps$ that we require only depends on the maximum degree
$\Delta_R$ of the reduced graph~$R$, but \emph{not} on the number of the vertices in $R$. 
Sometimes this is useful in applications. Clearly, we can reformulate our theorems to match the original order of
quantifiers of Theorem~\ref{thm:Blow-up:KSS}; the lower bound on $n_0$ can
be omitted in this case.

\paragraph{Applications.}

To demonstrate the usefulness of these extensions of the Blow-up Lemma, 
we consider two
example applications that can now be derived in a relatively straightforward manner.
At the end of this section we are going to mention a few further 
applications that are more difficult and will be proven in separate papers. 

\smallskip

Our first application concerns $F$-factors in graphs of high minimum
degree.  This is a topic which is well investigated for graphs~$F$ of
\emph{constant} size. For a graph~$F$ on~$f$ vertices, an \emph{$F$-factor}
in a graph~$G$ is a collection of vertex disjoint copies of~$F$ in~$G$ such
that all but at most $f-1$ vertices of~$G$ are covered by these copies of $F$.

A classical theorem by Hajnal and Szemer\'edi~\cite{HajSze70} states that
each $n$-vertex graph~$G$ with minimum degree $\delta(G)\ge\frac{r-1}{r}n$
has a $K_r$-factor. 
Alon and Yuster~\cite{AloYus} considered arbitrary graphs $F$ and 
showed that, if $r$ denotes the chromatic number of $F$, every sufficiently large 
graph~$G$ with minimum degree $\delta(G)\ge(\frac{r-1}{r}+\gamma)n$
contains an $F$-factor. This was improved upon by Koml\'os,
S\'ark\"ozy, and Szemer\'edi~\cite{KSS_alon}, who replaced the linear term~$\gamma n$ in the degree bound by a constant $C=C(F)$; and by K\"uhn
and Osthus~\cite{KueOst_packings}, who, inspired by a result of Koml\'os~\cite{Kom00},
determined the precise minimum degree threshold for every constant size~$F$
up to a constant.

In contrast to the previous results we consider graphs~$F$ whose size may
grow with the number of vertices~$n$ of the host graph~$G$. More precisely, we allow
graphs~$F$ of size linear in~$n$. To prove this result, we use
Theorem~\ref{thm:Blow-up:arr:full} (see Section~\ref{sec:Applications}) and
hence we require that~$F$ is $a$-arrangeable and has maximum degree at
most $\sqrt{n}/\log n$.

\begin{thm}\label{thm:GrowingHfactors}
  For every $a,r$ and $\gamma>0$ there exist $n_0$ and $\xi>0$ such that
  the following is true. Let $G$ be any graph on $n\ge n_0$ vertices with
  $\delta(G)\ge (\tfrac{r-1}{r}+\gamma)n$ and let $F$ be an $a$-arrangeable
  $r$-chromatic graph with at most $\xi n$ vertices and with maximum degree
  $\Delta(F)\le \sqrt{n}/\log n$. Then $G$ contains an $F$-factor.
\end{thm}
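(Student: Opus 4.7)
The plan is to combine the Szemer\'edi regularity lemma, the Hajnal--Szemer\'edi theorem, and the full Arrangeable Blow-up Lemma (Theorem~\ref{thm:Blow-up:arr:full}). Choosing constants with $\eps\ll\xi\ll\gamma/(ar)$, I would apply the regularity lemma to $G$ with parameter $\eps$ to obtain an equipartition $V(G)=V_0\dcup V_1\dcup\cdots\dcup V_t$ with $|V_0|\le\eps n$ and $|V_i|=n'$, together with a reduced graph $R$ of minimum degree $\delta(R)\ge(\tfrac{r-1}{r}+\tfrac{\gamma}{2})t$ (standard regularity-plus-degree argument). Hajnal--Szemer\'edi then produces a $K_r$-factor $\{K^{(1)},\ldots,K^{(s)}\}$ of $R$ with $s=t/r$; the corresponding $r$-partite subgraphs of $G$ can be made super-regular by moving a few vertices per cluster to the exceptional set.

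In each blowup $K^{(b)}$ I would embed a disjoint union $H^{(b)}$ of copies of $F$ using a mixture of the $r!$ possible orientations of the $r$-colouring $V(F)=W_1\dcup\cdots\dcup W_r$. If $x_\sigma$ copies are used with orientation $\sigma\in S_r$, the $k$th cluster of $K^{(b)}$ receives $\sum_\sigma x_\sigma|W_{\sigma(k)}|$ vertices of $H^{(b)}$, and the symmetric choice $x_\sigma=rn''/(r!\,f)$ makes these counts equal and matches the common cluster size $n''$. The resulting $H^{(b)}$ is $a$-arrangeable, since disjoint unions of $a$-arrangeable graphs are $a$-arrangeable, and satisfies $\Delta(H^{(b)})=\Delta(F)\le\sqrt n/\log n$. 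Applying Theorem~\ref{thm:Blow-up:arr:full} globally, with reduced graph equal to the $K_r$-factor (so that $\Delta(R)=r-1$ and the partition of each $H^{(b)}$ matches the super-regular $R$-partition of $K^{(b)}$), I would embed $H=\bigsqcup_b H^{(b)}$ into $G$.

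The main obstacle is ensuring that the resulting family of $F$-copies covers all but at most $f-1$ vertices of $G$, as the $F$-factor definition demands. Two sources of leftover arise: the exceptional set~$V_0$ and the integer-rounding slack in the $x_\sigma$. For the first, the minimum degree condition $\delta(G)\ge(\tfrac{r-1}{r}+\gamma)n$ lets one show, via a short pigeon-hole argument, that each $v\in V_0$ has $\Omega(n')$ neighbours in every cluster of $\Omega(\gamma t)$ many $K_r$-blowups; by distributing the exceptional vertices among such ``good'' blowups and adding them to suitable clusters (after which the partition stays $\kappa$-balanced and super-regular for some $\kappa$), the set $V_0$ is absorbed into the domain of~$H$. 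For the second, the divisibility constraint $(r-1)!\,f\mid n''$ needed for an exact integer solution to the per-blowup LP is secured by moving up to $(r-1)!\,f$ additional vertices per cluster out of each blowup before the Blow-up Lemma is applied; taking $\xi$ sufficiently small relative to $\gamma/(ar)$ guarantees that after these adjustments the image-restriction hypotheses (d) and (e) of Theorem~\ref{thm:Blow-up:arr:full} are met and that the total leftover stays below~$f$.
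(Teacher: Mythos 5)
Your high-level plan (regularity lemma, Hajnal--Szemer\'edi on the reduced graph, super-regularise the $K_r$-blowups, then apply the arrangeable Blow-up Lemma clique by clique) matches the paper's, and your observation that a disjoint union of $a$-arrangeable graphs is $a$-arrangeable is correct. But two substantive issues remain.

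First, the rounding bookkeeping in your final paragraph does not close. You propose to fix the divisibility constraint $(r-1)!\,f \mid n''$ by ``moving up to $(r-1)!\,f$ additional vertices per cluster out of each blowup,'' and then assert that ``the total leftover stays below $f$.'' That cannot be right: there are $t\cdot r$ clusters with $t=\Theta(1/\eps)$, so if you remove up to $(r-1)!\,f$ vertices from each of them and do not re-house them, the leftover is on the order of $(r-1)!\,f\cdot r/\eps \gg f-1$, which violates the definition of an $F$-factor. To make this line of attack work you would need to shuttle the excess vertices into a single designated blowup and embed an appropriate remainder graph there; as written, the removed vertices are simply lost.

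Second, the paper avoids both the LP over orientations and the $V_0$-absorption gymnastics by importing a ready-made structural result, Lemma~\ref{lem:EquiSuperPartition} (Lemma~6 of~\cite{BoeSchTar09}). That lemma produces a super-regular $K_r^{(k)}$-partition of \emph{all of} $V(G)$ (so $V_0$ is gone) and, crucially, it lets you \emph{prescribe} the cluster sizes $n_{i,j}$, within $\pm\xi_0 n$ of a base $r$-equitable target $(m_{i,j})$. This flexibility flips the problem around: instead of building $H^{(b)}$ to match rigid cluster sizes via an LP with divisibility constraints, one first slices $H$ into parts $H_1,\dots,H_k$, where $H_i$ for $i<k$ is $\ell_i r$ copies of $F$ with a balanced colouring of class size $n_{i,j}:=\ell_i|F|$, $H_k$ is whatever is left (coloured as equitably as possible), and then invokes the lemma to deliver clusters of exactly those sizes; the condition $|n_{i,j}-m_{i,j}|\le|F|\le\xi_0 n$ is all that is needed. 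This is both cleaner and the only place where the smallness of $\xi$ (hence of $|F|$) relative to $\gamma$ is actually used. Your mixture-of-orientations argument is an interesting alternative balancing device, but once you have flexible cluster sizes it is unnecessary, and without them it runs into the rounding problem above.
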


Our second application is a universality result for random graphs $\Gnp$
with constant~$p$ (that is, a graph on vertex set~$[n]$ for which every
$e\in\binom{[n]}{2}$ is inserted as an edge independently with
probability~$p$).  A graph~$G$ is called \emph{universal} for a
family~$\mathcal{H}$ of graphs if~$G$ contains a copy of each graph
in~$\mathcal{H}$ as a subgraph. 
For instance, graphs that are universal for the family of forests, of
planar graphs and of bounded degree graphs have been investigated
(see~\cite{AloCap} and the references therein).

Here we consider the class
\[\mathcal{H}_{n,a,\xi} := \{H: \text{$|H|=n$, $H$ is $a$-arrangeable, $\Delta(H)\le \xi n/\log n$}\}\]
of arrangeable graphs whose maximum degree is allowed to grow with~$n$.
Using Theorem~\ref{thm:Blow-up:arr:ext}, we show that with high probability
$\Gnp$ contains a copy of each graph in $\mathcal{H}_{n,a,\xi}$ (see Section~\ref{sec:Applications}).
Universality problems for bounded degree graphs in
(subgraphs of) random graphs with constant~$p$ were also considered in~\cite{HuaLeeSud}. Another result for subgraphs of potentially growing degree and $p$ tending to 0 can be found in~\cite{Rio00}. Theorem~2.1 of~\cite{Rio00} implies that any $a$-arrangeable graph of maximum degree $o(n^{1/4})$ can be embedded into $\Gnp$ with $p>0$ constant with high probability.

\begin{thm}\label{thm:GnpUniversal}
For all constants $a$, $p>0$ there exists $\xi>0$ such that $\Gnp$ is universal for 
$\mathcal{H}_{n,a,\xi}$
with high probability.
\end{thm}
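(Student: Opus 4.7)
The plan is to apply the extended Arrangeable Blow-up Lemma, Theorem~\ref{thm:Blow-up:arr:ext}, with $G=\Gnp$. I first fix constants: set $\delta:=p/2$, $\iota:=p^{a+1}/4$, $\kappa:=2$, select a constant $r=r(a)\ge a+1$, and invoke Theorem~\ref{thm:Blow-up:arr:ext} with inputs $(a,\Delta_R{=}r,\kappa,\iota,\delta)$ to extract the regularity parameter~$\eps$ and the target constant~$\xi$.

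For each $H\in\mathcal{H}_{n,a,\xi}$ I need a $\kappa$-balanced $K_r$-partition of~$H$ together with a corresponding partition of~$V(\Gnp)$ meeting the hypotheses of Theorem~\ref{thm:Blow-up:arr:ext}. The first ingredient exists because every $a$-arrangeable graph has back-degree at most~$a$ in its arrangeable ordering (a short induction along the ordering verifies this: if $x_{j_1}<\dots<x_{j_k}<x_i$ are the back-neighbours of~$x_i$, the condition at~$x_{j_k}$ forces $|N(x_i)\cap L_{j_k}|\le a$), whence $H$ is $a$-degenerate and thus $(a+1)$-chromatic. Combined with $\Delta(H)\le\xi n/\log n=o(n)$, an equitable-colouring theorem of Hajnal--Szemer\'edi type, in the refined form available for degenerate graphs with sub-linear maximum degree, yields a proper $r$-colouring $V(H)=X_1\dcup\dots\dcup X_r$ with $\big||X_i|-n/r\big|\le 1$.

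On the host side I pick $V(\Gnp)=V_1\dcup\dots\dcup V_r$ uniformly at random subject to $|V_i|=|X_i|$. Conditional on~$\Gnp$, which is itself a high-probability event, two standard concentration arguments give with high probability over the partition: (P1)~each pair $(V_i,V_j)$ is $(\eps,\delta)$-super-regular, using that w.h.p.\ $d(A,B)=p\pm\eps/2$ for all disjoint $A,B\subseteq V$ of linear size and that every vertex has roughly $p|V_j|$ neighbours in every~$V_j$ (Chernoff plus a union bound over subsets of size at least $\eps n$); and (P2)~for every $i\in[r]$ and every tuple $(u_1,\dots,u_{a+1})$ from $V\setminus V_i$, the common neighbourhood $\bigcap_j N_G(u_j)$ has size $(p^{a+1}+o(1))n$ (Chernoff plus the trivial $n^{a+1}$ union bound) and therefore, by a hypergeometric tail bound for its intersection with the random class~$V_i$ followed by one further polynomial union bound, meets~$V_i$ in at least~$\iota|V_i|$ vertices. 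Probabilistic existence then supplies, for every~$H$, a partition fulfilling the hypotheses of Theorem~\ref{thm:Blow-up:arr:ext}, which embeds~$H$ into~$\Gnp$.

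The principal obstacle is the colouring step: Hajnal--Szemer\'edi directly furnishes an equitable $(\Delta+1)$-colouring, but $\Delta(H)$ is permitted to grow with~$n$, whereas our invocation of Theorem~\ref{thm:Blow-up:arr:ext} requires $r$ to remain bounded in terms of~$a$ alone (so that $\eps$ and~$\xi$ depend only on~$a$ and~$p$, independently of~$H$). Establishing or citing a balanced proper colouring of $a$-degenerate graphs with $o(n)$ maximum degree using only $O(a)$ colours is the technical heart of the argument; the rest is a routine combination of Chernoff-type concentration with the extended Blow-up Lemma.
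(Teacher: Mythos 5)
Your overall route is the paper's: fix constants depending only on $a$ and $p$, take a complete reduced graph on a constant number $r=r(a)$ of vertices, check that a balanced partition of $V(\Gnp)$ is w.h.p.\ $(\eps,p/2)$-super-regular and satisfies the $(a+1)$-tuple condition via Chernoff and polynomial union bounds, split each $H\in\mathcal{H}_{n,a,\xi}$ into $r$ balanced independent sets, and feed everything into Theorem~\ref{thm:Blow-up:arr:ext}. The observation that $a$-arrangeability gives back-degree at most $a$, hence $a$-degeneracy, is also how the paper thinks about it.

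The step you flag as unresolved is, however, a genuine gap in the proposal as written, and it is exactly the point where the paper inserts an external result: Theorem~\ref{thm:DegenerateEquit} (Theorem~4 of Kostochka, Nakprasit, and Pemmaraju~\cite{KosNakPem05}) states that every $a$-degenerate graph with $\Delta(H)\le n/15$ has a balanced $k$-colouring for every $k\ge 16a$. Choosing $r=16a$ (and $\kappa=1$), this is precisely the balanced $O(a)$-colouring you need, and the hypothesis $\Delta(H)\le \xi n/\log n\le n/15$ is comfortably met; no ``Hajnal--Szemer\'edi with $\Delta+1$ colours'' is invoked, since $\Delta(H)$ may grow. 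Two cautions about your phrasing: some maximum-degree restriction in such a theorem is unavoidable (a star is $1$-degenerate but has no balanced colouring with a bounded number of colours), so your appeal to degeneracy plus sublinear degree is the right shape of hypothesis; and you should not hope for $r$ as small as $a+1$ --- $a$-degenerate graphs are $(a+1)$-colourable but not equitably so in general, and the known bound is $16a$ colours. Since you only require \emph{some} constant $r(a)$, this does not damage your architecture: citing Theorem~\ref{thm:DegenerateEquit} with $r=16a$ closes the gap and the remaining concentration arguments go through as you describe (the paper even simplifies your host-side step by fixing a balanced partition of $[n]$ before exposing $\Gnp$, rather than randomising the partition conditionally on the graph).
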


In addition, we use Theorem~\ref{thm:Blow-up:arr:full} in~\cite{ArrBolKom}
to establish an analogue of the Bandwidth Theorem from~\cite{BoeSchTar09} for
arrangeable graphs. More precisely, we prove the following result.

\begin{thm}[Arrangeable Bandwidth Theorem~\cite{ArrBolKom}]
\label{thm:BolKom:arr}
For all $r,a\in\N$ and $\gamma>0$, there exist constants $\beta>0$ and $n_0\in\N$ such that for every $n\ge n_0$ the following holds. 
If $H$ is an $r$-chromatic, $a$-arrangeable graph on $n$ vertices with $\Delta(H)\le \sqrt{n}/\log n$ and bandwidth at most $\beta n$ and if $G$ is a graph on $n$ vertices with minimum degree $\delta(G)\ge \left(\tfrac{r-1}r+\gamma\right)n$, then there exists an embedding of $H$ into $G$.
\end{thm}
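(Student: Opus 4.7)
The plan is to follow the blueprint of the Bandwidth Theorem of Böttcher, Schacht, and Taraz~\cite{BoeSchTar09}, invoking Theorem~\ref{thm:Blow-up:arr:full} in place of the classical Blow-up Lemma at the embedding step. The strategy has four parts: (i) apply the Regularity Lemma to $G$; (ii) locate a spanning backbone in the reduced graph using the minimum-degree hypothesis; (iii) exploit $\bw(H)\le\beta n$ and $\chi(H)\le r$ to partition $V(H)$ compatibly with this backbone; (iv) apply the Arrangeable Blow-up Lemma to produce the embedding.

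First, I would apply Szemerédi's Regularity Lemma to $G$ to obtain a partition $V(G)=V_0\dcup V_1\dcup\dots\dcup V_t$ with bounded-size reduced graph $R_0$ satisfying $\delta(R_0)\ge(\tfrac{r-1}{r}+\tfrac{\gamma}{2})t$. Then the \emph{Lemma for $G$} from~\cite{BoeSchTar09} yields a spanning backbone $R\subseteq R_0$ consisting of a sequence of $K_r$'s linked by additional edges that permit ``rotation'' between consecutive blocks; the maximum degree $\Delta(R)\le\Delta_R$ depends only on $r$. After redistributing $V_0$ and removing a small linear number of vertices per cluster if necessary, the pairs $(V_i,V_j)$ with $ij\in E(R)$ become $(\eps',\delta')$-super-regular.

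Next, applying the \emph{Lemma for $H$} from~\cite{BoeSchTar09}, I would use $\bw(H)\le\beta n$ and $\chi(H)\le r$ to build an $R$-partition $V(H)=X_1\dcup\dots\dcup X_t$ with $|X_i|=|V_i|$ for all $i$. Only vertices lying within $O(\beta n)$ of a $K_r$-block boundary need to be ``rotated'' and hence require image restrictions, and these vertices fall into a bounded number $C=C(r)$ of types per cluster; each type prescribes that the image lie in a common neighbourhood of a constant-sized set of already-fixed clusters from the adjacent block, which has linear size in $V_i$ by super-regularity and the boundedness of $\Delta_R$. These common neighbourhoods serve as the families $\cI_i=\{I_{i,1},\dots,I_{i,C}\}$ needed in Theorem~\ref{thm:Blow-up:arr:full}.

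The last step is to apply Theorem~\ref{thm:Blow-up:arr:full} with parameters $C$, $a$, $\Delta_R$, $\kappa$, $\delta'$, and $c$ obtained above: $H$ is $a$-arrangeable with $\Delta(H)\le\sqrt{n}/\log n$, the $R$-partition is essentially balanced, the corresponding super-regular structure is in place, and every image-restricted vertex carries its restriction from $\cI_i$. The main obstacle I foresee is hidden in step~(iii): the side condition $|N_H(S_i)\cap X_j|\le\alpha n_j$ of Theorem~\ref{thm:Blow-up:arr:full} is delicate once $\Delta(H)$ is allowed to grow like $\sqrt{n}/\log n$, since naively declaring the entire $O(\beta n)$-wide buffer to be image-restricted would already force this quantity to be super-linear. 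One must therefore be economical in the choice of $S_i$ --- only the genuinely essential vertices whose placement forces that of others should be declared image-restricted --- and set up the bandwidth partition so that the essential set in each cluster is of size $o(n/\Delta(H))$ while still fitting into the $C(r)$ image-restriction classes. Carrying this out cleanly, so that everything interfaces correctly with Theorem~\ref{thm:Blow-up:arr:full}, is the principal technical content of the argument compared with the bounded-degree case.
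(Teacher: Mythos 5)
The paper does not itself prove Theorem~\ref{thm:BolKom:arr}; it is cited from~\cite{ArrBolKom}, and the paper only reports the high-level strategy, namely that the BoeSchTar09 blueprint is followed with Theorem~\ref{thm:Blow-up:arr:full} substituted for the classical Blow-up Lemma. Your outline (Regularity Lemma for $G$, backbone/Lemma-for-$G$, compatible $R$-partition of $H$ via bandwidth, then the Arrangeable Blow-up Lemma) therefore matches the intended route, and you have correctly located the point where the bounded-degree argument breaks down: the side condition $|N_H(S_i)\cap X_j|\le\alpha n_j$ in item~(d) of Theorem~\ref{thm:Blow-up:arr:full}. If one naively declares the full $O(\beta n)$-wide buffer around each $K_r$-block boundary to be image-restricted, a single cluster could carry $\Theta(\beta n)$ restricted vertices of degree up to $\sqrt{n}/\log n$, yielding $|N_H(S_i)\cap X_j|$ of order $\beta n^{3/2}/\log n \gg n_j$, which is fatal.

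However, having named the obstacle you explicitly leave it unresolved (``carrying this out cleanly \dots\ is the principal technical content''), so the proposal is an outline rather than a proof. Two things are missing concretely. First, you must exhibit a choice of $S_i$ of size $o(n_j/\Delta(H))$ that nonetheless suffices to anchor the transitions between consecutive $K_r$-blocks and still falls into a bounded number $C(r)$ of restriction classes per cluster; merely asserting that ``only the genuinely essential vertices'' are restricted gives no mechanism for picking them out of the $\Theta(\beta n)$-wide buffer, nor any reason they remain few. Second, there is a compatibility issue you do not address: the bandwidth ordering of $H$ (used to cut $H$ into pieces aligned with the backbone) and an $a$-arrangeable ordering of $H$ (needed to invoke Theorem~\ref{thm:Blow-up:arr:full}, including the stable-ending re-ordering of Lemma~\ref{lem:arr:StableEnding}) are different orderings, and one must argue that the $R$-partition built from the bandwidth ordering coexists with an arrangeable ordering whose important-vertex structure does not collide with the chosen $S_i$. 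Until both of these are pinned down, the interface with item~(d) of Theorem~\ref{thm:Blow-up:arr:full} is not established, and the argument does not close.
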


As we also show there, this implies for example that every graph~$G$ with minimum
degree at least $(\frac34+\gamma)n$ contains \emph{almost every} planar graph~$H$
on~$n$ vertices, provided that $\gamma>0$. In addition it implies that
almost every planar graph~$H$ has Ramsey number $R(H)\le12|H|$.

Finally, another application of Theorem~\ref{thm:Blow-up:arr:full} appears
in~\cite{ASW12}.  In that paper Allen, Skokan, and W\"urfl prove the
following result, closing a gap left in the analysis of large planar
subgraphs of dense graphs by K\"uhn, Osthus, and Taraz~\cite{KueOstTar} and
K\"uhn and Osthus~\cite{KueOst_triang}.
% This is best possible for the given degree condition.

\begin{thm}[Allen, Skokan, W\"urfl~\cite{ASW12}]
\label{thm:DensePlanarSubgraphs}
  For every $\gamma\in (0,1/2)$ there exists $n_\gamma$ such that every
  graph on $n\ge n_\gamma$ vertices with minimum degree at least $\gamma n$
  contains a planar subgraph with $2n-4k$ edges, where $k$ is the unique
  integer such that $k\le 1/(2\gamma)<k+1$.
\end{thm}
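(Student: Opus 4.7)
The approach is to combine Szemer\'edi's Regularity Lemma with the Arrangeable Blow-up Lemma (Theorem~\ref{thm:Blow-up:arr:full}), following the framework of K\"uhn, Osthus, and Taraz~\cite{KueOstTar}. Since planar graphs are $10$-arrangeable, Theorem~\ref{thm:Blow-up:arr:full} can embed any planar template $H$ with $\Delta(H) \le \sqrt{n}/\log n$; the classical Blow-up Lemma of Theorem~\ref{thm:Blow-up:KSS} was limiting here because the natural planar templates that achieve the edge count $2n-4k$ have a few vertices of unbounded degree. This growing-degree obstacle is precisely what the arrangeable version removes.

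First I would apply the Regularity Lemma to~$G$ with density parameter just below~$\gamma$ and regularity parameter $\eps$ supplied by Theorem~\ref{thm:Blow-up:arr:full}, yielding a reduced graph~$R$ of constant order~$r$ with $\delta(R) \ge (\gamma - o(1))r$. Next, I would carry out a structural analysis of~$R$ to determine the appropriate planar template. The parameter $k = \lfloor 1/(2\gamma)\rfloor$ reflects the worst case in which~$R$ is essentially complete $(2k{+}1)$-partite with one distinguished small part, so that the densest plane subgraph one can route through the cluster structure is a $2k$-angulation-type planar graph with exactly $2n - 4k$ edges. The templates arising in all other cases are planar graphs~$H$ of at least $2n-4k$ edges admitting a $\kappa$-balanced $R$-partition compatible with~$R$, with $\Delta(H)$ bounded (in fact $O(1)$ in the main case).

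With the template~$H$ in hand, the embedding step becomes essentially a direct application of Theorem~\ref{thm:Blow-up:arr:full}. The super-regular $R$-partition of~$G$ is obtained from the $\eps$-regular partition in the standard way, by removing a small set of atypical vertices from each cluster. The remaining $\eps$-fraction of vertices of~$G$ lying outside all clusters is accommodated using the image restriction mechanism: for each such exceptional vertex $v$ I would pick a low-degree vertex $x \in V(H)$, designate $x$ as image-restricted to~$\{v\}$ (or to a tiny set around $v$), and assign image restrictions to the constantly many $H$-neighbours of $x$ inside appropriate clusters of $N_G(v)$. Since $|S_i|$ and $|N_H(S_i) \cap X_j|$ stay linear, and since $H$ is $10$-arrangeable with $\Delta(H) \le \sqrt{n}/\log n$, every hypothesis of Theorem~\ref{thm:Blow-up:arr:full} is met, yielding the planar subgraph with $2n-4k$ edges.

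The principal obstacle is the structural step: proving that for every reduced graph~$R$ of minimum degree at least $(\gamma - o(1))r$ there actually exists an $R$-partitioned planar template on~$n$ vertices with at least $2n-4k$ edges and controlled maximum degree. This requires a careful case analysis matching the extremal multipartite constructions $K_{n_1,\dots,n_{2k+1}}$, and the sharpness of the constant $4k$ in the deficit is what forces the delicate combinatorial arguments inherited from~\cite{KueOstTar} and K\"uhn--Osthus~\cite{KueOst_triang}. The role of Theorem~\ref{thm:Blow-up:arr:full} is precisely to absorb the few template vertices of slightly superconstant degree that those earlier analyses could not handle, thereby closing the gap mentioned in the statement.
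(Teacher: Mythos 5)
The paper does not prove Theorem~\ref{thm:DensePlanarSubgraphs}; it is quoted from the separate paper by Allen, Skokan, and W\"urfl~\cite{ASW12}, which the present paper only names as an application of Theorem~\ref{thm:Blow-up:arr:full}. So there is no internal proof against which your sketch can actually be checked, and a genuine comparison would require consulting~\cite{ASW12}.

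That said, the broad strokes of your sketch are consistent with what the paper says about the result (it closes a gap in the K\"uhn--Osthus--Taraz and K\"uhn--Osthus analyses, planar graphs are $10$-arrangeable by Kierstead--Trotter, and Theorem~\ref{thm:Blow-up:arr:full} absorbs the template vertices of growing degree). However, the exceptional-vertex mechanism you describe is not compatible with the hypotheses of Theorem~\ref{thm:Blow-up:arr:full}. Image restrictions there are subsets $I_{i,j}\subseteq V_i$ of size at least $cn_i$, so restricting a template vertex's image to a singleton $\{v\}$, let alone one lying outside every cluster, is simply not permitted. If you instead pre-embed some template vertices onto the exceptional vertices and then image-restrict their $H$-neighbours to $N_G(v)\cap V_j$, you would create up to $\Theta(\eps n)$ distinct image restrictions in each cluster, whereas Theorem~\ref{thm:Blow-up:arr:full} allows only a \emph{constant} number $C$ of distinct restrictions per cluster --- and Section~\ref{sec:opt} of the paper (Proposition~\ref{prop:UpperBound:landing}) shows that this constant bound is essentially unavoidable once $\Delta(H)$ is allowed to grow. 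So the exceptional-vertex bookkeeping would need to be organised quite differently from what you propose, for instance by redistributing the exceptional vertices back into clusters before invoking the Blow-up Lemma, or by grouping them so that only constantly many distinct restrictions arise.
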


\paragraph{Methods.}

To prove the full version of our Arrangeable Blow-up Lemma
(Theorem~\ref{thm:Blow-up:arr:full}), we proceed in two steps. Firstly, we
use a random greedy algorithm to embed an almost spanning
subgraph~$H'$ of the target graph~$H$ into the host graph~$G$ (proving
Theorem~\ref{thm:Blow-up:arr:almost-span} along the way). Secondly, we
complete the embedding by finding matchings in suitable auxiliary graphs
which concern the remaining vertices in $V(H)\setminus V(H')$ and the
unused vertices~$V^\Free$ of~$G$. The first step uses an approach similar to the one
of Koml\'os, S\'ark\"ozy, and Szemer\'edi~\cite{KSS97}. The second step
utilises ideas from R\"odl and Ruci\'nski's~\cite{RodlRuci99}.  Let us
briefly comment on the similarities and differences.

The use of a random greedy algorithm to prove the Blow-up Lemma appears
in~\cite{KSS97}. The idea is intuitive and simple: Order the vertices of
the target graph~$H'$ arbitrarily and consecutively embed them into the
host graph~$G$, in each step choosing a random image vertex $\phi(x)$ in
the set $A(x)$ of those vertices which are still possible as images for the
vertex~$x$ of~$H'$ we are currently embedding. If for some unembedded
vertex~$x$ the set~$A(x)$ gets too small, then call~$x$ critical and embed it
immediately, but still randomly in~$A(x)$.
Our random greedy algorithm proceeds similarly, with one main difference.
We cannot use an arbitrary order of the vertices of~$H'$, but have to use
one which respects the arrangeability bound. Consequently, we also cannot
embed critical vertices immediately -- each vertex has to be embedded when
it is its turn according to the given order. So we need a different
strategy for dealing with critical vertices. We solve this problem by
reserving a linear sized set of special vertices in~$G$ for the embedding of
critical vertices, which are very few.

The second step is more intricate. Similarly to the approach
in~\cite{RodlRuci99} we construct for each cluster $V_i$ an auxiliary
bipartite graph~$F_i$ with the classes $X_i\setminus V(H')$ and $V_i\cap V^\Free$
and an edge between $x\in V(H)$ and $v\in V(G)$ whenever embedding $x$
into $v$ is a permissible extension of the partial embedding from the first step.
Moreover, we guarantee that $V(H)\setminus V(H')$ is a stable
set. Then, clearly, if each~$F_i$ has a perfect matching, there is an
embedding of~$H$ into~$G$. So the question remains how to show that the
auxiliary graphs have perfect matchings. R\"odl and Ruci\'nski approach
this by showing that their auxiliary graphs are super-regular. We would
like to use a similar strategy, but there are two main
difficulties. Firstly, because the degrees in our auxiliary graphs vary
greatly, they cannot be super-regular. Hence we have to appropriately
adjust this notion to our setting, which results in a property that we
call weighted super-regular. Secondly, the proof that our auxiliary graphs
are weighted super-regular now has to proceed quite differently, because we
are dealing with the arrangeable graphs.

\paragraph{Structure.}

This paper is organised as follows. In Section~\ref{sec:notation} we
provide notation and some tools. In Section~\ref{sec:almost-spanning} we
show how to embed almost spanning arrangeable graphs, which will prove
Theorem~\ref{thm:Blow-up:arr:almost-span}. In Section~\ref{sec:spanning} we
extend this to become a spanning embedding, proving
Theorem~\ref{thm:Blow-up:arr:full}. At the end of
Section~\ref{sec:spanning}, we also outline how a similar argument gives
Theorem~\ref{thm:Blow-up:arr:ext}.  In Section~\ref{sec:opt} we explain why
the degree bounds in the new versions of the Blow-up Lemma and the requirements for the image
restrictions are essentially best possible.  In
Section~\ref{sec:Applications}, we give the proofs for our applications,
Theorem~\ref{thm:GrowingHfactors} and Theorem~\ref{thm:GnpUniversal}.

\section{Notation and preliminaries}
\label{sec:notation}

All logarithms are to base $e$. For a graph~$G$ we write $V(G)$ for its vertex set, $E(G)$ for its edge set
and denote the number of its vertices by $|G|$, its \emph{maximum degree}
by $\Delta(G)$ and its \emph{minimum degree} by $\delta(G)$. Let $u,v\in
V(G)$ and $U,W\subset V(G)$.  The \emph{neighbourhood} of~$u$ in~$G$ is
denoted by $N_G(u)$, the neighbourhood of~$u$ in the set $U$ by
$N_G(u,U):=N_G(u)\cap U$.  Similarly $N_G(U)=\bigcup_{x\in U} N_G(x)$ and
$N_G(U,W):=N_G(U)\cap W$.  The \emph{co-degree} of~$u$ and~$v$ is
$\deg_G(u,v)=|N_G(u)\cap N_G(v)|$.  We often omit the subscript~$G$.

For easier reading, we will often use $x$, $y$ or $z$ for vertices in
the graph~$H$ that we are embedding, and $u$, $v$, $w$ for vertices of the
host graph $G$.
% For any positive integer $n$ let $[n]\deff \{1,\dots,n\}$.

We shall also use  the following version of the Hajnal-Szemer\'edi Theorem~\cite{HajSze70}.

\begin{thm} \label{thm:HajnalSzemeredi}
Every graph $G$ on $n$ vertices and maximum degree $\Delta(G)$ can be partitioned into $\Delta(G)+1$ stable sets of size $\lfloor n/(\Delta(G)+1)\rfloor$ or $\lceil n/(\Delta(G)+1)\rceil$ each.
\end{thm}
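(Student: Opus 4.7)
The plan is to recognise the statement as a direct rephrasing of the equitable-colouring form of the Hajnal--Szemer\'edi theorem~\cite{HajSze70}, which asserts that every graph of maximum degree at most $\Delta$ admits a proper $(\Delta+1)$-colouring whose colour classes have sizes that pairwise differ by at most one. Since any way of splitting $n$ into $\Delta+1$ parts of almost equal size must use only the values $\lfloor n/(\Delta+1)\rfloor$ and $\lceil n/(\Delta+1)\rceil$ (in proportions determined by $n \bmod (\Delta+1)$), my first step is simply to invoke~\cite{HajSze70} and translate colour classes into stable sets.

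Should one instead prefer to derive the statement from the more commonly cited $K_r$-factor form of Hajnal--Szemer\'edi (namely, $\delta(G)\ge (r-1)n/r$ together with $r\mid n$ implies a $K_r$-factor), the plan is as follows. Write $n = q(\Delta+1)+s$ with $0\le s<\Delta+1$, pad $G$ with $(\Delta+1)-s$ isolated vertices to obtain a graph $G'$ on $n'=(q+1)(\Delta+1)$ vertices and with $\Delta(G')=\Delta(G)$, and then apply the $K_r$-factor form to the complement $\overline{G'}$ with $r = q+1$. The required minimum degree condition $\delta(\overline{G'})\ge (r-1)n'/r$ simplifies to $q\ge\Delta$, which holds once $n$ is of order at least $\Delta^2$. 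The resulting $K_r$-factor in $\overline{G'}$ is a partition of $V(G')$ into $\Delta+1$ stable sets of $G'$, each of size $q+1$; because the padded vertices are isolated they can be freely moved between these classes, and redistributing them so that one lands in each of some $(\Delta+1)-s$ distinct classes and then deleting them yields the claimed partition of $G$.

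The main obstacle in this second route is the regime $n \lesssim \Delta^2$, where the minimum-degree hypothesis on $\overline{G'}$ fails; in that range the equitable form of Hajnal--Szemer\'edi is genuinely stronger than the $K_r$-factor form, and one either invokes it directly or notes that when classes have size at most two a short direct argument suffices. For the purposes of this paper the cleanest path is to use the equitable-colouring formulation as a black box, which is how I would write the proof in one line.
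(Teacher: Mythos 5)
Your primary route is exactly what the paper does: Theorem~\ref{thm:HajnalSzemeredi} is stated as a black box with a citation to~\cite{HajSze70}, and no proof is given, so invoking the equitable-colouring form of Hajnal--Szemer\'edi directly is the intended reading. One small correction to your alternative derivation: after padding to $n'=(q+1)(\Delta+1)$ and setting $r=q+1$, one has $\delta(\overline{G'})\ge n'-1-\Delta=q(\Delta+1)=(r-1)n'/r$, so the minimum-degree hypothesis holds with equality for \emph{every} $n$ and $\Delta$ (not only when $q\ge\Delta$), and the second route therefore needs no separate treatment of the regime $n\lesssim\Delta^2$.
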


\subsection{Arrangeability}

Let~$H$ be a graph and $(x_1,x_2,\dots,x_n)$ be an $a$-arrangeable ordering
of its vertices. We write $x_i \prec x_j$ if and only if $i<j$ and say that
$x_i$ is \emph{left} of~$x_j$ and $x_j$ is \emph{right} of $x_i$. We write
$N^-(x)\deff \{y\in N_H(x) : y \prec x\}$ and $N^+(x)\deff \{y\in N_H(x): x
\prec y\}$ and call these the set of \emph{predecessors} or the set of
\emph{successors} of $x$ respectively. Predecessors and successors of
vertex sets and in vertex sets are defined accordingly. Then $|N^+(x)| \le
\Delta(H)$ for all $x\in V(H)$ and the definition of arrangeability says
that $N^-\big(N^+(x_i)\big)\cap\{x_1,\dots,x_i\}$ is of size at most~$a$
for each $i\in[n]$. Moreover, it follows that all $x\in V(H)$ satisfy
$|N^-(x)| \le a$ and

\begin{align}  \label{eq:degree-sum}
  e(H) = \sum_{x\in V(H)} |N^+(x)| = \sum_{x\in V(H)} |N^-(x)| \le a n\, .
\end{align}

In the proof of our main theorem, it will turn out to be desirable to have a vertex ordering which is not only arrangeable, but also has the property that its final $\mu n$ vertices form a stable set. More precisely we require the following properties.

\begin{defi}[stable ending] \label{def:StableEnding}
Let $\mu>0$ and let $H = (X_1\dcup\dots\dcup X_r,E)$ be an $r$-partite, $a$-arrangeable graph with partition classes of order $|X_i|=n_i$ with $\sum_{i\in[r]}n_i=n$. Let $(v_1,\dots,v_{n})$ be an $a$-arrangeable ordering of $H$. We say that the ordering has a \emph{stable ending of order} $\mu n$ if $W=\{v_{(1-\mu)n +1},\dots,v_{n}\}$ has the following properties
\begin{enumerate}
\item $|W\cap X_i| = \mu n_i$ for every $i\in[r]$,
\item $H[W]$ is a stable set.
\end{enumerate}
\end{defi}
 
The next lemma shows that an arrangeable order of a graph can be reordered to have a stable ending while only slightly increasing the arrangeability bound.

\begin{lem} \label{lem:arr:StableEnding}
Let $a,\Delta_R,\kappa$ be integers and let $H$ be an $a$-arrangeable graph that has a $\kappa$-balanced $R$-partition with $\Delta(R)<\Delta_R$. Then $H$ has a $(5a^2\kappa\Delta_R)$-arrangeable ordering with stable ending of order $\mu n$, where $\mu = 1/(10a(\kappa\Delta_R)^2)$.
\end{lem}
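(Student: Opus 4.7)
The plan is to modify the given $a$-arrangeable ordering $\sigma$ of $H$ by picking a suitable set $W\subseteq V(H)$ of size $\mu n$, moving it to the end of $\sigma$, and verifying that the resulting ordering is $(5a^2\kappa\Delta_R)$-arrangeable. The set $W$ should satisfy \itm{i} $|W\cap X_i|=\mu n_i$ for each $i$, \itm{ii} $W$ is independent in $H$, and \itm{iii} every $w\in W$ has $|N_H(w)|\le D\deff 4a\kappa\Delta_R$. Property~\itm{iii} is what will keep the arrangeability of the new ordering bounded by a constant after $W$ is moved.

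I will first establish a local degree bound. For every $ij\in E(R)$, the graph $H[X_i\cup X_j]$ is $a$-arrangeable as a subgraph of $H$, so by~\eqref{eq:degree-sum} it has at most $a(n_i+n_j)\le 2a\kappa n_i$ edges by $\kappa$-balance. Summing over the at most $\Delta_R-1$ neighbours of $i$ in $R$ gives $\sum_{v\in X_i}|N_H(v)|\le 2a\kappa\Delta_R n_i$, and so by Markov at least $n_i/2$ vertices $v\in X_i$ satisfy $|N_H(v)|\le D$; call this set $U_i$. I then construct $W$ greedily cluster by cluster in an arbitrary order $1,\dots,r$: in step $i$, pick any $W_i\subseteq U_i\setminus N_H(\bigcup_{j<i}W_j)$ with $|W_i|=\mu n_i$. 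Each previously chosen $W_j$ with $ij\in E(R)$ blocks at most $|W_j|\cdot D=\mu n_jD\le\mu\kappa n_iD$ vertices of $X_i$, so over the $<\Delta_R$ relevant~$j$ the total is at most $\Delta_R\mu\kappa n_iD=\tfrac{2}{5}n_i$ by the choice $\mu=1/(10a(\kappa\Delta_R)^2)$, leaving at least $n_i/2-2n_i/5=n_i/10\ge\mu n_i$ available vertices; the greedy step therefore succeeds.

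Next I define the new order by placing $V(H)\setminus W$ first in the order inherited from $\sigma$, followed by $W$ in an arbitrary order, and check arrangeability. For $x\in W$ the new right-neighbourhood of $x$ is contained in $W$ and hence empty by~\itm{ii}. For $x\in V(H)\setminus W$, a short computation gives that the new right-neighbourhood of $x$ equals $N^+_\sigma(x)\cup W^-(x)$, where $W^-(x)\deff N_H(x)\cap W\cap\Left_\sigma(x)\subseteq N^-_\sigma(x)$ and so $|W^-(x)|\le a$. Since the new left-neighbourhood of $x$ is contained in $\Left_\sigma(x)$, the $a$-arrangeability of $\sigma$ gives that the contribution of $N^+_\sigma(x)$ to the arrangeability count at $x$ is at most $a$, while~\itm{iii} bounds the contribution of $W^-(x)$ by $aD$, totalling at most $a+aD\le 5a^2\kappa\Delta_R$.

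The main obstacle lies in the first step: since the order $r$ of $R$ is unrestricted, clusters can be as small as $n/(r\kappa)$, and the global bound $e(H)\le an$ is not by itself enough to guarantee many low-degree vertices in a given $X_i$. The fix is to exploit that $a$-arrangeability is inherited by induced subgraphs and apply this to each bipartite piece $H[X_i\cup X_j]$; combined with $\kappa$-balance and $\Delta(R)<\Delta_R$, this yields a genuinely local bound on $\sum_{v\in X_i}|N_H(v)|$. Once this is in hand, the remaining greedy argument is routine, and $\mu$ is calibrated precisely so that the two types of loss---high-degree discards of size at most $n_i/2$ and neighbours of earlier $W_j$'s of size at most $2n_i/5$---together leave more than $\mu n_i$ choices in each step.
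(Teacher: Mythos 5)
Your proof is correct and follows essentially the same plan as the paper: derive a local degree bound in each $X_i$ by observing that $a$-arrangeability is inherited by the induced subgraph on $X_i$ and its $R$-neighbouring clusters, greedily select a stable set $W$ of low-degree vertices with $|W\cap X_i|=\mu n_i$, move $W$ to the end of the ordering, and verify the new arrangeability bound by splitting the right-neighbourhood of each $x\notin W$ into $N^+_\sigma(x)$ and the (at most $a$) left-neighbours of $x$ that lie in $W$. The only noteworthy deviation is in the greedy step: the paper extracts $W$ one vertex at a time, always from the currently least-filled cluster, and rules out failure via a separate contradiction argument, whereas your cluster-by-cluster selection with a direct count of the blocked vertices (at most $\tfrac{2}{5}n_i$ out of the $\geq n_i/2$ low-degree candidates) is a cleaner route to the same conclusion.
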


\begin{proof}
Let $X=X_1\dcup\dots\dcup X_r$ be a $\kappa$-balanced $R$-partition of $H$ with $|X_i|=n_i$. Further let $(x_1,\dots,x_{n})$ be any $a$-arrangeable ordering of $H$. In a first step we will find a stable set $W\subseteq X$ with $|W\cap X_i|=\mu n_i$ for $\mu=1/(10a(\kappa\Delta_R)^2)$. Note that for every $i\in[r]$ a vertex $x\in X_i$ has only neighbours in sets $X_j$ with $ij\in E(R)$. Further $H[X_i\cup \{X_j:ij\in E(R)\}]$ has at most $\kappa\Delta_Rn_i$ vertices and is $a$-arrangeable. Therefore
\begin{align*}
 \sum_{w\in X_i} \deg(w) \leByRef{eq:degree-sum} 2a\kappa\Delta_Rn_i.
\end{align*}
It follows that at least half the vertices $w\in X_i$ have $\deg(w)\le 4a\kappa\Delta_R$. Let $W_i'$ be the set of these vertices and $m'_i$ be their number.

Now we greedily find a stable set $W\subseteq \bigcup_{i\in[r]}W'_i$ as follows. In the beginning we set $W=\emptyset$. Then we iteratively select an $i\in[r]$ with 
\begin{equation}\label{eq:arr:choice}
  |X_i\cap W|/n_i = \min_{j\in [r]} |X_j\cap W|/n_j\,,
\end{equation}
choose an arbitrary vertex $x\in W_i'$, move it to $W$ and delete $x$ from $W_i'$ and $N_H(x)$ from $W_j'$ for all $j\in[r]$. We perform this operation until we have found a stable set $W$ with $|W\cap X_i|=\mu n_i$ for all $i\in[r]$ or we attempt to choose a vertex from an empty set~$W_{i^*}'$.
 
So assume that, at some point, we try to choose a vertex from an empty set $W_{i^*}'$. For each $i\in[r]$ let $m_i$ be the number of vertices chosen from $X_i$ (and moved to~$W$) so far. 
Moreover, let $i\in[r]$ be such that $m_i<\mu n_i$ and consider the last step when a vertex from $X_i$ was chosen. Before this step, $m_i-1$ vertices of~$X_i$ and at most $m_{i^*}$ vertices of~$X_{i^*}$ have been chosen. By~\eqref{eq:arr:choice} we thus have $(m_i-1)/n_i\le m_{i^*}/n_{i^*}$, which implies $m_i\le\kappa m_{i^*}+1$ because $n_i\le\kappa n_{i^*}$.
Hence, since $W'_{i^*}$ became empty, we have
\begin{align*}
 n_{i^*}/2 \le m_{i^*}' &\le m_{i^*} + \sum_{\{i^*,i\}\in E(R)}m_i 4a\kappa\Delta_R\\
 &\le m_{i^*} + (\Delta_R-1)(\kappa m_{i^*}+1) 4a\kappa\Delta_R \le m_{i^*}\, 5a(\kappa\Delta_R)^2\,.
\end{align*}
Thus $m_{i^*}\ge n_{i^*}/(10a(\kappa\Delta_R)^2)$. 
Since we then try to choose from $W'_{i^*}$ we must have $m_{i^*}/n_{i^*}\le m_i/n_i$ by~\eqref{eq:arr:choice}, which implies $m_i\ge n_i/(10a(\kappa\Delta_R)^2)=\mu n_i$. Hence we indeed find a stable set $W$ with $|W\cap X_i|=\mu n_i$ for all $i\in[r]$. 

Given this stable set $W$ we define a new ordering in which these vertices are moved to the end in order to form the stable ending. To make this more precise let $(x_1',\dots,x_n')$ be the vertex ordering obtained from $(x_1,\dots,x_n)$ by moving all vertices of $W$ to the end (in any order). It remains to prove that $(x_1',\dots,x_n')$ is $(5a^2\kappa\Delta_R)$-arrangeable. Let $L_i'=\{x_1',\dots,x_i'\}$ and $R_i'=\{x_{i+1}',\dots,x_n'\}$ be the vertices left and right of $x'_i$ in the new ordering. We have to show that
\begin{align*} %\label{eq:arr:5a} 
\big|N\big(N(x_i',R_i'),L_i'\big)\big|\le 5a^2\kappa\Delta_R
\end{align*}
for all $i\in[n]$. This is obvious for the vertices in $W$ because they are now at the end and $W$ is stable. For $x_i\notin W$ let $N'_i=N\big(N(x_i,R'_i),L'_i\big)$ be the set of predecessors of successors of $x_i$ in the new ordering. $N_i$ is defined analogously for the original ordering. Then all vertices in $N_i'\setminus N_i$ are neighbours of predecessors $y$ of $x_i$ in the original ordering with $y\in W$. There are at most $a$ such left-neighbours of $x_i$ and each of these has at most $4a\kappa\Delta_R$ neighbours by definition of $W$. Hence
\[
 |N_i'| \le|N_i| +a\cdot 4a\kappa\Delta_R\le a + 4a^2\kappa\Delta_R \le 5a^2\kappa\Delta_R\,.
\]
\end{proof}

\subsection{Weighted regularity} \label{sec:WeightedRegularity}

In our proof we shall make use of a weighted version of $\eps$-regularity. More precisely, we will have to deal with a bipartite graph whose vertices have very different degrees. The idea is then to give each vertex a weight antiproportional to its degree and then say that the graph is weighted regular if the following holds.

\begin{defi}[Weighted regular pairs] \label{def:weighted:regularity}
Let $\eps>0$ and consider a bipartite graph $G=(A\dcup B,E)$ with a weight function $\omega:A\to [0,1]$. For $A'\subseteq A$ , $B'\subseteq B$ we define the \emph{weighted density}  
\[
   d_\omega(A',B')\deff\frac{\sum_{x\in A'}\omega(x)|N(x,B')|}{|A'|\cdot|B'|}\, .
\]
We say that the pair $(A,B)$ with weight function $\omega$ is \emph{weighted $\eps$-regular} (with respect to $\omega$) if for any $A'\subseteq A$ with $|A'|\ge \eps|A|$ and any $B'\subseteq B$ with $|B'|\ge \eps |B|$ we have
\[
  |d_\omega(A,B) - d_\omega(A',B')| \le \eps\, .
\]
\end{defi}

Many results for $\eps$-regular pairs carry over to weighted $\eps$-regular pairs. For one, subpairs of weighted regular pairs are weighted regular.

\begin{prop}
\label{prop:regular:sub}
Let $G=(A\dcup B,E)$ with weight function $\omega:A\to [0,1]$ be weighted $\eps$-regular. Further let $A'\subseteq A$, $B'\subseteq B$ with $|A'|\ge \gamma|A|$ and $|B'|\ge\gamma |B|$ for some $\gamma\ge\eps$ and set $\eps'\deff\max\{2\eps,\eps/\gamma\}$. Then $(A'\dcup B', E\cap A'\times B')$ is a weighted $\eps'$-regular pair with respect to the restricted weight function $\omega':A'\to [0,1]$, $\omega'(x)=\omega(x)$.
\end{prop}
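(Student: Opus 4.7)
The plan is to unfold the definition of weighted $\eps'$-regularity for $(A',B')$ and reduce each of the two relevant density estimates to the assumed weighted $\eps$-regularity of the ambient pair $(A,B)$, then combine them by the triangle inequality. Concretely, fix test sets $A''\subseteq A'$ with $|A''|\ge\eps'|A'|$ and $B''\subseteq B'$ with $|B''|\ge\eps'|B'|$; the goal is to show
\[
\bigl|d_{\omega'}(A',B')-d_{\omega'}(A'',B'')\bigr|\le \eps'.
\]
Since $\omega'$ is just the restriction of $\omega$, the weighted densities $d_{\omega'}(A'',B'')$ and $d_{\omega}(A'',B'')$ coincide (and similarly for $(A',B')$), so it suffices to work entirely with $\omega$.

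The first key step is to verify that $A''$ and $B''$ are admissible test sets for the weighted $\eps$-regularity of $(A,B)$. This is the one small computation in the proof: from $|A''|\ge\eps'|A'|\ge\eps'\gamma|A|$ and the choice $\eps'\ge\eps/\gamma$ we obtain $|A''|\ge\eps|A|$, and analogously $|B''|\ge\eps|B|$. Similarly, from $\gamma\ge\eps$ we have $|A'|\ge\eps|A|$ and $|B'|\ge\eps|B|$, so $(A',B')$ itself is admissible as well.

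With admissibility established, the weighted $\eps$-regularity of $(A,B)$ (Definition~\ref{def:weighted:regularity}) gives
\[
\bigl|d_{\omega}(A,B)-d_{\omega}(A',B')\bigr|\le\eps\quad\text{and}\quad\bigl|d_{\omega}(A,B)-d_{\omega}(A'',B'')\bigr|\le\eps.
\]
Adding these via the triangle inequality yields $|d_{\omega}(A',B')-d_{\omega}(A'',B'')|\le 2\eps\le\eps'$, which is exactly the required estimate.

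I do not anticipate any real obstacle: the argument is the standard "regularity restricts to large subsets" observation, now carried out with the weighted density $d_\omega$ in place of the edge density. The only thing to be careful about is the bookkeeping in the first step, namely that the slack in the regularity parameter is inflated by the factor $1/\gamma$ (giving the term $\eps/\gamma$) whereas the triangle inequality contributes the factor $2$ (giving the term $2\eps$), and taking the maximum of these two is what forces the choice $\eps'=\max\{2\eps,\eps/\gamma\}$.
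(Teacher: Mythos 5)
Your proof is correct and follows essentially the same argument as the paper's: verify that $A'', B''$ (and $A', B'$) are admissible test sets for the ambient weighted $\eps$-regular pair, apply the regularity bound twice, and combine by the triangle inequality to get $2\eps\le\eps'$. Your closing remark also correctly explains why both terms $2\eps$ and $\eps/\gamma$ appear in the definition of $\eps'$.
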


\begin{proof}
Let $A'\subseteq A$ and $B'\subseteq B$ with $|A'|\ge \gamma |A|$, $|B'|\ge\gamma|B|$ be arbitrary. The definition of weighted $\eps$-regularity implies that $|d_\omega(A,B)-d_\omega(A',B')|\le \eps$. Moreover, $|d_\omega(A,B)-d_\omega(A^*,B^*)|\le \eps$ for all $A^*\subseteq A'$ and $B^*\subseteq B'$ with $|A^*|\ge (\eps/\gamma)|A'|\ge \eps|A|$, $|B^*|\ge (\eps/\gamma)|B'|\ge \eps|B|$ for the same reason. It follows by triangle inequality that $|d_\omega(A',B')-d_\omega(A^*,B^*)|\le 2\eps$. Hence $(A'\dcup B',E\cap A'\times B')$ with weight function $\omega':A'\to[0,1]$ is a weighted $\eps'$-regular pair where $\eps'=\max\{2\eps,\eps/\gamma\}$.
\end{proof}

If most vertices of a bipartite graph have the `right' degree and most pairs have the `right' co-degree then the graph is an $\eps$-regular pair. This remains to be true for weighted regular pairs and weighted degrees and co-degrees.

\begin{defi}[Weighted degree and co-degree] \label{def:weighted:degree}
Let $G=(A\dcup B, E)$ be a bipartite graph and $\omega:A\to [0,1]$. For $x,y\in A$ we define the \emph{weighted degree} of $x$ as $\deg_\omega(x)\deff \omega(x)|N(x,B)|$ and the \emph{weighted co-degree} of $x$ and $y$ as $\deg_\omega(x,y)\deff \omega(x)\omega(y)|N(x,B)\cap N(y,B)|$. 
\end{defi}

A proof of the following lemma can be found in the Appendix.

\begin{lem} \label{lem:reg:weighted:deg-codeg} % [Weighted degree and co-degree]
Let $\eps>0$ and $n\ge \eps^{-6}$. Further let $G=(A\dcup B,E)$ be a bipartite graph with $|A|=|B|=n$ and let $\omega: A \to [\eps,1]$ be a weight function for $G$. If
\begin{enumerate}
\item $|\{x\in A: |\deg_\omega(x)-d_\omega(A,B)n| > \eps^{14}n\}| < \eps^{12}n$ \quad and
\item $|\{\{x,y\}\in \binom A2: |\deg_\omega(x,y)-d_\omega(A,B)^2n| \ge \eps^{9}n\}| \le \eps^{6}\binom n2$
\end{enumerate}
then $(A,B)$ is a weighted $3\eps$-regular pair.
\end{lem}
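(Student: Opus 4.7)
The plan is to mimic the standard deviation/variance proof that degree plus codegree control implies $\varepsilon$\nobreakdash-regularity, adapted to the weighted setting. Set $d\deff d_\omega(A,B)$. Fix an arbitrary $A'\subseteq A$ and $B'\subseteq B$ with $|A'|,|B'|\ge \varepsilon n$; we must show $|d_\omega(A',B')-d|\le 3\varepsilon$. For $v\in B$ introduce the auxiliary quantity $h(v)\deff\sum_{x\in N(v)\cap A'}\omega(x)$, so that
\[
  d_\omega(A',B')=\frac{1}{|A'|\,|B'|}\sum_{v\in B'}h(v).
\]
The task is to prove that the sum of $h$ over $B'$ is close to $|B'|\cdot|A'|d$, which will be done by estimating the mean and the second moment of $h$ over all of $B$ and then applying Cauchy--Schwarz on the restriction to $B'$.

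First I would control the mean. Hypothesis (i) gives a set $A^\ast\subseteq A$ of size at least $(1-\varepsilon^{12})n$ on which $\deg_\omega(x)=dn\pm\varepsilon^{14}n$. Splitting $\sum_{x\in A'}\deg_\omega(x)$ over $A'\cap A^\ast$ and its complement, and using the trivial bound $\deg_\omega(x)\le n$ together with $|A'|\ge\varepsilon n$, one obtains
\[
  \sum_{x\in A'}\deg_\omega(x)=|A'|n\bigl(d\pm 2\varepsilon^{11}\bigr),
\]
which is exactly $|A'|n\,d_\omega(A',B)$, and equals $\sum_{v\in B}h(v)$. So the mean $\bar h\deff n^{-1}\sum_{v\in B}h(v)$ satisfies $\bar h=|A'|(d\pm 2\varepsilon^{11})$.

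Next I would estimate the second moment. Expanding yields
\[
  \sum_{v\in B}h(v)^2=\sum_{x\in A'}\omega(x)^2\,|N(x,B)|+2\!\!\sum_{\{x,y\}\in\binom{A'}{2}}\!\!\deg_\omega(x,y).
\]
The first sum is bounded by $\sum_{x\in A'}\deg_\omega(x)\le 2|A'|n$ (using $\omega\le 1$). For the second, hypothesis (ii) gives at most $\varepsilon^6\binom{n}{2}$ ``bad'' pairs contributing at most $n$ each, while the remaining pairs in $\binom{A'}{2}$ contribute $d^2n\pm\varepsilon^9 n$. Combining, using $|A'|\ge\varepsilon n\ge\varepsilon^{-5}$, I expect to obtain
\[
  \sum_{v\in B}h(v)^2\;\le\;|A'|^2 n\bigl(d^2+7\varepsilon^4\bigr).
\]
Subtracting $n\bar h^2=|A'|^2 n(d\pm 2\varepsilon^{11})^2$ and absorbing lower order terms, the variance satisfies $\sum_{v\in B}(h(v)-\bar h)^2\le 8\varepsilon^4|A'|^2 n$.

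Finally, I would apply Cauchy--Schwarz on $B'$:
\[
  \Big|\sum_{v\in B'}h(v)-|B'|\bar h\Big|\;\le\;\sqrt{|B'|\sum_{v\in B}(h(v)-\bar h)^2}\;\le\;|A'|\sqrt{8\varepsilon^4 n|B'|}.
\]
Dividing by $|A'|\,|B'|$ and using $|B'|\ge\varepsilon n$ gives $|d_\omega(A',B')-\bar h/|A'||\le\sqrt{8\varepsilon^3}\le 3\varepsilon^{3/2}$, and combining with $\bar h/|A'|=d\pm 2\varepsilon^{11}$ yields $|d_\omega(A',B')-d|<3\varepsilon$, as required. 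The only part that needs real care is the bookkeeping of the error terms in steps two and three: the whole point of the peculiar exponents $\varepsilon^{14},\varepsilon^{12},\varepsilon^9,\varepsilon^6$ in the hypotheses and the assumption $n\ge\varepsilon^{-6}$ is to make sure that, after taking a square root and dividing by $|B'|\ge\varepsilon n$, the accumulated deviation is still bounded by $3\varepsilon$ rather than by something weaker in $\varepsilon$; that is the main (but essentially routine) obstacle.
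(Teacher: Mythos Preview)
Your proof is correct and takes a genuinely different route from the paper's. The paper does not argue directly: it converts the vertex-weighted bipartite graph into an integer edge-weighted graph in the sense of Czygrinow and R\"odl by setting $\tilde\omega(x,y)=\lceil C\omega(x)\rceil$ for edges $xy$ (with $C\approx\varepsilon^{-14}$ a scaling factor), checks that hypotheses (i) and (ii) translate into the corresponding degree/codegree hypotheses of \cite[Lemma~4.2]{CzygRodl00} with $\xi=\varepsilon^6$, and then quotes that lemma as a black box. Your argument is the direct second-moment/Cauchy--Schwarz proof of the degree/codegree characterisation of regularity, carried out in the weighted setting; this is more self-contained and in effect reconstructs the proof of the Czygrinow--R\"odl lemma that the paper appeals to. The paper's route saves writing at the cost of an external reference plus an extra rounding layer, while yours also makes transparent that the lower bound $\omega\ge\varepsilon$ is not actually used---something the paper only remarks on after its proof.
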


It is well known that a balanced $(\eps,\delta)$-super-regular pair has a
perfect matching if $\delta>2\eps$ (see, e.g., \cite{RodlRuci99}).
Similarly, balanced weighted regular pairs with an appropriate minimum
degree bound have perfect matchings (see the Appendix for a proof).

\begin{lem} \label{lem:reg:weighted:matching} % [Matchings in weighted regular pairs]
Let $\eps>0$ and let $G=(A\dcup B,E)$ with $|A|=|B|=n$ and weight function $\omega: A \to [\sqrt\eps,1]$ be a weighted $\eps$-regular pair. If $\deg(x)>2\sqrt\eps n$ for all $x\in A\cup B$ then $G$ contains a perfect matching.
\end{lem}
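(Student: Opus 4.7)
The plan is to follow the classical proof that balanced super-regular pairs have perfect matchings, but using the weighted versions of the quantities. Suppose for contradiction that no perfect matching exists. Since $|A|=|B|=n$, Hall's theorem yields a non-empty set $S\subseteq A$ with $|N(S)|<|S|$. Set $T\deff B\setminus N(S)$, so there are no edges between $S$ and $T$; the whole proof will be about ruling out this configuration.

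The first key observation is that the weight lower bound $\omega(x)\ge\sqrt\eps$ together with the minimum degree condition $\deg(x)>2\sqrt\eps\, n$ gives $\deg_\omega(x)=\omega(x)|N(x)|>2\eps n$ for every $x\in A$. Averaging yields $d_\omega(A,B)>2\eps$. This is the substitute for the ``density bounded away from $0$'' that drives the usual super-regular matching proof.

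Now I would split into three cases depending on the sizes of $S$ and $T$. If $|S|\le\eps n$, then picking any $x\in S$ gives $|N(S)|\ge\deg(x)>2\sqrt\eps\, n\ge\eps n\ge|S|$, contradicting $|N(S)|<|S|$. If $|T|\le\eps n$ (including $T=\emptyset$), then $|N(S)|\ge(1-\eps)n$, but any $y\in T$ has $N(y)\subseteq A\setminus S$ with $|N(y)|>2\sqrt\eps\, n$, forcing $|S|\le n-2\sqrt\eps\, n<n-\eps n\le|N(S)|$, again a contradiction (if $T=\emptyset$ we trivially have $|N(S)|=n\ge|S|$). The main case $|S|,|T|>\eps n$ invokes weighted regularity: since there are no edges between $S$ and $T$ we have $d_\omega(S,T)=0$, while Definition~\ref{def:weighted:regularity} gives $|d_\omega(S,T)-d_\omega(A,B)|\le\eps$, contradicting $d_\omega(A,B)>2\eps$.

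I do not expect any genuine obstacle here. The only subtlety worth double-checking is that the weighted density $d_\omega(S,T)$ really is $0$ when there are no $S$–$T$ edges, which is immediate from the definition $d_\omega(S,T)=\sum_{x\in S}\omega(x)|N(x,T)|/(|S||T|)$, and that the weight lower bound $\sqrt\eps$ (rather than some other function of $\eps$) is exactly what is needed to turn the minimum-degree hypothesis $2\sqrt\eps\, n$ into a weighted density bound $2\eps$ that beats the $\eps$-regularity error term.
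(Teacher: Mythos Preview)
Your proof is correct and follows essentially the same approach as the paper: verify Hall's condition by the same three-way case split using the minimum degree for small $S$, the weighted regularity (together with the key observation $d_\omega(A,B)>2\eps$) to rule out both $S$ and $T=B\setminus N(S)$ being large, and the minimum degree on the $B$-side for the remaining case. The only cosmetic difference is that the paper splits on $|S|$ (the cases $|S|<\eps n$, $\eps n\le|S|\le(1-\eps)n$, $|S|>(1-\eps)n$) rather than on $|S|$ and $|T|$, but the arguments are the same.
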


\subsection{Chernoff type bounds} \label{sec:ChernoffBounds}

Our proofs will heavily rely on the probabilistic method. In particular we will want to bound random variables that are close to being binomial. By close to we mean that the individual events are not necessarily independent but occur with certain probability even if condition on the outcome of other events. The following two variations on the classical bound by Chernoff make this more precise.

\begin{lem} \label{lem:pseudo:Chernoff}
Let $0\le p_1\le p_2 \le 1$, $0<c \le 1$. Further let $\cA_i$ for $i\in[n]$ be 0-1-random variables and set $\cA:=\sum_{i\in[n]}\cA_i$. If 
\[
   p_1 \le 
   \PP\left[\cA_i=1 \,\left|~\parbox{150pt}{ $\cA_{j}=1$ for all $j \in J$ and\\ $\cA_j=0$ for all $j\in [i-1]\setminus J$}\right. \right] \le p_2
\]
for every $i\in[n]$ and every $J\subseteq[i-1]$ then
\[
  \PP[\cA \le (1-c) p_1n] \le \exp\left(-\frac{c^2}3 p_1n \right)
\]
and
\[
  \PP[\cA \ge (1+c)p_2n] \le \exp\left(-\frac{c^2}3 p_2n \right)\, .
\]
\end{lem}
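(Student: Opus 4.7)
The plan is to reduce to the classical Chernoff bound for sums of \emph{independent} Bernoulli random variables via a coupling argument that stochastically sandwiches $\cA$ between Binomial$(n,p_1)$ and Binomial$(n,p_2)$.

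First, I would construct the coupling explicitly. Let $U_1,\dots,U_n$ be i.i.d.\ uniform on $[0,1]$. Define the $\cA_i$ sequentially: having observed $\cA_1,\dots,\cA_{i-1}$, let $q_i=q_i(\cA_1,\dots,\cA_{i-1})$ denote the conditional probability that $\cA_i=1$, and set $\cA_i=1$ iff $U_i\le q_i$. By construction the $\cA_i$ have the correct joint distribution, and by hypothesis $p_1\le q_i\le p_2$ for every realisation of the history. Now define $\cB_i^-=\mathbf{1}[U_i\le p_1]$ and $\cB_i^+=\mathbf{1}[U_i\le p_2]$. Since the $U_i$ are independent, $\cB^-\deff\sum_{i\in[n]}\cB_i^-\sim\mathrm{Bin}(n,p_1)$ and $\cB^+\deff\sum_{i\in[n]}\cB_i^+\sim\mathrm{Bin}(n,p_2)$. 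The sandwich $p_1\le q_i\le p_2$ forces $\cB_i^-\le\cA_i\le\cB_i^+$ pointwise, hence
\[
\cB^-\le\cA\le\cB^+ \quad\text{almost surely.}
\]

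Second, I would invoke the standard multiplicative Chernoff bounds for binomials: for $0<c\le1$ and $X\sim\mathrm{Bin}(n,p)$,
\[
\PP[X\le(1-c)pn]\le\exp\!\left(-\tfrac{c^2}{3}pn\right),\qquad \PP[X\ge(1+c)pn]\le\exp\!\left(-\tfrac{c^2}{3}pn\right).
\]
Combining this with the coupling,
\[
\PP[\cA\le(1-c)p_1n]\le\PP[\cB^-\le(1-c)p_1n]\le\exp\!\left(-\tfrac{c^2}{3}p_1n\right),
\]
and analogously for the upper tail using $\cA\le\cB^+$. This yields both inequalities in the statement.

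I do not expect a genuine obstacle here; the only slightly subtle point is to notice that the coupling works even though the conditional probabilities $q_i$ themselves depend on the random history $(\cA_1,\dots,\cA_{i-1})$. This is handled cleanly by generating all randomness through the independent uniforms $U_i$ and exploiting the pointwise inequality $p_1\le q_i\le p_2$, which holds for every possible history by assumption. Everything else reduces to quoting the standard Chernoff estimate for binomial tails.
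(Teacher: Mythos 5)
Your proof is correct, and it takes a genuinely different route from the paper's. The paper establishes stochastic dominance at the level of distribution functions: it defines $a_{\ell,k}=\PP[\sum_{i\le\ell}\cA_i\le k]$ and $b_{\ell,k}=\PP[B_{\ell,p_1}\le k]$ and proves $a_{\ell,k}\le b_{\ell,k}$ by induction on $\ell$, with the key recursion $a_{\ell,k}\le p_1\,a_{\ell-1,k-1}+(1-p_1)\,a_{\ell-1,k}$ following from the lower bound on the conditional probabilities; the upper tail is treated symmetrically with $p_2$. You instead exhibit the stochastic dominance explicitly via a coupling: generate $\cA_i=\mathbf{1}[U_i\le q_i]$ and $\cB_i^\pm=\mathbf{1}[U_i\le p_1]$ or $\mathbf{1}[U_i\le p_2]$ from the same independent uniforms $U_i$, and use the hypothesis $p_1\le q_i\le p_2$ (valid for every history) to obtain the pointwise sandwich $\cB^-\le\cA\le\cB^+$ almost surely. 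Both arguments reduce the lemma to the classical Chernoff bound for $\mathrm{Bin}(n,p_1)$ and $\mathrm{Bin}(n,p_2)$. Your coupling is slicker and more transparent---it makes the monotonicity visible at the sample-path level rather than hiding it inside an inductive manipulation of CDFs---while the paper's CDF induction is slightly more elementary in that it never needs to construct an auxiliary probability space or invoke the existence of coupled uniforms. Neither approach has a gap; the choice is a matter of taste.
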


Similarly we can state a bound on the number of tuples of certain random variables.

\begin{lem} \label{lem:pseudo:Chernoff:tuple}
Let $0<p$ and $a,m,n\in \N$. Further let $\cI\subseteq\mathcal{P}([n])\setminus\{\emptyset\}$ be a collection of $m$ disjoint sets with at most $a$ elements each. For every $i\in [n]$ let $\cA_i$ be a 0-1-random variable. Further assume that for every $I\in\cI$ and every $k\in I$ we have 
\[
  \PP\left[\cA_k=1 \,\left|~\parbox{150pt}{ $\cA_{j}=1$ for all $j \in J$ and\\ $\cA_j=0$ for all $j\in [k-1]\setminus J$}\right. \right] \ge p
\] 
for every $J\subseteq [k-1]$ with $[k-1]\cap I\subseteq J$. Then
\[
\PP\Big[ \big|\{I\in \mathcal{I}: \cA_{i}=1\text{ for all $i\in
        I$}\}\big|\ge \tfrac{1}{2} p^am\Big] \ge
1-2\exp\Big(-\frac1{12} p^am \Big)\, .
\]
\end{lem}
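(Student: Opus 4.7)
The plan is to reduce to the single-event Chernoff-type bound in Lemma~\ref{lem:pseudo:Chernoff} applied to the indicator variables $\cB_I \deff \prod_{k\in I}\cA_k$ for $I\in \cI$. I would fix an arbitrary ordering $I_1,\dots,I_m$ of the sets in $\cI$, so that $\sum_{\ell=1}^m\cB_{I_\ell}$ is exactly the quantity we want to lower-bound, and then aim to apply Lemma~\ref{lem:pseudo:Chernoff} with $p_1 = p^a$ and $c=1/2$.

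The main technical step will be to verify the hypothesis of Lemma~\ref{lem:pseudo:Chernoff}, namely that for every $\ell\in[m]$ and every $J\subseteq[\ell-1]$,
\[
\PP\bigl[\cB_{I_\ell}=1 \,\bigm|\, \cB_{I_j}=1 \text{ for } j\in J,\ \cB_{I_j}=0 \text{ for } j\in[\ell-1]\setminus J\bigr] \ \ge\ p^a.
\]
To prove this, denote the conditioning event by $E$ and condition further on the history $H$ specifying the values of all $\cA_k$ with $k\notin I_\ell$. Since the $I_j$'s are pairwise disjoint, each $\cB_{I_j}$ with $j\ne\ell$ is measurable with respect to $H$, so refining $E$ by $H$ is consistent. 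Enumerating $I_\ell=\{k_1<\dots<k_s\}$ with $s\le a$, the chain rule gives
\[
\PP\bigl[\cB_{I_\ell}=1 \,\bigm|\, H\bigr] \ =\ \prod_{j=1}^{s}\PP\bigl[\cA_{k_j}=1 \,\bigm|\, H,\ \cA_{k_1}=\cdots=\cA_{k_{j-1}}=1\bigr]\ \ge\ p^s\ \ge\ p^a,
\]
where at the $j$-th factor the conditioning contains $[k_j-1]\cap I_\ell=\{k_1,\dots,k_{j-1}\}$, so the hypothesis of the lemma applies and yields a factor of at least $p$. Averaging over $H$ with weights $\PP[H\mid E]$ delivers the desired bound.

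Once this is established, Lemma~\ref{lem:pseudo:Chernoff} with $p_1=p^a$, $c=1/2$ and $n=m$ gives directly
\[
\PP\Bigl[\sum_{\ell}\cB_{I_\ell}\le \tfrac12 p^a m\Bigr]\ \le\ \exp\bigl(-\tfrac1{12} p^a m\bigr),
\]
from which the stated conclusion follows; the harmless factor $2$ in front of the exponential in the statement leaves room for absorbing the boundary case and any rounding.

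The main obstacle, as indicated above, is the careful handling of the conditioning. The hypothesis of Lemma~\ref{lem:pseudo:Chernoff} requires a lower bound on $\PP[\cB_{I_\ell}=1\mid\text{past}]$ in terms of the compound events $\cB_{I_j}$ for $j<\ell$, whereas the hypothesis of the present lemma only gives bounds on individual $\cA_k$'s conditioned on their past. The crucial structural fact that allows the bridging is the disjointness of the $I_j$'s, which decouples the randomness inside $I_\ell$ from the values of the $\cA_k$ outside $I_\ell$ and thereby permits the chain-rule lower bound $p^s\ge p^a$ uniformly in the past.
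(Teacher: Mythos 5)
Your overall plan — pass to the indicators $\cB_{I}=\prod_{k\in I}\cA_k$, verify the hypothesis of Lemma~\ref{lem:pseudo:Chernoff} with $p_1=p^a$, and finish with $c=1/2$ (the factor $2$ in the statement indeed gives enough slack) — is the same reduction the paper uses. The gap is in your verification of the pseudo-independence, and it is a genuine one. You condition on the history $H$ recording \emph{all} $\cA_k$ with $k\notin I_\ell$ and then invoke the hypothesis to bound $\PP[\cA_{k_j}=1\mid H,\ \cA_{k_1}=\cdots=\cA_{k_{j-1}}=1]$. But $H$ fixes variables with indices \emph{larger} than $k_j$, whereas the hypothesis only controls $\PP[\cA_{k_j}=1\mid\cdot\,]$ given a full specification of $[k_j-1]$; it says nothing once future variables are conditioned on, and disjointness of the sets does not decouple this, since correlations run through the index order. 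A two-variable example already breaks the step: let $\cA_1$ be Bernoulli$(1/2)$, let $\cA_2=1$ whenever $\cA_1=1$ and $\cA_2$ be Bernoulli$(1/2)$ when $\cA_1=0$. For $\cI=\{\{1\},\{2\}\}$ the hypothesis holds with $p=1/2$, yet $\PP[\cA_1=1\mid \cA_2=0]=0$, so the per-history chain-rule bound fails.

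Moreover, because you fix an \emph{arbitrary} ordering of $\cI$, the inequality you are trying to feed into Lemma~\ref{lem:pseudo:Chernoff} is itself false in general: in the example above, take $I_1=\{2\}$, $I_2=\{1\}$; then $\PP[\cB_{I_2}=1\mid \cB_{I_1}=0]=\PP[\cA_1=1\mid\cA_2=0]=0<p^a$, so no $p_1=p^a$ is admissible. The ordering must be adapted to the indices: the paper orders the sets by their largest element, so that every previously exposed set lies entirely below $\max I_i$, and then proves the conditional bound by induction over the elements $i_1<\dots<i_a$ of $I_i$, at each step summing only over the histories $(v_1,\dots,v_k)$ of the variables \emph{preceding} the element currently exposed that are compatible with the conditioning event (the sets $C_k$ there), rather than conditioning in one stroke on the entire outside configuration. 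So your argument needs both the index-adapted ordering and this more careful, past-only conditioning bookkeeping; as written, the bridge from the hypothesis on individual $\cA_k$'s to the pseudo-independence of the $\cB_{I_\ell}$'s does not go through.
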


The proofs for both lemmas can be found in the Appendix. The first one is
very close to the proof of the classical Chernoff bound while the second proof builds on the fact that the events $[\cA_i=1 \text{ for all $i\in I$}]$ have probability at least $p^a$ for every $I\in\cI$. In particular, in the special case $a=1$, Lemma~\ref{lem:pseudo:Chernoff} implies Lemma~\ref{lem:pseudo:Chernoff:tuple}.

\section{An almost spanning version of the Blow-up Lemma} \label{sec:almost-spanning}

This section is dedicated to the proof of Theorem~\ref{thm:Blow-up:arr:almost-span} which is a first step towards Theorem~\ref{thm:Blow-up:arr:full}. We give a randomised algorithm for the embedding of an almost spanning subgraph $H'$ into $G$ and show that it is well defined and that it succeeds with positive probability.

This embedding of $H'$ is later extended to the embedding of a spanning subgraph $H$ in Section~\ref{sec:spanning}. Applying the randomised algorithm to $H$ while only embedding $H'$ provides the structural information necessary for the extension of the embedding. It is for this reason that we define a graph $H$ while only embedding a subgraph $H'\subseteq H$ into $G$ in this section.

\begin{remark}
  In the following we shall always assume that each super-regular pair $(V_i,V_j)$ appearing in the proof has density 
  \begin{equation*}
    d(V_i,V_j)=\delta
  \end{equation*}
  \emph{exactly}, and minimum degree
  \begin{equation} \label{eq:min-deg}
    \min\nolimits_{v\in V_i} \deg(v,V_j) \ge \tfrac12\delta|V_j| \,, \quad \min\nolimits_{v\in V_j} \deg(v,V_i) \ge \tfrac12\delta|V_i| \,,
 \end{equation}
 since otherwise we can simply appropriately delete random edges to obtain this situation (while possibly increasing regularity to~$2\eps$).
\end{remark}

\subsection{Constants, constants}

Since there will be plenty of constants involved in the following proofs we
give a short overview first.
\begin{tabbing}
    \quad \= $\Delta_R$: \= the maximum degree of $R$ is \emph{strictly}
      smaller than $\Delta_R$ \\
    \> $r$: \> the number of clusters \\
    \> $a$: \> the arrangeability of $H$ \\
    \> $s$: \> the chromatic number of $H$ \\
    \> $\delta$: \> the density of the pairs $(V_i,V_j)$ in $G$ \\
    \> $\mu$: \> the proportion of $G$ that will be left after embedding
         $H$ \\
    \> $\xi$: \> some constant in the degree-bound of $H$ \\
    \> $\eps$: \> the regularity of the pairs $(V_i,V_j)$ in $G$ \\
    \> $\eps'$: \> the weighted regularity of the auxiliary graphs $F_i(t)$ \\
    \> $\kappa$: \> the maximum quotient between cluster sizes \\
    \> $\gamma$: \> a threshold for moving a vertex into the critical set \\
    \> $\lambda$: \> the fraction of vertices whose predecessors receive a special embedding \\
    \> $\alpha$: \> the fraction of vertices with image restrictions \\
    \> $c$: \> the relative size of the image restrictions \\
    \> $C$: \> the maximum number of image restrictions per cluster \\
\end{tabbing}
Now let $C,a,\Delta_R,\kappa \in\N$ and $\delta,c,\mu>0$ be given. We define the following constants.
\begin{align}
  \gamma &= \frac c2 \frac{\mu}{10} \delta^a\;, \label{eq:consts:gamma} \\
  \lambda &= \frac{1}{25 a}\delta\gamma\;, \label{eq:consts:lambda}\\
  \eps' &= \min\left\{\left(\frac{\lambda \delta^a}{6\cdot 2^{a^2+1}3^a}\right)^2,\left(\frac{7\gamma}{30}\right)^2\right\}\;, \label{eq:consts:eps:p}\\
  \eps &= \min\left\{ \frac1{\Delta_R(1+C)2^{a+1}}\eps', \left(\frac{\eps'}{3}\right)^{36}\right\}\;, \label{eq:consts:eps} \\
  \alpha &=  \frac{\sqrt\eps}6\;. \label{eq:consts:alpha}
\intertext{Furthermore, let $r$ be given. Then we choose}
  \xi &= \frac{8\eps^2}{9\gamma^2\kappa r}\,. \label{eq:consts:xi}
\end{align}
Moreover, we ensure that $n_0$ is big enough to guarantee
\begin{align}
\sqrt{n_0} \ge 48\frac{3^a2^{a^2+1}a \kappa r}{\lambda \delta^a}\;, \quad  
n_0 \ge 60\frac{\kappa r}{\eps^2\delta\mu} \log (12 (n_0)^2) \text{,~~and~~~} 
\log n_0 \ge 36\frac{2^{a^2}a^2\kappa r}{\lambda}
\,. \label{eq:consts:n}
\end{align}
All logarithms are base $e$. In short, the constants used relate as
\[
  0 < \xi \ll \eps \ll \alpha \ll \eps' \ll \lambda \ll \gamma \ll \mu, \delta \le 1\, .
\]
Moreover, $\eps \ll 1/\Delta_R$. Note that it follows from these definitions that $(1+\eps/\delta)^a\le 1+\sqrt\eps/3$ and $(1-\eps/\delta)^a\ge 1-\sqrt\eps/3$ which implies
\begin{align} \label{eq:eps:1}
 \frac{(\delta+\eps)^a}{1+\sqrt{\eps}/3} \le \delta^a \le  \frac{(\delta-\eps)^a}{1-\sqrt{\eps}/3},
 \quad \text{in particular} \quad 
 (\delta-\eps)^a \ge \frac{9}{10} \delta^a.
\end{align}

\subsection{The randomised greedy algorithm} \label{sec:rga:definition}

Let $V(H)=(x_1,\dots,x_n)$ be an $a$-arrangeable ordering of $H$ and let $H'\subseteq H$ be a subgraph induced by $\{x_1,\dots,x_{(1-\mu)n}\}$. In this section we define a \emph{randomised greedy algorithm} (RGA) for the embedding of $V(H')$ into $V(G)$. This algorithm processes the vertices of $H$ vertex by vertex and thereby defines an embedding $\phi$ of $H'$ into $G$. We say that vertex $x_t$ gets embedded in time step $t$ where $t$ runs from 1 to $T=|H'|$. Accordingly $t(x)\in [n]$ is defined to be the time step in which vertex $x$ will be embedded. 

We explain the main ideas before giving an exact definition of the algorithm.

\smallskip

\textbf{Preparing $H$:} Recall that $S_i$ is the set of \emph{image restricted vertices} in $X_i$ and set $S\deff \bigcup S_i$. We define~$L_i^*$ to be the last $\lambda n_i$ vertices in $X_i\setminus N(S)$ in the arrangeable ordering. Moreover, we define $X_i^*\deff N^-(L_i^*) \cup S_i$ and $X^*\deff \bigcup X_i^*$. Those vertices will be called the \emph{important vertices}. The name indicates that they will play a major r\^ole for the spanning embedding. Important vertices shall be treated specially by the embedding algorithm. 
The $a$-arrangeability of $H$ implies that
\begin{align} \label{eq:important:ub}
 |X_i^*| \le a \lambda n_i + \alpha n_i
\end{align}
for all $i\in [r]$.
\smallskip

\textbf{Preparing $G$:} Before we start embedding into $G$ we randomly set
aside $(\mu/10) n_i$ vertices in $V_i$ for each $i\in[r]$. We denote these
sets by $V^\SP_i$ and call them the \emph{special vertices}. All remaining vertices, i.e., $V^\ORD_i\deff V_i\setminus V^\SP_i$ will be called \emph{ordinary vertices}.
As the name suggests our algorithm will try to embed most vertices of $H'$
into the sets $V^\ORD_i$ and only if this fails resort to embedding into
$V^\SP_i$. The idea is that the special vertices will be reserved for the
important vertices and for those vertices in $H'$ whose embedding turns out to be intricate. We define
\[
V^\ORD \deff \bigcup_{i=1}^r V^\ORD_i, \quad
V^\SP \deff \bigcup_{i=1}^r V^\SP_i .
\]
Note that $V^\ORD\dcup V^\SP$ defines a partition of $V(G)$.

\smallskip

\textbf{Candidate sets:}
While our embedding process is running, more and more vertices of $G$ will be used up to accommodate vertices of $H$. For each time step $t\in[n]$ we denote by $V^{\Free}(t)\deff V(G)\setminus \{v\in V(G): \exists t'<t : \phi(x_{t'}) = v\}$ the set of vertices where no vertex of $H$ has been embedded yet. 
Obviously $\phi(x_t) \in V^{\Free}(t)$ for all $t$.

The algorithm will define sets $C_{t,x} \subseteq V(G)$ for $1\le t \le T$, $x\in V(H)$, which we will call 
the \emph{candidate set} for $x$ at time $t$. Analogously 
\[ A_{t,x}\deff C_{t,x}\cap V^{\Free}(t)\]
will be called the \emph{available candidate set} for $x$ at time $t$. Again we distinguish between the \emph{ordinary candidate} set $C^\ORD_{t,x} \deff C_{t,x}\cap V^\ORD$ and the \emph{special candidate set} $C^\SP_{t,x} \deff C_{t,x} \cap V^\SP$ or their respective available version $A^\ORD_{t,x} \deff A_{t,x} \cap V^\ORD$ and $A^\SP_{t,x} \deff A_{t,x}\cap V^\SP$.

Finally we define a set $Q(t)\subseteq V(H)$ and call it the \emph{critical set} at time $t$. $Q(t)$ will contain the vertices whose available candidate set got too small at time $t$ or earlier. 

\smallskip

\noindent
\textbf{Algorithm RGA}

\noindent
\textsc{Initialisation}\\[.5ex]
Randomly select $V^\SP_i\subseteq V_i$ with $|V^\SP_i|=(\mu/10)|V_i|$ for each $i\in [r]$. For $x\in X_i\setminus S_i$ set $C_{1,x}=V_i$ and for $x\in S_i$ set $C_{1,x}=I(x)$. Set $Q(1)=\emptyset$.\\[0.5ex]
Check that for every $i\in[r]$, $v \in V_i$, and every $j\in N_R(i)$ we have 
\begin{align} \label{eq:cond:size:special-neighbours}
\left| \frac{|N_G(v)\cap V_j^\SP|}{|V_j^\SP|} - \frac{|N_G(v)\cap V_j|}{|V_j|} \right| \le \eps\,.
\end{align}
Further check that every $x\in S_i$ has
\begin{align} \label{eq:cond:size:image-restricted}
|C_{1,x}^\SP| = |I(x) \cap V_i^\SP| \ge \tfrac{1}{20}c\mu\, n_i\;.
\end{align}
\textbf{Halt with failure} if any of these does not hold.\\[.5ex]
\noindent\textsc{Embedding Stage}\\[.5ex]
For $t\ge 1$, \textbf{repeat} the following steps.\\[1ex]
\emph{Step~1 -- Embedding $x_t$:} Let $x=x_t$ be the vertex of $H$ to
be embedded at time $t$. 
\noindent
Let $A'_{t,x}$ be the set of vertices~$v\in A_{t,x}$ which
satisfy~\eqref{eq:cond:size-ordinary} and~\eqref{eq:cond:size-special} for all $y\in N^+(x)$:
\begin{align}
(\delta-\eps) |C^\ORD_{t,y}| \le& |N_G(v)\cap C^\ORD_{t,y}| \le (\delta+\eps) |C^\ORD_{t,y}|, \label{eq:cond:size-ordinary}\\ 
(\delta-\eps) |C^\SP_{t,y}| \le& |N_G(v)\cap C^\SP_{t,y}| \le (\delta+\eps) |C^\SP_{t,y}|.\label{eq:cond:size-special}
\end{align}
\noindent
Choose $\phi(x)$ uniformly at random from
\begin{align}\label{eq:Ax}
  A(x)&\deff\begin{cases}
    A^\ORD_{t,x} \cap A'_{t,x} \quad & \text{ if $x \notin X^*$ and $x \notin Q(t)$,} \\
    A^\SP_{t,x} \cap A'_{t,x} & \text{ else.}
  \end{cases}
\intertext{
\emph{Step~2 -- Updating candidate sets:} for each unembedded vertex $y\in V(H)$, set}
%\begin{align*}
\nonumber
C_{t+1,y} &\deff \begin{cases} C_{t,y} \cap N_G(\phi(x)) & \text{ if $y \in N^+(x)$, } \\ 
C_{t,y} &\text{ otherwise.}\end{cases} 
\end{align}
\emph{Step~3 -- Updating critical vertices:} We will call a vertex $y\in X_i$ \emph{critical} if $y\notin X_i^*$ and
\begin{align} \label{eq:cond:queue:in}
|A^\ORD_{t+1,y}| &< \gamma n_i.
\end{align}  
Obtain $Q(t+1)$ by adding to $Q(t)$ all critical vertices that have not been embedded yet. Set $Q_i(t+1)=Q(t+1)\cap X_i$.\\[.5ex]
\textbf{Halt with failure} \textbf{if} there is $i\in [r]$ with 
\begin{align} \label{eq:cond:queue:abort} 
|Q_i(t+1)| &> \eps' n_i\,.
\end{align} 
\textbf{Else, if} there are no more unembedded vertices left in $V(H')$ \textbf{halt with success},\\ \textbf{otherwise} set $t \leftarrow t+1$ and go back to \emph{Step~1}.\\

We have now defined our randomised greedy algorithm for the embedding of an almost spanning subgraph $H'$ into $G$. The rest of this section is to prove that it succeeds with positive probability. This then implies Theorem~\ref{thm:Blow-up:arr:almost-span}.

In order to analyse the RGA we define auxiliary graphs which describe possible embeddings of vertices of $H'$ into $G$. These auxiliary graphs inherit some kind of regularity from $G$ with positive probability. We show that the algorithm terminates successfully whenever this happens. 

In the subsequent Section~\ref{sec:Initialisation} we show that conditions~\eqref{eq:cond:size:special-neighbours} and~\eqref{eq:cond:size:image-restricted} hold with probability at least $5/6$. The \textsc{Initialisation} of the RGA succeeds whenever this happens. Moreover, we prove that the embedding of each vertex is randomly chosen from a set of linear size in Step~1 of the \textsc{Embedding Stage}.

In Section~\ref{sec:AuxiliaryGraph} we define auxiliary graphs and derive that all auxiliary graphs are weighted regular with probability at least $5/6$. We also show that condition~\eqref{eq:cond:queue:abort} never holds if this is the case. Thus the \textsc{Embedding Stage} also terminates successfully with probability at least $5/6$.

We conclude that the whole RGA succeeds with probability at least $2/3$. This implies Theorem~\ref{thm:Blow-up:arr:almost-span}.

\subsection{Initialisation and Step~1} \label{sec:Initialisation}

This section is to prove that the \textsc{Initialisation} of the RGA succeeds with probability at least $5/6$ and that Step~1 of the \textsc{Embedding Stage} always chooses vertices from a non-empty set. 

\begin{lem}\label{lem:RGA:init}
The \textsc{Initialisation} succeeds with probability at least $5/6$, i.e. both condition~\eqref{eq:cond:size:special-neighbours} and~\eqref{eq:cond:size:image-restricted} hold for every $i\in[r]$, $v\in V_i$, $j\in [r]\setminus\{i\}$, and $x\in S_i$ with probability $5/6$.
\end{lem}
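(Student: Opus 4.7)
The plan is to apply standard Chernoff-style concentration bounds to the random choice of the sets $V^\SP_i\subseteq V_i$, and then take a union bound over all events appearing in~\eqref{eq:cond:size:special-neighbours} and~\eqref{eq:cond:size:image-restricted}. The constants were chosen precisely so that the per-event failure probability is at most $1/(12n^2)$, while the number of events is at most polynomial in~$n$, giving failure probability at most $1/12$ for each of the two conditions, and hence $1/6$ in total.

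More concretely, fix $i\in[r]$, $j\in N_R(i)$, and $v\in V_i$, and consider the random variable $X=|N_G(v)\cap V_j^\SP|$, where $V_j^\SP$ is a uniformly random subset of $V_j$ of size $(\mu/10)n_j$. Write $d=|N_G(v)\cap V_j|/|V_j|$; by the minimum degree assumption~\eqref{eq:min-deg} we have $d\ge\delta/2$. Since $V_j^\SP$ is sampled without replacement, the indicators that successive vertices of $N_G(v)\cap V_j$ lie in $V_j^\SP$ satisfy the hypothesis of Lemma~\ref{lem:pseudo:Chernoff} with $p_1,p_2$ both close to $\mu/10$. Hence $X$ is concentrated around its expectation $d\,|V_j^\SP|$, and a brief computation shows that the event~\eqref{eq:cond:size:special-neighbours} fails with probability at most $2\exp(-c_0\eps^2|V_j^\SP|)$ for an absolute constant $c_0>0$. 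Since $|V_j^\SP|\ge\mu n/(20\kappa r)$ and the lower bound on $n_0$ in~\eqref{eq:consts:n} forces $\eps^2\mu n/(\kappa r)\ge 60\log(12n^2)$, this failure probability is at most $1/(6n^2)$. Union-bounding over at most $n\cdot\Delta_R\le n^2$ triples $(i,j,v)$ yields total failure probability at most $1/12$.

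The argument for~\eqref{eq:cond:size:image-restricted} is analogous and even easier: for each $x\in S_i$, the random variable $Y=|I(x)\cap V_i^\SP|$ has mean $(\mu/10)|I(x)|\ge c\mu n_i/10$, and Lemma~\ref{lem:pseudo:Chernoff} (with $c=1/2$) gives
\[
\PP\bigl[Y<\tfrac1{20}c\mu n_i\bigr]\le\exp\bigl(-\tfrac1{12}(c\mu/10)n_i\bigr).
\]
Again~\eqref{eq:consts:n} is strong enough to make this at most $1/(12n^2)$, and since $|S|\le\alpha n\le n^2$, the union bound over all image-restricted vertices contributes at most $1/12$ to the failure probability.

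Adding the two bounds gives that both~\eqref{eq:cond:size:special-neighbours} and~\eqref{eq:cond:size:image-restricted} hold simultaneously with probability at least $1-1/6=5/6$, which is the claim. There is no real obstacle here; the only mild care needed is that the sampling of $V_j^\SP$ is hypergeometric rather than binomial, which is why we appeal to Lemma~\ref{lem:pseudo:Chernoff} (whose conditional-probability hypothesis accommodates sampling without replacement) rather than to the classical Chernoff inequality.
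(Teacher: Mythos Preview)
Your proposal is correct and follows essentially the same approach as the paper: apply a Chernoff-type tail bound to the hypergeometric variables $|N_G(v)\cap V_j^\SP|$ and $|I(x)\cap V_i^\SP|$, then take a union bound using~\eqref{eq:consts:n}. One small remark: your justification via Lemma~\ref{lem:pseudo:Chernoff} is not quite literally applicable, since conditioning on many indicators $[u_k\in V_j^\SP]$ can push the conditional probability for the next one well away from~$\mu/10$; the clean fix is simply to invoke the standard fact that the hypergeometric distribution obeys the same Chernoff bounds as the binomial (as the paper implicitly does by citing Theorem~\ref{thm:ChernoffBound}).
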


\begin{proof}[of Lemma~\ref{lem:RGA:init}]
Fix one $v\in V_i$, $j\in [r]\setminus \{i\}$. Since $V_j^\SP$ is a randomly chosen subset of $V_j$ we have
\[
\EE[ |N_G(v)\cap V_j^\SP|] = |N_G(v)\cap V_j|\frac{|V_j^\SP|}{|V_j|} \ge \frac{\delta}2 n_j \frac{\mu}{10}\,.
\]
It follows from a Chernoff bound (see Theorem~\ref{thm:ChernoffBound} in the Appendix) that 
\[ \PP\Big[ | N_G(v)\cap V_j^\SP| - |N_G(v)\cap V_j| \frac{|V_j^\SP|}{|V_j|} > \eps |V_j^\SP| \Big] \le \exp\left( -\frac{\eps^2}6 \delta n_i \frac{\mu}{10}\right) \leByRef{eq:consts:n} \frac 1{12n^2}\;. \]
Similarly $c|V_i^\SP|\ge c\frac{\mu}{10}n_i$ and
\[
 \PP[ c|V_i^\SP|-|I(x)\cap V_i^\SP| \ge \frac c2 |V_i^\SP|] \le \exp\left(-\frac1{12}c\frac{\mu}{10}n_i\right) \le \frac1{12 n}\;.
\]
A union bound over all $i\in [r]$, $v\in V_i$ and $j\in N_R(i)$ or over all $x\in S_i$ finishes the proof.
\end{proof}

Let us write $\pi(t,x)$ for the number of predecessors of $x$ that already got embedded by time $t$: \[
\pi(t,x)\deff|\{t'<t: \{x,x_{t'}\}\in E(H)\}|.
\]
Obviously $\pi(t,x)\le a$ by the definition of arrangeability.

\begin{lem} \label{lem:RGA:candidate-sets}
Let $x\in X_i\setminus S_i$ and $t\le T$ be arbitrary. Then 
\begin{align*}
	(1-\mu/10)(\delta-\eps)^{\pi(t,x)}n_i &\le|C^\ORD_{t,x}| \le
	(1-\mu/10)(\delta+\eps)^{\pi(t,x)} n_i\, , \\
	(\mu/10)(\delta-\eps)^{\pi(t,x)} n_i &\le |C^\SP_{t,x}| \le
	(\mu/10)(\delta+\eps)^{\pi(t,x)} n_i\,.
\end{align*}
If $x\in S_i$, $t\le T$ then 
\[ \frac{9}{10}\gamma n_i \le |C^\SP_{t,x}|\,. \]
\end{lem}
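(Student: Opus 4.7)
The plan is to proceed by induction on $\pi(t,x)$, the number of predecessors of $x$ that have been embedded by time $t$. The only mechanism by which $C_{t,x}$ shrinks as $t$ grows is Step~2 of the \textsc{Embedding Stage}, which is triggered exactly when a predecessor of $x$ is embedded. Between two consecutive such events the candidate set stays constant, so it suffices to track the effect of one predecessor-embedding on $|C^\ORD_{t,x}|$ and $|C^\SP_{t,x}|$.

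For the base case $\pi(t,x)=0$ and $x\in X_i\setminus S_i$, the initialisation sets $C_{1,x}=V_i$, hence $C^\ORD_{1,x}=V_i^\ORD$ and $C^\SP_{1,x}=V_i^\SP$, which have sizes exactly $(1-\mu/10)n_i$ and $(\mu/10)n_i$ respectively, matching the claimed bounds with $\pi(t,x)=0$. For the inductive step, let $y\in N^-(x)$ be embedded at time $t^*<t$, and assume the bounds hold at time $t^*$. The algorithm chose $\phi(y)\in A'_{t^*,y}$, and since $x\in N^+(y)$ this means $\phi(y)$ satisfies \eqref{eq:cond:size-ordinary} and \eqref{eq:cond:size-special} with $v=\phi(y)$ and (the role of) $y=x$. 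By Step~2,
\begin{equation*}
C^\ORD_{t^*+1,x}=C^\ORD_{t^*,x}\cap N_G(\phi(y)),\qquad
C^\SP_{t^*+1,x}=C^\SP_{t^*,x}\cap N_G(\phi(y)),
\end{equation*}
and so the two inequalities multiply the previous upper and lower bounds by $(\delta\pm\eps)$, preserving the inductive hypothesis with $\pi(t,x)$ raised by one.

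For $x\in S_i$ the argument is the same on the special side, but starting from $|C^\SP_{1,x}|=|I(x)\cap V_i^\SP|\ge \frac{1}{20}c\mu\, n_i$, guaranteed by \eqref{eq:cond:size:image-restricted} of the \textsc{Initialisation}. Since $\pi(t,x)\le a$ by $a$-arrangeability, the inductive step yields
\begin{equation*}
|C^\SP_{t,x}|\ge(\delta-\eps)^a\cdot\tfrac{1}{20}c\mu\, n_i\ge\tfrac{9}{10}\delta^a\cdot\tfrac{1}{20}c\mu\, n_i=\tfrac{9}{10}\gamma n_i,
\end{equation*}
where the last two steps use \eqref{eq:eps:1} and the definition \eqref{eq:consts:gamma} of $\gamma$.

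There is no real obstacle here — the only point that requires a touch of care is recognising that the size-preservation conditions \eqref{eq:cond:size-ordinary} and \eqref{eq:cond:size-special} were built into the algorithm precisely so that this induction goes through: whenever a predecessor $y$ of $x$ is embedded, the vertex $\phi(y)$ is chosen from $A'_{t^*,y}$, which by definition already controls the shrinking of $C^\ORD_{t^*,x}$ and $C^\SP_{t^*,x}$ by factors in $[\delta-\eps,\delta+\eps]$. The implicit assumption throughout is that the algorithm has not halted with failure before time $t$; if $A'_{t^*,y}$ were empty the algorithm could not produce $\phi(y)$, but that possibility is handled by the later analysis in Sections \ref{sec:Initialisation}–\ref{sec:AuxiliaryGraph}, not here.
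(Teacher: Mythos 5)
Your proof is correct and takes essentially the same approach as the paper: both track the initial sizes from the \textsc{Initialisation} and then observe that conditions~\eqref{eq:cond:size-ordinary} and~\eqref{eq:cond:size-special}, imposed on the choice of $\phi(y)\in A'_{t^*,y}$, force each embedding of a predecessor to multiply $|C^\ORD_{t,x}|$ and $|C^\SP_{t,x}|$ by a factor in $[\delta-\eps,\delta+\eps]$, with the same final numerical check $\tfrac{1}{20}c\mu(\delta-\eps)^a\ge\tfrac{9}{10}\gamma$ for the image-restricted case. You merely make the underlying induction on $\pi(t,x)$ explicit, which the paper leaves informal.
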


\begin{proof}%[of Lemma~\ref{lem:RGA:candidate-sets}]
The \textsc{Initialisation} of the RGA defines the candidate sets such that $|C^\ORD_{1,x}| = (1-\mu/10)n_i$ and $|C^\SP_{1,x}| = (\mu/10)n_i$ for every $x\in X_i\setminus S_i$. In the \textsc{Embedding Stage} conditions~\eqref{eq:cond:size-ordinary} and \eqref{eq:cond:size-special} guarantee that $C^\ORD_{t,x}$ and $C^\SP_{t,x}$ respectively shrink by a factor of $(\delta \pm \eps)$ whenever a vertex in $N^-(x)$ is embedded.

If $x\in S_i$ we still have $|C_{1,x}^\SP|\ge  (c\mu/20)n_i$ by \eqref{eq:cond:size:image-restricted}. The statement follows as conditions~\eqref{eq:cond:size-ordinary} and \eqref{eq:cond:size-special} again guarantee that $C_{t,x}^\SP$ shrinks at most by a factor of $(\delta-\eps)^a$. Moreover, $\tfrac{1}{20}c\mu(\delta-\eps)^a\ge \tfrac{9}{10}\gamma$ by~\eqref{eq:eps:1} and the definition of $\gamma$.
\end{proof}

We now argue that $\phi(x)$ is chosen from a non-empty set at the end of Step~1 in the \textsc{Embedding Stage}. In fact, we will show that $\phi(x)$ is chosen from a set of size linear in $n_i$.

\begin{lem} \label{lem:RGA:choices}
For any vertex $x\in X_i$ that gets embedded in the \textsc{Embedding Stage} $\phi(x)$ is chosen randomly from a set $A(x)$ of size at least $(\gamma/2) n_i$.\\
Moreover, if $x$ gets embedded into $V_i^\SP$
\[
  |X_i^*| + |Q_i(t(x))| + |A_{t(x),x}^\SP\setminus A(x)| \le \frac{\delta}{18} |C_{t(x),x}^\SP|\; .
\] 
If the RGA completes the \textsc{Embedding Stage} successfully but $x\in X_i$ does not get embedded in the \textsc{Embedding Stage} we have
\[
   |A_{T,x}^\SP| \ge \frac{7\gamma}{10} n_i\;.
\]
%if $x$ gets embedded into $V_i^\sigma$ with $\sigma\in\{\ORD,\SP\}$.
\end{lem}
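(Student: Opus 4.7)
My plan is to address the three assertions in turn, but they all rest on the same two inputs: Lemma~\ref{lem:RGA:candidate-sets} for lower bounds on $|C_{t,y}^\ORD|$ and $|C_{t,y}^\SP|$, and a regularity-based count of how many $v \in V_i$ can fail conditions~\eqref{eq:cond:size-ordinary} and~\eqref{eq:cond:size-special}. The key observation for the regularity count is that $a$-arrangeability forces $|N^-(N^+(x)) \cap \Left(x)| \le a$, so at time $t=t(x)$ the candidate sets of all successors $y \in N^+(x)$ are determined by which subset of these $\le a$ already-embedded vertices is a predecessor of $y$; grouping $y$'s by partition class and by this subset shows that there are at most $\Delta_R\,2^a$ distinct ordinary (and likewise at most $\Delta_R\,2^a$ distinct special) candidate sets across all $y \in N^+(x)$. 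Each such candidate set has size a positive constant fraction of $n_j$ by Lemma~\ref{lem:RGA:candidate-sets} together with~\eqref{eq:eps:1}, so $\eps$-regularity of the subpairs $(V_i,V_j^\ORD)$ and $(V_i,V_j^\SP)$ (inherited from $(V_i,V_j)$, because both parts have linear size, cf.~\eqref{eq:cond:size:special-neighbours}) contributes $O(\eps)\,n_i$ bad vertices per candidate set. A union bound over the at most $2\Delta_R 2^a$ candidate sets then yields
\begin{equation*}
|A_{t,x} \setminus A'_{t,x}| \le O(\eps\, \Delta_R 2^a)\, n_i \le 2\eps' n_i \le \gamma n_i / 9,
\end{equation*}
using~\eqref{eq:consts:eps} and~\eqref{eq:consts:eps:p}.

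For the first assertion I split along the two cases in~\eqref{eq:Ax}. If $x \notin X^* \cup Q(t)$, then $x$ was never declared critical in any of the iterations $1,\dots,t-1$, so~\eqref{eq:cond:queue:in} gives $|A_{t,x}^\ORD| \ge \gamma n_i$; subtracting the bad vertices above leaves $|A(x)| \ge \gamma n_i/2$. Otherwise $A(x) = A_{t,x}^\SP \cap A'_{t,x}$, and I start from the lower bound on $|C_{t,x}^\SP|$ supplied by Lemma~\ref{lem:RGA:candidate-sets} (distinguishing $x \in S_i$ and $x \notin S_i$) and subtract the vertices of $V_i^\SP$ already used as images; by construction these are images of vertices in $X_i^* \cup Q_i(t)$, so by~\eqref{eq:important:ub} and~\eqref{eq:cond:queue:abort} they number at most $(a\lambda + \alpha + \eps')n_i$. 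The choices of $\lambda$ and $\gamma$ in~\eqref{eq:consts:gamma}--\eqref{eq:consts:lambda} ensure $a\lambda + \alpha + \eps' \ll \gamma$, so $|A_{t,x}^\SP|$ remains comfortably above $\gamma n_i$, and removing the bad vertices preserves $|A(x)| \ge \gamma n_i/2$.

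The second assertion is essentially the bookkeeping of the previous paragraph repackaged into a single inequality: $|X_i^*| \le (a\lambda + \alpha)n_i$ by~\eqref{eq:important:ub}, $|Q_i(t(x))| \le \eps' n_i$ by~\eqref{eq:cond:queue:abort}, and $|A_{t(x),x}^\SP \setminus A(x)| \le 2\eps' n_i$ from the first paragraph, so the sum is at most $(a\lambda + \alpha + 3\eps')n_i$, which by~\eqref{eq:consts:gamma}--\eqref{eq:consts:lambda} is at most $\delta/18$ times the lower bound on $|C_{t(x),x}^\SP|$ from Lemma~\ref{lem:RGA:candidate-sets} in both sub-cases $x \in S_i$ and $x \notin S_i$. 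For the third assertion, any $x \in X_i$ left unembedded after the Embedding Stage lies in $X_i \setminus X_i'$, so by time $T$ all of its $\le a$ predecessors have been embedded; Lemma~\ref{lem:RGA:candidate-sets} combined with~\eqref{eq:eps:1} gives $|C_{T,x}^\SP| \ge \tfrac{9}{100}\mu\delta^a n_i$, and subtracting the at most $(a\lambda + \alpha + \eps')n_i$ used special vertices, together with $\gamma = c\mu\delta^a/20$, yields $|A_{T,x}^\SP| \ge (7\gamma/10)n_i$. The main obstacle of the argument is concentrated in the first paragraph: a naive union bound over the up to $\Delta(H) \approx \sqrt n/\log n$ successors would be fatal, and it is precisely the arrangeability observation that replaces it by the $\Delta_R\,2^a$-term and makes the hierarchy of constants in~\eqref{eq:consts:gamma}--\eqref{eq:consts:n} mutually consistent.
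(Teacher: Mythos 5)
Your plan correctly identifies the two engines of the proof---Lemma~\ref{lem:RGA:candidate-sets} for the size of the candidate sets and an arrangeability-based union bound over distinct candidate sets $C^\ORD_{t,y}, C^\SP_{t,y}$ for $y\in N^+(x)$---and the case split and bookkeeping for all three assertions match what the paper does. The one genuine gap is in the count of distinct candidate sets. You argue that, since at most~$a$ already-embedded vertices lie in $N^-(N^+(x))$, each candidate set $C^\sigma_{t,y}$ is determined by which subset of these $\le a$ vertices are predecessors of~$y$, giving $2^a$ possibilities per cluster, hence $\Delta_R 2^a$ ordinary and $\Delta_R 2^a$ special sets. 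But a candidate set is determined by the subset of embedded predecessors \emph{and} by the initial set $C^\SP_{1,y}$: for an image-restricted vertex $y\in S_j$ we have $C^\SP_{1,y}=I(y)\cap V_j^\SP$, which can be any of the $C$ permissible image restrictions $I_{j,1},\dots,I_{j,C}$. So the correct count of distinct special candidate sets per cluster is $(1+C)2^a$, not $2^a$, and the paper is explicit about this---it is precisely why the factor $(1+C)$ appears in the definition of~$\eps$ in~\eqref{eq:consts:eps}. Your bound $|A_{t,x}\setminus A'_{t,x}|\le 2\eps' n_i$ does \emph{happen} to hold, but only because~\eqref{eq:consts:eps} was chosen to absorb the $(1+C)$ factor you did not account for; as stated your counting would also ``prove'' the lemma under a weaker (and insufficient) choice of~$\eps$.

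A smaller point: in the third assertion you invoke the bound $|C_{T,x}^\SP|\ge \tfrac{9}{100}\mu\delta^a n_i$, which comes from Lemma~\ref{lem:RGA:candidate-sets} for $x\notin S_i$; for $x\in S_i$ the lemma only gives the weaker uniform bound $|C^\SP_{T,x}|\ge\tfrac{9}{10}\gamma n_i$. Since $\tfrac{9}{100}\mu\delta^a\ge\tfrac{9}{10}\gamma$ (because $\gamma=\tfrac{c}{2}\tfrac{\mu}{10}\delta^a$ and $c\le1$), the uniform bound is the right one to quote, and it is exactly what the paper uses for all three parts of the lemma.
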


\begin{proof}
We claim that any $x\in X_i$ that gets embedded into $V_i^\sigma$ during the \textsc{Embedding Stage} has
\begin{align} \label{eq:RGA:choices:cl}
|A_{t(x),x}^\sigma| \ge \frac{7\gamma}{10}n_i\;.
\end{align}
We will establish equation~\eqref{eq:RGA:choices:cl} at the end of this proof. 

In order to show the first statement of the lemma we now bound $|A_{t(x),x}^\sigma\setminus A(x)|$, i.e., we determine the number of vertices that potentially violate conditions~\eqref{eq:cond:size-ordinary} or~\eqref{eq:cond:size-special}. As $H$ is $a$-arrangeable, the vertices $y \in N^+(x)$ share at most $2^a$ distinct ordinary candidate sets $C^\ORD_{t(x),y}$ in each $V_j$. The number of special candidate sets $C^\SP_{t(x),y}$ in each $V_j$ might be larger by a factor of $C$ as they arise from the intersection with at most $C$ sets $I_{j,k}$ (with $k\in [C]$) which are the image restrictions. Moreover, there are less than $\Delta_R$ many sets $V_j$ with $j\in N_R(i)$ bounding the total number of candidate sets we have to care for by $\Delta_R(1+C)2^a$.

As we embed $x$ into an $\eps$-regular pair there are at most $2\eps n_i$ vertices $v\in A^\sigma_{t(x),x}$ for each $C^\ORD_{t(x),y}$ that violate~\eqref{eq:cond:size-ordinary} (and the same number for each $C^\SP_{t(x),y}$ that violate~\eqref{eq:cond:size-special}) with $y\in N^+(x)$. Hence
\begin{align} \label{eq:RGA:excludes}
 |A_{t(x),x}^\sigma\setminus A(x)| \le \Delta_R(1+C)2^{a+1}\eps n_i
\end{align}
if $x$ gets embedded into $V_i^\sigma$. Now $\Delta_R(1+C)2^{a+1}\eps n_i \le \gamma/5 n_i$ by~\eqref{eq:consts:eps} and 
\[
  |A(x)| = |A_{t(x),x}^\sigma| - |A_{t(x),x}^\sigma\setminus A(x)| \ge (\gamma/2)n_i
\]
follows.

\medskip

Next we show the second statement of the lemma. If $x\in X_i$ gets embedded into $V_i^\SP$ in the \textsc{Embedding Stage} we conclude 
\begin{align*}
  |X_i^*| + |Q_i(t(x))| + |A_{t(x),x}^\SP\setminus A(x)| &\le \left(a\lambda + \alpha\right)n_i+ \eps' n_i + \Delta_R(1+C)2^{a+1}\eps n_i \\
& \leBy{\eqref{eq:consts:lambda},\eqref{eq:consts:eps}} \left(\tfrac1{25}\delta\gamma + \alpha + 2\eps'\right)n_i \leByRef{eq:consts:alpha} \tfrac{1}{20}\delta\gamma\,n_i \\
&\le \tfrac{\delta}{18} |C_{t(x),x}^\SP|
\end{align*}
where the first inequality is due to \eqref{eq:important:ub},~\eqref{eq:cond:queue:abort}, and~\eqref{eq:RGA:excludes} and the last inequality is due to $|C_{t(x),x}^\SP|\ge \tfrac{9}{10}\gamma n_i$ by Lemma~\ref{lem:RGA:candidate-sets}.

\medskip

We now return to Equation~\eqref{eq:RGA:choices:cl}. In order to prove it we distinguish between the two cases of~\eqref{eq:Ax} in \emph{Step 1} of the \textsc{Embedding Stage}. If $x\notin X^*$ has never entered the critical set, it is embedded into $A^\ORD_{t(x),x}$ and $|A^\ORD_{t(x),x}| \ge (7\gamma/10)n_i$ holds by condition~\eqref{eq:cond:queue:in}. Else $x$ gets embedded into $A^\SP_{t(x),x}$. As only vertices from $Q_i(t(x))$ or $X_i^*$ have been embedded into $V_i^\SP$ so far, we can bound $|A^\SP_{t(x),x}|$ by
\begin{align*}
|A^\SP_{t(x),x}| &\ge |C^\SP_{t(x),x}| - |Q_i(t(x))| - |X_i^*|\\
 &\geByRef{eq:important:ub} \frac{9\gamma}{10}n_i - \eps' n_i - (a\lambda+\alpha)n_i \ge \frac{7\gamma}{10} n_i
\end{align*}
where the second inequality is due to Lemma~\ref{lem:RGA:candidate-sets} and the third inequality is due to our choice of constants. In any case we have $|A_{t(x),x}^\sigma|\ge \frac{7\gamma}{10}n_i$ if $x$ gets embedded into $V_i^\sigma$ (with $\sigma\in\{\ORD,\SP\}$) in Step~1 of the \textsc{Embedding Stage}.

\medskip

If the RGA completes the \textsc{Embedding Stage} successfully but $x\in X_i$ does not get embedded during the \textsc{Embedding Stage} the analogous argument gives
\begin{align*}
  |A^\SP_{T,x}| &\ge |C^\SP_{T,x}| - |Q_i(T)| - |X_i^*| \ge \frac{7\gamma}{10} n_i\; .
\end{align*}

\end{proof}

\subsection{The auxiliary graph} \label{sec:AuxiliaryGraph}

We run the RGA as described above. In order to analyse it, we define \emph{auxiliary graphs} 
$F_i(t)$ which monitor at every time step $t$ whether a vertex $v\in V(G)$ is still contained in the candidate set of a vertex $x\in V(H)$. Let $F_i(t) \deff (X_i\dcup V_i,E(F_i(t)))$ where $xv \in E(F_i(t))$ if and only if $v\in C_{t,x}$. We stress that we use the candidate sets $C_{t,x}$ and not the set of available candidates $A_{t,x}$. This is well defined as $C_{t,x}\subseteq V_i$ for every $x \in X_i$ and every~$t$. Note that $F_i(t)$ is a balanced bipartite graph. By Lemma~\ref{lem:RGA:candidate-sets} we have
\begin{align}\label{eq:aux:min-max-deg}
   (\delta-\eps)^{\pi(t,x)} n_i \le \deg_{F_i(t)}(x) \le (\delta+\eps)^{\pi(t,x)} n_i
\end{align}
for every $x\in X_i\setminus S_i$, i.e., the degree of $x$ in $F_i(t)$ strongly depends on the number $\pi(t,x)$ of embedded predecessors. 
The main goal of this section is proving, however, that if we take this into account and weight the auxiliary graphs accordingly, then they are with high probability weighted regular (see Lemma~\ref{lem:aux:reg}). It will turn out that the RGA succeeds if this is the case (see Lemma~\ref{lem:reg-small-queue}).

More precisely, for $F_i(t)$ we shall use the weight function $\omega_t\colon X_i\to[0,1]$ with
\begin{equation} \label{eq:weighted:def} 
     \omega_t(x):=\delta^{a-\pi(t,x)}\,.
\end{equation}
Observe that the weight function depends on~$t$. For nicer notation, we write $\deg_{\omega,t}(x)\deff\deg_{\omega(t)}(x)=\omega_t(x)|N_{F_i(t)}(x)|$ for $x\in X_i$ and $d_{\omega,t}(X,Y)\deff d_{\omega(t)}(X,Y)$ for $X\subseteq X_i$ and $Y\subseteq V_i$.
% \deff\sum_{x\in X}\omega_t(x)|N_{F_i(t)}(x,Y)|/(|X|\cdot|Y|)$ $X\subseteq X_i$, $Y\subseteq V_i$. 
By~\eqref{eq:aux:min-max-deg} we have
\begin{align}
 \deg_{\omega,t}(x) &\ge \delta^{a-\pi(t,x)} (\delta-\eps)^{\pi(t,x)} n_i \geByRef{eq:eps:1} (1-\sqrt\eps/3)\delta^a n_i\,,
\label{eq:weighted:lower-deg} \\
 \deg_{\omega,t}(x) &\le \delta^{a-\pi(t,x')} (\delta+\eps)^{\pi(t,x')} n_i \leByRef{eq:eps:1} (1+\sqrt\eps/3)\delta^a n_i
 \label{eq:weighted:upper-deg}
\end{align}
for every $x\in X_i\setminus S_i$ and $t$. Thus for every $i\in[r]$ and $t\le T$ the auxiliary graph $F_i(t)$ satisfies
\begin{align} \label{eq:aux:reg:weighted:density}
(1-\sqrt\eps/2)\delta^a \leByRef{eq:consts:alpha} (1-\alpha)(1-\sqrt\eps/3)\delta^a \le d_{\omega,t}(X_i,V_i) \le (1+\sqrt{\eps}/2)\delta^a\, .
\end{align}

Let $\cR_i(t)$ denote the event that $F_i(t)$ is weighted $\eps'$-regular for $\eps'$ as in~\eqref{eq:consts:eps:p}. Further let $\cR_i$ be the event that $\cR_i(t)$ for all $t\le T$.

\begin{lem} \label{lem:aux:reg}
We run the RGA in the setting of Theorem~\ref{thm:Blow-up:arr:almost-span}. 
Then $\cR_i$ holds for all $i\in [r]$ with probability at least $5/6$.
\end{lem}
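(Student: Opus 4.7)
The plan is to apply Lemma~\ref{lem:reg:weighted:deg-codeg} with regularity parameter $\eps'/3$ to each auxiliary graph $F_i(t)$ separately. This would reduce the task, for each fixed $(i,t)$, to verifying (i) at most $(\eps'/3)^{12}n_i$ vertices of $X_i$ have weighted degree deviating from $d_{\omega,t}(X_i,V_i)n_i$ by more than $(\eps'/3)^{14}n_i$, and (ii) at most $(\eps'/3)^6\binom{n_i}{2}$ pairs in $\binom{X_i}{2}$ have weighted co-degree deviating from $d_{\omega,t}(X_i,V_i)^2n_i$ by more than $(\eps'/3)^9n_i$. If for each fixed $(i,t)$ both (i) and (ii) hold with probability at least $1-1/(6rn)$, then a union bound over $i\in[r]$ and $t\le T$ will give the desired $5/6$ probability.

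Condition (i) should be essentially deterministic. Combining the degree bounds~\eqref{eq:weighted:lower-deg}--\eqref{eq:weighted:upper-deg} with the density bound~\eqref{eq:aux:reg:weighted:density} gives $|\deg_{\omega,t}(x)-d_{\omega,t}(X_i,V_i)n_i|\le\sqrt{\eps}\,n_i\le(\eps'/3)^{14}n_i$ for every $x\in X_i\setminus S_i$, by the choice~\eqref{eq:consts:eps} of $\eps$. The only possible exceptions are the $|S_i|\le\alpha n_i\le(\eps'/3)^{12}n_i$ image-restricted vertices, using~\eqref{eq:consts:alpha} and~\eqref{eq:consts:eps}.

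For condition (ii) the plan is to split pairs $\{x,y\}\in\binom{X_i}{2}$ according to $c_{x,y}(t)\deff|N^-(x)\cap N^-(y)\cap\{x_1,\dots,x_{t-1}\}|$, the number of common predecessors of $x$ and $y$ already embedded. The natural expectation is $|C_{t,x}\cap C_{t,y}|\approx\delta^{\pi(t,x)+\pi(t,y)-c_{x,y}(t)}n_i$, so the weighted co-degree should be $\approx\delta^{2a-c_{x,y}(t)}n_i$. If $c_{x,y}(t)\ge 1$ this deviates from $\delta^{2a}n_i\approx d_{\omega,t}(X_i,V_i)^2 n_i$ by a constant factor, so such pairs must be absorbed into the exception set; their total number (over all $t$) is structurally bounded by convexity and arrangeability~\eqref{eq:degree-sum} via $\sum_{z}\binom{|N^+(z)|}{2}\le\tfrac12\Delta(H)\cdot an\le\tfrac{an\sqrt{n}}{2\log n}$, which lies well below $(\eps'/3)^6\binom{n_i}{2}$ by the choice of $n_0$ in~\eqref{eq:consts:n}.

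The main obstacle will be the remaining pairs $\{x,y\}$ with $c_{x,y}(t)=0$, for which $C_{t,x}$ and $C_{t,y}$ are shaped by \emph{disjoint} subsets of random choices. I plan to track this intersection step by step: whenever some $x_s\in N^-(x)\cup N^-(y)$ is embedded at time $s$, its image $\phi(x_s)$ is chosen from a linear-sized $A(x_s)\subseteq V_{i(x_s)}$ by Lemma~\ref{lem:RGA:choices}, and by $\eps$-regularity of $(V_{i(x_s)},V_i)$ most such choices yield $|C_{s+1,x}\cap C_{s+1,y}|\in(\delta\pm 2\eps)|C_{s,x}\cap C_{s,y}|$, as long as the current intersection still has size $\ge\eps n_i$. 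Then Lemma~\ref{lem:pseudo:Chernoff} will bound the number of ``atypical'' steps along this trajectory, and a union bound over the $O(rn^3)$ triples $(i,t,\{x,y\})$ will complete (ii). The delicate point is that the algorithm's filter $A'_{t,x}$ may bias the conditional distribution of $\phi(x_s)$, but since $|A_{t,x}\setminus A(x_s)|\le\Delta_R(1+C)2^{a+1}\eps n_i$ by~\eqref{eq:RGA:excludes}, this bias fits comfortably into the $2\eps$-tolerance.
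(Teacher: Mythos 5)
Your treatment of the degree condition and of the pairs with common (embedded) predecessors matches the paper's proof, and your per-step analysis of $|C_{t,x}\cap C_{t,y}|$ (a typical embedding step multiplies the intersection by $\delta\pm O(\eps)$ except with probability at most $4\eps/\gamma$, using Lemma~\ref{lem:RGA:choices}, $\eps$-regularity, and the fact that the filter $A'_{t,x}$ only discards $O(\eps n)$ vertices) is exactly the paper's event $\cCo_{t,y,z}$ and inequality~\eqref{eq:aux:reg:prob}. The gap is in how you convert this into the co-degree condition for \emph{all} pairs: you propose to apply Lemma~\ref{lem:pseudo:Chernoff} ``along the trajectory'' of a single pair and then union bound over the $O(rn^3)$ triples $(i,t,\{x,y\})$. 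But a fixed pair $\{x,y\}$ has at most $2a$ relevant embedding steps, so the best you can say about it is that it ever suffers an atypical step with probability at most roughly $8a\eps/\gamma$ --- a small \emph{constant}, not anything polynomially small. No Chernoff bound over at most $2a$ trials can improve this, so a union bound over $\Theta(n^2)$ pairs (let alone $O(rn^3)$ triples) is impossible; and replacing the union bound by Markov on the expected number of bad pairs only yields a constant failure probability per cluster, which is not enough here because the lemma needs failure probability about $n_i^{-1}$ per $i$ (the number of clusters $r$ may be huge relative to $1/\eps$, by the quantifier order of Theorem~\ref{thm:Blow-up:arr:almost-span}; the paper explicitly notes that the Markov route only works in the variant Theorem~\ref{thm:Blow-up:arr:almost-span:lin} where $\eps$ may depend on $r$).

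The missing idea is to get \emph{concentration of the number of bad pairs} rather than a tiny probability per pair: the paper partitions the predecessor-disjoint pairs $P_i$ into classes $K_1\dcup\dots\dcup K_\ell$ via Hajnal--Szemer\'edi (Theorem~\ref{thm:HajnalSzemeredi}) so that each embedding step affects the co-degree of at most one pair per class, and then applies Lemma~\ref{lem:pseudo:Chernoff} to the indicator variables of ``first atypical step'' \emph{within a class}, giving that at most $16a\eps|K_k|/\gamma<\tfrac12\sqrt[4]{\eps}|K_k|$ pairs of $K_k$ ever go bad except with probability $n_i^{-3}$; this requires $|K_k|\gtrsim\log n$, which is precisely where the hypothesis $\Delta(H)\le\xi n/\log n$ of the almost-spanning setting enters (note your proposal quotes $\sqrt n/\log n$, which is the bound of Theorem~\ref{thm:Blow-up:arr:full}, not of the setting in which this lemma is run). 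A second, smaller point: the union over all times $t$ is not needed and would only hurt you --- the paper defines the exceptional sets $W_i^{(1)},W_i^{(2)}$ as unions over $t$ and controls them once, since a pair is bad at some time only if it suffered an atypical step at its first bad time (Fact~\ref{fac:aux:Co}).
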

We will use Lemma~\ref{lem:reg:weighted:deg-codeg} and weighted degrees and co-degrees to prove Lemma~\ref{lem:aux:reg}.

\begin{proof}[of Lemma~\ref{lem:aux:reg}] 
This proof checks the conditions of Lemma~\ref{lem:reg:weighted:deg-codeg}. Let 
\begin{align*}
 W_i^{(1)}(t) &= \big\{ x\in X_i : |\deg_{\omega,t}(x)-d_{\omega,t}(X_i,V_i) n_i| > \sqrt{\eps} n_i \big\}\,,\\
 W_i^{(2)}(t) &= \Big\{ \{x,y\} \in \binom{X_i}2 : |\deg_{\omega,t}(x,y)-d_{\omega,t}(X_i,V_i)^2 n_i| \ge \sqrt[4]{\eps} n_i \Big\} 
\end{align*}
be the set of vertices and pairs which deviate from the expected (co-)degree. 
Let $W_i^{(1)}:=\bigcup_{t\in[T]} W_i^{(1)}(t)$ and $W_i^{(2)}:=\bigcup_{t\in[T]} W_i^{(2)}(t)$.
We have $\eps'\ge 3\eps^{1/36}$ by~\eqref{eq:consts:eps}, and by Lemma~\ref{lem:reg:weighted:deg-codeg} all auxiliary graphs $F_i(t)$ with $t=1,\dots,T$ are weighted $\eps'$-regular if both
\begin{align}
|W_i^{(1)}| &< \sqrt{\eps} n_i\,,  \label{eq:aux:reg:co-deg:cond:1}\\
|W_i^{(2)}| &\le \sqrt[4]{\eps} \binom{n_i}2\, . \label{eq:aux:reg:co-deg:cond:2} 
\end{align}
Thus $\cR_i$ occurs whenever equations~\eqref{eq:aux:reg:co-deg:cond:1} and~\eqref{eq:aux:reg:co-deg:cond:2} are satisfied. We will prove that this happens for a fixed $i\in[r]$ with probability at least $1-n_i^{-1}$, which together with a union bound over $i\in[r]$ implies the statement of the lemma.

So fix $i\in[r]$. From~\eqref{eq:weighted:lower-deg},~\eqref{eq:weighted:upper-deg} and~\eqref{eq:aux:reg:weighted:density} we deduce that
\begin{align*}
  | \deg_{\omega,t}(x) - d_{\omega,t}(X_i,V_i) n_i | \le \sqrt{\eps} n_i\
\end{align*}
for all $x \in X_i\setminus S_i$ and every $t\le T$. But $|S_i|\le\alpha n_i < \sqrt\eps n_i$ by~\eqref{eq:consts:alpha} and equation~\eqref{eq:aux:reg:co-deg:cond:1} is thus \emph{always} satisfied.

It remains to consider~\eqref{eq:aux:reg:co-deg:cond:2}. To this end let $P_i$ be the set of all pairs $\{y,z\}\in\binom{X_i\setminus S_i}2$ with $N^-(y)\cap N^-(z)=\emptyset$. Observe that
$|\binom{X_i}2 \setminus \binom{X_i\setminus S_i}2| \le \alpha n_i^2 \le\frac{\sqrt\eps}6 n_i^2$ by~\eqref{eq:consts:alpha} and
\begin{equation*}\begin{split}
  \Big|\Big\{\{y,z\}\in \binom{X_i}2 : N^-(y)\cap N^-(z)\neq\emptyset\Big\}\Big| & \le a\Delta(H)n_i 
  \le a \frac{\xi n}{\log n} n_i \\
  & \le  \frac{2a\xi\kappa r}{\log n}\binom{n_i}2 \leByRef{eq:consts:xi} \sqrt\eps \binom{n_i}2\,.
\end{split}\end{equation*}
Hence it suffices to show that
\begin{equation}\label{eq:aux:almost-final-cond}
  \PP\Big[ |W_i^{(2)} \cap P_i| \le \tfrac12\sqrt[4]{\eps} \binom{n_i}2 \Big]
  \ge \PP\Big[ |W_i^{(2)} \cap P_i| \le \tfrac12\sqrt[4]{\eps} |P_i| \Big]
  > 1-n_i^{-1} \,.
\end{equation}
For this we first partition $P_i$ into sets of mutually predecessor disjoint pairs, i.e., $P_i=K_1\dcup\dots\dcup K_\ell$ such that for every $k\in[\ell]$ no vertex of $X_i$ appears in two different pairs in $K_k$, and moreover no two pairs in $K_k$ contain two vertices that have a common predecessor. Theorem~\ref{thm:HajnalSzemeredi} applied to the following graph asserts that there is such a partition with almost equally sized classes~$K_k$: Let $\mathcal{P}$ be the graph on vertex set $P_i$ with edges between exactly those pairs $\{y_1,y_2\}, \{y'_1,y'_2\} \in P_i$ which have either $\{y_1,y_2\}\cap\{y_1',y_2'\}\neq\emptyset$ or $\big(N^-_H(y_1)\cup N^-_H(y_2)\big)\cap \big(N^-_H(y'_{1})\cup N^-_H(y'_2)\big)\neq \emptyset$.
This graph has maximum degree $\Delta(\mathcal{P})< 2a\Delta(H)n_i \le  2a(\xi n/\log n) n_i$. Hence Theorem~\ref{thm:HajnalSzemeredi} gives a partition $K_1\dcup\dots\dcup K_\ell$ of~$P_i$ into stable sets with $|K_k|\ge \lfloor |P_i|/\big(\Delta(\mathcal{P})+1\big)\rfloor \ge \log n/(8a\xi\kappa r)$ for all $k\in[\ell]$, where we used $|P_i|\ge n_i^{2}/4$.

Now fix $k\in[\ell]$ and consider the random variable $K'_k\deff K_k\cap W_i^{(2)}$. Our goal now is to show
\begin{equation}\label{eq:aux:final-cond}
  \PP\Big[ |K'_k| > \tfrac12\sqrt[4]{\eps} |K_k| \Big] \le n_i^{-3} \,,
\end{equation}
as this together with another union bound over $k\in[\ell]$ with $\ell<n_i^2$ implies~\eqref{eq:aux:almost-final-cond}.
We shall first bound the probability that some fixed pair $\{y,z\}\in K_k$
gets moved to $W_i^{(2)}(t)$ (and hence to $K'_k$) at some time $t$.

For a pair $\{y,z\}\in K_k$ and $t\in[T]$ let $\cCo_{t,y,z}$ denote the
event that $| \deg_{\omega,t+1}(y,z) - \deg_{\omega,t}(y,z)| \le \eps n_i$.
Why are we interested in these events?
Obviously $\cCo_{t,y,z}$ holds for all time steps $t$ with $x_t \notin
N^-(y)\dcup N^-(z)$. This is because we have
$\deg_{\omega,t+1}(y,z)=\deg_{\omega,t}(y,z)$ for such~$t$. 
Moreover $|d_{\omega,t'}(X_i,V_i)^2-\delta^{2a}|\le 2\sqrt{\eps}\delta^{2a}$ by~\eqref{eq:aux:reg:weighted:density}.
Thus the fact that $|N^-(y)\dcup N^-(z)|\le 2a$ and the definition of~$\omega$ from~\eqref{eq:weighted:def} imply the following. 
If $\cCo_{t,y,z}$ holds for all $t\le T$, then 
\begin{align*}
 |\deg_{\omega,t'}(y,z) - d_{\omega,t'}(X_i,V_i)^2n_i| &\le |\deg_{\omega,t'}(y,z)-\delta^{2a}n_i| + |d_{\omega,t'}(X_i,V_i)^2-\delta^{2a}|\,n_i\\
 &\le 2a\eps n_i + 2\sqrt{\eps}\delta^{2a} n_i \leByRef{eq:consts:eps} \sqrt[4]{\eps} n_i
\end{align*}
for every $t'\le T$. 
In other words, if $\cCo_{t,y,z}$ holds for all $t\le
T$ then $\{y,z\}\not\in K_k\cap W_i^{(2)}$. More precisely, we have
the following.
\begin{fact}\label{fac:aux:Co}
  For the smallest~$t$ with $\{y,z\}\in W_i^{(2)}(t)$ we have that
  $\cCo_{t',y,z}$ holds for all $t'<t$ but \emph{not} for $t'=t$.
\end{fact}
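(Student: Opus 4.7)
My plan is to prove the fact as the contrapositive of the computation in the paragraph immediately above it. The key structural observation---already used implicitly there---is that $\deg_{\omega,t}(y,z)$ changes only at the (at most $2a$) time steps $t$ with $x_t\in N^-(y)\cup N^-(z)$, so at every other $t$ the event $\cCo_{t,y,z}$ holds automatically.

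Let $t$ be the smallest index with $\{y,z\}\in W_i^{(2)}(t)$. I would first assume for contradiction that $\cCo_{t',y,z}$ held for every $t'<t$. Because only at most $2a$ of the differences $\deg_{\omega,t'+1}(y,z)-\deg_{\omega,t'}(y,z)$ with $t'<t$ are nonzero, the telescoping bound
\[
  \bigl|\deg_{\omega,t}(y,z)-\deg_{\omega,1}(y,z)\bigr|\le 2a\eps n_i
\]
applies. Since $\{y,z\}\in P_i$ forces $\pi(1,y)=\pi(1,z)=0$ and $C_{1,y}=C_{1,z}=V_i$, we have $\deg_{\omega,1}(y,z)=\delta^{2a}n_i$; combining with the density estimate $|d_{\omega,t}(X_i,V_i)^2-\delta^{2a}|\le 2\sqrt{\eps}\,\delta^{2a}$ from~\eqref{eq:aux:reg:weighted:density} and the choice of $\eps$ in~\eqref{eq:consts:eps} then yields
\[
  |\deg_{\omega,t}(y,z)-d_{\omega,t}(X_i,V_i)^2 n_i|\le 2a\eps n_i + 2\sqrt{\eps}\,\delta^{2a} n_i \le \sqrt[4]{\eps}\,n_i,
\]
contradicting $\{y,z\}\in W_i^{(2)}(t)$. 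Hence some $\cCo_{t',y,z}$ with $t'\le t$ must fail, and the minimality of $t$ pins this first failure down to the step that separates the last safe time from the one at which the pair crosses into $W_i^{(2)}$, matching the indexing in the fact.

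The work is essentially bookkeeping on top of the computation already on display above: one verifies that the displayed bound is really a running bound, valid at every time $t$ for which no $\cCo_{t',y,z}$ with $t'<t$ has yet failed. The only delicate point is reconciling the indexing of $\cCo_{t,y,z}$ (change during step $t$) with that of $W_i^{(2)}(t)$ (state at the start of step $t$); no new ideas beyond those used in the preceding paragraph are required.
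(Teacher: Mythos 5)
Your overall route is the paper's own: the Fact carries no separate proof precisely because it is meant to be read off, via contraposition, from the displayed telescoping computation, and you reproduce that computation correctly (only the at most $2a$ steps that embed vertices of $N^-(y)\cup N^-(z)$ change the weighted co-degree, $\deg_{\omega,1}(y,z)=\delta^{2a}n_i$ for pairs in $P_i$, and \eqref{eq:aux:reg:weighted:density} together with \eqref{eq:consts:eps} keep the accumulated deviation below $\sqrt[4]{\eps}\,n_i$).

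The last step, however, is a genuine gap, and it is exactly the point you set aside as ``bookkeeping''. Your contradiction shows that \emph{some} $\cCo_{t',y,z}$ with $t'<t$ fails; that is the \emph{negation} of the first clause of the Fact at this same $t$, and it does not localise the failure at $t'=t$. Minimality of the entry time cannot ``pin down'' the first failure: a violation of $\cCo_{t',y,z}$ only means a co-degree jump exceeding $\eps n_i$, which is far below the $\sqrt[4]{\eps}\,n_i$ deviation required for membership in $W_i^{(2)}$, so the first violation may occur many steps before the pair ever enters $W_i^{(2)}$, with the pair staying outside $W_i^{(2)}$ in the meantime; moreover, since $W_i^{(2)}(t)$ is computed from $\deg_{\omega,t}$, i.e.\ from the state \emph{before} step $t$, any entry at time $t$ is caused by step $t-1$, not by step $t$. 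What your argument does deliver --- and what is actually used afterwards to conclude $|K'_k|\le\cA$ --- is the existence of a \emph{first} time $t^*$ at which $\cCo_{t^*,y,z}$ fails while $\cCo_{t',y,z}$ holds for all $t'<t^*$; such a $t^*$ automatically lies in $T_k$ with $\{y_{t^*},z_{t^*}\}=\{y,z\}$, so $\cA(t^*)=1$. If you formulate the Fact in that form (which is how the paper uses it), your computation finishes the job; as written, the ``minimality pins this first failure down'' sentence asserts something the computation does not give.
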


Moreover, if $\cCo_{t',y,z}$ holds for all $t'<t$ then
\[
|\deg_{\omega,t}(y,z) - \deg_{\omega,0}(y,z)| \le
\big(\pi(t,y)+\pi(t,z)\big)\eps n_i\,.
\]
Recall that $\deg_{\omega,t}(y,z)=\delta^{a-\pi(t,y)}\delta^{a-\pi(t,z)}|C_{t,y}\cap
C_{t,z}|$ and in particular (since $y,z\in X_i\setminus S_i$) $\deg_{\omega,0}(y,z)=\delta^{2a}n_i$. Hence
\begin{align}\label{eq:aux:assumingCo}
  |C_{t,y}\cap C_{t,z}|\ge (\delta^{\pi(t,y)+\pi(t,z)}-\eps \delta^{\pi(t,y)+\pi(t,z)-2a}(\pi(t,y)+\pi(t,z)))n_i\geByRef{eq:consts:eps} \eps n_i\,.
% \quad \text{if $\cCo_{t',y,z}$ for all $t'<t$}\,.
\end{align}
We now claim that
\begin{align} \label{eq:aux:reg:prob}
 \PP[\cCo_{t,y,z} \, | \, \cCo_{t',y,z}\text{ for all $t'<t$}] \ge 1-\frac{4\eps}\gamma\;.
\end{align}
This is obvious if $x_t\notin N^-(y)\dcup N^-(z)$. So assume we are about to embed an $x_t\in N^-(y)\dcup N^-(z)$, which happens to be in~$X_j$. Then $\phi(x_t)$ is chosen randomly among at least $(\gamma/2) n_j$ vertices of $V_j$ by Lemma~\ref{lem:RGA:choices}. Out of those at most $2\eps n_j$ vertices $v\in V_j$ have 
\[ \big|\deg(v,C_{t,y}\cap C_{t,z}) - d(V_i,V_j) \cdot |C_{t,y}\cap C_{t,z}| \big| > \eps n_i
\]
because $|C_{t,y}\cap C_{t,z}|\ge \eps n_i$ by~\eqref{eq:aux:assumingCo}
and $G[V_i,V_j]$ is $\eps$-regular. For every other choice of $\phi(x_t)=v\in V_j$ we have
\begin{align*}
  \big|\deg_{\omega,t+1}(y,z) &- \deg_{\omega,t}(y,z)\big| \\
& = \big| \omega_{t+1}(y)\omega_{t+1}(z) \deg(v,C_{t,y}\cap C_{t,z}) - \omega_t(y)\omega_t(z) \cdot |C_{t,y}\cap C_{t,z}|\big|\\
& = \omega_{t+1}(y)\omega_{t+1}(z) \cdot \big|\deg(v,C_{t,y}\cap C_{t,z}) - \delta\cdot |C_{t,y}\cap C_{t,z}| \big|\\
& = \omega_{t+1}(y)\omega_{t+1}(z) \cdot \big|\deg(v,C_{t,y}\cap C_{t,z}) - d(V_i,V_j)\cdot |C_{t,y}\cap C_{t,z}| \big|\\
& \le \omega_{t+1}(y)\omega_{t+1}(z) \cdot \eps n_i \le \eps n_i\,.
\end{align*}
Thus at most $2\eps n_j$ out of $(\gamma/2)n_j$ choices for $\phi(x_t)$
will result in $\overline{\cCo_{t,y,z}}$, which
implies~\eqref{eq:aux:reg:prob}, as claimed. 

Finally, in order to show concentration, we will apply
Lemma~\ref{lem:pseudo:Chernoff}. For this purpose observe that  by the construction of~$K_k$ for each
time step $t\in[T]$ the embedding of~$x_t$ changes the co-degree of at most
one pair in~$K_k$, which we denote by $\{y_t,z_t\}$ if present. That is, $x_t \in N^-(y_t)\cup N^-(z_t)$.
Now let $T_k\subseteq [T]$ be the set of time steps $t$ with $\{y_t,z_t\}$
in $K_k$, i.e., let $T_k$ be the set of time steps which actually change
the co-degree of a pair in $K_k$. Since $|N^-(y)\cup N^-(z)|\le 2a$ for
every pair $\{y,z\}\in K_k$ we have $|T_k|\le 2a|K_k|$.
 We define the following 0-1-variables $\cA(t)$ for $t\in T_k$: Let $\cA(t)=1$ if and only if
$\cCo_{t',y_t,z_t}$ holds for all $t'\in [t-1]\cap T_k$ but not for $t'=t$.
Fact~\ref{fac:aux:Co} then implies $|K'_k|\le\cA:=\sum_{t\in T_k}\cA(t)$.
Moreover, for any $t'<t$ with $\{y_{t'},z_{t'}\}=\{y_t,z_t\}$ and
$\cA(t')=1$ we have $\cA(t)=0$ by definition.
Hence, for any $t\in T_k$ and $J\subseteq[t]\cap T_k$ we have
\[
  \PP\left[\cA(t)=1\,\Big|\,\parbox{170pt}{$\cA(t')=1$ for all $t'\in J$\\ $\cA(t')=0$ for all $t'\in ([t]\cap T_k)\setminus J$}\right]\le 4\eps/\gamma
\]
by~\eqref{eq:aux:reg:prob}. Now either $|T_k|<16a\eps|K_k|/\gamma$ and thus $\cA < 16a\eps|K_k|/\gamma$ by definition. Or $|T_k|\ge 16a\eps|K_k|/\gamma$ and 
\[
\PP\left[\cA\ge \frac{16a\eps}\gamma |K_k|\right]\le\PP\left[\cA\ge \frac{8\eps}\gamma |T_k|\right]\le\exp\left(-\frac{4\eps}{3\gamma}|T_k|\right)\le n_i^{-3}, 
\]
by Lemma~\ref{lem:pseudo:Chernoff}, where the last inequality follows from 
\[
 \frac{4\eps}{3\gamma}|T_k| \ge \frac{64a\eps^2}{3\gamma^2}|K_k|\ge \frac{8\eps^2\log n}{3\gamma^2\xi\kappa r} \geByRef{eq:consts:xi} 3\log n_i\,.
\]
Since $|K'_k|\le\cA$ and $16a\eps/\gamma <\frac12\sqrt[4]{\eps}$ by~\eqref{eq:consts:eps} we obtain~\eqref{eq:aux:final-cond} as desired.
\end{proof}

We have now established that the auxiliary graph $F_i(t)$ for the embedding of $X_i$ into $V_i$ is weighted regular for all times $t\le T$ with positive probability. The following lemma states that no critical set ever gets large in this case, i.e., if all auxiliary graphs remain weighted regular, then the RGA terminates successfully.

\begin{lem} \label{lem:reg-small-queue}
For every $t\le T$ and $i\in[r]$ we have: $\cR_i(t)$ implies that $|Q_i(t)| \le\eps' n_i$. In particular, $\cR_i$ for all $i\in[r]$ implies that the RGA completes the \textsc{Embedding Stage} successfully.
\end{lem}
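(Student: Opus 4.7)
The plan is to argue by contradiction: assume $|Q_i(t)| > \eps' n_i$ for some $i \in [r]$ and $t \le T$, and extract a violation of the weighted $\eps'$-regularity of $F_i(t)$, contradicting $\cR_i(t)$. The natural test pair to probe will be $(Q_i(t), Y)$ with $Y := V_i^\ORD \cap V^\Free(t)$ on the $V_i$-side; the choice of $Y$ is the one subtle point, which I discuss at the end.

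The first observation is that $|A^\ORD_{s,y}|$ is non-increasing in $s$, because $A^\ORD_{s,y} = C^\ORD_{s,y} \cap V^\Free(s)$ and each of the two factors can only shrink (predecessors only get added, and free vertices only get used up). Hence every $y \in Q_i(t)$, which became critical at some earlier time $s \le t$ with $|A^\ORD_{s,y}| < \gamma n_i$, still satisfies
\[
|N_{F_i(t)}(y) \cap Y| \;=\; |A^\ORD_{t,y}| \;<\; \gamma n_i
\]
at the present time, even if $y$ has meanwhile been embedded. Moreover $|V_i^\ORD| = (1-\mu/10)n_i$ and at most $(1-\mu)n_i$ vertices of $X_i$ are embedded by time $T$, so $|Y| \ge (9\mu/10)n_i \gg \eps' n_i$ by~\eqref{eq:consts:eps:p}. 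Together with $|Q_i(t)| > \eps' n_i$ this makes the hypothesis of weighted $\eps'$-regularity applicable to $(Q_i(t), Y)$.

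The core computation is now a two-sided bound on $d_{\omega,t}(Q_i(t), Y)$. Using $\omega_t \le 1$ and the degree bound above,
\[
d_{\omega,t}(Q_i(t), Y) \;<\; \frac{\gamma n_i}{(9\mu/10)n_i} \;=\; \frac{10\gamma}{9\mu},
\]
while weighted $\eps'$-regularity combined with~\eqref{eq:aux:reg:weighted:density} gives
\[
d_{\omega,t}(Q_i(t), Y) \;\ge\; d_{\omega,t}(X_i, V_i) - \eps' \;\ge\; (1 - \tfrac{\sqrt{\eps}}{2})\delta^a - \eps'.
\]
The definition of $\gamma$ in~\eqref{eq:consts:gamma} and of $\eps'$ in~\eqref{eq:consts:eps:p} are tailored precisely so that these two are incompatible: $10\gamma/(9\mu) = c\delta^a/18 \le \delta^a/18$, whereas the lower bound is essentially $\delta^a$. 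This contradiction establishes the first statement. The "in particular" clause is then immediate: if $\cR_i$ holds for every $i$, the abort condition~\eqref{eq:cond:queue:abort} is never triggered, and by Lemma~\ref{lem:RGA:choices} the set $A(x)$ is non-empty in Step~1 throughout, so the RGA successfully embeds all of $H'$.

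I expect the main difficulty to be conceptual rather than computational: identifying the correct test set. Taking $Y = V_i$, or even $Y = V_i \cap V^\Free(t)$, would be too generous, because a critical vertex may have many neighbours in the already used ordinary part of $V_i$ or in the reserve $V_i^\SP$, and the desired upper bound on $d_{\omega,t}$ would fail. Restricting to $Y = V_i^\ORD \cap V^\Free(t)$ is exactly what the critical threshold $|A^\ORD_{\cdot,y}| < \gamma n_i$ controls, while the role of the reserved fraction $\mu/10$ of special vertices is precisely to guarantee that $|Y|$ remains linear in $n_i$ throughout the embedding stage, so that the regularity property of $F_i(t)$ can be applied.
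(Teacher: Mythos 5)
Your proposal is correct and follows essentially the same route as the paper: a contradiction argument testing the pair $(Q_i(t),\,V_i^\ORD\cap V^\Free(t))$, with the upper density bound $\gamma n_i/((9\mu/10)n_i)\le \delta^a/18$ coming from the criticality threshold (via monotonicity of the available ordinary candidate sets, which the paper leaves implicit) and the lower bound coming from~\eqref{eq:aux:reg:weighted:density} together with weighted $\eps'$-regularity. Your closing remarks on the choice of test set and the role of the reserved $\mu/10$-fraction match the paper's reasoning exactly.
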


\begin{proof}
The idea of the proof is the following. Vertices only become critical because their available candidate set is significantly smaller than the average available candidate set. In other words, the weighted density between the set of critical vertices and $V_i^{\Free}(t)$ deviates significantly from the weighted density of the auxiliary graph. Since the auxiliary graph is weighted regular it follows that there cannot be many critical vertices.

Indeed, assume for contradiction that there is $i\in[r]$ and $t\le T$ with $|Q_i(t)|>\eps' n_i$ and such that $F_i(t)$ is weighted $\eps'$-regular. Let $x\in Q_i(t)$ be an arbitrary critical vertex. Then $x$ is an ordinary vertex and the available (ordinary) candidate set $A_{t,x}^\ORD=C_{t,x}\cap V_i^\ORD\cap V_i^{\Free}(t)$ of $x$ got small, that is, 
\begin{align*}
 |C_{t,x}\cap V_i^\ORD\cap V_i^{\Free}(t)| \lByRef{eq:cond:queue:in} \gamma n_i \leByRef{eq:consts:gamma} \frac{\mu}{20}\delta^a n_i\,.
\end{align*}
In the language of the auxiliary graph this means that
\begin{align*}
\deg_{\omega,t}(x, V_i^\ORD \cap V_i^{\Free}(t)) &= \omega_t(x) |C_{t,x}\cap V_i^\ORD \cap V_i^{\Free}(t)| \le \frac{\mu}{20}\delta^a n_i .
\end{align*}
Moreover $|V_i^\ORD\cap V_i^{\Free}(t)| \ge |V_i^{\Free}(t)|-|V_i^\SP| \ge \tfrac{9}{10} \mu n_i\ge \eps' n_i$. This implies
\begin{align} \label{eq:weighted:upper-irreg}
 d_{\omega,t}( Q_i(t), V_i^\ORD\cap V_i^{\Free}(t) ) \le \frac{\mu/20\, \delta^a n_i}{9/10\, \mu n_i} = \frac1{18} \delta^a\, .
\end{align}
Since~\eqref{eq:aux:reg:weighted:density} and \eqref{eq:weighted:upper-irreg} imply that
\[ d_{\omega,t}(X_i,V_i) - d_{\omega,t}( Q_i(t), V_i^\ORD\cap V_i^{\Free}(t) ) \ge \frac12 \delta^a - \frac1{18} \delta^a > \eps', \]
but $F_i(t)$ is weighted $\eps'$-regular we conclude that $|Q_i(t)|< \eps' n_i$.
\end{proof}

Theorem~\ref{thm:Blow-up:arr:almost-span} is now immediate from the following lemma.

\begin{lem}\label{lem:rga:success}
  If we apply the RGA in the setting of
  Theorem~\ref{thm:Blow-up:arr:almost-span}, then with probability at least
  $2/3$ the event $\cR_i$ holds for all $i\in[r]$ and the RGA finds an
  embedding of $H'$ into $G$ (obeying the $R$-partitions of $H$ and $G$ and
  the image restrictions).
\end{lem}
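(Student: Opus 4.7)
The plan is to combine the three main technical lemmas of this section via a straightforward union bound, and then verify that once the two relevant high-probability events occur, the RGA both completes without halting and outputs a valid embedding respecting the $R$-partition and image restrictions.

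First, let $\cE_{\text{init}}$ denote the event that the \textsc{Initialisation} succeeds, i.e.\ that \eqref{eq:cond:size:special-neighbours} and \eqref{eq:cond:size:image-restricted} hold for all relevant $i,j,v,x$. By Lemma~\ref{lem:RGA:init} we have $\PP[\cE_{\text{init}}]\ge 5/6$. Let $\cE_{\text{reg}}$ denote the event that $\cR_i$ holds for every $i\in[r]$; by Lemma~\ref{lem:aux:reg} applied in the setting of Theorem~\ref{thm:Blow-up:arr:almost-span}, $\PP[\cE_{\text{reg}}]\ge 5/6$. A union bound then gives $\PP[\cE_{\text{init}}\cap\cE_{\text{reg}}]\ge 1-1/6-1/6 = 2/3$. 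The core of the proof is to show that on $\cE_{\text{init}}\cap\cE_{\text{reg}}$ the RGA never halts with failure and terminates with a valid embedding of $H'$.

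Next, I would argue that on this good event the algorithm runs to completion. The \textsc{Initialisation} does not halt with failure precisely because $\cE_{\text{init}}$ holds. In the \textsc{Embedding Stage}, Lemma~\ref{lem:RGA:choices} shows that at each time step $t$ the set $A(x_t)$ from which $\phi(x_t)$ is drawn has size at least $(\gamma/2)n_i>0$, so \emph{Step~1} is always well defined and does not produce an empty choice. The only remaining way to fail is triggering the critical-set abort condition \eqref{eq:cond:queue:abort}. But Lemma~\ref{lem:reg-small-queue} guarantees that, whenever $\cR_i(t)$ holds, $|Q_i(t)|\le\eps' n_i$. Since $\cE_{\text{reg}}$ says $\cR_i(t)$ holds for \emph{every} $t\le T$ and every $i\in[r]$, the abort in \emph{Step~3} never fires. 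Hence the algorithm processes $t=1,2,\dots,T=|H'|$ and halts with success.

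Finally, I would verify that the output $\phi$ is indeed an embedding with the required properties. By construction, for every $x\in X_i$ the candidate set $C_{t,x}$ remains a subset of $V_i$ at every time $t$; since $\phi(x)\in A(x)\subseteq C_{t(x),x}$, we obtain $\phi(V(H')\cap X_i)\subseteq V_i$, so the $R$-partition is respected. For $x\in S_i$, the \textsc{Initialisation} sets $C_{1,x}=I(x)$ and candidate sets only shrink, so $\phi(x)\in I(x)$. For each edge $\{x,y\}\in E(H')$ with, say, $x\prec y$, when $x$ is embedded we update $C_{t(x)+1,y}\subseteq N_G(\phi(x))$; since $C_{t,y}$ is monotone non-increasing in~$t$, this forces $\phi(y)\in N_G(\phi(x))$, so edges of $H'$ map to edges of $G$. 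Injectivity is immediate because $\phi(x)$ is always chosen from $A(x)\subseteq V^{\Free}(t(x))$, which excludes previously used image vertices. No step here is genuinely difficult—the content is in Lemmas~\ref{lem:RGA:init}, \ref{lem:RGA:choices}, \ref{lem:aux:reg}, and \ref{lem:reg-small-queue}; the only thing one has to take care with is making sure the bookkeeping of the two failure events and the three halting conditions is exhaustive.
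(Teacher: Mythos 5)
Your proposal is correct and follows essentially the same route as the paper: a union bound combining Lemma~\ref{lem:RGA:init} and Lemma~\ref{lem:aux:reg} (each with probability at least $5/6$), then Lemma~\ref{lem:reg-small-queue} to rule out the abort condition~\eqref{eq:cond:queue:abort}, concluding success with probability at least $2/3$. Your explicit verification that the output respects the $R$-partition, the image restrictions, edges, and injectivity is only spelled out a bit more fully than the paper's closing remark ``by definition of the algorithm,'' but it is the same argument.
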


\begin{proof}[of Lemma~\ref{lem:rga:success}]
Let $C,a,\Delta_R,\kappa$ and $\delta,c,\mu$ be given. Set the constants $\gamma,\eps,\alpha$ as in \eqref{eq:consts:gamma}-\eqref{eq:consts:alpha}. Let $r$ be given and choose $n_0,\xi$ as in~\eqref{eq:consts:xi}-\eqref{eq:consts:n}. Further let $R$ be a graph of order $r$ with $\Delta(R)<\Delta_R$ and let $G,H,H'$ have the required properties. Run the RGA with these settings. The \textsc{Initialisation} succeeds with probability at least $5/6$ by Lemma~\ref{lem:RGA:init}. It follows from Lemma~\ref{lem:aux:reg} that $\cR_i$ occurs for all $i\in [r]$ with probability at least $5/6$. This implies that no critical set $Q_i$ ever violates the bound~\eqref{eq:cond:queue:abort} by Lemma~\ref{lem:reg-small-queue}. Thus the \textsc{Embedding Stage} also succeeds with probability $5/6$. We conclude that the RGA succeeds with probability at least $2/3$. Thus an embedding $\phi$ of $H'=H[X_1'\dcup\dots\dcup X_r']$ into $G$ which maps $X_i'$ into $V_i$ exists. Moreover this embedding guarantees $\phi(x)\in I(x)$ for all $x\in S_i\cap X_i'$ by definition of the algorithm.
\end{proof}

At the end of this section we want to point out that the minimum degree bound for $H$ in Theorem~\ref{thm:Blow-up:arr:almost-span} can be increased even further if we swap the order of the quantifiers. More precisely, for a fixed graph $R$ we may choose $\eps$ such that almost spanning subgraphs of linear maximum degree can be embedded into a corresponding $(\eps,d)$-regular $R$-partition.

\begin{thm} \label{thm:Blow-up:arr:almost-span:lin}
Given a graph $R$ of order $r$ and positive parameters $a,\kappa,\delta,\mu$ there are $\eps,\xi>0$ such that the following holds.
  Assume that we are given 
  \renewcommand{\theenumi}{\alph{enumi}}
  \begin{enumerate}
    \item a graph $G$ with a $\kappa$-balanced
      $(\eps,\delta)$-regular $R$-partition $V(G)=V_1\dcup\dots\dcup
      V_r$ with $|V_i|=:n_i$ and
    \item an $a$-arrangeable graph $H$ with maximum degree $\Delta(H)\le \xi n$ (where $n=\sum n_i$), together with a corresponding $R$-partition $V(H)=X_1\dcup\dots\dcup X_r$ with $|X_i|\le(1-\mu)n_i$.
  \end{enumerate}
  Then there is an embedding $\phi\colon V(H) \to V(G)$ such that
  $\phi(X_i)\subseteq V_i$.
\end{thm}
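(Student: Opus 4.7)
The plan is to adapt the randomised greedy algorithm (RGA) of Section~\ref{sec:almost-spanning}, exploiting the fact that $\eps$ is now allowed to depend on the reduced graph~$R$ (in particular on $r$ and on $\Delta_R:=\Delta(R)+1$). I would choose $\eps$ small enough (depending on $R$ and on $a,\kappa,\delta,\mu$) so that $\Delta_R 2^{a+1}\eps\ll \delta^a\mu$, and then~$\xi\ll\eps$.

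Since~$G$ is only $(\eps,\delta)$-regular rather than super-regular, I would first \emph{clean}~$G$: for each $i\in[r]$ and $j\in N_R(i)$, at most $\eps n_i$ vertices of $V_i$ have fewer than $(\delta-\eps)n_j$ neighbours in~$V_j$, so deleting all such bad vertices leaves sub-clusters $V'_i$ of size at least $(1-\Delta_R\eps)n_i\ge |X_i|+(\mu/2)n_i$, with each pair $(V'_i,V'_j)$ now effectively super-regular and $(\mu/2)n_i$ free vertices per cluster. Then I run the RGA of Section~\ref{sec:almost-spanning} on the cleaned graph, simplified so that there are no image restrictions ($S_i=\emptyset$, $C=1$) and no special-vertex set is reserved ($V_i^{\SP}=\emptyset$); the $\mu/2$-slack in each~$V'_i$ plays the r\^ole of~$V_i^{\SP}$. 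The arrangeability-based bound~\eqref{eq:RGA:excludes} still shows that at each step the number of atypical candidates is at most $\Delta_R 2^{a+1}\eps n_i$, which is negligible compared to the candidate set of size at least $\delta^a(\mu/2)n_i$, so typical candidates remain abundant throughout and the candidate-set analysis of Lemmas~\ref{lem:RGA:candidate-sets} and~\ref{lem:RGA:choices} goes through.

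The main obstacle is the concentration step, Lemma~\ref{lem:aux:reg}. The Chernoff bound used there required the stable sets~$K_k$ to have size $\Omega(\log n_i)$, which is exactly what forced $\Delta(H)\le\sqrt n/\log n$ via~\eqref{eq:consts:xi}; for linear~$\Delta(H)$ the sets~$K_k$ have only constant size and Chernoff no longer applies. One remedy is to replace Chernoff by Markov's inequality on the expected number of bad pairs, which is $O(\eps)\binom{n_i}{2}$ by~\eqref{eq:aux:reg:prob}; a union bound over pairs and relevant time steps then suffices provided $\eps$ is chosen sufficiently small relative to~$r$, which is now permitted. An alternative is to bypass the weighted-regularity framework entirely and maintain directly the deterministic invariant $|A_t(z)|\ge(\delta-\eps)^{\pi(t,z)}(\mu/4)n_{i(z)}$ for every unembedded~$z$: cross-cluster shrinks preserve it whenever $\phi(x_t)$ is jointly typical for the at most $\Delta_R 2^a$ distinct successor candidate sets, while same-cluster removals are absorbed by the slack and by the fact that, by typicality, any embedded same-cluster vertex lies in a fixed~$A_t(z)$ in only a bounded fraction of its steps, which an amortised double-counting argument can make precise. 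Either route completes the proof.
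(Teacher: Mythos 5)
Your first remedy is exactly the paper's route: since $\eps$ may now depend on $r$, one no longer needs a $1-n_i^{-1}$ bound per cluster (and hence no Chernoff over the stable-set partition $K_1\dcup\dots\dcup K_\ell$); instead one bounds $\EE\bigl[|W_i^{(2)}|\bigr]$ by $2a\tfrac{4\eps}{\gamma}\binom{n_i}{2}+a\Delta(H)n_i$ using~\eqref{eq:aux:reg:prob}, chooses $\eps,\xi$ so that this is at most $\tfrac{\sqrt[4]{\eps}}{6r}\binom{n_i}{2}$, and finishes by Markov plus a union bound over $i\in[r]$. Your observation that $G$ is only $(\eps,\delta)$-regular and therefore needs a cleaning step (or an equivalent fix to Lemma~\ref{lem:RGA:init}) before the RGA runs is a fair catch; the paper's sketch does not spell this out.

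However, there is a genuine gap in the surrounding setup you describe. You propose to run the RGA with $V_i^{\SP}=\emptyset$, claiming the $\mu/2$-slack in $V_i'$ ``plays the r\^ole of $V_i^{\SP}$''. This does not work: the special set is not just spare room, it is a \emph{random} subset reserved at the start so that $C_{t,z}\cap V^{\SP}$ stays proportional to $C_{t,z}$ throughout, and so that critical vertices (which Lemma~\ref{lem:reg-small-queue} only bounds by $\eps' n_i$, it does not eliminate them) still have a candidate pool of size $\Omega(\gamma n_i)$ to land in. Having $\Omega(\mu n_i)$ free vertices at time $T$ gives no lower bound on $|C_{T,z}\cap V^{\Free}(T)|$ for a critical $z$, since the used vertices might well be concentrated inside $C_{T,z}$. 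So the special-vertex mechanism should be kept. Your second proposed remedy (maintaining the deterministic invariant $|A_t(z)|\ge(\delta-\eps)^{\pi(t,z)}(\mu/4)n_{i(z)}$ via ``an amortised double-counting argument'') is too vague to verify; the same-cluster-removal issue you gesture at is precisely what the weighted-regularity framework and the critical-set/special-set machinery were introduced to control, and I do not see how the claimed invariant is preserved without something of that strength.
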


\begin{proof}[(sketch)]
Theorem~\ref{thm:Blow-up:arr:almost-span:lin} is deduced along the lines of the proof of Theorem~\ref{thm:Blow-up:arr:almost-span}. 

Once more the randomised greedy algorithm from
Section~\ref{sec:rga:definition} is applied. It finds an embedding of $H$
into $G$ if all auxiliary graphs $F_i(t)$ remain weighted regular
throughout the \textsc{Embedding Stage}. This in turn happens if each
auxiliary graph $F_i(t)$ contains few pairs $\{x,y\}\in \binom{X_i}2$ whose
weighted co-degree deviates from the expected value. 

In the setting of Theorem~\ref{thm:Blow-up:arr:almost-span} this is the case with positive probability as has been proven in Lemma~\ref{lem:aux:reg}: Inequality~\eqref{eq:aux:almost-final-cond} states that the number of pairs with incorrect co-degree exceeds the bound of~\eqref{eq:aux:reg:co-deg:cond:2} with probability at most $1-n_i^{-1}$. This particular argument is the only part of the proof of Theorem~\ref{thm:Blow-up:arr:almost-span} that requires the degree bound of $\Delta(H)\le \xi n/\log n$. 
We then used~\eqref{eq:aux:almost-final-cond} and a union bound over
$i\in[r]$ to show that 
all auxiliary graphs $F_i(t)$ remain weighted regular throughout the
\textsc{Embedding Stage} with probability at least $5/6$. Since $r$ can be
large compared to all constants except $n_0$ we need the bound $1-n_i^{-1}$ in~\eqref{eq:aux:almost-final-cond}.

In the setting of Theorem~\ref{thm:Blow-up:arr:almost-span:lin} however it
suffices to replace this bound by a constant. More precisely, since we are allowed to choose $\eps$ depending on the order of $R$ the proof of Lemma~\ref{lem:aux:reg} becomes even simpler: Set $\eps,\xi$ small enough to ensure $8a\eps/\gamma+2a\kappa r\xi\le \sqrt[4]\eps/(6r)$.
Note that inequality~\eqref{eq:aux:reg:prob} then implies that the expected number of pairs $\{y,z\}\in \binom{X_i}2$ with incorrect co-degree is bounded by $2a\frac{4\eps}\gamma \binom{n_i}2+a\Delta(H)n_i \le \frac{\sqrt[4]{\eps}}{6r}\binom{n_i}2$. 

It follows from Markov's inequality and the union bound over all $i\in[r]$ that all auxiliary graphs $F_i(t)$ remain weighted regular throughout the \textsc{Embedding Stage} with probability at least $5/6$.
Choosing $\eps$ sufficiently small we can thus guarantee that the randomised greedy algorithm successfully embeds $H$ into $G$ with positive probability.
\end{proof}

Using the classical approach of Chvatal, R\"odl, Szemer\'edi, and
Trotter~\cite{CRST} Theorem~\ref{thm:Blow-up:arr:almost-span:lin} easily
implies that all $a$-arrangeable graphs have linear Ramsey numbers. This
result has first been proven (using the approach of~\cite{CRST}) by Chen and Schelp~\cite{CheSch93}.

\section{The spanning case} \label{sec:spanning}

In this section we prove our main result, Theorem~\ref{thm:Blow-up:arr:full}. We use the randomised greedy algorithm and its analysis from Section~\ref{sec:almost-spanning} to infer that the almost spanning embedding found in Theorem~\ref{thm:Blow-up:arr:almost-span} can in fact be extended to a spanning embedding. We shortly describe our strategy in Section~\ref{sec:Proof:outline} and establish a minimum degree bound for the auxiliary graphs in Section~\ref{sec:Proof:aux:min-deg} before we give the proof of Theorem~\ref{thm:Blow-up:arr:full} in Section~\ref{sec:Proof:Blow-up:arr:full}. We conclude this section with a sketch of the proof of Theorem~\ref{thm:Blow-up:arr:ext} in Section~\ref{sec:Proof:Blow-up:arr:ext}.

\subsection{Outline of the proof} \label{sec:Proof:outline}

Let $G, H$ satisfy the conditions of Theorem~\ref{thm:Blow-up:arr:full}.
We first use Lemma~\ref{lem:arr:StableEnding} to order the
vertices of~$H$ such that the arrangeability of the resulting order is
bounded and its last $\mu n$ vertices form a stable set~$W$.  We then run
the RGA to embed the almost spanning subgraph $H'=H[X\setminus W]$ into~$G$. The RGA is
successful and the resulting auxiliary graphs $F_i(T)$ are all
weighted regular (that is, $\cR_i$ holds) with probability $2/3$ by
Lemma~\ref{lem:rga:success}.

It remains to extend the embedding of $H'$ to an embedding of $H$.
Since~$W$ is stable it suffices to find for each $i\in[r]$ a bijection
between 
\begin{align*}
  L_i \deff X_i \setminus W
\end{align*}
and $V_i^{\Free}(T)$ which respects the candidate sets, i.e., which maps $x$ into $C_{T,x}$. Such a bijection is given by a perfect matching in $F_i^*\deff F_i(T)[L_i\dcup V_i^{\Free}(T)]$, which is the subgraph of $F_i(T)$ induced by the vertices left after the \textsc{Embedding Phase} of the RGA. 

By Lemma~\ref{lem:reg:weighted:matching} balanced weighted regular pairs
with an appropriate minimum degree bound have perfect matchings.  Now,
$(L_i,V_i^{\Free})$ is a subpair of a weighted regular pair and thus
weighted regular itself by Proposition~\ref{prop:regular:sub}. Hence our main goal is to
establish a minimum degree bound for $(L_i,V_i^{\Free})$. More precisely we
shall explain in Section~\ref{sec:Proof:aux:min-deg} that it easily
follows from the definition of the RGA that vertices in $L_i$ have the
appropriate minimum degree if $\cR_i$ holds.

\begin{prop} \label{prop:aux:left-min-deg} 
Run the RGA in the setting of Theorem~\ref{thm:Blow-up:arr:full} and assume that $\cR_j$ holds for all $j\in [r]$. Then every $x\in L_i$ has
\[ \deg_{F_i(T)}(x,V_i^{\Free}(T)) \ge 3\sqrt{\eps'}n_i.\]
\end{prop}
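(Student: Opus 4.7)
The plan is to read the bound straight off the third conclusion of Lemma~\ref{lem:RGA:choices}, so I do not expect any substantive obstacle here; the real work has already been done in that lemma. First, since $\cR_j$ holds for every $j\in[r]$, Lemma~\ref{lem:reg-small-queue} implies that the RGA completes the \textsc{Embedding Stage} successfully at time~$T$. A vertex $x\in L_i$ lies in the stable ending, i.e.\ in $W\cap X_i$, and the RGA only embeds vertices of $H'=H[X\setminus W]$; hence $x$ is not embedded during the \textsc{Embedding Stage}. The last part of Lemma~\ref{lem:RGA:choices} therefore applies to~$x$ and yields
\[
   |A_{T,x}^{\SP}|\ge \tfrac{7\gamma}{10}\, n_i.
\]

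Next I would translate this bound into the auxiliary-graph language. By the \textsc{Initialisation} of the RGA we have $C_{1,x}\subseteq V_i$, and the update rule in Step~2 only ever shrinks $C_{t,x}$, so $C_{T,x}\subseteq V_i$. Consequently $A_{T,x}=C_{T,x}\cap V^{\Free}(T)=C_{T,x}\cap V_i^{\Free}(T)$. Since by definition $yv\in E(F_i(T))$ iff $v\in C_{T,y}$, it follows that
\[
   \deg_{F_i(T)}(x,V_i^{\Free}(T))=|C_{T,x}\cap V_i^{\Free}(T)|=|A_{T,x}|\ge |A_{T,x}^{\SP}|\ge \tfrac{7\gamma}{10}\, n_i.
\]

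The final step is a numerical check: from~\eqref{eq:consts:eps:p} we have $\eps'\le(7\gamma/30)^2$, hence $3\sqrt{\eps'}\le 7\gamma/10$, which yields the desired inequality $\deg_{F_i(T)}(x,V_i^{\Free}(T))\ge 3\sqrt{\eps'}\,n_i$. The substantive content — bounding the number of vertices in $X_i$ that can ever consume $x$'s special candidates, namely the important set $X_i^{*}$ (via~\eqref{eq:important:ub}) and the critical set $Q_i(T)$ (via~\eqref{eq:cond:queue:abort}) — is already absorbed into Lemma~\ref{lem:RGA:choices}, so the present proposition is really just a repackaging of that bound into the weighted-regular auxiliary-graph framework used in the matching argument of Section~\ref{sec:spanning}, together with the sanity check on $\eps'$.
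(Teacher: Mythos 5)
Your proposal is correct and follows essentially the same route as the paper: invoke Lemma~\ref{lem:reg-small-queue} to get successful completion, apply the last assertion of Lemma~\ref{lem:RGA:choices} to the unembedded vertex $x\in L_i$ to get $|A_{T,x}|\ge|A_{T,x}^{\SP}|\ge\tfrac{7\gamma}{10}n_i$, identify $|A_{T,x}|$ with $\deg_{F_i(T)}(x,V_i^{\Free}(T))$, and finish with the constant check from~\eqref{eq:consts:eps:p}. The extra care you take in unwinding the definition of $F_i(T)$ and in reading $L_i$ as the unembedded vertices of the stable ending (as the paper's own proof does) is fine and matches the intended argument.
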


For vertices in~$V_i^{\Free}$ on the other hand this is not necessarily true. But it holds with sufficiently high probability. This is also proved in Section~\ref{sec:Proof:aux:min-deg}.

\begin{lem}\label{lem:aux:right-min-deg}
Run the RGA in the setting of Theorem~\ref{thm:Blow-up:arr:full} and assume that $\cR_j$ holds for all $j\in[r]$. Then we have
\[ 
\PP\left[\forall i\in [r],~\forall v\in V_i^{\Free}(T): \deg_{F_i(T)}(v,L_i) \ge 3\sqrt{\eps'}n_i \right] \ge \frac23 \, .
\]
\end{lem}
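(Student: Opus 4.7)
The plan is to bound, for each fixed $i\in[r]$ and each fixed $v\in V_i$, the probability $\PP[\deg_{F_i(T)}(v,L_i)<3\sqrt{\eps'}n_i]$ by, say, $1/(6rn)$, and then union bound over all such pairs; since $V_i^{\Free}(T)\subseteq V_i$, this yields the desired conclusion. Throughout, I restrict attention to $x\in L_i':=L_i\setminus S_i$; since $|S_i|\le\alpha n_i\ll |L_i|$ we have $|L_i'|\ge\mu n_i/2$, and for such $x$ the initial candidate set is $C_{1,x}=V_i$, so $v\in C_{T,x}$ is equivalent to $\phi(y)\in N_G(v)$ for every $y\in N^-(x)$.

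For the concentration step I mimic the partitioning argument from the proof of Lemma~\ref{lem:aux:reg}. Let the conflict graph on $L_i'$ have an edge between $x$ and $x'$ whenever $N^-(x)\cap N^-(x')\neq\emptyset$; its maximum degree is at most $a\Delta(H)\le a\sqrt{n}/\log n$, so Theorem~\ref{thm:HajnalSzemeredi} yields a partition $L_i'=K_1\dcup\cdots\dcup K_\ell$ into stable sets, each of size at least of order $\mu\sqrt{n}\log n/(a\kappa r)$. Inside each $K_k$ the sets in $\{N^-(x):x\in K_k\}$ are pairwise disjoint, so Lemma~\ref{lem:pseudo:Chernoff:tuple} applies with $\cI=\{N^-(x):x\in K_k\}$ and indicators $\cA_y:=\mathbf{1}[\phi(y)\in N_G(v)]$.

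To invoke Lemma~\ref{lem:pseudo:Chernoff:tuple} I need a uniform conditional lower bound $\PP[\cA_y=1\mid\text{history}]\ge p$ of order $\delta$: one then gets $|\{x\in K_k:v\in C_{T,x}\}|\ge\tfrac12 p^a|K_k|$ with failure probability at most $2\exp(-p^a|K_k|/12)$, which is super-polynomially small since $|K_k|$ is of order $\sqrt{n}\log n$. Summing over $k$ would yield $\deg_{F_i(T)}(v,L_i)\ge \tfrac14\delta^a\mu n_i\gg 3\sqrt{\eps'}n_i$ by the choice of constants in~\eqref{eq:consts:eps:p}. By Lemma~\ref{lem:RGA:choices}, $\phi(y)$ is chosen uniformly from $A(y)\subseteq V_{i(y)}$ with $|A(y)|\ge(\gamma/2)n_{i(y)}\ge\eps n_{i(y)}$, so the required conditional probability equals $|N_G(v)\cap A(y)|/|A(y)|$, and the union bound over the $O(n)$ pairs $(i,v)$ is easily absorbed.

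The hard part is making this conditional probability bound uniform in $v$. Indeed, $\eps$-regularity of the pair $(V_i,V_{i(y)})$ only controls $|N_G(v)\cap A(y)|/|A(y)|$ for all but $\eps n_i$ ``exceptional'' vertices $v\in V_i$, and the exceptional set depends on the random history producing $A(y)$. I plan to handle this by introducing, for the fixed $v$ under consideration, auxiliary ``good history'' events asserting that $v$ is not exceptional at any time step at which a predecessor of some $x\in L_i'$ is embedded. A double-counting argument using $\eps$-regularity together with the super-regularity lower bound $|N_G(v)\cap V_{i(y)}|\ge\tfrac{\delta}{2}n_{i(y)}$ from~\eqref{eq:min-deg}, and a further application of Lemma~\ref{lem:pseudo:Chernoff}, should show that these good events hold simultaneously except with probability below $1/(12rn)$; on that event the Lemma~\ref{lem:pseudo:Chernoff:tuple} computation sketched above goes through with a uniform $p$ of order $\delta$. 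A final union bound over $i\in[r]$ and $v\in V_i$ then delivers the claimed probability of at least $2/3$.
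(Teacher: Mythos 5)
Your high-level plan (conflict-free partition plus Lemma~\ref{lem:pseudo:Chernoff:tuple}) points in the right direction, and you correctly identify the crux: obtaining a conditional lower bound $\PP[\phi(y)\in N_G(v)\mid\text{history}]\ge p$ that is uniform in $v$. But the step you flag as ``the hard part'' is essentially the entire content of the lemma, and your sketched fix does not resolve it.

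Two concrete problems. First, you work with $L_i'=L_i\setminus S_i$, but this set is far too large for the argument to be controllable. The paper deliberately restricts attention to the much smaller set $L_i^*$ (the last $\lambda n_i$ vertices of $X_i\setminus N(S)$) precisely because predecessors of these vertices lie in $X^*\setminus S$ and are therefore embedded into the \emph{special} sets $V^\SP$; consequently $A(y)\approx C_{t(y),y}^\SP$, which starts from the clean base $C^\SP_{1,y}=V^\SP_{j(y)}$ and shrinks by a factor of $(\delta\pm\eps)$ each time a vertex of $N^-(y)$ is embedded. For a generic $y\in N^-(L_i')$ embedded ordinarily, one has no comparable control over $A^\ORD_{t(y),y}$ relative to $N_G(v)$, since the free pool $V^\Free$ has been depleted in a way one cannot track for each fixed $v$.

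Second, and more fundamentally, the ``good history'' event you propose does not combine cleanly with Lemma~\ref{lem:pseudo:Chernoff:tuple}, whose hypothesis requires the lower bound $p$ conditional on \emph{every} outcome of the earlier indicators. To engineer the needed control, the paper tracks for each fixed $v$ how the ratio $|C^\SP_{t,y}\cap N_G(v)|/|C^\SP_{t,y}|$ evolves when each $z\in N^-(y)$ is embedded (the events $\cB_{v,x}(z)$ in Proposition~\ref{prop:hdp:predecessors}), and these events depend on the embeddings of the \emph{second-order} predecessors $N^-(N^-(x))$. Making the $\cB$-events ``pseudo-independent'' across different $x$ therefore requires a partition of $L_i^*$ into classes that are predecessor disjoint \emph{of second order}, not merely predecessor disjoint as in your conflict graph. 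This is precisely why the maximum degree bound $\Delta(H)\le\sqrt n/\log n$ (rather than $\xi n/\log n$) appears in Theorem~\ref{thm:Blow-up:arr:full}: the second-order conflict graph has degree $\approx(a\Delta(H))^2$, and one still needs classes of size $\Omega(\log n)$. Your first-order-only partition misses this layer, so the uniform $p$ you need is not established by any argument you give, and I do not see how the vague double-counting sketch could supply it without, in effect, reintroducing the $\cB$-events and the second-order structure.
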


\subsection{Minimum degree bounds for the auxiliary graphs} \label{sec:Proof:aux:min-deg}

In this section we prove Proposition~\ref{prop:aux:left-min-deg} and Lemma~\ref{lem:aux:right-min-deg}. For the former we need to show that vertices $x\in L_i$ have an appropriate minimum degree in $F_i^*$, which is easy.

\begin{proof}[of Proposition~\ref{prop:aux:left-min-deg}]
Since $\cR_j$ holds for all $j\in [r]$ the RGA completed the \textsc{Embedding Stage} successfully by Lemma~\ref{lem:reg-small-queue}. Note that all $x \in L_i$ did not get embedded yet. Thus 
\begin{align*}
    \deg_{F_i(T)}(x,V_i^{\Free}(T)) = |A_{T,x}| \ge |A_{T,x}^\SP| &\ge \tfrac{7}{10}\gamma n_i \geByRef{eq:consts:eps:p} 3\sqrt{\eps'} n_i\,,
\end{align*}
for every $x\in L_i$ by Lemma~\ref{lem:RGA:choices}.
\end{proof} 

Lemma~\ref{lem:aux:right-min-deg} claims that vertices in $V_i^{\Free}(T)$ with positive probability also have a sufficiently large degree in $F_i^*$. We sketch the idea of the proof.

Let $x\in L_i$ and $v\in V_i^{\Free}(T)$ for some $i\in[r]$. Recall that there is an edge $xv \in E(F_i(T))$ if and only if $\phi(N^-(x)) \subseteq N_G(v)$. So we aim at lower-bounding the probability that $\phi(N^-(x))\subseteq N_G(v)$ for many vertices $x\in L_i$. 

Now let $y \in N^-(x)$ be a predecessor of~$x$. Recall that~$y$ is randomly embedded into~$A(y)$, as defined in~\eqref{eq:Ax}. Hence the probability that $y$ is embedded into $N_G(v)$ is $|A(y)\cap N_G(v)|/|A(y)|$. Our goal will now be to show that these fractions are bounded from below by a constant for all predecessors of many vertices~$x\in L_i$, which will then imply Lemma~\ref{lem:aux:right-min-deg}. To motivate this constant lower bound observe that a random subset $A$ of $X_j$ satisfies $|A\cap N_G(v)|/|A|=|N_G(v)\cap V_j|/|V_j|$ in expectation, and the right hand fraction is bounded from below by $\delta/2$ by~\eqref{eq:min-deg}. For this reason we call the vertex $v$ \emph{likely for $y\in X_j$} and say that $\cA_v(y)$ holds, if
\begin{align*}
%  \label{reg:aux:reg-vert}
  \frac{|A(y)\cap N_G(v)|}{|A(y)|} \ge \frac{2}{3}\frac{|V_j \cap N_G(v)|}{|V_j|}\,.
\end{align*}
Hence it will suffice to prove that for every $v\in V_i$ there are many $x\in L_i$ such that $v$ is likely for all $y\in N^-(x)$. 

We will focus on the last $\lambda n_i$ vertices $x$ in $L_i\setminus N(S)$ (i.e., on vertices $x\in L_i^*$) as we have a good control over the embedding of their predecessors (who are in $X^*\setminus S$). Note that there indeed are $\lambda n_i$ vertices in $L_i \setminus N(S)$ as $\mu n_i - \alpha n_i \ge \lambda n_i$. For $i\in[r]$ and $v\in V_i$ we define
\begin{align*}
L_i^*(v)&\deff\{x\in L_i^*: \cA_v(y)\text{ holds for all $y\in N^-(x)$}\}\,.
\end{align*}
Our goal is to show that a positive proportion of the vertices in~$L_i^*$ will be in $L_i^*(v)$. The following lemma makes this more precise.

\begin{lem} \label{lem:aux:rmd:Avy}
We run the RGA in the setting of Theorem~\ref{thm:Blow-up:arr:full} and assume that $\cR_j$ holds for all $j\in[r]$. 
Then
\[ 
  \PP\left[\forall i\in[r], \forall v\in V_i: |L_i^*(v)|\ge 2^{-a^2-1}|L_i^*|\right] \ge \frac56\,.
\]
\end{lem}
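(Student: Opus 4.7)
Fix $i\in[r]$ and $v\in V_i$; the plan is to lower-bound $|L_i^*(v)|$ for this choice and then take a union bound over the at most $n$ choices of $(i,v)$. The approach is to show that for every $x\in L_i^*$ the event $E_x := \bigcap_{y\in N^-(x)} \cA_v(y)$ has probability at least $2^{-a^2}$, and then to apply Lemma~\ref{lem:pseudo:Chernoff:tuple} on a Hajnal--Szemer\'edi partition of $L_i^*$ for concentration.

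Every $y\in N^-(x)$ lies in $X_j^*\setminus S_j$ for some $j\in N_R(i)$ (since $x\in X_i\setminus N(S)$), and $A(y)\subseteq V_j^{\SP}$ is essentially determined by the random embeddings of $N^-(y)$, up to the small deficit from $V^\Free$ and the $A'$-filter that is controlled by Lemma~\ref{lem:RGA:choices}. Hence $E_x$ is essentially determined by the embedding of the grand-predecessor set $U_x := N^-(N^-(x))$, which by $a$-arrangeability satisfies $|U_x|\le a^2$. I call $w\in U_x\cap X_k$ \emph{$v$-typical} if for every $y\in N^+(w)\cap N^-(x)$ in $X_j$ the intersection with $N_G(\phi(w))$ preserves the $N_G(v)$-density of $B:=C^{\SP}_{t(w),y}$ up to an additive $\eps$. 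By $\eps$-regularity of $(V_k,V_j)$ applied to $B$ and $B\cap N_G(v)$---both of size $\Omega(\mu\delta^a n_j)\gg \eps|V_j|$ by Lemma~\ref{lem:RGA:candidate-sets} and~\eqref{eq:cond:size:special-neighbours}---at most $2a\eps|V_k|$ positions in $V_k$ fail typicality for the $\le a$ relevant $y$'s, and since $|A(w)|\ge (\gamma/2)n_k$ this is at most $|A(w)|/2$. Thus $\PP[w$ is $v$-typical $\mid\text{history}]\ge 1/2$ for every history compatible with $\cR_j$ holding for all~$j$. When all $w\in U_x$ are $v$-typical, telescoping over the $\le a$ shrinking steps that build $C^{\SP}_{t(y),y}$ keeps its $N_G(v)$-density within $|N_G(v)\cap V_j|/|V_j|\pm O(a\eps)$, and the passage from $C^{\SP}_{t(y),y}$ to $A(y)$ perturbs this ratio by at most $O((\lambda+\eps')/(\mu\delta^a))$; by the hierarchy $\lambda,\eps'\ll\gamma\ll\mu\delta^a$ these errors still leave the ratio above $\tfrac23|N_G(v)\cap V_j|/|V_j|$, so $\cA_v(y)$ holds for every $y\in N^-(x)$ and $E_x$ occurs.

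For concentration, define a conflict graph on $L_i^*$ with an edge between $x_1,x_2$ whenever $U_{x_1}\cap U_{x_2}\ne\emptyset$. For each fixed $w$, the set $\{x\in L_i^*:w\in U_x\}$ is contained in $N^+(N^+(w))$ of size at most $\Delta(H)^2\le n/\log^2 n$; combined with $|U_{x_1}|\le a^2$ this bounds the conflict graph's maximum degree by $a^2 n/\log^2 n$. Theorem~\ref{thm:HajnalSzemeredi} therefore partitions $L_i^*$ into stable classes of size $m=\Omega(\lambda\log^2 n/(a^2\kappa r))$. Inside each class the family $\{U_x\}$ is pairwise disjoint with $|U_x|\le a^2$, so Lemma~\ref{lem:pseudo:Chernoff:tuple} applied with $p=1/2$ and maximum tuple size $a^2$ yields that at least $2^{-a^2-1}m$ of the $x$'s in the class satisfy $E_x$, except with failure probability $2\exp(-2^{-a^2}m/12)\le n^{-3}$. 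Summing the class contributions gives $|L_i^*(v)|\ge 2^{-a^2-1}|L_i^*|$, and union-bounding over all $i\in[r]$ and $v\in V_i$ (at most $n$ events, each failing with probability $O(n^{-2})$) delivers the claimed probability~$5/6$.

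The main obstacle is establishing the $1/2$ lower bound on $\PP[w\text{ is }v\text{-typical}\mid\cdot]$ \emph{uniformly} across algorithm histories. This reduces to verifying that the shrinking sets $C^{\SP}_{t,y}$ and their intersections $C^{\SP}_{t,y}\cap N_G(v)$ never fall below $\eps|V_j|$ (so that $\eps$-regularity can be invoked at each shrinking step), and that the accumulated additive error of the $\le a$ telescoping steps, together with the deficits from $V^\Free$ and from the $A'$-filtering, consumes strictly less than $\tfrac13|N_G(v)\cap V_j|/|V_j|$. Both points are quantitative consequences of Lemma~\ref{lem:RGA:candidate-sets}, the special-vertex condition~\eqref{eq:cond:size:special-neighbours}, and the constant hierarchy fixed in~\eqref{eq:consts:gamma}--\eqref{eq:consts:n}.
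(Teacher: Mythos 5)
Your proof follows essentially the same route as the paper's: partition $L_i^*$ via Hajnal--Szemer\'edi on a conflict graph built from shared second-order predecessors $U_x=N^-(N^-(x))$, show that each grand-predecessor $w\in U_x$ is ``$v$-typical'' with conditional probability at least $1/2$ using $\eps$-regularity and the lower bound $|A(w)|\ge(\gamma/2)n_k$ from Lemma~\ref{lem:RGA:choices}, telescope to deduce $\cA_v(y)$ for all $y\in N^-(x)$ when every $w\in U_x$ is typical, then apply Lemma~\ref{lem:pseudo:Chernoff:tuple} with $p=1/2$ and tuple size $a^2$ and union-bound over $(i,v)$. Your notion of ``$v$-typical'' is exactly the paper's event $\cB_{v,x}(z)$ (up to a constant in the additive error, which the paper takes as $2\eps/(\delta-\eps)$ rather than $\eps$), and your second step reproduces Proposition~\ref{prop:hdp:predecessors} informally.

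One small point worth being aware of: the paper's conflict relation (``predecessor disjoint of second order'') also requires $N^-(x)\cap N^-(x')=\emptyset$, whereas your conflict graph only forbids $U_x\cap U_{x'}\neq\emptyset$. If $x,x'$ share a first-order predecessor $y$ with $N^-(y)\neq\emptyset$, then $N^-(y)\subseteq U_x\cap U_{x'}$ and your graph already catches the conflict; the only extra pairs the paper removes are those sharing a $y$ with $N^-(y)=\emptyset$, for which $\cA_v(y)$ holds deterministically (given the initialization and $\cR_j$) and so there is no correlation to control. Hence your weaker conflict graph suffices; the paper's stronger relation is a safety margin rather than a logical necessity. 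Also make sure to exclude, or handle trivially, those $x$ with $U_x=\emptyset$ since Lemma~\ref{lem:pseudo:Chernoff:tuple} requires the sets in $\cI$ to be non-empty; as you note, such $x$ land in $L_i^*(v)$ unconditionally. The quantitative estimates (class size $\Omega(\lambda\log^2 n/(a^2\kappa r))$ versus the paper's threshold $36\cdot2^{a^2}\log n_i$, failure probability $n_i^{-3}$ per class) match the paper's up to constants.
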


Lemma~\ref{lem:aux:rmd:Avy} together with the subsequent lemma will imply Lemma~\ref{lem:aux:right-min-deg}. 

\begin{lem}\label{lem:aux:rmd:pre}
Run the RGA in the setting of Theorem~\ref{thm:Blow-up:arr:full} and assume that $\cR_j$ holds for all $j\in[r]$ and that $|L_i^*(v)|\ge 2^{-a^2-1}|L_i^*|$. Then we have
\[ 
\PP\left[\forall i\in [r],~\forall v\in V_i^{\Free}(T): \deg_{F_i(T)}(v,L_i) \ge 3\sqrt{\eps'}n_i \right] \ge \frac56 \, .
\]
\end{lem}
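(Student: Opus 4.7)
The idea is that for each $i\in[r]$ and $v\in V_i^{\Free}(T)$, many vertices $x\in L_i^*(v)$ will satisfy $\phi(N^-(x))\subseteq N_G(v)$, since for such $x$ every predecessor $y$ has $\cA_v(y)$, forcing $\PP[\phi(y)\in N_G(v)]\ge \tfrac{2}{3}\cdot\tfrac{\delta}{2}=\delta/3$ by~\eqref{eq:min-deg} and the uniform sampling in~\eqref{eq:Ax}. Because $L_i^*\subseteq X_i\setminus S_i$ and $L_i^*\subseteq L_i$, each such $x$ contributes an edge of $F_i(T)$ incident to $v$ on the $L_i$-side, so it suffices to produce $3\sqrt{\eps'}n_i$ of them. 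The constant in~\eqref{eq:consts:eps:p} is precisely tuned so that $3\sqrt{\eps'}n_i=\tfrac{1}{2}(\delta/3)^a\cdot 2^{-a^2-1}\lambda n_i$, which matches $\tfrac{1}{2}(\delta/3)^a|L_i^*(v)|$ under the hypothesis, so I aim to recover essentially this fraction of $L_i^*(v)$.

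The probabilistic tool will be Lemma~\ref{lem:pseudo:Chernoff:tuple}, which however demands that the index sets be pairwise disjoint. To arrange this I first partition $L_i^*$ into predecessor-disjoint classes: consider the graph on $L_i^*$ with edges $\{x,x'\}$ whenever $N^-(x)\cap N^-(x')\ne\emptyset$. Its maximum degree is at most $a\Delta(H)\le a\sqrt{n}/\log n$, so Theorem~\ref{thm:HajnalSzemeredi} yields $L_i^*=K_1\dcup\cdots\dcup K_\ell$ with $\ell\le a\Delta(H)+1$ and $|K_k|\ge\lfloor|L_i^*|/\ell\rfloor$, and within each $K_k$ the sets $\{N^-(x):x\in K_k\}$ are pairwise disjoint. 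For each $k$ I set $K_k^v:=K_k\cap L_i^*(v)$, enumerate $\bigcup_{x\in K_k^v}N^-(x)$ by RGA time step, and let $\cA_y:=\mathbf{1}[\phi(y)\in N_G(v)]$. Applying Lemma~\ref{lem:pseudo:Chernoff:tuple} with $p=\delta/3$ and $m=|K_k^v|$ should give, except with probability $2\exp\bigl(-\tfrac{1}{12}(\delta/3)^a|K_k^v|\bigr)$, at least $\tfrac{1}{2}(\delta/3)^a|K_k^v|$ vertices $x\in K_k^v$ with $\phi(N^-(x))\subseteq N_G(v)$. Summing over $k$ yields at least $\tfrac{1}{2}(\delta/3)^a|L_i^*(v)|\ge 3\sqrt{\eps'}n_i$, and a union bound over $k$, $i$, $v$ upgrades this to the global statement; the per-$k$ failure probabilities are tiny by~\eqref{eq:consts:n}, which ensures that $n$ is large enough for $(\delta/3)^a|K_k^v|\gg\log n$ whenever $|K_k^v|$ is on its typical scale.

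The chief technical obstacle is that Lemma~\ref{lem:pseudo:Chernoff:tuple} conditions only on past values of the $\cA$-variables, while the bound $\PP[\cA_y=1]\ge\delta/3$ really relies on the richer history just before $y$ is embedded, in particular on whether $\cA_v(y)$ holds, which is measurable with respect to the pre-$t(y)$ history. My plan is to sidestep this with a decoration: replace $\cA_y$ by $\widetilde{\cA}_y$ that coincides with $\cA_y$ on $\{\cA_v(y)\text{ holds}\}$ and is an independent $\mathrm{Bernoulli}(\delta/3)$ otherwise, so that $\PP[\widetilde{\cA}_y=1\mid\text{any history}]\ge\delta/3$ holds unconditionally and the hypothesis of Lemma~\ref{lem:pseudo:Chernoff:tuple} is satisfied. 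Since for every $x\in L_i^*(v)$ and $y\in N^-(x)$ the event $\cA_v(y)$ automatically holds, the count of $K_k^v$-tuples with all $\widetilde{\cA}_y=1$ lower-bounds the count of $x$'s we really need. A secondary annoyance is that classes $K_k^v$ with $|K_k^v|\ll\log n$ give useless Chernoff concentration; but such classes are few (at most $\ell=O(\sqrt{n}/\log n)$) and their total $L_i^*(v)$-mass is $o(n_i)$, a loss easily absorbed by the slack hidden in~\eqref{eq:consts:eps:p} and~\eqref{eq:consts:n}.
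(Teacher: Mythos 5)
Your overall strategy — partition into predecessor-disjoint classes via Hajnal--Szemer\'edi, apply Lemma~\ref{lem:pseudo:Chernoff:tuple} with $p=\delta/3$ to each class, sum and union-bound — is the same as the paper's, and your ``decoration'' device for making the hypotheses of Lemma~\ref{lem:pseudo:Chernoff:tuple} literal is a reasonable formalization of what the paper does informally. But there is a concrete gap in where you apply Hajnal--Szemer\'edi. You partition $L_i^*$ into $K_1,\dots,K_\ell$ and then intersect with $L_i^*(v)$ to form $K_k^v$, whereas the paper applies Hajnal--Szemer\'edi directly to $L_i^*(v)$. This matters because your classes $K_k^v$ have \emph{no individual lower bound on their size}; the only guarantee you have is on their sum. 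The worst case is that $L_i^*(v)$ is spread so that every $|K_k^v|$ sits just below the threshold $T_{\mathrm{thresh}}\approx 48(3/\delta)^a\log n_i$ needed for the Chernoff tail to beat $n_i^{-4}$. The average size is $|L_i^*(v)|/\ell\approx 2^{-a^2-1}\lambda n_i/(a\Delta(H))$, which by~\eqref{eq:consts:n} is \emph{exactly} $\approx T_{\mathrm{thresh}}$ when $|L_i^*(v)|$ takes its minimum value $2^{-a^2-1}|L_i^*|$. So in the even-spread case you could discard essentially \emph{all} of $L_i^*(v)$.

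Your dismissal of the small classes (``their total mass is $o(n_i)$, easily absorbed by slack hidden in~\eqref{eq:consts:eps:p} and~\eqref{eq:consts:n}'') does not hold up: the lost mass can be as large as $\ell\cdot T_{\mathrm{thresh}}\approx 48a(3/\delta)^a\sqrt{n}$, and~\eqref{eq:consts:n} is tuned precisely so that this quantity is comparable to $2^{-a^2-1}\lambda n_i$, not smaller. Moreover, the chain $\tfrac12(\delta/3)^a\cdot 2^{-a^2-1}\lambda n_i \ge 3\sqrt{\eps'}n_i$ in~\eqref{eq:consts:eps:p} is also tight; there is no factor-of-two slack on that side to absorb even a $50\%$ loss. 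The fix is simply to do what the paper does: build the conflict graph on $L_i^*(v)$ itself (its maximum degree is still at most $a\Delta(H)-1$), so that Hajnal--Szemer\'edi hands you classes of size at least $|L_i^*(v)|/(a\Delta(H))\ge 48(3/\delta)^a\log n_i$ directly, and no class is ever discarded. With that change, the rest of your argument (the $\delta/3$ lower bound coming from $\cA_v(y)$ and~\eqref{eq:min-deg}, the application of Lemma~\ref{lem:pseudo:Chernoff:tuple}, the union bound over $k$, $i$, $v$) goes through exactly as intended and matches the paper's proof.
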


\begin{proof}[of Lemma~\ref{lem:aux:rmd:pre}]
Let $i\in[r]$ and $v\in V_i$ be arbitrary and assume that the event of Lemma~\ref{lem:aux:rmd:Avy} occurs, this is, assume that we do have $|L_i^*(v)|\ge 2^{-a^2-1}|L_i^*|$. We claim that $v$ almost surely has high degree in $F_i(T)$ in this case.
\begin{claim}
If $|L_i^*(v)|\ge 2^{-a^2-1}|L_i^*|$, then 
\[
  \PP\left[\deg_{F_i(T)}(v,L_i)\ge 3\sqrt{\eps'}n_i\right] \ge 1-\frac{1}{n_i^2}\;.
\]
\end{claim}
This claim, together with a union bound over all $i\in[r]$ and $v\in V_i$, implies that
\[
  \PP\left[\forall i\in[r], \forall v\in V_i: \deg_{F_i(T)}(v,L_i)\ge 3\sqrt{\eps'}n_i\right] \ge \frac 56  
\]
if $|L_i^*(v)|\ge 2^{-a^2-1}|L_i^*|$ for all $i\in[r]$ and all $v\in V_i$. It remains to establish the claim.

\begin{claimproof}[of Claim]
Let~$x\in L_i^*(v)$. Recall that $xv\in E(F_i)$ if and only if $\phi(y)\in N_G(v)$ for all $y\in N^-(x)$. If the events $[\phi(y)\in N_G(v)]$ were independent for all $y \in N^-(L_i^*(v))$ we could apply a Chernoff bound to infer that almost surely a linear number of the vertices $x\in L_i^*(v)$ is such that $[\phi(y)\in N_G(v)]$ for all $y\in N^-(x)$. However, the events might be far from independent: just imagine two vertices $x$, $x'$ sharing a predecessor $y$. We address this issue by partitioning the vertices into classes that do not share predecessors. We then apply Lemma~\ref{lem:pseudo:Chernoff:tuple} to those classes to finish the proof of the claim. Here come the details. 

\medskip

We partition $L_i^*(v)$ into predecessor disjoint sets. To do so we construct an auxiliary graph on vertex set $L_i^*(v)$ that has an edge $xx'$ for exactly those vertices $x\neq x'$ that share at least one predecessor in $H$. As~$H$ is $a$-arrangeable, the maximum degree of this auxiliary graph is bounded by $a\Delta(H)-1$. Hence we can apply Theorem~\ref{thm:HajnalSzemeredi} to partition the vertices of this auxiliary graph into stable sets $K_1\dcup\dots\dcup K_b$ with 
\begin{align} \label{eq:aux:rmd:Kl:1}
  |K_\ell| \ge \frac{|L_i^*(v)|}{a\Delta(H)} \ge \frac{2^{-a^2-1}\lambda \sqrt{n}}{a\,\kappa\, r}\log n\geByRef{eq:consts:n} 48\left(\frac{3}{\delta}\right)^a\log n_i
\end{align}
for $\ell \in [b]$. Those sets are predecessor disjoint in $H$. We now want to apply Lemma~\ref{lem:pseudo:Chernoff:tuple}. Let $\cI=\{ N^-(x):x\in K_\ell\}$. The sets in $\cI$ are  pairwise disjoint and have at most $a$ elements each. Name the elements of $\bigcup_{I\in \cI}I=\{y_1,\dots,y_s\}$ (with $s=|\bigcup_{I\in\cI}I|$) in ascending order with respect to the arrangeable ordering. Furthermore, let $\cA_k$ be a random variable which is 1 if and only if $y_k$ gets embedded into $N_G(v)$. By the definition of $L_i^*(v)$, the event $\cA_v(y_k)$ holds for each $k\in[s]$. It follows from the definition of $\cA_v(y_k)$ that
\begin{equation*} 
  \PP[\cA_k=1] = \PP[\phi(y_k) \in N_G(v)] = \frac{|A(y_k)\cap N_G(v)|}{|A(y_k)|} \ge \frac23\frac{|N_G(v)\cap V_j|}{|V_j|} \geByRef{eq:min-deg} \frac\delta3\;.
\end{equation*}
This lower bound on the probability of $\cA_k=1$ remains true even if we condition on other events $\cA_j=1$ (or their complements $\cA_j=0$), because in this calculation the lower bound relies solely on $|A(y_k)\cap N_G(v)|/|A(y_k)|$, which is at least $\delta/3$ for all $k\in[m]$ regardless of the embedding of other $y_j$. Hence, 
\[
  \PP\left[\cA_k=1 \Big| \parbox{150pt}{$\cA_{j}=1$ for all $j\in  J$\\ $\cA_j=0$ for all $j\in [k-1]\setminus J$}\right]\ge \frac\delta3
\]
for every $k$ and every $J\subseteq [k-1]$ (this is stronger than the condition required by Lemma~\ref{lem:pseudo:Chernoff:tuple}). By Lemma~\ref{lem:pseudo:Chernoff:tuple}, we have
\begin{align*}
& \PP\left[ \big|\big\{x\in K_\ell:\phi(N^-(x))\subseteq
        N_G(v)\big\}\big| 
   \ge \frac12\Big(\frac{\delta}{3}\Big)^a|K_\ell|\right]\\
& \hspace{3cm} = \PP\left[ \big|\big\{I\in \cI: \cA_i=1\text{ for all $i\in I$}\big\}\big| 
   \ge \frac12\Big(\frac{\delta}{3}\Big)^a|K_\ell|\right] \\
& \hspace{3cm} \ge 1-2\exp\left(-\frac1{12}\Big(\frac{\delta}{3}\Big)^a|K_\ell|\right) \\
& \hspace{3cm} \geByRef{eq:aux:rmd:Kl:1} 1-2\exp(-4\log n_i) = 1-2\cdot n_i^{-4}\,.
\end{align*}
Applying a union bound over all $\ell\in[b]$ we conclude that
\begin{align*}
  \deg_{F_i(T)}(v,L_i) &\ge \big|\big\{x \in L_i^*(v): \phi(N^-(x))\subseteq N_G(v)\big\}\big|\\
  &\ge \sum_{\ell\in[b]} \frac12\Big(\frac{\delta}{3}\Big)^a|K_\ell| =
  \frac12\Big(\frac{\delta}{3}\Big)^{a} |L_i^*(v)| \ge \frac{\delta^a}{2\cdot 2^{a^2+1}3^a}\lambda n_i \geByRef{eq:consts:eps:p} 3\sqrt{\eps'} n_i
\end{align*}
with probability at least $1-2n_i\exp(-4\log n_i)\ge 1-n_i^{-2}$.
\end{claimproof}
This concludes the proof of the lemma.
\end{proof}

The remainder of this section is dedicated to the proof of Lemma~\ref{lem:aux:rmd:Avy}. This proof will use similar ideas as the proof of Lemma~\ref{lem:aux:rmd:pre}. This time, however, we are not only interested in the predecessors of $x\in L_i^*$ but in the predecessors of the predecessors. We call those \emph{predecessors of second order} and say two vertices $x$, $x'$ are \emph{predecessor disjoint of second order} if $N^-(x)\cap N^-(x')=\emptyset$ and $N^-(N^-(x))\cap N^-(N^-(x'))=\emptyset$.

\medskip

To prove Lemma~\ref{lem:aux:rmd:Avy}, we have to show that for any vertex $v\in V_i$ many vertices $x$ in $L_i^*$ are such that all their predecessors $y\in N^-(x)$ are likely for $v$. Note that $x\in L_i^*$ implies that $y\in N^-(x)$ gets embedded into the special candidate set $C_{t(y),y}^\SP$. 

It depends only on the embedding of the vertices in $N^-(y)$ whether a given vertex $v\in V_i$ is likely for $y$ or not. Therefore, we formulate an event $\cB_{v,x}(z)$, which, if satisfied for all $z\in N^-(y)$, will imply $\cA_v(y)$ as we will show in the next proposition. Recall that $C_{1,y}^\SP=V_j^\SP$ for $y\in N^-(L_i^*)\subseteq X^*$ and $C_{t(z)+1,y}^\SP=C_{t(z),y}^\SP\cap N_G(\phi(z))$. For $x\in L_i^*$ and $z\in N^-(N^-(x))$ let $\cB_{v,x}(z)$ be the event that 
\[
\left|\frac{|C_{t(z),y}^\SP\cap N_G(v)|}{|C_{t(z),y}^\SP|} - \frac{|C_{t(z)+1,y}^\SP\cap N_G(v)|}{|C_{t(z)+1,y}^\SP|} \right|\le \frac{2\eps}{\delta-\eps}
\]
for all $y\in N^-(x)$. 

\begin{prop} \label{prop:hdp:predecessors} 
Let $i\in[r]$, $v\in V_i$, $x \in L_i^*$, and $z\in N^-(N^-(x))$, then 
\[ \PP\left[\cB_{v,x}(z) \big|\big. \cB_{v,x}(z')\text{ for all $z'\in N^-(N^-(x))$ with $t(z')<t(z)$}\right] \ge 1/2. \]
This remains true if we additionally condition on other events $\cB_{v,\tilde x}(\tilde z)$ (or their complements) with $\tilde z\in N^-(N^-(\tilde x))$ for $\tilde x\in L_i^*$, as long as $x$ and $\tilde x$ are predecessor disjoint of second order.\\
Moreover, if $\cB_{v,x}(z)$ occurs for all $z\in N^-(N^-(x))$, then $\cA_v(y)$ occurs for all $y\in N^-(x)$.
\end{prop}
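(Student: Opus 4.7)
The plan is to establish the three assertions in sequence, building on the fact that each event $\cB_{v,x}(z)$ bounds the change in the ratio $|C^\SP_{t,y}\cap N_G(v)|/|C^\SP_{t,y}|$ caused by embedding a single vertex $z\in N^-(y)$, while $\cA_v(y)$ amounts to a lower bound on the analogous ratio for $A(y)$ at time $t(y)$. I will first verify the deterministic implication (third part), then estimate the conditional probability that $\cB_{v,x}(z)$ holds (first part), and finally observe that the disjointness hypothesis propagates this bound through the additional conditioning (second part).

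For the deterministic implication, fix $y\in N^-(x)$ with $y\in X_j$. Because $x\in L_i^*\subseteq X_i\setminus N(S)$, the vertex $y$ is not image-restricted, so $C^\SP_{1,y}=V_j^\SP$, and~\eqref{eq:cond:size:special-neighbours} yields that $|C^\SP_{1,y}\cap N_G(v)|/|C^\SP_{1,y}|$ differs from $|N_G(v)\cap V_j|/|V_j|\ge\delta/2$ by at most $\eps$. The ratio is unchanged at every step $t$ with $x_t\notin N^-(y)$, while at the at most $a$ steps embedding some $z\in N^-(y)\subseteq N^-(N^-(x))$ the hypothesis $\cB_{v,x}(z)$ caps the change of this ratio by $2\eps/(\delta-\eps)$. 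Telescoping gives
\begin{align*}
  \frac{|C^\SP_{t(y),y}\cap N_G(v)|}{|C^\SP_{t(y),y}|} \ge \frac{|N_G(v)\cap V_j|}{|V_j|}-\eps-\frac{2a\eps}{\delta-\eps}\,.
\end{align*}
Since $y\in X^*$ is embedded into $V^\SP$, Lemma~\ref{lem:RGA:choices} bounds $|C^\SP_{t(y),y}\setminus A(y)|\le(\delta/18)|C^\SP_{t(y),y}|$, so passing from $C^\SP_{t(y),y}$ to $A(y)$ decreases the ratio by at most $\delta/18$. Combined with $|N_G(v)\cap V_j|/|V_j|\ge\delta/2$ and $\eps\ll\delta$, this yields $|A(y)\cap N_G(v)|/|A(y)|\ge (2/3)|N_G(v)\cap V_j|/|V_j|$, which is exactly $\cA_v(y)$.

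For the probability bound, I would condition on the entire algorithmic history up to time $t(z)$; given this history, $\phi(z)$ is uniform on $A(z)$, and the telescoping estimate already applies so that $|C^\SP_{t(z),y}|$ and $|C^\SP_{t(z),y}\cap N_G(v)|$ are each at least $\eps n_{i'}$ for every $y\in N^-(x)$ with $z\in N^-(y)$, where $i'$ is the cluster of $y$. By $\eps$-regularity of the pair $(V_{i'},V_j)$ with $z\in X_j$, at most $4\eps n_j$ choices $u\in V_j$ fail the degree bound $(\delta\pm\eps)|Y|$ in either $Y=C^\SP_{t(z),y}$ or $Y\cap N_G(v)$; for every other $u$ a direct computation shows that the required $2\eps/(\delta-\eps)$ bound holds. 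Union-bounding over the at most $a$ relevant $y$ produces at most $4a\eps n_j$ bad choices of $\phi(z)$, and since $|A(z)|\ge(\gamma/2)n_j$ by Lemma~\ref{lem:RGA:choices}, the failure probability is at most $8a\eps/\gamma\le 1/2$ by the choice of constants. The main obstacle is the robustness claim: if $x$ and $\tilde x$ are predecessor-disjoint of second order then no $\tilde y\in N^-(\tilde x)$ lies in $N^+(z)$, so $\phi(z)$ does not touch any candidate set $C^\SP_{\cdot,\tilde y}$ appearing in $\cB_{v,\tilde x}(\tilde z)$; this structural independence, combined with the fact that the bad-choice bound $4a\eps n_j$ is uniform over all histories preceding $t(z)$, allows the same $1/2$ estimate to survive conditioning on arbitrary combinations of such disjoint events.
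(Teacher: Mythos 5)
Your proof is correct and follows essentially the same route as the paper's: the deterministic implication via telescoping the ratio from $C^\SP_{1,y}=V^\SP_j$ using condition~\eqref{eq:cond:size:special-neighbours} and the $\delta/18$ loss from Lemma~\ref{lem:RGA:choices}, and the probability bound via $\eps$-regularity giving at most $4a\eps n_j$ bad choices out of $|A(z)|\ge(\gamma/2)n_j$, with the same structural-independence justification for the extra conditioning. The only differences are cosmetic (you treat the deterministic part first and phrase the regularity step through vertex degrees rather than densities), so no changes are needed.
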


\begin{proof}[of Proposition~\ref{prop:hdp:predecessors}]
Let $x\in L_i^*$ and let $z\in N^-(N^-(x))$ lie in $X_\ell$. Further assume that $\cB_{v,x}(z')$ holds for all $z'\in N^-(N^-(x))$ with $t(z')<t(z)$. For $y\in N^-(x)$ let $j(y)$ be such that $y\in X_{j(y)}$. Then $\cB_{v,x}(z')$ for all $z'\in N^-(y)$ with $t(z')<t(z)$ implies
\begin{align*}
\frac{|C_{t(z),y}^\SP\cap N_G(v)|}{|C_{t(z),y}^\SP|} &\ge \frac{|C_{1,y}^\SP\cap N_G(v)|}{|C_{1,y}^\SP|}-\frac{2\eps\cdot a}{\delta-\eps} \\
&= \frac{|V_{j(y)}\cap N_G(v)|}{|V_{j(y)}|}-\frac{2\eps\cdot a}{\delta-\eps} \geByRef{eq:min-deg} \frac\delta2 - \frac{2\eps\cdot a}{\delta-\eps}
\end{align*}
where the identity $C_{1,y}=V_{j(y)}$ is due to $y\notin S$. Hence $|C_{t(z),y}^\SP\cap N_G(v)|\ge \eps n_{j(\ell)}$ by~\eqref{eq:cond:size-special} and our choice of constants. Now fix a $y\in N^-(x)$. As $(V_{j(y)},V_\ell)$ is an $\eps$-regular pair all but at most $4\eps n_\ell$ vertices $w\in A_{t(z),z}\subseteq V_{\ell}$ simultaneously satisfy
\begin{align*}
\left|\frac{\big|N_G\big(w,C_{t(z),y}^\SP\cap N_G(v)\big)\big|}{|C_{t(z),y}^\SP\cap N_G(v)|} - d(V_{j(y)},V_\ell)\right| &\le \eps\,,\text{ and}\\
\left|\frac{|N_G(w, C_{t(z),y}^\SP)|}{|C_{t(z),y}^\SP|} - d(V_{j(y)},V_\ell)\right|&\le\eps\;.
\end{align*}
Hence, all but at most $4\eps a n_\ell$ vertices in $V_\ell$ satisfy the above inequalities for all $y\in N^-(x)$. If $\phi(z)=w$ for a vertex $w$ that satisfies the above inequalities for all $y\in N^-(x)$ we have
\begin{align*}
 \left|\frac{|C_{t(z)+1,y}^\SP\cap N_G(v)|}{|C_{t(z),y}^\SP\cap N_G(v)|} - \frac{|C_{t(z)+1,y}^\SP|}{|C_{t(z),y}^\SP|}\right| \le 2\eps\, .
\end{align*}
This implies $B_{v,x}(z)$ as
\begin{align*}
 \left|\frac{|C_{t(z)+1,y}^\SP\cap N_G(v)|}{|C_{t(z)+1,y}^\SP|} - \frac{|C_{t(z),y}^\SP\cap N_G(v)|}{|C_{t(z),y}^\SP|}\right| &\le 2\eps \frac{|C_{t(z),y}^\SP\cap N_G(v)|}{|C_{t(z)+1,y}^\SP|}\\ 
 &\le  2\eps \frac{|C_{t(z),y}^\SP|}{|C_{t(z)+1,y}^\SP|} \leByRef{eq:cond:size-ordinary} \frac{2\eps}{\delta-\eps}\, .
\end{align*}
Since $\phi(z)$ is chosen randomly from $A(z)\subseteq A_{t(z),z}$ with $|A(z)|\ge (\gamma/2) n_\ell$ by Lemma~\ref{lem:RGA:choices}, we obtain
\begin{align*}
\PP\left[ \cB_{v,x}(z) \big|\big. \cB_{v,x}(z')\text{ for all $z'\in N^-(N^-(x))$, $t(z')<t(z)$}\right] \ge 1- \frac{4\eps a n_\ell}{(\gamma/2) n_\ell} \ge \frac12 \,.
\end{align*}
Note that this probability follows alone from the $\eps$-regularity of the pairs $(V_{j(y)},V_\ell)$ and the fact that $A(z)$ and $C_{t(z),y}^\SP\cap N_G(v)$ are large. If $x$ and $\tilde x$ are predecessor disjoint of second order the outcome of the event $\cB_{v,\tilde x}(\tilde z)$ for $\tilde z\in N^-(N^-(\tilde x))$ does not influence those parameters. We can therefore condition on other events $\cB_{v,\tilde x}(\tilde z)$ as long as $x$ and $\tilde x$ are predecessor disjoint of second order.

\medskip

It remains to show the second part of the proposition, that $v$ is likely for all $y\in N^-(x)$ if $\cB_{v,x}(z)$ holds for all $z\in N^-(N^-(x))$. Again let $x\in L_i^*$ and let $y\in N^-(x)$ lie in $X_j$. Recall that condition~\eqref{eq:cond:size:special-neighbours} in the definition of the RGA guarantees
\[
    \left| \frac{|V_j^\SP\cap N_G(v)|}{|V_j^\SP|} - \frac{|V_j\cap N_G(v)|}{|V_j|}\right| \le \eps\,.
\]
 Moreover, $\cB_{v,x}(z)$ for all $z\in N^-(y)$, $C_{1,y}^\SP=V_j^\SP$ (as $y\notin S$) and the fact that $|N^-(y)|\le a$ imply
\begin{align*} \label{eq:hdp:intersection}
 \left|\frac{|C_{t(y),y}^\SP\cap N_G(v)|}{|C_{t(y),y}^\SP|} - \frac{|V_j^\SP\cap N_G(v)|}{|V_j^\SP|}\right| \le \frac{2\eps\cdot a}{\delta-\eps}\, .
\end{align*}
As $2\eps a/(\delta-\eps)+\eps \le \delta/36 \le (\delta/18)|V_j\cap N_G(v)|/|V_j|$ we conclude that 
\begin{align} \label{eq:rmd:CV}
\frac{|C_{t(y),y}^\SP \cap N_G(v)|}{|C_{t(y),y}^\SP|} \ge \frac{17}{18}\frac{|V_j\cap N_G(v)|}{|V_j|}
\end{align}
for all $y\in N^-(x)$ if $\cB_{v,x}(z)$ for all $z\in N^-(N^-(x))$. Equation~\eqref{eq:rmd:CV} in turn implies $\cA_v(y)$ as only few vertices get embedded into $V^\SP$ thus making $C_{t,y}^\SP \approx A_{t,y}^\SP$. More precisely, by Lemma~\ref{lem:RGA:choices} we have
\begin{align*}
 \frac{|A(y)\cap N_G(v)|}{|A(y)|} &\ge  \frac{|A_{t(y),y}^\SP\cap N_G(v)| - |A_{t(y),y}^\SP\setminus A(y)|}{|A_{t(y),y}^\SP|} \\
 &\ge \frac{|C_{t(y),y}^\SP\cap N_G(v)| - |X_j^*| - |Q_j(t(y))| - |A_{t(y),y}^\SP\setminus A(y)|}{|C_{t(y),y}^\SP|}\\
 &\ge \frac{|C_{t(y),y}^\SP\cap N_G(v)|}{|C_{t(y),y}^\SP|}-\frac{\delta}{18}\\
 &\geByRef{eq:rmd:CV} \left( \frac{17}{18} -\frac{2}{18}\right) \frac{|V_j\cap N_G(v)|}{|V_j|}\;.
\end{align*}
\end{proof}

We have seen that $x\in L_i^*$ also lies in $L_i^*(v)$ if $\cB_{v,x}(z)$ holds for all $z\in N^-(N^-(x))$. To prove Lemma~\ref{lem:aux:rmd:Avy} it therefore suffices to show that an arbitrary vertex $v$ has a linear number of vertices $x\in L_i^*$ with $\cB_{v,x}(z)$ for all $z\in N^-(N^-(x))$. 

\begin{proof}[of Lemma~\ref{lem:aux:rmd:Avy}]
Let $i\in[r]$ and $v\in V_i$ be arbitrary. We partition $L_i^*$ into classes of vertices that are predecessor disjoint of second order. Observe that for every $x\in L_i^*$ we have
\begin{align*}
  \left|\left\{ x'\in L_i^*: \parbox{160pt}{$N^-(x)\cap N^-(x')\neq \emptyset$ or \\[.5ex]$N^-(N^-(x))\cap N^-(N^-(x'))\neq \emptyset$} \right\}\right| &\le (a\Delta(H))^2 \le \frac{a^2n}{\log^2 n}\\
 & \lByRef{eq:consts:n} \frac{\lambda n_i}{36\cdot 2^{a^2}\log n_i}
\end{align*}
as $H$ is $a$-arrangeable. Recall that $|L_i^*|=\lambda n_i$. Therefore, Theorem~\ref{thm:HajnalSzemeredi} gives a partition $L_i^*=K_1\dcup\dots\dcup K_b$ with 
\begin{align} \label{eq:aux:rmd:Kl:2}
  |K_\ell|\ge 36\cdot 2^{a^2}\log n_i
\end{align}
for all $\ell\in[b]$ such that the vertices in $K_\ell$ are predecessor disjoint of second order. 

Next we want to apply Lemma~\ref{lem:pseudo:Chernoff:tuple}. Let $\ell\in[b]$ be fixed. We define $\cI=\{ N^-(N^-(x)): x\in K_\ell\}$. These sets are pairwise disjoint and have at most $a^2$ elements each. Name the elements of $\bigcup_{I\in \cI} I=\{z_1,\dots,z_{|\cup_{I\in \cI} I|}\}$ in ascending order with respect to the arrangeable ordering. Then for every $I\in \cI$ and every $z_k \in  I$ we have 
\[
  \PP\left[ \cB_{v,x}(z_k) \Big|\, \parbox{190pt}{ $\cB_{v,x}(z_j)$ for all $z_j\in J$,\\[.5ex] $\overline{\cB_{v,x}}(z_j)$ for all $z_j\in \{z_1,\dots,z_{k-1}\}\setminus J$}\right] \ge \frac 12
\] 
for every $J \subseteq \{z_1,\dots,z_{k-1}\}$ with $\{z_1,\dots,z_{k-1}\}\cap I \subseteq J$ by Proposition~\ref{prop:hdp:predecessors}. We set $K_\ell(v)\deff \left\{ x \in K_\ell: \cB_{v,x}(z) \text{ for all }z \in N^-(N^-(x))\right\}$ and apply Lemma~\ref{lem:pseudo:Chernoff:tuple} to derive 
\begin{align*}
   \PP\left[ \left|K_\ell(v)\right| \ge 2^{-a^2-1}|K_\ell|\right] &\ge 1 - 2\exp\left(-\frac{1}{12}2^{-a^2}|K_\ell|\right)\\
&\geByRef{eq:aux:rmd:Kl:2} 1 - 2\exp\left(-3\log n_i \right)=1-2\cdot n_i^{-3}\,.
\end{align*}
Note that we have $\bigcup_{\ell\in[b]} K_\ell(v) \subseteq L_i^*(v)$ as the following is true for every $x\in K_\ell$ by Proposition~\ref{prop:hdp:predecessors}: $\cB_{v,x}(z)$ for all $z\in N^-(N^-(x))$ implies $\cA_v(y)$ for all $y\in N^-(x)$. Taking a union bound over all $\ell \in [b]$ we thus obtain that
\[
   \PP\left[ \left|L_i^*(v)\right| \ge 2^{-a^2-1}|L_i^*|\right] \ge 1-b\cdot 2n_i^{-3} \ge 1 - \frac{2}{n_i^2}\,.
\] One further union bound over all $i\in[r]$ and $v\in V_i$ finishes the proof.
\end{proof}

\subsection{Proof of Theorem~\ref{thm:Blow-up:arr:full}} \label{sec:Proof:Blow-up:arr:full}

Putting everything together, we conclude that the RGA gives a spanning embedding of $H$ into $G$ with probability at least~$1/3$.
We now use Lemma~\ref{lem:reg:weighted:matching}, Proposition~\ref{prop:aux:left-min-deg}, and Lemma~\ref{lem:aux:right-min-deg} to prove our main result.

\begin{proof}[of Theorem~\ref{thm:Blow-up:arr:full}]
Let integers $C,a,\Delta_R,\kappa$ and $\delta,c>0$ be given. Set $a'= 5a^2\kappa\Delta_R$ and $\mu=1/(10a'(\kappa\Delta_R)^2)$. We invoke Theorem~\ref{thm:Blow-up:arr:almost-span} with parameters $C,a',\Delta_R,\kappa$ and $\delta,c,\mu>0$ to obtain $\eps,\alpha>0$. Let $r$ be given and choose $n_0$ as in Theorem~\ref{thm:Blow-up:arr:almost-span}.

Now let $R$ be a graph on $r$ vertices with $\Delta(R)<\Delta_R$. And let
$G$ and $H$ satisfy the conditions of Theorem~\ref{thm:Blow-up:arr:full},
i.e., let $G$ have the $(\eps,\delta)$-super-regular $R$-partition
$V(G)=V_1\dcup\dots\dcup V_r$ and let $H$ have a $\kappa$-balanced
$R$-partition $V(H)=X_1\dcup\dots\dcup X_r$. Further let
$\{x_1,\dots,x_n\}$ be an $a$-arrangeable ordering of~$H$. We apply
Lemma~\ref{lem:arr:StableEnding} to find an $a'$-arrangeable ordering
$\{x_1',\dots,x_n'\}$ of $H$ with a stable ending of order $\mu n$. Let $H'
= H[\{x_1',\dots,x_{(1-\mu)n}'\}]$ be the subgraph induced by the first
$(1-\mu)n$ vertices of the new ordering. We take this ordering and run the
RGA as described in Section~\ref{sec:rga:definition} to embed~$H'$
into~$G$. 
By Lemma~\ref{lem:rga:success} we have
\begin{align}\label{eq:aux:reg:1}
   \PP\big[\text{RGA successful and } \cR_i\text{ for all
       $i\in[r]$}\big] \ge \frac23 \,,
\end{align}
where~$\cR_i$ is the event that the auxiliary graph $F_i(t)$ is weighted
$\eps'$-regular for all $t\le T$.  Note that every image restricted vertex
$x\in S_i\cap V(H')$ has been embedded into $I(x)$ by the definition of the
RGA.

Now assume that $\cR_i$ holds for all $i\in[r]$.  It remains to embed
the stable set $L_i=V(H)\setminus V(H')$.
To this end we shall find in each $F_i^*:=F_i(T)[L_i\dcup V_i^{\Free}(T)]$ a
perfect matching, which defines a bijection from $L_i$ to
$V_i^{\Free}(T)$ that maps every $x\in L_i$ to a vertex $v\in
V_i^{\Free}(T)\cap C_{T,x}$. Note that again $x\in S_i$ is embedded into
$I(x)$ by construction. 

Since $F_i(T)$ is weighted $\eps'$-regular the subgraph $F_i^*$ is weighted $(\eps'/\mu)$-regular by Proposition~\ref{prop:regular:sub}. Moreover
\begin{align} \label{eq:aux:reg:2}
   \PP\left[\delta(F_i^*) \ge 3\sqrt{\eps'}n_i\text{ for all $i\in[r]$}\right] \ge \frac23
\end{align}
by Proposition~\ref{prop:aux:left-min-deg} and Lemma~\ref{lem:aux:right-min-deg}. In other words, with probability at least $2/3$ all graphs $F_i^* = F_i(T)[L_i \dcup V_i^\Free]$ are balanced, bipartite graphs on $2 \mu n_i$ vertices with $\deg(x)\ge 3\sqrt{\eps'/\mu} (\mu n_i)$ for all $x\in L_i\cup V_i^\Free$. Also note that $\omega(x)\ge \delta^a \ge \sqrt{\eps'/\mu}$ for all $x\in L_i$ by definition of $\eps'$. We conclude from Lemma~\ref{lem:reg:weighted:matching} that $F_i^* $ has a perfect matching if $F_i^*$ has minimum degree at least $3\sqrt{\eps'}n_i$. Hence, combining \eqref{eq:aux:reg:1} and~\eqref{eq:aux:reg:2} we obtain that the RGA terminates successfully and all $F_i^*$ have perfect matchings with probability at least $1/3$.
Thus there is an almost spanning embedding of $H'$ into $G$ that can be extended to a spanning embedding of $H$ into $G$.
\end{proof}

\subsection{Proof of Theorem~\ref{thm:Blow-up:arr:ext}} \label{sec:Proof:Blow-up:arr:ext}

We close this section by sketching the proof of Theorem~\ref{thm:Blow-up:arr:ext}, which is very similar to the proof of Theorem~\ref{thm:Blow-up:arr:full}. We start by quickly summarising the latter. 
For two graphs $G$ and $H$ let the partitions $V=V_1\dcup\dots\dcup V_k$ and $X=X_1\dcup\dots\dcup X_k$ satisfy the requirements of Theorem~\ref{thm:Blow-up:arr:full}. In order to find an embedding of $H$ into $G$ that maps the vertices of $X_i$ onto $V_i$ we proceeded in two steps. First we used a randomised greedy algorithm to embed an almost spanning part of $H$ into $G$. This left us with sets $L_i\subseteq X_i$ and $V_i^\Free\subseteq V_i$. We then found a bijection between the $L_i$ and $V_i^\Free$ that completed the embedding of $H$. 

\smallskip

More precisely, we did the following. We ran the randomised greedy algorithm from Section~\ref{sec:rga:definition} and defined auxiliary graphs $F_i(t)$ on vertex sets $V_i\dcup X_i$ that kept track of all possible embeddings at time $t$ of the embedding algorithm. We showed that the randomised greedy embedding succeeds for the almost spanning subgraph if all the auxiliary graphs remain weighted regular (Lemma~\ref{lem:reg-small-queue}). This in turn happens with probability at least $2/3$ by Lemma~\ref{lem:rga:success}. This finished stage one of the embedding (and also proved Theorem~\ref{thm:Blow-up:arr:almost-span}).

For the second stage of the embedding we assumed that stage one found an almost spanning embedding by time $T$ and that all auxiliary graphs are weighted regular. We defined $F_i^*(T)$ to be the subgraph of $F_i(T)$ induced by $L_i\dcup V_i^\Free$. This subgraph inherits (some) weighted regularity from $F_i(T)$. Moreover, we showed that all $F_i^*(T)$ have a minimum degree which is linear in $n_i$ with probability at least $2/3$ (see Proposition~\ref{prop:aux:left-min-deg} and Lemma~\ref{lem:aux:right-min-deg}). Each $F_i^*(T)$ has a perfect matching in this case by Lemma~\ref{lem:reg:weighted:matching}. Those perfect matchings then gave the bijection of $L_i$ onto $V_i^\Free$ that completed the embedding of $H$ into $G$. We concluded that with probability at least $2/3$ the almost spanning embedding found by the randomised greedy algorithm in stage one can be extended to a spanning embedding of $H$ into $G$.

\medskip

For the proof of Theorem~\ref{thm:Blow-up:arr:ext} we proceed in exactly the same way. Note that Theorem~\ref{thm:Blow-up:arr:full} and Theorem~\ref{thm:Blow-up:arr:ext} differ only in the following aspects. The first allows a maximum degree of $\sqrt{n}/\log n$ for $H$ while the latter extends this to $\Delta(H)\le \xi n/\log n$. This does not come free of charge. Theorem~\ref{thm:Blow-up:arr:ext} not only requires the $R$-partition of $G$ to be super-regular but also imposes what we call the \emph{tuple condition}, that every tuple of $a+1$ vertices in $V\setminus V_i$ have a linearly sized joint neighbourhood in $V_i$. We now sketch how one has to change the proof of Theorem~\ref{thm:Blow-up:arr:full} to obtain Theorem~\ref{thm:Blow-up:arr:ext}.

\smallskip

Again we proceed in two stages. The first of those, which gives the almost spanning embedding, is identical to the previously described one: here the larger maximum degree is not an obstacle (see also Theorem~\ref{thm:Blow-up:arr:almost-span}). 
Again all auxiliary graphs are weighted regular by the end of the \textsc{Embedding Phase} with probability at least $2/3$. Moreover, all vertices in $L_i$ have linear degree in $F_i^*(T)$ by the same argument as before (see Proposition~\ref{prop:aux:left-min-deg} and its proof).

It now remains to show to show that every vertex $v$ in $V_i^\Free$ has a linear degree in the auxiliary graph $F_i^*(T)$. At this point we deviate from the proof of Theorem~\ref{thm:Blow-up:arr:full}. Recall that $L_i^*$ was defined as the last $\lambda n_i$ vertices of $X_i\setminus N(S)$ in the arrangeable ordering and $L_i^*(v)$ was defined as the set of vertices $x\in L_i^*$ with $\cA_v(y)$ for all $y\in N^-(x)$. We still want to prove that $L_i^*(v)$ is large for every $v$ as this again would imply the linear minimum degree for all $v\in V_i^\Free$. However, the maximum degree $\Delta(H)\le \xi n/\log n$ does not allow us to partition $L_i$ into sets which are predecessor disjoint of second order any more. This, however, was crucial for our proof of $|L_i^*(v)|\ge 2^{-a^2-1}|L_i^*|$ (see the proof of Lemma~\ref{lem:aux:rmd:Avy}). 

\smallskip

We may, however, alter the definition of the event $\cA_v(y)$ to overcome this obstacle. Instead of requiring that $|A(y) \cap N_G(v)|/|A(y)|\ge (2/3)|V_j\cap N_G(v)|/|V_j|$, we now define $\cA_v(y)$ in the proof of Theorem~\ref{thm:Blow-up:arr:ext} to be the event that
\begin{align*} 
   \frac{|A(y)\cap N(v)|}{|A(y)|} \ge \frac\iota2\;.
\end{align*}
We still denote by $L_i^*(v)$ the set of vertices $x\in L_i^*$ with $\cA_v(y)$ for all $y\in N^-(x)$. Now the tuple condition guarantees that $|C_{t(y),y}\cap N_G(v)| \ge \iota n_j$ for any $y\in X_j$ and $v \in V\setminus V_j$. Since we chose $V_j^\SP\subseteq V_j$ randomly we obtain $|C_{t(y),y}^\SP\cap N_G(v)| \ge (\mu/20) \iota n_j$ for all $y\in X_j$ \emph{almost surely}. The same arguments as in the proof of Proposition~\ref{prop:hdp:predecessors} imply that $A(y)\approx C_{t(y),y}^\SP$ for all $y$ that are predecessors of vertices $x\in L_i^*$. Hence, 
\[
 \frac{|A(y)\cap N_G(v)|}{|A(y)|} \approx \frac{|C_{t(y),y}^\SP\cap N_G(v)|}{|C_{t(y),y}^\SP|}\approx \frac{(\mu/20) \iota n_j}{(\mu/10) \delta^a n_j} \ge \frac\iota2
\]
for all $x\in L_i^*$ and all $y\in N^-(x)$ almost surely. If this is the case we have $|L_i^*(v)|=|L_i^*|=\lambda n_i$ and therefore the assertion of Lemma~\ref{lem:aux:rmd:Avy} holds also in this setting. It remains to show that the same is true for Lemma~\ref{lem:aux:rmd:pre}. Indeed, after some appropriate adjustments of the constants, the very same argument implies $\deg_{F_i(T)}(v,L_i)\ge 3\eps'n_i$ for all $i\in[r]$ and $v\in V_i^\Free$ if $|L_i^*(v)|=|L_i^*|$. More precisely, the change in the definition of $\cA_v(y)$ will force smaller values of $\eps'$, that is, the constant in the bound of the joint neighbourhood of each $(a+1)$-tuple has to be large compared to the $\eps$ in the $\eps$-regularity of the partition $V_1\dcup\dots\dcup V_k$. The constants then relate as 
\[
  0 < \xi \ll \eps \ll \eps' \ll \lambda \ll \gamma \ll \mu, \delta, \iota \le 1\, .
\]
The remaining steps in the proof of Theorem~\ref{thm:Blow-up:arr:ext} are identical to those in the proof of Theorem~\ref{thm:Blow-up:arr:full}. For $i\in[r]$ the minimum degree in $F_i^*(T)$ together with the weighted regularity implies that $F_i^*(T)$ has a perfect matching. The perfect matching defines a bijection of $L_i$ onto $V_i^\Free$ that in turn completes the embedding of $H$ into $G$.

\medskip

To wrap up, let us quickly comment on the different degree bounds for $H$ in Theorem~\ref{thm:Blow-up:arr:full} and Theorem~\ref{thm:Blow-up:arr:ext}. The proof of Theorem~\ref{thm:Blow-up:arr:ext} just sketched only requires $\Delta(H)= \xi n/\log n$. This is needed to partition $L_i^*$ into \emph{predecessor disjoint sets} in the last step in order to prove the minimum degree for the auxiliary graphs.

Contrary to that the proof of Theorem~\ref{thm:Blow-up:arr:full} partitions the vertices of $L_i^*$ into sets which are \emph{predecessor disjoint of second order}, i.e., which do have $N^-(N^-(x))\cap N^-(N^-(x'))=\emptyset$ for all $x\neq x'$. This is necessary to ensure that there is a linear number of vertices $x$ in $L_i^*$ with $\cA_v(y)$ for all $y\in N^-(x)$, i.e., whose predecessors all get embedded into $N_G(v)$ with probability $\delta/3$. More precisely we ensure that, all predecessors $y$ of $x$ have the following property. The predecessors $z_1,\dots,z_k$ of $y$ are embedded to a $k$-tuple $\left(\phi(z_1),\dots,\phi(z_k)\right)$ of vertices in $G$ such that $\bigcap N(\phi(z_i)) \cap N_G(v) \cap V_{j(y)}$ is large. This fact follows trivially from the tuple condition of Theorem~\ref{thm:Blow-up:arr:ext} and hence we don't need a partition into predecessor disjoint sets of second order.

\section{Optimality}
\label{sec:opt}

The aim of this section is twofold. Firstly, we shall investigate why the
degree bounds given  in Theorem~\ref{thm:Blow-up:arr:full}  and in
Theorem~\ref{thm:Blow-up:arr:ext} are best possible. Secondly, we shall why
the conditions Theorem~\ref{thm:Blow-up:arr:full} imposes on  image restrictions
are so restrictive.

\paragraph{Optimality of Theorem~\ref{thm:Blow-up:arr:ext}.}

To argue that the requirement $\Delta(H)\le n/\log n$ is optimal up to
the constant factor we use a construction from~\cite{KSS95} and the following proposition.

\begin{prop} \label{prop:Domination}
For every $\eps>0$ the domination number of a graph $\Gnp$ with high probability is larger than $(1-\eps)p \log n$.
\end{prop}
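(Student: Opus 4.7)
The plan is to run a first-moment argument on the number of dominating sets of size $k := \lceil (1-\eps) p \log n \rceil$. Fix such a subset $S \subseteq V(\Gnp)$. A vertex $v \in V(\Gnp)\setminus S$ is undominated by $S$ precisely when it has no neighbour in $S$; since for distinct $v,v'\notin S$ the sets of potential edges to $S$ are disjoint, these events are mutually independent and
\[
   \PP[S \text{ dominates}] \;=\; \bigl(1-(1-p)^k\bigr)^{n-k} \;\le\; \exp\!\bigl(-(n-k)(1-p)^k\bigr).
\]
Summing over the $\binom{n}{k}\le n^k$ candidate sets $S$ and using $\binom{n}{k}\le n^k$ gives
\[
   \EE\bigl[\,\#\text{dominating $k$-sets in }\Gnp\,\bigr] \;\le\; \exp\!\bigl(k\log n - (n-k)(1-p)^k\bigr).
\]

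The task reduces to showing that this expectation is $o(1)$; Markov's inequality then implies that whp no $k$-set dominates, and so any dominating set has size at least $k+1 > (1-\eps) p \log n$, as required. Writing $\lambda := -\log(1-p) > 0$, we have $(1-p)^k = n^{-\lambda(1-\eps)p}$, and hence $(n-k)(1-p)^k = \Theta\!\bigl(n^{\,1-\lambda(1-\eps)p}\bigr)$, whereas $k\log n = O(p\log^2 n)$. So as long as $\lambda(1-\eps)p < 1$, the negative polynomial-in-$n$ term dwarfs the polylogarithmic positive one and the whole exponent tends to $-\infty$.

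The only quantitative obstacle is therefore the condition $\lambda(1-\eps)p < 1$, which delineates the range of $p$ in which the asserted lower bound on $\gamma(\Gnp)$ is meaningful. For $p$ small, or more generally bounded away from $1$ by a suitable amount depending on $\eps$, it follows at once from the elementary estimate $\lambda \le p/(1-p)$, which gives $\lambda(1-\eps)p \le (1-\eps)p^2/(1-p)$. In any such regime, the first-moment bound above vanishes (in fact super-polynomially fast in $n$), completing the proof via Markov's inequality. No further ideas beyond the first-moment estimate are needed.
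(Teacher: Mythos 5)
Your first-moment argument is the same one the paper uses, and your execution of it is the careful one. Working directly with $(1-p)^k$, you correctly find that the exponent $k\log n - (n-k)(1-p)^k$ tends to $-\infty$ exactly when $(1-\eps)p\cdot\big(-\log(1-p)\big) < 1$. The paper's proof instead silently replaces $(1-p)^r$ by $\exp(-rp)$, but since $1-p\le e^{-p}$ this moves the bound the \emph{wrong} way; moreover, the $n^{1-\eps}$ in its final display corresponds to $rp=(1-\eps)\log n$, which is inconsistent with the substitution $r=(1-\eps)p\log n$. So the paper's chain of inequalities does not establish the claim as written.

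As you observe, the restriction $(1-\eps)p\big(-\log(1-p)\big)<1$ is not an artefact of the method: the domination number of $\Gnp$ for constant $p$ is a.s.\ $(1+o(1))\log n/\big(-\log(1-p)\big)$, and this is \emph{smaller} than $(1-\eps)p\log n$ whenever $(1-\eps)p\big(-\log(1-p)\big)>1$ --- e.g.\ for $p=0.9$ and small $\eps$, since $0.9\log 10\approx 2.07$. Hence the proposition as stated is actually false for $p$ sufficiently close to $1$, and the application in Section~\ref{sec:opt}, which invokes it with $p=0.9$ and a root of degree $\tfrac12\log n > \log n/\log 10\approx 0.43\log n$, also needs its constant lowered. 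The clean and correct statement, which your computation proves verbatim, is that the domination number exceeds $(1-\eps)\log n/\big(-\log(1-p)\big)$ whp. So there is no gap in your reasoning; rather, you have exposed an error in the proposition and its proof.
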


\begin{proof}
The probability that a graph in $\Gnp$ has a dominating set of size $r$ is bounded by
\begin{align*}
 \binom nr\left(1-(1-p)^r\right)^{n-r} &\le \exp\left(r\log n - \exp(-rp)(n-r)\right)\,.\\
\intertext{Setting $r=(1-\eps) p \log n$ we obtain}
 \PP\left[\parbox{130pt}{$\Gnp$ has a dominating set of size $(1-\eps) p\log n$}\right] &\le \exp\left((1-\eps)p \log^2 n - \frac{n-(1-\eps)p\log n}{n^{1-\eps}}\right) \to 0
\end{align*}
for every (fixed) positive $\eps$.
\end{proof}

Let $H$ be a tree with a root of degree $\tfrac12\log n$, such that each neighbour of this root has $2n/\log n$ leaves as neighbours. This graph $H$ almost surely is not a subgraph of $\Gnp[0.9]$ by Proposition~\ref{prop:Domination} as the neighbours of the root form a dominating set.

%\comment{Hier waere ich ueber einen Satz mehr, der sagt wie man das checkt, gluecklich.}

\paragraph{Optimality of Theorem~\ref{thm:Blow-up:arr:full}.}

The degree bound $\Delta(H)\le\sqrt{n}/\log n$ is optimal up to the $\log$-factor. 
More precisely, we can show the following.

\begin{prop} \label{prop:UpperBound:deg} 
  For every $\eps>0$ and $n_0\in\N$ there are $n\ge
  n_0$, % is $n (in\N)$ such that there are an
  an $(\eps,1/2)$-super-regular pair $(V_1,V_2)$ with $|V_1|=|V_2|=n$ and a
  tree $T\subseteq K_{n,n}$ with $\Delta(T)\le \sqrt n +1$, such that
  $(V_1,V_2)$ does not contain~$T$.
\end{prop}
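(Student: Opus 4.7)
My plan is to exhibit both an $(\eps,1/2)$-super-regular pair $(V_1,V_2)$ with $|V_1|=|V_2|=n$ (for $n$ a perfect square chosen $\ge n_0$) and a tree $T$ with $\Delta(T)\le\sqrt n+1$ such that $T$ does not embed into $(V_1,V_2)$.

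First, I would take $T$ to be the \emph{tight double broom}: a root $r\in V_1$ with exactly $\sqrt n+1$ children $w_1,\dots,w_{\sqrt n+1}\in V_2$, and each $w_i$ having exactly $\sqrt n-1$ additional leaf children in $V_1$. The identity $1+(\sqrt n+1)(\sqrt n-1)=n$ makes $V(T)\cap V_1$ fill $V_1$ exactly under any bipartition-respecting embedding $\phi$, and $\Delta(T)=\sqrt n+1$. The embedding task then reduces to choosing a root image $r^*\in V_1$, selecting $\sqrt n+1$ distinct vertices $w_i^*\in V_2\cap N_G(r^*)$, and partitioning $V_1\setminus\{r^*\}$ into $\sqrt n+1$ classes of size $\sqrt n-1$ with the $i$-th class contained in $N_G(w_i^*)$. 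By a standard ($b$-matching) version of Hall's theorem, such a partition exists iff for every $S\subseteq\{w_1^*,\dots,w_{\sqrt n+1}^*\}$ one has $\bigl|\bigcup_{w\in S}N_G(w)\cap(V_1\setminus\{r^*\})\bigr|\ge(\sqrt n-1)|S|$.

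Second, I would construct the host graph $G$ so as to block this Hall condition for every admissible choice of $(r^*,w_1^*,\dots,w_{\sqrt n+1}^*)$. A natural candidate is a \emph{blown-up template} construction: partition $V_2$ into $\sqrt n$ groups $G_1,\dots,G_{\sqrt n}$ of size $\sqrt n$ each, assign to each group $G_j$ a common template neighborhood $U_j\subseteq V_1$ of size $n/2$, and declare every $w\in G_j$ adjacent precisely to $U_j$. If the family $\{U_j\}$ is pseudo-random, then standard Chernoff-type estimates show $G$ is $(\eps,1/2)$-super-regular: every $V_2$-vertex has degree exactly $n/2$, every $V_1$-vertex has degree $\approx n/2$, and sub-densities $d(A',B')$ for $|A'|,|B'|\ge\eps n$ concentrate around $1/2$. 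In such a $G$, any two children selected from the same group share their entire $V_1$-neighborhood, so Hall's condition forces at most $\lfloor(n/2-1)/(\sqrt n-1)\rfloor\approx\sqrt n/2$ children per group. The obstruction would then be obtained by tuning the templates (for instance via a correlated algebraic construction) so that every $r^*\in V_1$ belongs to too few templates to distribute all $\sqrt n+1$ children within these per-group caps.

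The main obstacle is reconciling two conflicting demands on the family $\{U_j\}$: pseudo-randomness (needed for $(\eps,1/2)$-super-regularity) typically makes each $r^*\in V_1$ lie in about $\sqrt n/2$ templates, giving total capacity $\approx n/4\gg\sqrt n+1$ and allowing the embedding. To enforce the Hall obstruction, the templates must be delicately correlated so that $r^*$ lies in only $O(1)$ templates while still ``looking random'' to large subsets. I expect that carrying out this correlated construction—or alternatively replacing $T$ by a tree whose non-embeddability stems from a more subtle structural deficiency of $G$ (for example a degree-sequence or bipartite-expansion deficit)—is the crux of the proof.
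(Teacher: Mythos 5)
There is a genuine gap, and you have essentially located it yourself: the host graph is never constructed, and the repair you suggest cannot work inside your framework. In your template graph every vertex of a group $G_j$ is adjacent exactly to $U_j$, so for $r^*\in V_1$ we have $\deg_G(r^*)=t\cdot\sqrt n$, where $t$ is the number of templates containing $r^*$. Super-regularity with $\delta=1/2$ requires $\deg_G(r^*)\ge n/2$ for \emph{every} $r^*\in V_1$, hence $t\ge\sqrt n/2$ always; the hoped-for situation ``$r^*$ lies in only $O(1)$ templates while looking random'' is therefore not merely delicate but impossible. More generally, any obstruction based on a capacity/Hall deficit at the root's neighbourhood is ruled out by the minimum-degree part of super-regularity, so this line cannot be completed.

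The missing idea in the paper is to make the obstruction a \emph{distance} obstruction rather than a capacity one. Partition $V_1$ into a \emph{constant} number $k$ of blocks $W_1,\dots,W_k$ (with $1/k\ll\eps$); for each odd $i$ pick a random $U_i\subseteq V_2$ of size $n/2$ and set $U_{i+1}=V_2\setminus U_i$, and join $W_i$ completely to $U_i$. Then every vertex has degree exactly $n/2$, the pair is whp $\eps$-regular (degree--codegree criterion), and yet $U_1\cap U_2=\emptyset$ forces $\mathrm{dist}(W_1,W_2)>2$. The tree is then chosen so that an entire colour class of size $n$ sits within distance~$2$ of a root: its image must fill $V_1$ but cannot meet $W_2$, a contradiction. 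Your tight double broom has exactly this distance-$2$ property on its $n$-vertex side, so it would serve here too, except that you must additionally exclude embeddings placing the root in $V_2$ (or, as the paper does, glue two brooms at their roots so that both colour classes have size exactly $n$ and the argument applies to whichever root lands in $V_1$). So your tree is essentially right; the graph construction and the nature of the obstruction are what need to be replaced.
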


Condition~\eqref{item:Blow-up:ir} of Theorem~\ref{thm:Blow-up:arr:full}
allows only a constant number of permissible image restrictions per cluster.
The following proposition shows that also this is best possible (up to the
value of the constant).

\begin{prop} \label{prop:UpperBound:landing} 
  For every $\eps>0$, $n_0\in\N$, and every $w\colon\N \to \N$ which goes
  to infinity arbitrarily slowly, there are $n\ge n_0$, an
  $(\eps,1/2)$-super-regular pair $(V_1,V_2)$ with $|V_1|=|V_2|=n$ and a
  tree $T\subseteq K_{n,n}$ with $\Delta(T)\le w(n)$ such that the following
  is true.  The images of $w(n)$ vertices of~$T$ can be restricted to sets
  of size $n/2$ in $V_1\cup V_2$ such that no embedding of~$T$ into $(V_1,V_2)$
  respects these image restrictions.
\end{prop}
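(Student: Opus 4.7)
The plan is an explicit construction capturing the following intuition: because each image restriction set has size $n/2$ and each vertex of an $(\eps,1/2)$-super-regular pair has degree $\ge n/2$, forcing the neighbourhood of $v\in V_2$ to avoid an image-restriction set $J\subseteq V_1$ means $N_G(v)=V_1\setminus J$ \emph{exactly}. We exploit this by building $G$ so that every $v\in V_2$ is paired with one ``forbidden'' set $J_i$, and by then constructing a fan-shaped tree $T$ with one leaf per $J_i$ sharing a common parent $y$: whichever $V_2^{(i)}$ the image of $y$ lies in, the restriction on the corresponding leaf conflicts with $N_G(\phi(y))$.

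Given $\eps,n_0,w$, choose $n\ge n_0$ to be a power of $2$, $n=2^m$, with $k:=w(n)\ge 1/\eps^5$ and $k\mid n$ (possible since $w\to\infty$). Identify $V_1=V_2=\mathbb{F}_2^m$, pick $k$ distinct non-trivial linear hyperplanes $J_1,\dots,J_k\subseteq V_1$, each of size $n/2$, partition $V_2=V_2^{(1)}\dcup\dots\dcup V_2^{(k)}$ into equal classes of size $n/k$, and define $G$ by $N_G(v):=V_1\setminus J_i$ for every $v\in V_2^{(i)}$. Every vertex has degree exactly $n/2$ and the overall density is $1/2$. For $\eps$-regularity, take $A\subseteq V_1$, $B\subseteq V_2$ with $|A|,|B|\ge\eps n$, and set $\alpha_i:=|B\cap V_2^{(i)}|/|B|$, $\beta_i:=|A\setminus J_i|/|A|$; then $d(A,B)=\sum_i\alpha_i\beta_i$ with $\sum_i\alpha_i=1$. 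Writing $\chi_i$ for the character dual to $J_i$, one computes $\beta_i-\tfrac12=-\tfrac{n}{2|A|}\widehat{\mathbf 1_A}(\chi_i)$, so Parseval's identity $\sum_\chi|\widehat{\mathbf 1_A}(\chi)|^2=|A|/n$ forces the number of ``bad'' indices $i$ (with $|\beta_i-\tfrac12|>\eps/2$) to be at most $n/(\eps^2|A|)\le 1/\eps^3$. Combining with $\alpha_i\le(n/k)/|B|\le 1/(\eps k)$ gives $|d(A,B)-\tfrac12|\le\eps/2+1/(2\eps^4 k)\le\eps$ because $k\ge 1/\eps^5$. The zero-density subpair $(J_j,V_2^{(j)})$ is not considered by the regularity definition because $|V_2^{(j)}|=n/k<\eps n$.

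Now let $T$ be a spanning tree of $K_{n,n}$ (with bipartition $X_1\dcup X_2$) containing a vertex $y\in X_2$ of degree $k$ adjacent to $x_1,\dots,x_k\in X_1$, extended by attaching to each $x_i$ an alternating path through the remaining $n-1$ vertices of $X_2$ and $n-k$ of $X_1$; then $\Delta(T)=k=w(n)$. Set $I(x_i):=J_i$ for $i\in[k]$, giving exactly $w(n)$ image-restricted vertices, each restricted to a set of size $n/2$. The obstruction is immediate: if $\phi$ were an embedding respecting the bipartition and restrictions, then $\phi(y)\in V_2^{(j)}$ for some $j\in[k]$, so $N_G(\phi(y))=V_1\setminus J_j$; the edge $\{y,x_j\}\in E(T)$ forces $\phi(x_j)\in V_1\setminus J_j$, while $I(x_j)=J_j$ forces $\phi(x_j)\in J_j$, a contradiction. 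The main obstacle is the $\eps$-regularity verification: by design $G$ contains the completely unbalanced subpairs $(J_j,V_2^{(j)})$ that carry the obstruction, and the heart of the proof is to ensure simultaneously that each such unbalanced pair is too small ($|V_2^{(j)}|<\eps n$) to be tested by the regularity condition and that Parseval's identity controls all other ``bad directions'' $\beta_i$ uniformly in $A$.
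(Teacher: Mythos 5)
There is a genuine gap: your host graph is not $(\eps,1/2)$-super-regular, because the minimum-degree condition fails on the $V_1$ side. You assert that ``every vertex has degree exactly $n/2$'', but this only holds for $V_2$: a vertex $u\in V_1$ has degree $\sum_{i:\,u\notin J_i}|V_2^{(i)}|=(n/k)\,\bigl|\{i\in[k]: u\notin J_i\}\bigr|$, which varies with $u$. In the extreme, the zero vector of $\mathbb{F}_2^m$ lies in every linear hyperplane $J_i$, so it has degree $0$; and if, say, the defining vectors of the $J_i$ are linearly independent, a standard basis vector has degree only $n/k$. Since $(\eps,1/2)$-super-regularity requires $|N(u,V_2)|\ge n/2$ for \emph{every} $u\in V_1$, the construction as stated does not satisfy the hypotheses of the proposition, independently of the (correct) Fourier/Parseval verification of $\eps$-regularity and the (correct) obstruction argument with the fan tree. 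The paper's construction avoids exactly this pitfall by taking the forbidden sets in complementary pairs, $U_{i+1}=V_2\setminus U_i$: then every vertex on the non-partitioned side lies in precisely half of the sets $U_1,\dots,U_k$ and hence has degree exactly $(k/2)(n/k)=n/2$. Your construction can be repaired the same way: instead of $k$ arbitrary linear hyperplanes, take $k/2$ nontrivial characters $\chi_1,\dots,\chi_{k/2}$ and let $J_{2j-1}=\{x:\chi_j(x)=0\}$ and $J_{2j}=V_1\setminus J_{2j-1}$; then every $u\in V_1$ avoids exactly $k/2$ of the $J_i$, all degrees become $n/2$, and your Parseval bound on the number of ``bad'' indices goes through unchanged since the coset indicators have the same Fourier coefficient in absolute value.

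Apart from this, your argument is essentially the paper's (partition one side into $k$ classes with prescribed half-sized neighbourhoods, a hub of degree $k$ whose leaves are restricted to the forbidden halves, and the same one-line contradiction), with the sides of hub and restrictions swapped and with an explicit algebraic construction plus Fourier analysis replacing the paper's random choice of the $U_i$ and the degree/co-degree criterion for $\eps$-regularity. Two smaller points to tidy up once the pairing fix is in place: requiring $k=w(n)$ to divide $n=2^m$ forces $w(n)$ to be a power of two, which you cannot assume for an arbitrary $w$ (use nearly equal classes, or a hub degree $k\le w(n)$ and pad with harmless additional half-set restrictions to reach exactly $w(n)$ restricted vertices, as the statement asks for $w(n)$ of them); and the spanning property of $T$ is not needed -- a tree on $w(n)+1$ vertices already suffices, as the paper remarks.
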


We remark that our construction for
Proposition~\ref{prop:UpperBound:landing} does not require a spanning
tree~$T$, but only one on $w(n)+1$ vertices. Moreover, this proposition shows that
the number of admissible image restrictions drops from
linear (in the original Blow-up Lemma) to constant (in
Theorem~\ref{thm:Blow-up:arr:full}), if the maximum degree of the target
graph~$H$ increases from constant to an increasing function.

\smallskip

We now give the constructions that prove these two propositions.

\begin{proof}[of Proposition~\ref{prop:UpperBound:deg} (sketch)]
  Let $\eps>0$ and $n_0$ be given, choose an integer~$k$ such that $1/k \ll
  \eps$ and an integer~$n$ such that $k,n_0\ll n$, and consider the
  following bipartite graph $G_k=(V_1\dcup V_2,E)$ with $|V_1|=|V_2|=n$.
  Let $W_1,\dots,W_k$ be a balanced partition of $V_1$. Now for each
  \emph{odd} $i\in[k]$ we randomly and independently choose a subset
  $U_i\subseteq V_2$ of size $n/2$; and we set $U_{i+1}:= V_2\setminus
  U_i$. Then we insert exactly all those edges into~$E$ which have one
  vertex in~$W_i$ and the other in~$U_i$, for $i\in[k]$. Clearly, every
  vertex in~$G$ has degree $n/2$. In addition, using the degree co-degree
  characterisation of $\eps$-regularity it is not difficult to check that
  $(V_1,V_2)$ almost surely is $\eps$-regular.

  Next, we construct the tree~$T$ as follows. We start with a
  tree~$T'$, which consists of a root of degree $\sqrt n -1$ and is such
  that each child of this root has exactly $\sqrt{n}$ leaves as children.
  For obtaining~$T$, we then take two copies of~$T'$, call their
  roots~$x_1$ and~$x_2$, respectively, and add an edge between~$x_1$
  and~$x_2$. Clearly, the two colour classes of~$T$ have size~$n$ and
  $\Delta(T)=\sqrt n +1$.

  It remains to show that $T\not\subseteq G_k$. Assume for contradiction
  that there is an embedding~$\phi$ of~$T$ into $G_k$ such that
  $\phi(x_1)\in W_1$.  Note that $n-1$ vertices in~$T$ have distance~$2$
  from~$x_1$. Since~$G_k$ is bipartite~$\phi$ has to map these $n-1$ vertices
  to~$V_1$.  In particular, one of them has to be embedded
  in~$W_2$. However, the distance between~$W_1$ and~$W_2$ in~$G_k$ is
  greater than~$2$.
\end{proof}

The proof of Proposition~\ref{prop:UpperBound:landing} proceeds similarly.

\begin{proof}[of Proposition~\ref{prop:UpperBound:landing} (sketch)]
  Let~$\eps$, $n_0$ and $w$ be given, choose $n$ large enough so that
  $n_0\le n$ and $1/w(n) \ll \eps$, and set $k:=w(n)$.

  We reuse the graph~$G_k=(V_1\dcup V_2,E)$ from the previous proof as
  $\eps$-regular pair. Now consider any balanced tree~$T$ with a vertex~$x$
  of degree $\Delta(T)=w(n)=k$. Let $\{y_1,\dots,y_k\}$ be the neighbours
  of~$x$ in~$T$. For $i\in[k]$ we then restrict the image of~$y_i$ to
  $V_2\setminus U_i$.

  We claim that there is no embedding of~$T$ into~$G$ that respects these
  image restrictions. Indeed, clearly~$x$ has to be embedded
  into~$W_j\subseteq V_1$ for some $j\in[k]$ because its neighbours are image
  restricted to subsets of~$V_2$.  However, by the definition of~$U_j$
  this prevents~$y_j$ from being embedded into $V_2\setminus U_j$.
\end{proof}
\clearpage

\section{Applications} \label{sec:Applications}

\subsection{$F$-factors for growing degrees}

This section is to prove Theorem~\ref{thm:GrowingHfactors}. Our strategy will be to repeatedly embed a collection of copies of $F$ into a super-regular $r$-tuple in $G$ with the help of the Blow-up Lemma version stated as Theorem~\ref{thm:Blow-up:arr:full}. The following result by B\"ottcher, Schacht, and Taraz~\cite[Lemma 6]{BoeSchTar09} says that for $\gamma>0$ any sufficiently large graph $G$ with $\delta(G)\ge ((r-1)/r+\gamma)|G|$ has a regular partition with a reduced graph $R$ that contains a $K_r$-factor. Moreover, all pairs of vertices in $R$ that lie in the same $K_r$ span super-regular pairs in $G$.
Let $K_r^{(k)}$ denote the disjoint union of $k$ complete graphs on $r$ vertices each. For all $n,k,r\in\N$, we call an integer partition $(n_{i,j})_{i\in[k],j\in[r]}$ of $[n]$ (with $n_{i,j}\in\N$ for all $i\in[k]$ and $j\in[r]$) $r$-equitable, if $|n_{i,j}-n_{i,j'}|\le 1$ for all $i\in[k]$ and $j,j'\in[r]$.

\begin{lem}\label{lem:EquiSuperPartition}
For all $r\in\N$ and $\gamma>0$ there exists $\delta>0$ and $\eps_0>0$ such that for every positive $\eps\le\eps_0$ there exists $K_0$ and $\xi_0>0$ such that for all $n\ge K_0$ and for every graph $G$ on vertex set $[n]$ with $\delta(G)\ge((r-1)/r+\gamma)n$ there exists $k\in \N\setminus\{0\}$, and a graph $K_r^{(k)}$ on vertex set $[k]\times [r]$ with
\renewcommand{\labelenumi}{(R\arabic{enumi})}
\begin{enumerate}
\item $k\le K_0$,
\item there is an $r$-equitable integer partition $(m_{i,j})_{i\in[k],j\in[r]}$ of $[n]$ with $(1+\eps)n/(kr)\ge m_{i,j}\ge (1-\eps)n/(kr)$ such that the following holds.\footnote{The upper bound on $m_{i,j}$ is implicit in the proof of the lemma but not explicitly stated in~\cite{BoeSchTar09}.}
\end{enumerate}
For every partition $(n_{i,j})_{i\in[k],j\in[r]}$ of $n$ with $m_{i,j}-\xi_0 n \le n_{i,j} \le m_{i,j}+\xi_0 n$ there exists a partition $(V_{i,j})_{i\in[k],j\in[r]}$ of $V$ with
\renewcommand{\theenumi}{V\arabic{enumi}}
\renewcommand{\labelenumi}{(\theenumi)}
\begin{enumerate}
\item\label{eq:lem:ESP:1} $|V_{i,j}|=n_{i,j}$,
\item\label{eq:lem:ESP:2} $(V_{i,j})_{i\in[k],j\in[r]}$ is $(\eps,\delta)$-super-regular on $K_r^{(k)}$.
\end{enumerate}
\end{lem}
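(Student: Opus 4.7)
The plan is to deduce this from the Szemer\'edi Regularity Lemma together with the Hajnal--Szemer\'edi theorem (Theorem~\ref{thm:HajnalSzemeredi}), following essentially the line taken in~\cite{BoeSchTar09}. First I would fix auxiliary constants $d$ and $\eps''$ with $\eps''\ll\eps\ll d\ll\gamma$ and apply the Regularity Lemma to $G$ with regularity parameter $\eps''$ and density threshold~$d$. This gives an equitable partition $V_0\dcup V_1\dcup\dots\dcup V_M$ of $[n]$ with $M\le K_0$ (absorbing $K_0$ into the constants) in which all but an $\eps''$-fraction of pairs are $\eps''$-regular, and $|V_0|\le\eps'' n$. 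The standard cleanup shows that the reduced graph~$R'$ on $[M]$ with an edge $\{i,j\}$ whenever $(V_i,V_j)$ is $\eps''$-regular of density at least $d$ has minimum degree at least $\big(\tfrac{r-1}{r}+\tfrac{\gamma}{2}\big)M$.

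Next I would extract a $K_r$-factor of~$R'$. Since $\delta(R')\geq\tfrac{r-1}{r}M+\tfrac{\gamma}{2}M$, the complement of $R'$ satisfies $\Delta(\overline{R'})\leq M/r-\gamma M/2$, so Theorem~\ref{thm:HajnalSzemeredi} applied to $\overline{R'}$ produces an equitable colouring of $\overline{R'}$ into $r$ colour classes of almost equal size; after dropping at most $r-1$ clusters to force divisibility, this is exactly a copy of $K_r^{(k)}$ in $R'$ with $k=\lfloor M/r\rfloor$. I would then relabel the clusters of $R'$ as $V_{i,j}$ for $(i,j)\in[k]\times[r]$ so that each $\{V_{i,1},\dots,V_{i,r}\}$ spans a $K_r$ in $R'$. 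At this point, each pair $(V_{i,j},V_{i,j'})$ with $j\ne j'$ is $\eps''$-regular with density at least $d$, but may fail to be super-regular.

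To enforce super-regularity I would perform the usual cleaning: inside each $K_r$-block, for every ordered pair $(j,j')$ delete from $V_{i,j}$ the at most $\eps''|V_{i,j}|$ vertices with fewer than $(d-\eps'')|V_{i,j'}|$ neighbours in $V_{i,j'}$. This at most doubles the regularity parameter and preserves density at least $\delta\deff d/2$, while giving $(\eps,\delta)$-super-regularity on each $K_r$-block. The deleted vertices together with $V_0$ form an exceptional set~$U$ of size $O(\eps'' n)$. Using the bound $\delta(G)\ge\big(\tfrac{r-1}{r}+\gamma\big)n$, each $u\in U$ has neighbours in at least $\gamma n$ vertices of every~$V_{i,j}$ on average, so I can greedily assign each $u$ to some cluster~$V_{i,j}$ in which it has $\ge(\delta+\eps)|V_{i,j'}|$ neighbours in each of the remaining $V_{i,j'}$ within the same block; this reintegrates~$U$ while preserving super-regularity. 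The resulting sizes $m_{i,j}$ are then equitable within each $K_r$-block and satisfy the stated $(1\pm\eps)n/(kr)$ bound; note the upper bound is immediate since $|V_0|$ and the deleted vertices together affect the cluster sizes only by an additive $O(\eps'' n)\ll\eps n/(kr)$.

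The remaining point, the flexibility in the choice of $(n_{i,j})$, I expect to be the main technical obstacle but also the most straightforward to handle. Given target sizes $n_{i,j}$ with $|n_{i,j}-m_{i,j}|\le\xi_0 n$, I would first equalise block totals by moving $O(\xi_0 n)$ vertices between clusters in different $K_r$-blocks (using the minimum degree of $G$ to locate, for each vertex to be moved, a target cluster in which it has many neighbours in every partner cluster); then within each block I would rebalance between clusters, exchanging $O(\xi_0 n)$ vertices at a time. Since $\xi_0$ can be chosen far smaller than $\delta$ and $\eps$, each such swap perturbs degrees and densities by at most $O(\xi_0)$ and therefore preserves both $\eps$-regularity (by the standard robustness of regular pairs under small perturbations) and the $\delta$-minimum-degree condition needed for super-regularity in~\eqref{eq:lem:ESP:2}. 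Verifying the exact bookkeeping so that all $r$ pairs in each $K_r$-block survive simultaneously is what I expect to consume most of the routine work.
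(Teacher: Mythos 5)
You should first note that the paper itself contains no proof of Lemma~\ref{lem:EquiSuperPartition}: it is imported verbatim from \cite[Lemma~6]{BoeSchTar09} (with the upper bound on $m_{i,j}$ added, as the footnote records), so your proposal has to be judged on its own; in outline it does follow the same route as the cited proof (regularity lemma, Hajnal--Szemer\'edi on the reduced graph, super-regularisation inside the $K_r$-blocks, redistribution of exceptional vertices). Two of your steps are, however, not correct as written. The Hajnal--Szemer\'edi step is stated backwards: what is needed is a $K_r$-factor of $R'$, i.e.\ a partition of (almost all of) the $M$ clusters into $k\approx M/r$ cliques of size exactly $r$, which corresponds to an equitable colouring of $\overline{R'}$ with roughly $M/r$ colours, not ``into $r$ colour classes of almost equal size''. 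Applying Theorem~\ref{thm:HajnalSzemeredi} to $\overline{R'}$, whose maximum degree is at most $M/r-\gamma M/2$, produces about $M/r-\gamma M/2$ stable sets whose sizes are slightly \emph{larger} than $r$; a partition of the clusters into $r$ huge cliques is neither what that theorem yields nor a structure from which $K_r^{(k)}$ can be read off, since clusters in different classes need not form edges of $R'$. This is repairable (delete at most $r-1$ clusters for divisibility, take a $K_r$ inside each class, send leftovers to the exceptional set), but as formulated the step fails. Relatedly, the upper bound in (R2) is not ``immediate'': $\eps''$ cannot be taken small compared to $1/k$, because $k$ may be as large as $K_0=K_0(\eps'')$; one must spread the $O(\eps'' n)$ exceptional vertices over the $\Omega(\gamma k)$ blocks friendly to each of them, so that any single cluster gains only $O\big(\eps'' n/(\gamma k)\big)\ll \eps n/(kr)$ vertices.

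The genuine gap is the flexibility clause, which is the heart of the lemma and which you treat as routine. The hypothesis $\delta(G)\ge((r-1)/r+\gamma)n$ gives only that every vertex has, for an $\Omega(\gamma r)$-fraction of the blocks $i$, at least $d|V_{i,j}|$ neighbours in \emph{every} cluster of block $i$ (and, dually, that each block has linearly many such ``friendly'' vertices); it does not make any particular vertex friendly to any particular block. The target vector $(n_{i,j})$ is adversarial, so the clusters that must shrink and those that must grow are not under your control, and nothing in the hypotheses prevents, once $k$ is moderately large, an entire cluster from having no edges at all to a prescribed block (a vertex may have up to $(1/r-\gamma)n$ non-neighbours, while a block contains only about $n/k$ vertices). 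Hence the direct moves you propose---take a vertex from a surplus cluster and place it in a deficit cluster ``in which it has many neighbours in every partner cluster''---need not exist; the same objection applies to your intra-block rebalancing, because a vertex moved from $V_{i,j}$ to $V_{i,j'}$ now also needs large degree into its former cluster $V_{i,j}$, over which super-regularisation gives no control. To close this one must either justify chain moves (a flow or Hall-type argument, requiring an additional connectivity statement you have not provided) or organise the construction so that the $\pm\xi_0 n$ adjustment is absorbed into the assignment of a suitable reservoir of vertices known to be friendly to many blocks; this is precisely where the work behind \cite[Lemma~6]{BoeSchTar09} lies, and your sketch does not supply it.
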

\renewcommand{\labelenumi}{(\roman{enumi})}

Using this partitioning result for $G$, Theorem~\ref{thm:GrowingHfactors} follows easily.

\begin{proof}[of Theorem~\ref{thm:GrowingHfactors}]
We alternatingly choose constants as given by Theorem~\ref{thm:Blow-up:arr:full} and Lemma~\ref{lem:EquiSuperPartition}. So let $\delta,\eps_0>0$ be the constants given by Lemma~\ref{lem:EquiSuperPartition} for $r$ and $\gamma>0$. Further let $\eps,\alpha>0$ be the constants given by Theorem~\ref{thm:Blow-up:arr:full} for $C=0$, $a$, $\Delta_R=r$, $\kappa=2$, $c=1$ and $\delta$. We are setting $C=0$ as we do not use any image restrictions in this proof. If necessary we decrease $\eps$ such that $\eps\le\eps_0$ holds. Let $K_0$ and $\xi_0>0$ be as in Lemma~\ref{lem:EquiSuperPartition} with $\eps$ as set before. For $r$ let $n_0$ be given by Theorem~\ref{thm:Blow-up:arr:full}. If necessary increase $n_0$ such that $n_0\ge K_0$. Finally set $\xi=\xi_0$. In the following we assume that 
\begin{enumerate}
\item $G$ is of order $n\ge n_0$ and has $\delta(G)\ge(\tfrac{r-1}{r}+\gamma)n$, and
\item $H$ is an $a$-arrangeable, $r$-chromatic $F$-factor with $|F|\le \xi n$, $\Delta(F)\le\sqrt{n}/\log n$.
\end{enumerate}
We need to show that $H\subseteq G$. For this purpose we partition $H$ into subgraphs $H_1,\dots,H_k$, where $H_i$ is to be embedded into $\cup_{j\in[r]}V_{i,j}$ later, as follows. Let $(m_{i,j})_{i\in[k],j\in[r]}$ be an $r$-equitable partition of $[n]$ with $m_{i,j}\ge (1-\eps)n/(kr)$ as given by Lemma~\ref{lem:EquiSuperPartition}. For $i=1,\dots,k-1$ we choose $\ell_i$ such that both
\begin{align}\label{eq:GrowingHfactors:partition:1}
 \big|m_{i,j}-\ell_i |F|\big| \le |F| \text{, \qquad and \qquad} \left|\sum_{i'\le i} (m_{i',j}-\ell_{i'}|F|)\right| \le |F|
\end{align}
for all $j\in[r]$. Let $n_{i,j}= \ell_i |F|$ for all $j\in[r]$ and set $H_i$ to be $\ell_ir$ copies of $F$. Note that there exists an $r$-colouring of $H_i$ in which each colour class $X_{i,j}$ has exactly $n_{i,j}$ vertices. Finally $H_k$ is set to be $H\setminus (H_1\cup\dots\cup H_{k-1})$. Let $\chi:V(H_k)\to [r]$ be a colouring of $H_k$ where the colour-classes have as equal sizes as possible and set $n_{k,j}\deff|\chi^{-1}(j)|$ and $X_{k,j}\deff \chi^{-1}(j)$ for $j\in[r]$. It follows from~\eqref{eq:GrowingHfactors:partition:1} that 
\begin{align} \label{eq:GrowingHfactors:partition:2}
  |n_{i,j}-m_{i,j}| \le |F| \le \xi_0 n
\end{align}
for all $i\in[k]$, $j\in[r]$. Thus there exists a partition $(V_{i,j})_{i\in[k],j\in[r]}$ of $V(G)$ with properties~\eqref{eq:lem:ESP:1} and~\eqref{eq:lem:ESP:2} by Lemma~\ref{lem:EquiSuperPartition}.

We apply Theorem~\ref{thm:Blow-up:arr:full} to embed $H_i$ into $G[V_{i,1}\cup\dots\cup V_{i,r}]$ for every $i\in[k]$. 
Note that we have partitioned $V(H_i)=X_{i,1}\cup\dots\cup X_{i,r}$ in such a way that $|X_{i,j}|=n_{i,j}$ and $vw\in E(H_i)$ implies $v\in X_{i,j}$, $w\in X_{i,j'}$ with $j\neq j'$. Now properties~\eqref{eq:lem:ESP:1},~\eqref{eq:lem:ESP:2} guarantee that $|V_{i,j}|=n_{i,j}$ and $(V_{i,j},V_{i,j'})$ is an $(\eps,\delta)$-super-regular pair in $G$ for all $i\in[k]$ and $j,j'\in[r]$ with $j\neq j'$. It follows that $H_i$ is a subgraph of $G[V_{i,1}\cup\dots\cup V_{i,r}]$ by Theorem~\ref{thm:Blow-up:arr:full}.
\end{proof}

\subsection{Random graphs and universality}

Next we prove Theorem~\ref{thm:GnpUniversal}, which states that $G=\Gnp$ is universal for the class of $a$-arrangeable bounded degree graphs, $\mathcal{H}_{n,a,\xi}= \{H: |H|=n, H \text{ $a$-arr., $\Delta(H)\le \xi n/\log n$}\}$.

To prove this we will find a balanced partition of $G$ and apply Theorem~\ref{thm:Blow-up:arr:ext}. For this purpose we also have to find a balanced partition of the graphs $H\in\mathcal{H}_{n,a,\xi}$. To this end we shall use the following result of Kostochka, Nakprasit, and Pemmaraju~\cite{KosNakPem05}.

\begin{thm}[Theorem 4 from~\cite{KosNakPem05}] \label{thm:DegenerateEquit}
Every $a$-arrangeable\footnote{In fact~\cite{KosNakPem05} shows this result for the more general class of $a$-degenerate graphs.} graph $H$ with $\Delta(H)\le n/15$ has a balanced $k$-colouring for each $k\ge 16a$.
\end{thm}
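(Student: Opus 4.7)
The plan is to start from any proper $k$-colouring of $H$ and iteratively rebalance it. Since $a$-arrangeability implies $a$-degeneracy (the arrangeability order witnesses $|N^-(x)|\le a$ for every vertex, as noted just after Definition~\ref{def:arrangeable}), we have $\chi(H)\le a+1\le k$, so a proper $k$-colouring is produced trivially by greedy colouring along the arrangeability order. All the work lies in converting this into a balanced one.

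I would carry out the rebalancing by a Hajnal--Szemer\'edi style augmenting-chain argument. Given a proper $k$-colouring with classes $V_1,\dots,V_k$, call $V_i$ \emph{heavy} if $|V_i|>\lceil n/k\rceil$ and \emph{light} if $|V_i|<\lfloor n/k\rfloor$, and build an auxiliary digraph $D$ on $[k]$ with an arc $i\to j$ whenever some $v\in V_i$ has no neighbour in $V_j$. Such an arc encodes a single-vertex recolouring that preserves properness, so any directed path in $D$ from a heavy class to a light class yields a sequence of recolourings that strictly decreases the total imbalance $\sum_i\bigl(|V_i|-\lceil n/k\rceil\bigr)_+$. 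Iterating this operation until no such path exists produces the desired balanced $k$-colouring.

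The main obstacle is showing that a heavy-to-light augmenting path in $D$ always exists when the colouring is imbalanced. A one-step argument is too weak: the vertices of $V_i$ blocked from colour $j$ number at most $\Delta(H)\cdot|V_j|\le (n/15)\lceil n/k\rceil$, which can exceed $|V_i|\approx n/k$. The slack appears only on average, via degeneracy: $\sum_v|N_H(v)|\le 2an$, so a typical vertex is blocked from at most $O(a)$ classes, and with $k\ge 16a$ the average vertex admits arcs in $D$ to at least $k-O(a)\ge 13a$ other classes. Combining this averaged count with the uniform bound $\Delta(H)\le n/15$ --- which rules out pathological concentrations of high-degree vertices inside a single class --- should suffice, by a double-counting and pigeonhole argument of Hajnal--Szemer\'edi type, to guarantee a heavy-to-light path of bounded length. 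Making the counting bite for the exact constants $15$ and $16a$ is the crux of the proof; the factor $16$ provides the $15a$ units of slack in $k-a$ that survive after subtracting both the forbidden-colour contribution (from degeneracy) and the blocked-direction contribution (from $\Delta(H)$).
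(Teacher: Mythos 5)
The paper does not prove this statement: it is quoted verbatim as Theorem~4 of~\cite{KosNakPem05}, so there is no internal argument to compare against and your proposal has to stand on its own. The preliminary step is fine ($a$-arrangeable $\Rightarrow$ $a$-degenerate $\Rightarrow$ properly $(a+1)$-colourable $\Rightarrow$ properly $k$-colourable). The rebalancing step, however, rests on a lemma that you announce as ``the crux'' but do not prove, namely that every imbalanced proper $k$-colouring has a heavy-to-light directed path in the digraph $D$, and this lemma is false. Take $a=1$, $k=16$, $n=256$, and let $H$ be a tree with $17$ ``centres'' $v_1,\dots,v_{17}$, each adjacent to exactly one vertex in each of $V_2,\dots,V_{16}$, all remaining $239$ vertices having degree $1$ or $2$ with all their neighbours among the centres. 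Then $H$ is $1$-degenerate, $\Delta(H)=15\le n/15$, the colouring with $V_1=\{v_1,\dots,v_{17}\}$ is proper and has $|V_1|=17>\lceil n/16\rceil=16$, and yet $V_1$ is an isolated node of your digraph $D$: every $v_i$ has a neighbour in every other class, and every vertex outside $V_1$ has all its neighbours inside $V_1$. A procedure that only ever performs imbalance-decreasing moves along heavy-to-light paths in $D$ therefore makes no move at all, although the theorem guarantees a balanced $16$-colouring.

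What is missing is a mechanism for \emph{imbalance-neutral} recolourings that unstick a trapped heavy class --- here one must first shift the $V_2$-neighbour of $v_1$ to some other $V_j$, which does not improve the imbalance by itself, before $v_1$ can be moved into $V_2$; the single-shot digraph with heavy-to-light paths cannot see this. The averaging bound $\sum_v\deg_H(v)\le 2an$ controls only the \emph{number} of high-degree vertices, not how they distribute over colour classes, and $\Delta(H)\le n/15$ most certainly does not ``rule out pathological concentrations of high-degree vertices inside a single class'' --- the example above is precisely such a concentration, with $\Delta=15\le 256/15$. So the Hajnal--Szemer\'edi-type pigeonhole you invoke does not close as described, and the proof given in~\cite{KosNakPem05} is considerably more delicate precisely because of this obstruction.
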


A graph has a \emph{balanced $k$-colouring} if the graph has a proper colouring with at most $k$ colours such that the sizes of the colour classes differ by at most 1.

\begin{proof}[of Theorem~\ref{thm:GnpUniversal}]
Let $a$ and $p$ be given. Set $\Delta_R\deff 16a$, $\kappa=1$, $\iota\deff \tfrac12 p^{a+1}$, $\delta\deff p/2$ and let $R$ be a complete graph on $16a$ vertices. Set $r\deff 16a$ and let $\eps$, $\xi$, $n_0$ as given by Theorem~\ref{thm:Blow-up:arr:ext}. Let $n\ge n_0$ and let $V=V_1\dcup\dots\dcup V_{r}$ be a balanced partition of $[n]$. Then we generate a random graph $G=\Gnp$ on vertex set $[n]$. Every pair $(V_i,V_j)$ is $(\eps, p/2)$-super-regular in $G$ with high probability. Furthermore with high probability we have that every tuple $(u_1,\dots,u_{a+1})\subseteq V\setminus V_i$ satisfies $|\cap_{j\in[a+1]}N_G(u_j)\cap V_i|\ge \iota |V_i|$. So assume this is the case and let $H\in\mathcal{H}_{n,a,\xi}$. We partition $H$ into $16a$ equally sized stable sets with the help of Theorem~\ref{thm:DegenerateEquit}. Thus $H$ satisfies the requirements of Theorem~\ref{thm:Blow-up:arr:ext} and $H$ embeds into $G$.
\end{proof}

\clearpage

%\newpage
\bibliographystyle{siam}   %siam  % The style of your bibliography
 % try alpha instead
\bibliography{ArrangeableBlow-up}  % The name of your .bib file.

\clearpage
\addcontentsline{toc}{section}{Appendix}
\section*{Appendix} %\label{sec:Appendix}

% The first part of this appendix gives some background for the concept of weighted regularity and supplements two missing proofs from Section~\ref{sec:WeightedRegularity}. The second part motivates our Chernoff bounds for random variables which are pseudo-independent and appends the missing proofs from Section~\ref{sec:ChernoffBounds}. 

\subsection*{Weighted regularity} %\label{sec:WeightedRegularity}

In this section we provide some background on \emph{weighted regularity}. In particular, we supplement the proofs of Lemma~\ref{lem:reg:weighted:deg-codeg} and Lemma~\ref{lem:reg:weighted:matching}. We start with a short introduction to the results on weighted regularity by Czygrinow and R\"odl~\cite{CzygRodl00}. Their focus lies on hypergraphs. However, we only present the graph case here. 

Czygrinow and R\"odl define their weight function on the set of edges (whereas in our scenario we have a bipartite graph with weights on the vertices of one class). They consider weighted graphs $G=(V,\tilde\omega)$ where $\tilde\omega: V\times V\to \mathbb{N}_{\ge 0}$. One can think of $\tilde\omega(x,y)$ as the multiplicity of the edge $(x,y)$. Their weighted degree and co-degree for $x,y\in V$ are then defined as 
\[
 \deg_{\tilde\omega}^*(x) \deff \sum_{y\in V}\tilde\omega(x,y), \qquad \deg_{\tilde\omega}^*(x,y) \deff \sum_{z\in V}\tilde\omega(x,z)\tilde\omega(y,z)\,.
\]
For disjoint $A,B\subseteq V$ they define
\[ 
 d_{\tilde\omega}^*(A,B) = \frac{\sum\tilde\omega(x,y)}{K|A|\,|B|}\, ,
\]
where the sum is over all pairs $(x,y) \in A\times B$ and $K\deff 1+\max\{\tilde\omega(x,y): (x,y)\in V\times V\}$.\footnote{Czygrinow and R\"odl require $K$ to be strictly larger than the maximal weight for technical reasons.} A pair $(A,B)$ in $G=(V,\tilde\omega)$ with $A\cap B=\emptyset$ is called \emph{$(\eps,\tilde\omega)$-regular} if
\[
  |d_{\tilde\omega}^*(A,B)-d_{\tilde\omega}^*(A',B')|<\eps
\]
for all $A'\subseteq A$ with $|A'|\ge \eps |A|$ and all $B'\subseteq B$ with $|B'|\ge\eps|B|$. As in the unweighted case, regular pairs can be characterised by the degree and co-degree distribution of their vertices. 
The following lemma (see~\cite[Lemma 4.2]{CzygRodl00}) shows that a pair is weighted regular in the setting of Czygrinow and R\"odl if most of the vertices have the correct weighted degree and most of the pairs have the correct weighted co-degree.

\begin{lem}[Czygrinow, R\"odl~\cite{CzygRodl00}] \label{lem:weighted:CR}
Let $G=(A\dcup B,\tilde\omega)$ be a weighted graph with $|A|=|B|=n$ and let $\eps,\xi\in (0,1)$, $\xi^2<\eps$, $n\ge 1/\xi$. Assume that both of the following conditions are satisfied:
\begin{enumerate}
\renewcommand{\theenumi}{\roman{enumi}'}
\renewcommand{\labelenumi}{(\theenumi)}
\item\label{eq:reg:w:CR:1} $\left|\left\{x\in A:|\deg_{\tilde\omega}^*(x)-K\,d_{\tilde\omega}^*(A,B)n| > K\xi^2n\right\}\right| <\xi^2n$, and
\item\label{eq:reg:w:CR:2} $\left|\left\{ \{x_i,x_j\} \in \binom A2 :\left|\deg_{\tilde\omega}^*(x_i,x_j)-K^2d_{\tilde\omega}^*(A,B)^2n\right| \ge K^2\xi n\right\}\right|\le \xi\binom n2$.
\end{enumerate}
Then for every $A'\subseteq A$ with $|A'|\ge \eps n$ and every $B'\subseteq B$ with $|B'|\ge \eps n$ we have
\[
 |d_{\tilde\omega}^*(A',B') - d_{\tilde\omega}^*(A,B)|\le 2\frac{\xi^2}\eps + \frac{\sqrt{5\xi}}{\eps^2-\eps\xi^2}\,.
\]
\end{lem}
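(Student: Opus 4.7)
The plan is to prove the inequality in two stages, corresponding to the two additive terms in the conclusion. Stage~1 compares $d_{\tilde\omega}^*(A',B)$ to $d_{\tilde\omega}^*(A,B)$ via the degree hypothesis~(\ref{eq:reg:w:CR:1}) and loses $2\xi^2/\epsilon$; Stage~2 compares $d_{\tilde\omega}^*(A',B')$ to $d_{\tilde\omega}^*(A',B)$ via the co-degree hypothesis~(\ref{eq:reg:w:CR:2}) and a Cauchy--Schwarz defect argument, losing $\sqrt{5\xi}/(\epsilon^2-\epsilon\xi^2)$. The triangle inequality then yields the stated bound. The asymmetry between the two rates reflects the fact that Stage~2 involves squaring the weighted sums and then taking a square root.

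For Stage~1, I would write $K|A'||B|\,d_{\tilde\omega}^*(A',B) = \sum_{x\in A'}\deg_{\tilde\omega}^*(x)$. By~(\ref{eq:reg:w:CR:1}), all but at most $\xi^2 n$ vertices $x\in A$ satisfy $|\deg_{\tilde\omega}^*(x)-K\,d_{\tilde\omega}^*(A,B)\,n|\le K\xi^2 n$, so the good vertices in $A'$ contribute $K|A'|n\,d_{\tilde\omega}^*(A,B)$ up to an additive error $K|A'|\xi^2 n$, while the at most $\xi^2 n$ bad vertices in $A'$ contribute at most $K\xi^2 n\cdot n$. Dividing by $K|A'|\,|B| = K|A'|n$ and using $|A'|\ge \epsilon n$ gives $|d_{\tilde\omega}^*(A',B) - d_{\tilde\omega}^*(A,B)|\le \xi^2 + \xi^2/\epsilon \le 2\xi^2/\epsilon$.

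For Stage~2, set $d:=d_{\tilde\omega}^*(A',B)$ and introduce the weighted second moment
\[
T \;:=\; \sum_{z\in B}\Bigl(\tfrac{1}{K}\textstyle\sum_{x\in A'}\tilde\omega(x,z)\Bigr)^{\!2} \;=\; K^{-2}\sum_{x,x'\in A'}\deg_{\tilde\omega}^*(x,x').
\]
Using hypothesis~(\ref{eq:reg:w:CR:2}), the good pairs in $\binom{A'}{2}$ contribute at most $|A'|^2 d_{\tilde\omega}^*(A,B)^2 n + |A'|^2 \xi n$ in total, while the at most $\xi\binom{n}{2}$ bad pairs together with the $|A'|$ diagonal terms are bounded trivially by $n$ each, yielding $T \le |A'|^2 d_{\tilde\omega}^*(A,B)^2 n + O(\xi n^3)$. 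Since $T/|B|$ is the mean of the function $f(z):=\sum_{x\in A'}\tilde\omega(x,z)/K$ squared, while $|A'|d$ is the overall mean of $f$, the variance of $f$ on $B$ is $\operatorname{Var}(f) = T/|B| - |A'|^2 d^2$, and combined with Stage~1 and a comparison of $d_{\tilde\omega}^*(A,B)^2$ to $d^2$ this variance is bounded by $O(\xi n^2)$. A second Cauchy--Schwarz inequality comparing the mean of $f$ on $B'$ to the mean of $f$ on $B$ then gives $|A'|^2 (d_{\tilde\omega}^*(A',B')-d)^2 \le \frac{|B|}{|B'|}\operatorname{Var}(f)$, from which the stated bound follows after using $|B'|\ge\epsilon n$ and $|A'|\ge \epsilon n$.

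The main obstacle is the error bookkeeping in Stage~2: one must isolate the contribution of the bad pairs (whose co-degrees admit no better than the trivial bound $K^2 n$) and show that it is absorbed into the $\sqrt{\xi}$-term, and one must apply the defect form of Cauchy--Schwarz carefully enough to obtain the exact denominator $\epsilon^2 - \epsilon\xi^2$ rather than a cruder $\epsilon^2$. I do not expect any conceptual difficulty here; the whole argument is essentially the standard degree/co-degree characterisation of $\epsilon$-regularity, adapted to the edge-weighted setting by replacing edge indicators with the normalised weights $\tilde\omega(x,z)/K \in [0,1)$.
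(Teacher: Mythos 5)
The paper does not prove this lemma at all --- it is imported verbatim from Czygrinow and R\"odl~\cite{CzygRodl00} (their Lemma~4.2) --- so there is no internal proof to compare your argument against; I can only assess it on its own merits, and it is essentially the standard degree/co-degree criterion for regularity carried out for the normalised weights $\tilde\omega(\cdot,\cdot)/K\in[0,1)$, which is sound. Stage~1 as you describe it gives exactly the term $2\xi^2/\eps$. For Stage~2, with $f(z)=\sum_{x\in A'}\tilde\omega(x,z)/K$, $d=d_{\tilde\omega}^*(A',B)$ and $d_0=d_{\tilde\omega}^*(A,B)$, the bookkeeping comes out as $T\le |A'|^2d_0^2n+|A'|^2\xi n+2\xi n^3$ (bad pairs contribute at most $2\xi\binom n2\cdot n$, the diagonal at most $|A'|n\le n^2\le\xi n^3$ using $n\ge 1/\xi$), hence $\operatorname{Var}(f)\le |A'|^2(d_0^2-d^2)+3\xi n^2\le (4\xi^2/\eps+3\xi)n^2$ by Stage~1, and your defect Cauchy--Schwarz step then yields $|d_{\tilde\omega}^*(A',B')-d|\le\sqrt{3\xi+4\xi^2/\eps}\,/\,\eps^{3/2}$. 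Two remarks you should make explicit. First, the claim ``the variance is $O(\xi n^2)$'' uses $\xi^2/\eps=O(\xi)$, which does \emph{not} follow from the hypothesis $\xi^2<\eps$ alone; it is harmless only because when $\sqrt{5\xi}\ge\eps^2-\eps\xi^2$ the asserted bound exceeds $1$ and the conclusion is vacuous (weighted densities lie in $[0,1)$), while in the remaining regime $5\xi<\eps^4$, so indeed $\xi\ll\eps$. Second, you need not chase the exact denominator $\eps^2-\eps\xi^2$: since $\eps^2-\eps\xi^2\le\eps^2\le\eps^{3/2}$, your bound with denominator $\eps^{3/2}$ is at least as strong as the stated one, and the constant is also respected because in the non-vacuous regime $3\eps+4\xi\le 5$, giving $(3\xi+4\xi^2/\eps)/\eps^3\le 5\xi/(\eps^2(\eps-\xi^2)^2)$. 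The factor $\eps-\xi^2$ in the quoted bound presumably arises in the original proof from discarding the at most $\xi^2n$ degree-exceptional vertices of $A'$, leaving $(\eps-\xi^2)n$ of them; your route avoids that step and is a perfectly acceptable (marginally sharper) alternative.
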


The assertion of Lemma~\ref{lem:weighted:CR} implies that the pair $(A,B)$ is $(\eps',\tilde\omega)$-regular, where $\eps'=\max\{\eps, 2\xi^2/\eps + \sqrt{5\xi}/(\eps^2-\eps\xi^2)\}$, if the conditions of the lemma are satisfied.

\medskip

Our goal is to translate this result into our setting of weighted regularity (see Section~\ref{sec:WeightedRegularity}). We shortly recall our definition of weighted graphs and weighted regularity before we restate and prove Lemma~\ref{lem:reg:weighted:deg-codeg}. 

Let $G=(A\dcup B,E)$ be a bipartite graph and $\omega: A\to [0,1]$ be our weight function for $G$. We define the weighted degree of a vertex $x\in A$ to be $\deg_\omega(x)= \omega(x)|N(x,B)|$ and the weighted co-degree of $x,y\in A$ as $\deg_\omega(x,y)=\omega(x)\omega(y)|N(x,B)\cap N(y,B)|$. Similarly, the weighted density of a pair $(A',B')$ is defined as 
\[
d_\omega(A',B')\deff\sum_{x\in A'}\frac{\omega(x)|N(x,B')|}{|A'|\cdot |B'|}\,.
\] 
Again the pair $(A,B)$ is called weighted $\eps$-regular if
\[
    |d_\omega(A,B) - d_\omega(A',B')|\le \eps
\]
for all $A'\subseteq A$ and $B'\subseteq B$ with $|A'|\ge \eps |A|$ and $|B'|\ge \eps |B|$. We now prove Lemma~\ref{lem:reg:weighted:deg-codeg}, which we restate here for the reader's convenience.

\begin{lemNN}[Lemma~\ref{lem:reg:weighted:deg-codeg}]
Let $\eps>0$ and $n\ge \eps^{-6}$. Further let $G=(A\dcup B,E)$ be a bipartite graph with $|A|=|B|=n$ and let $\omega: A \to [\eps,1]$ be a weight function for $G$. If
\begin{enumerate}
\renewcommand{\theenumi}{\roman{enumi}}
\renewcommand{\labelenumi}{(\theenumi)}
\item\label{eq:reg:w:d-g:1} $|\{x\in A: |\deg_\omega(x)-d_\omega(A,B)n| > \eps^{14}n\}| < \eps^{12}n$ \quad and
\item\label{eq:reg:w:d-g:2} $|\{\{x,y\}\in \binom A2: |\deg_\omega(x,y)-d_\omega(A,B)^2n| \ge \eps^{9}n\}| \le \eps^{6}\binom n2$
\end{enumerate}
then $(A,B)$ is a weighted $3\eps$-regular pair.
\end{lemNN}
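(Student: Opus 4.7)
The plan is to reduce Lemma~\ref{lem:reg:weighted:deg-codeg} to the edge-weighted version, Lemma~\ref{lem:weighted:CR}, by converting the vertex weights $\omega\colon A\to[\eps,1]$ into approximate integer-valued edge weights. Fix a large integer $M$ (to be chosen) and define $\tilde\omega(x,y)\deff\lfloor M\omega(x)\rfloor$ whenever $xy\in E$ with $x\in A$, $y\in B$, and $\tilde\omega(x,y)\deff 0$ otherwise; then $K=1+\max\tilde\omega\le M+1$. A direct calculation shows that for every $A'\subseteq A$ and $B'\subseteq B$,
\[
   \Big|\sum_{xy\in E\cap(A'\times B')}\tilde\omega(x,y)-M|A'||B'|d_\omega(A',B')\Big|\le|A'||B'|,
\]
from which one deduces $|Kd^*_{\tilde\omega}(A',B')-Md_\omega(A',B')|\le 1$, and analogously $|\deg^*_{\tilde\omega}(x)-M\deg_\omega(x)|\le n$ and $|\deg^*_{\tilde\omega}(x,y)-M^2\deg_\omega(x,y)|\le 3Mn$ (using $|ab-\lfloor a\rfloor\lfloor b\rfloor|\le a+b$ for $a,b\ge 0$). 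Thus each quantity of the CR setting is an $M$- or $M^2$-multiple of the corresponding quantity in our setting, up to an additive error that is lower-order in $M$.

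With these approximations in place, set $\xi\deff\eps^6$ and take $M$ large enough (say $M\ge 6\eps^{-12}$) so that both $M\eps^{14}+2\le M\xi^2$ and $M^2\eps^9+6M\le M^2\xi$. Then every $x\in A$ with $|\deg_\omega(x)-d_\omega(A,B)n|\le\eps^{14}n$ also satisfies $|\deg^*_{\tilde\omega}(x)-Kd^*_{\tilde\omega}(A,B)n|\le K\xi^2n$, and similarly for pairs; consequently, hypotheses~\eqref{eq:reg:w:d-g:1} and~\eqref{eq:reg:w:d-g:2} translate directly into hypotheses~\eqref{eq:reg:w:CR:1} and~\eqref{eq:reg:w:CR:2} of Lemma~\ref{lem:weighted:CR} with parameter $\xi=\eps^6$. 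Since $\xi^2=\eps^{12}<\eps$ and $n\ge\eps^{-6}=1/\xi$, the hypotheses of Lemma~\ref{lem:weighted:CR} are met, and we obtain, for all $A'\subseteq A$, $B'\subseteq B$ with $|A'|,|B'|\ge\eps n$,
\[
    |d^*_{\tilde\omega}(A,B)-d^*_{\tilde\omega}(A',B')|\;\le\; \frac{2\xi^2}{\eps}+\frac{\sqrt{5\xi}}{\eps^2-\eps\xi^2}\;=\;2\eps^{11}+\frac{\sqrt{5}\,\eps}{1-\eps^{11}}.
\]

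To translate this back to a bound on $d_\omega$, note that the estimate $|Kd^*_{\tilde\omega}-Md_\omega|\le 1$ together with $K\le M+1$ yields $|d_\omega(A',B')-d^*_{\tilde\omega}(A',B')|\le 2/M$. Two applications of the triangle inequality (on $(A,B)$ and $(A',B')$) then give
\[
    |d_\omega(A,B)-d_\omega(A',B')|\;\le\;\frac{4}{M}+2\eps^{11}+\frac{\sqrt5\,\eps}{1-\eps^{11}}\;<\;3\eps,
\]
where the last inequality holds for any $\eps\in(0,1)$ as long as $M$ is additionally chosen at least $8/((3-\sqrt{5})\eps)$, which is compatible with the earlier requirement $M\ge 6\eps^{-12}$ (and since $\sqrt{5}<3$ the leading term $\sqrt{5}\eps$ strictly beats the target $3\eps$). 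Because every $A',B'$ with $|A'|,|B'|\ge 3\eps n$ also satisfies $|A'|,|B'|\ge\eps n$, this is precisely weighted $3\eps$-regularity of $(A,B)$. The main obstacle in the argument is not conceptual but the bookkeeping: one must choose the polynomial orders of $\xi$ and $M$ in $\eps$ carefully so that the discretisation error introduced by the floor is strictly dominated by the CR thresholds at every step; in particular, the hypothesis $\omega\ge\eps$ is not actually needed for the reduction.
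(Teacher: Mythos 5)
Your overall strategy is the same as the paper's: rescale the vertex weights to integer edge weights and invoke Lemma~\ref{lem:weighted:CR} with $\xi=\eps^6$, then translate back. But there is a genuine gap in the verification of the hypotheses of Lemma~\ref{lem:weighted:CR}. Its thresholds are $K\xi^2n$ and $K^2\xi n$, where $K=1+\max\tilde\omega$, yet you check your error terms against $M\xi^2$ and $M^2\xi$ and then silently replace $M$ by $K$; the only relation you establish is $K\le M+1$, which is the wrong direction. Since the lemma allows $\omega\equiv\eps$, with floor rounding one can have $K\approx M\eps$, and then with your choice $M=6\eps^{-12}$ the threshold in condition~\eqref{eq:reg:w:CR:1} is $K\xi^2n\approx 6\eps^{13}\cdot\eps^{-12}n=6\eps n$, while the discretisation error alone in $|\deg^*_{\tilde\omega}(x)-M\deg_\omega(x)|$ can be of order $n$; so hypothesis~(i) does not imply~\eqref{eq:reg:w:CR:1} with your parameters, and similarly for~\eqref{eq:reg:w:CR:2}. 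The repair is exactly what the paper does: take the scaling factor of order $\eps^{-13}$ to $\eps^{-14}$ (the paper uses $\lceil C\omega(x)\rceil$ with $\eps^{-13}-1\le C\le\eps^{-14}$ chosen so that $K\ge\eps^{-13}$) and use the lower bound $K\ge M\eps$, which comes precisely from $\omega\ge\eps$. This also shows that your closing remark that ``$\omega\ge\eps$ is not actually needed for the reduction'' is wrong: without a lower bound on the weights, $K$ cannot be forced to be large relative to the $\pm1$ rounding error per edge, and the $K$-normalised thresholds of Lemma~\ref{lem:weighted:CR} are no longer attainable from hypotheses (i) and (ii). (The paper's remark after the proof handles small weights by a separate preprocessing step, not inside the reduction.)

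A second, smaller slip: the back-translation bound $|d_\omega(A',B')-d^*_{\tilde\omega}(A',B')|\le 2/M$ is false in general, since $d_\omega\approx\tfrac KM d^*_{\tilde\omega}$ and $K/M$ can be as small as about $\eps$ (e.g.\ $\omega\equiv\eps$), so the two densities can differ by a multiplicative factor, not by $O(1/M)$. This one is harmless for the final conclusion if you argue as the paper does, writing
\begin{align*}
 |d_\omega(A,B)-d_\omega(A',B')| \le \tfrac KM\,|d^*_{\tilde\omega}(A,B)-d^*_{\tilde\omega}(A',B')| + \tfrac2M
\end{align*}
and using $K/M\le 1+1/M$, but as stated the intermediate claim is incorrect. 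In short: right reduction, but the quantitative bookkeeping around $K$ versus the scaling factor is broken, and fixing it forces both a larger scaling constant and the use of the hypothesis $\omega\ge\eps$.
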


\newcommand{\CST}{C}

\begin{proof}[of Lemma~\ref{lem:reg:weighted:deg-codeg}]
Let $\eps>0$, $G=(A\cup B,E)$ and $\omega:A\to[\eps,1]$ satisfy the requirements of the lemma. From this $\omega$ we define a weight function $\tilde\omega:A\times B \to \mathbb{N}_{\ge 0}$ in the setting of Lemma~\ref{lem:weighted:CR}. For $(x,y)\in A\times B$ we set 
\[
  \tilde\omega(x,y) \deff 
\begin{cases}
\left\lceil \CST \cdot \omega(x)\right\rceil & \text{ if $\{x,y\}\in E$,}\\
\, 0 & \text{ otherwise,} 
\end{cases}
\]
where $\eps^{-13}-1\le\CST\le\eps^{-14}$ is chosen such that $K=\max\{\tilde\omega(x,y)+1: (x,y)\in A\times B\}\ge\eps^{-13}$. (This is possible unless $E=\emptyset$.) Note that our choice of constants implies $K/C\le 1+2\eps^{13}$. Moreover, let $d_{\tilde\omega}^*(A,B)$ be defined as above. The definition of $\tilde\omega$ implies
\begin{align}  \label{eq:weight-rel:1}
 \CST  \deg_\omega(x) &\le \deg_{\tilde\omega}^*(x) \le \CST \deg_\omega(x) + |N(x,B)|\text{ and}\\ \label{eq:weight-rel:2}
 \CST^2 \deg_\omega(x,y) &\le \deg_{\tilde\omega}^*(x,y) \le \CST^2\deg_\omega(x,y) + (2C+1)|N(x,B)\cap N(y,B)|
\end{align}
for all $x,y\in A$. Here the second inequality follows from
\[\lceil C\cdot \omega(x)\rceil^2 \le  \left( C\cdot \omega(x)+1\right)^2 \le C^2\big(\omega(x)\big)^2+2C+1\,.
\]
Moreover,
\begin{align}\label{eq:weight-rel:3}
 \CST  d_\omega(A',B') &\le K d_{\tilde\omega}^*(A',B') \le \CST  d_\omega(A',B') + 1
\end{align}
for all $A'\subseteq A$, $B'\subseteq B$ which in turn implies that 
\begin{align}\label{eq:weight-rel:4}
 \big(\CST\, d_\omega(A',B')\big)^2 - \big(K\,d_{\tilde\omega}^*(A',B')\big)^2 &\le 
%\big(\CST d_\omega(A',B')-K d_{\tilde\omega}^*(A',B')\big) \cdot \big(\CST  d_\omega(A',B') + K d_{\tilde\omega}^*(A',B')\big)\le 
1\cdot(\CST+K)
\end{align}
for all $A'\subseteq A$, $B'\subseteq B$.

\medskip
We now verify that conditions~\eqref{eq:reg:w:d-g:1} and~\eqref{eq:reg:w:d-g:2} of Lemma~\ref{lem:reg:weighted:deg-codeg} imply conditions~\eqref{eq:reg:w:CR:1} and~\eqref{eq:reg:w:CR:2} of Lemma~\ref{lem:weighted:CR}. Set $\xi\deff\eps^6$ and let $x\in A$ be such that $|\deg_\omega(x)-d_\omega(A,B)n|\le \eps^{14}n$. It follows from~\eqref{eq:weight-rel:1},~\eqref{eq:weight-rel:3} and the triangle inequality that
\begin{align*}
    |\deg_{\tilde\omega}^*(x) - K\, d_{\tilde\omega}^*(A,B)n|&\le |\deg_{\tilde\omega}^*(x) - \CST \deg_\omega(x)|\\
 &\qquad + |\CST\, \deg_\omega(x) - \CST\, d_\omega(A,B)n| \\
 &\qquad + |\CST\, d_\omega(A,B)n - K\, d_{\tilde\omega}^*(A,B)n|\\
&\le n + \CST  \eps^{14}n + n \le 3n\\
&\le K \xi^2 n\,.
\end{align*}
Hence, condition~\eqref{eq:reg:w:d-g:1} implies condition~\eqref{eq:reg:w:CR:1}.

\medskip

Now let $\{x,y\}\in\binom A2$ satisfy $|\deg_\omega(x,y)-d_\omega(A,B)^2n| < \eps^{9}n$. It follows from~\eqref{eq:weight-rel:2},~\eqref{eq:weight-rel:3} and~\eqref{eq:weight-rel:4} that
\begin{align*}
\left|\deg_{\tilde\omega}^*(x,y)-K^2d_{\tilde\omega}^*(A,B)^2n\right| &\le |\deg_{\tilde\omega}^*(x,y) - \CST^2\deg_{\omega}(x,y)|\\
&\qquad + |\CST^2 \deg_{\omega}(x,y) - \CST^2 d_\omega(A,B)^2n|\\
&\qquad + \left|\CST^2 d_\omega(A,B)^2n - K^2d_{\tilde\omega}^*(A,B)^2n\right|\\
&< (2\CST+1)n + \CST^2 \eps^9n + (C+K)n\\
&\le K^2 \xi n\,,
\end{align*}
where the last inequality is due to $C\le K/\eps$. Thus, condition~\eqref{eq:reg:w:d-g:2} implies condition~\eqref{eq:reg:w:CR:2}.

\medskip

We conclude that $G=(A\dcup B,\tilde\omega)$ satisfies the requirements of Lemma~\ref{lem:weighted:CR}. Hence every $A'\subseteq A$ with $|A'|\ge \eps n$ and every $B'\subseteq B$ with $|B'|\ge \eps n$ has
\[ 
  |d_{\tilde\omega}^*(A',B')-d_{\tilde\omega}^*(A,B)|\le 2\frac{\xi^2}\eps + \frac{\sqrt{5\xi}}{\eps^2-\eps\xi^2}\le \frac52 \eps \,.
\]
Together with~\eqref{eq:weight-rel:3} and the fact that $K/C\le 1+2\eps^{13}$ this finishes the proof as we have
\begin{align*}
  |d_\omega(A',B')-d_\omega(A,B)| &\le  |d_\omega(A',B') - \tfrac KC d_{\tilde\omega}^*(A',B')| \\
& \qquad + |\tfrac KC d_{\tilde\omega}^*(A',B') - \tfrac KC d_{\tilde\omega}^*(A,B)| \\
& \qquad + |\tfrac KC d_{\tilde\omega}^*(A,B) - d_\omega(A,B)|\\
&\le \tfrac 1C + \tfrac KC \tfrac52\eps +\tfrac 1C\\
&\le 3\eps\,.
\end{align*}
\end{proof}

We want to point out that the requirement that $\omega$ is at least $\eps$ does not cause any problem when we apply Lemma~\ref{lem:reg:weighted:deg-codeg} because one could simply increase the weight of all vertices $x$ with $\omega(x)<\eps$ to $\eps$ without changing the weighted densities in the subpairs by more than $\eps$. Hence a graph with an arbitrary weight function is weighted $2\eps$-regular if the graph with the modified weight function is weighted $\eps$-regular.

\medskip

The remainder of this section is dedicated to the proof of Lemma~\ref{lem:reg:weighted:matching} which we restate here.

\begin{lemNN}[Lemma~\ref{lem:reg:weighted:matching}]
Let $\eps>0$ and let $G=(A\dcup B,E)$ with $|V_i|=n$ and weight function $\omega: A \to [\sqrt\eps,1]$ be a weighted $\eps$-regular pair. If $\deg(x)> 2\sqrt\eps n$ for all $x\in A\cup B$ then $G$ contains a perfect matching.
\end{lemNN}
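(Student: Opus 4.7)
The plan is to verify Hall's condition for every $S\subseteq A$. Since $|A|=|B|=n$, this yields a perfect matching. I will split into three cases depending on the size of~$S$, and the only non-trivial input is a lower bound on the weighted density $d_\omega(A,B)$: because $\omega(x)\ge\sqrt{\eps}$ and $|N(x)|>2\sqrt{\eps}\,n$ for every $x\in A$, we immediately obtain
\[
   d_\omega(A,B) \;=\; \frac{\sum_{x\in A}\omega(x)|N(x)|}{n^2} \;>\; \frac{\sqrt\eps \cdot 2\sqrt\eps\,n\cdot n}{n^2} \;=\; 2\eps\,.
\]

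For small $S$, i.e.\ $|S|\le 2\sqrt\eps\,n$, any single vertex $x\in S$ already satisfies $|N(S)|\ge|N(x)|>2\sqrt\eps\,n\ge|S|$.

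For medium $S$, i.e.\ $2\sqrt\eps\,n<|S|\le(1-2\sqrt\eps)n$, assume for contradiction $|N(S)|<|S|$ and set $T:=B\setminus N(S)$. Then $|T|>n-|S|\ge 2\sqrt\eps\,n\ge\eps n$ while $|S|\ge\eps n$, so the pair $(S,T)$ satisfies the size requirement of weighted $\eps$-regularity. But by construction there is no edge between $S$ and $T$, hence $d_\omega(S,T)=0$; together with $d_\omega(A,B)>2\eps$ this contradicts $|d_\omega(A,B)-d_\omega(S,T)|\le\eps$.

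For large $S$, i.e.\ $|S|>(1-2\sqrt\eps)n$, let $T:=B\setminus N(S)$ and suppose $T\neq\emptyset$. Any $v\in T$ has $N(v)\subseteq A\setminus S$, so $\deg(v)\le|A\setminus S|<2\sqrt\eps\,n$, contradicting the minimum degree assumption on vertices of~$B$. Hence $T=\emptyset$, i.e.\ $|N(S)|=n\ge|S|$.

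There is no serious obstacle in this proof; the key observation is that the minimum weight $\sqrt{\eps}$ and the minimum degree $2\sqrt{\eps}\,n$ were calibrated precisely so that $d_\omega(A,B)$ exceeds the regularity parameter~$\eps$ by a factor large enough to derive the above contradiction, while the extreme cases are handled by the minimum degree alone.
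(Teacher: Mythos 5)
Your proposal is correct and follows essentially the same route as the paper: verify the K\"onig--Hall condition by a three-case analysis on $|S|$, using the minimum degree for small and large $S$ and using $d_\omega(A,B)>2\eps$ together with weighted $\eps$-regularity (applied to $S$ and $T=B\setminus N(S)$, where $d_\omega(S,T)=0$) for the middle range. The only differences are cosmetic choices of case thresholds ($2\sqrt\eps\,n$ versus $\eps n$) and phrasing the extreme cases by contradiction rather than directly.
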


\begin{proof}[of Lemma~\ref{lem:reg:weighted:matching}]
In order to prove that $G=(A\dcup B,E)$ has a perfect matching, we will verify the K\"onig--Hall criterion for $G$, i.e., we will show that $|N(S)| \ge |S|$ for every $S\subseteq A$. We distinguish three cases.

\underline{Case 1, $|S| < \eps n$}: 
The minimum degree $\deg(x) \ge 2\sqrt\eps n$ implies $|N(S)|\ge 2\sqrt\eps n\ge |S|$ for any non-empty set $S$. 

\underline{Case 2, $\eps n \le |S| \le (1-\eps) n$}:
Note that $\deg(x)> 2\sqrt\eps n$ and $\omega(x)\ge \sqrt\eps$ for all $x\in A$ implies that $d_\omega(A,B)> 2\eps$. We now set $T=B\setminus N(S)$. Since $d_\omega(S,T)=0$ and $(A,B)$ is a weighted-$\eps$-regular pair with weighted density greater than $2\eps$ we conclude that $|T|< \eps n$. 

\underline{Case 3, $|S| > (1-\eps)n$}: 
For every $y\in B$ we have $|S|+|N(y)|\ge (1-\eps+2\sqrt\eps)n>n$ and thus $N(y)\cap S \neq \emptyset$. It follows that $N(S)=B$ if $|S|>(1-\eps)n$.
\end{proof}

\subsection*{Chernoff type bounds} %\label{sec:Probabilities}

%%\comment{Janson and Ruci\'nski \cite{JanRuc02} have some results that look very much like the thing we need.}

The analysis of our randomised greedy embedding (see Section~\ref{sec:rga:definition}) repeatedly uses concentration results for random variables. Those random variables are the sum of Bernoulli variables. % However, the sums are no binomially distributed as the Bernoulli trials are not mutually independent. In this section we prove that a weaker property, which we call \emph{pseudo-independence}, also allows concentration results in the fashion of Chernoff type bounds. 
% 
% Before we do so, we first state a well known two-sided Chernoff estimate for binomially distributed random variables 
If these are mutually independent we use a Chernoff bound (see, e.g.,~\cite[Corollary 2.3]{RandomGraphs}).

\begin{thm}[Chernoff bound] \label{thm:ChernoffBound}
Let $\cA=\sum_{i\in[n]}\cA_i$ be a binomially distributed random variable with $\PP[\cA_i]=p$ for all $i\in[n]$. Further let $c\in[0,3/2]$. Then 
\[ \PP[|\cA - pn| \ge c\cdot pn] \le \exp\left(-\frac{c^2}{3}pn\right).\]
\end{thm}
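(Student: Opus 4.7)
The plan is to apply the classical exponential moment (Bernstein--Chernoff) method separately to the upper and lower deviations, then combine the two bounds by a union bound. Since $\cA=\sum_{i\in[n]}\cA_i$ with independent Bernoulli$(p)$ summands, the moment generating function factorises: for every real $t$,
\[
\EE[e^{t\cA}] \;=\; \prod_{i\in[n]}\EE[e^{t\cA_i}] \;=\; \big(1-p+pe^{t}\big)^n \;\le\; \exp\!\big(pn(e^{t}-1)\big),
\]
where the last step uses the elementary inequality $1+x\le e^x$ applied to $x=p(e^t-1)$.

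For the upper tail, I would fix $t>0$ and apply Markov's inequality to the nonnegative random variable $e^{t\cA}$:
\[
\PP[\cA\ge(1+c)pn] \;\le\; e^{-t(1+c)pn}\,\EE[e^{t\cA}] \;\le\; \exp\!\big(pn(e^{t}-1)-t(1+c)pn\big).
\]
Optimising over $t$ by choosing $t=\log(1+c)$ yields
\[
\PP[\cA\ge(1+c)pn] \;\le\; \exp\!\big(pn\,[\,c-(1+c)\log(1+c)\,]\big).
\]
For the lower tail (which is only nonvacuous when $c\le 1$, since $\cA\ge 0$), the symmetric computation applied to $e^{-t\cA}$ with the choice $t=-\log(1-c)>0$ gives
\[
\PP[\cA\le(1-c)pn] \;\le\; \exp\!\big(pn\,[\,-c-(1-c)\log(1-c)\,]\big).
\]

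To obtain the stated bound $\exp(-c^2pn/3)$, what remains is to verify the two analytic inequalities
\[
(1+c)\log(1+c)-c \;\ge\; \tfrac{c^2}{3} \quad\text{for } c\in[0,3/2],
\]
\[
c+(1-c)\log(1-c) \;\ge\; \tfrac{c^2}{3} \quad\text{for } c\in[0,1].
\]
Both are elementary calculus exercises: letting $f(c)$ denote the difference between the left-hand side and $c^2/3$, one checks $f(0)=0$ and $f'(c)\ge 0$ on the stated range (or equivalently uses the Taylor expansion $(1\pm c)\log(1\pm c)\mp c=\tfrac{c^2}{2}\mp\tfrac{c^3}{6}+O(c^4)$ together with monotonicity arguments). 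The only mildly delicate point, and the step I would flag as the main obstacle, is that the constant $1/3$ is asymmetric: for the lower tail one could use the better constant $1/2$, but for the upper tail the bound must survive up to $c=3/2$, where $(1+c)\log(1+c)-c=\tfrac{5}{2}\log\tfrac{5}{2}-\tfrac{3}{2}\approx 0.79$ just exceeds $c^2/3=0.75$. This residual slack is precisely what dictates the hypothesis $c\in[0,3/2]$ in the statement and explains why $1/3$ rather than $1/2$ is the correct constant for the two-sided bound.
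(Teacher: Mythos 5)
The paper does not actually prove this statement: it is quoted as a standard fact with a pointer to the literature (Corollary~2.3 of the cited random graphs monograph), so there is no ``paper proof'' to match; your exponential-moment argument is exactly the standard derivation behind that reference, and all of your computations are correct. The moment generating function bound, the optimisation $t=\log(1+c)$ resp.\ $t=-\log(1-c)$, and the two analytic inequalities $(1+c)\log(1+c)-c\ge c^2/3$ on $[0,3/2]$ and $c+(1-c)\log(1-c)\ge c^2/2\ge c^2/3$ on $[0,1]$ are all fine, and your observation that $c=3/2$ is the breaking point for the constant $1/3$ is exactly right.

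The one genuine gap is the very last step. You bound each tail separately by $\exp(-c^2pn/3)$ and then say you ``combine the two bounds by a union bound'', but the union bound gives $\PP[|\cA-pn|\ge c\,pn]\le 2\exp(-c^2pn/3)$, not $\exp(-c^2pn/3)$ as stated in the theorem. This is not something you can repair by being cleverer: the two-sided bound without the factor $2$ is false in general (take $n=1$, $p=1/2$, $c=1$; then the left-hand side is $1$ while the right-hand side is $e^{-1/6}<1$), and indeed the cited source states the two-sided inequality with a leading factor $2$. So the statement as printed in the paper is slightly misquoted, and your argument proves the correct version of it. This discrepancy is harmless for the paper, since every application (in Lemma~\ref{lem:RGA:init} and in the proof of Lemma~\ref{lem:pseudo:Chernoff}) only uses a one-sided tail, for which your bounds hold with constant $1$ in front; but in your write-up you should either insert the factor $2$ in the conclusion or state explicitly that you are proving the two one-sided inequalities.
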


% The aim of this section is to prove an analogous result for random variables which are not binomially distributed but have the following property. 
However, we also consider scenarios where the Bernoulli variables are not independent. % For $i\in[n]$ let $\cA_i$ be 0-1-random variables and let $\cA=\sum_{i\in[n]}\cA_i$. Furthermore assume that 
% \[
%   p_1\le \PP\left[\cA_i=1 \,\left|~\parbox{150pt}{ $\cA_{j}=1$ for all $j \in J$ and\\ $\cA_j=0$ for all $j\in [i-1]\setminus J$}\right. \right] \le p_2
% %  \PP[\cA_\ell = 1 | \cA_j=1 \text{ if and only if $j\in J$}] \ge p
% \]
% for all $i\in[n]$ and $J\subseteq [i-1]$. We call a set of 0-1-random variables with this property \emph{pseudo-independent}. Pseudo-independent random variables differ from binomial distributed random variables in two ways. The probabilities for the events $\cA_i$ are merely bounded from below and the $\cA_i$ might be far from being mutually independent. Still this weaker conditions allows for a Chernoff type bound. 

\begin{lemNN}[Lemma~\ref{lem:pseudo:Chernoff}]
Let $0\le p_1\le p_2 \le 1$, $0<c \le 1$. Further let $\cA_i$ for $i\in[n]$ be a 0-1-random variable and set $\cA:=\sum_{i\in[n]}\cA_i$. If 
\begin{align} \label{eq:pseudo:Independence}
   p_1 \le 
   \PP\left[\cA_i=1 \,\left|~\parbox{150pt}{ $\cA_{j}=1$ for all $j \in J$ and\\ $\cA_j=0$ for all $j\in [i-1]\setminus J$}\right. \right] \le p_2
\end{align}
for every $i\in[n]$ and every $J\subseteq[i-1]$ then
\[
  \PP[\cA \le (1-c) p_1n] \le \exp\left(-\frac{c^2}3 p_1n \right)
\]
and
\[
  \PP[\cA \ge (1+c)p_2n] \le \exp\left(-\frac{c^2}3 p_2n \right)\, .
\]
\end{lemNN}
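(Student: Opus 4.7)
The plan is to adapt the classical exponential-moment (Markov) argument for Chernoff bounds, using the tower property of conditional expectation in place of independence. The hypothesis~\eqref{eq:pseudo:Independence} is tailored precisely to make this work: conditionally on the entire history $\cA_1,\dots,\cA_{i-1}$, the probability that $\cA_i=1$ is sandwiched between $p_1$ and $p_2$, which is exactly what one needs to bound one-variable moment generating functions from above (for the upper tail) and below (for the lower tail).

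For the upper tail, I would fix $t>0$ and write
\[
  \EE[e^{t\cA}] \;=\; \EE\!\left[\,e^{t(\cA_1+\cdots+\cA_{n-1})}\,\EE\bigl[e^{t\cA_n}\,\big|\,\cA_1,\dots,\cA_{n-1}\bigr]\,\right].
\]
Since $\cA_n\in\{0,1\}$, the inner conditional expectation equals $1+\PP[\cA_n=1\mid \cA_1,\dots,\cA_{n-1}](e^t-1)$. By the right-hand inequality of~\eqref{eq:pseudo:Independence} (applied with $J=\{j<n:\cA_j=1\}$) and $e^t-1>0$, this is at most $1+p_2(e^t-1)\le\exp\!\bigl(p_2(e^t-1)\bigr)$. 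Iterating the tower property over $i=n,n-1,\dots,1$ yields $\EE[e^{t\cA}]\le\exp\!\bigl(np_2(e^t-1)\bigr)$. Markov's inequality then gives
\[
  \PP[\cA\ge(1+c)p_2n] \;\le\; \exp\!\bigl(np_2(e^t-1)-t(1+c)p_2n\bigr).
\]
Choosing $t=\log(1+c)$ and using the standard numerical inequality $(1+c)\log(1+c)-c\ge c^2/3$ for $c\in[0,1]$ produces the claimed bound $\exp(-c^2p_2n/3)$.

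For the lower tail, I would run the same argument with $t<0$ (say $t=-s$ with $s>0$). Now $e^t-1<0$, so the direction of the one-step inequality flips and the left-hand inequality of~\eqref{eq:pseudo:Independence} becomes the operative one: $\EE[e^{-s\cA_n}\mid \cA_1,\dots,\cA_{n-1}]=1+\PP[\cA_n=1\mid\cdots](e^{-s}-1)\le 1+p_1(e^{-s}-1)\le\exp(p_1(e^{-s}-1))$. Iterating gives $\EE[e^{-s\cA}]\le\exp(np_1(e^{-s}-1))$, and optimizing with $s=-\log(1-c)$ plus the inequality $(1-c)\log(1-c)+c\ge c^2/3$ for $c\in[0,1]$ delivers $\PP[\cA\le(1-c)p_1n]\le\exp(-c^2p_1n/3)$.

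There is really no deep obstacle here; the whole point of the lemma is that~\eqref{eq:pseudo:Independence} is the minimal condition under which the classical computation goes through, and the only care needed is to iterate the conditioning from the largest index down so that the hypothesis applies verbatim at each step (it is stated in terms of the outcomes of $\cA_1,\dots,\cA_{i-1}$, i.e.\ in the natural forward order). The numerical inequalities $(1\pm c)\log(1\pm c)\mp c\ge c^2/3$ for $c\in[0,1]$ are standard and I would simply cite them or verify them by comparing Taylor series.
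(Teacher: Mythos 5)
Your proof is correct, but it follows a different route than the paper. You run the exponential-moment argument directly: conditioning on the full history $\cA_1,\dots,\cA_{i-1}$ (whose atoms are exactly the events indexed by $J$ in~\eqref{eq:pseudo:Independence}), you bound each conditional factor of the moment generating function by $\exp\bigl(p_2(e^t-1)\bigr)$ resp.\ $\exp\bigl(p_1(e^{-s}-1)\bigr)$, iterate via the tower property, and then optimise $t$ and invoke the numerical inequalities $(1\pm c)\log(1\pm c)\mp c\ge c^2/3$ on $[0,1]$ (both of which do hold there, the lower-tail one even with $c^2/2$). The paper instead avoids redoing the Chernoff computation: it proves by induction on $\ell$ the distribution-function comparison $\PP[\sum_{i\le\ell}\cA_i\le k]\le\PP[B_{\ell,p_1}\le k]$ (and the analogous upper-tail comparison with $p_2$), i.e.\ a stochastic domination of $\cA$ by/over genuine binomials, and then applies the stated classical Chernoff bound (Theorem~\ref{thm:ChernoffBound}) as a black box. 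The trade-off: the paper's reduction needs no re-derivation of the optimisation step and cleanly isolates where the hypothesis enters (the one-step recursion $a_{\ell,k}\le p_1a_{\ell-1,k-1}+(1-p_1)a_{\ell-1,k}$), while your argument is self-contained apart from the two elementary inequalities, and makes transparent that~\eqref{eq:pseudo:Independence} is precisely a bound on the conditional law of $\cA_i$ given the past, so it generalises easily (e.g.\ to Azuma-type settings). The only point to state carefully in a written version is the usual measure-theoretic caveat that one restricts to history atoms of positive probability, and the degenerate choice $s=-\log(1-c)$ at $c=1$, both of which are routine.
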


% We still owe the proofs of Lemma~\ref{lem:pseudo:Chernoff} and Lemma~\ref{lem:pseudo:Chernoff:tuple} which play a central r\^ole in the analysis of our randomized greedy algorithm. It is not surprising that these follow the ideas of the original proof of the Chernoff bound. 

The somewhat technical conditioning in~\eqref{eq:pseudo:Independence} allows us to bound the probability for the event $\cA_i=1$ even if we condition on any outcome of the events $\cA_j$ with $j<i$. % Therefore, we say that the event $\cA_i=1$ is pseudo-independent from the outcome of the preceding~$\cA_j$.

The idea of the proof now is to relate the random variable $\cA$ to a truly binomially distributed random variable and then use a Chernoff bound.

\begin{proof}[of Lemma~\ref{lem:pseudo:Chernoff}]
For $k,\ell\in \N_0$ define $a_{\ell,k}=\PP[\sum_{i\le\ell}\cA_i \le k]$ and $b_{\ell,k}=\PP[B_{\ell,p_1}\le k]$ where $B_{\ell,p_1}$ is a binomially distributed random variable with parameters $\ell$ and $p_1$. So both $a_{\ell,k}$ and $b_{\ell,k}$ give a probability that a random variable (depending on $\ell$ and $p_1$) is below a certain value $k$. The following claim relates these two probabilities.

\begin{claim} \label{cl:pseudo:Chernoff:Compare}
For every $k\ge 0$, $\ell\ge 0$ we have $a_{\ell,k}\le b_{\ell,k}$.
\end{claim}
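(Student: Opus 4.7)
The plan is induction on $\ell$. For the base case $\ell=0$, the empty sum equals $0\le k$ for every $k\ge 0$, so both $a_{0,k}$ and $b_{0,k}$ equal $1$. For the inductive step, setting $S_\ell\deff\sum_{i\le\ell}\cA_i$, I would split according to the value of $\cA_\ell$ to write
\[ a_{\ell,k} \;=\; \PP[S_{\ell-1}\le k] \;-\; \PP[S_{\ell-1}=k,\cA_\ell=1] \;=\; a_{\ell-1,k} \;-\; \PP[S_{\ell-1}=k,\cA_\ell=1]. \]

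The key step is to lower-bound $\PP[S_{\ell-1}=k,\cA_\ell=1]$. I would expand this probability as a sum over all histories $H=(\cA_1,\dots,\cA_{\ell-1})$ with $S_{\ell-1}(H)=k$, writing
\[ \PP[S_{\ell-1}=k,\cA_\ell=1] \;=\; \sum_{H:\,S_{\ell-1}(H)=k}\PP[H]\,\PP[\cA_\ell=1\mid H]. \]
By hypothesis~\eqref{eq:pseudo:Independence}, each factor $\PP[\cA_\ell=1\mid H]$ is at least $p_1$, giving the uniform lower bound $p_1\,\PP[S_{\ell-1}=k]=p_1(a_{\ell-1,k}-a_{\ell-1,k-1})$. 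Combined with the previous display, this yields the recurrence $a_{\ell,k}\le(1-p_1)\,a_{\ell-1,k}+p_1\,a_{\ell-1,k-1}$. The analogous identity $b_{\ell,k}=(1-p_1)\,b_{\ell-1,k}+p_1\,b_{\ell-1,k-1}$ holds (with equality) by a direct calculation for the binomial. Since $1-p_1,\,p_1\ge 0$, the inductive hypothesis applied to $a_{\ell-1,k}$ and $a_{\ell-1,k-1}$ then gives $a_{\ell,k}\le b_{\ell,k}$, closing the induction.

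The only subtlety to overcome is that the $\cA_i$ are \emph{not} independent, so one cannot directly view $\sum_i\cA_i$ as a binomial and invoke the classical Chernoff bound. The conditional bound in~\eqref{eq:pseudo:Independence}, being uniform over every possible history, is precisely what makes the history-by-history estimate above go through without any assumption on the joint distribution of $(\cA_1,\dots,\cA_{\ell-1})$; one can equivalently view the argument as constructing a monotone coupling between $\sum_i\cA_i$ and $B_{\ell,p_1}$. Once the claim is established, the lower-tail bound in Lemma~\ref{lem:pseudo:Chernoff} follows by applying the classical Chernoff bound to $B_{\ell,p_1}$, and the upper-tail bound follows by the symmetric argument, using the upper inequality in~\eqref{eq:pseudo:Independence} together with $B_{\ell,p_2}$.
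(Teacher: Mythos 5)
Your proof is correct and follows essentially the same route as the paper: induction on $\ell$, using the uniform conditional bound~\eqref{eq:pseudo:Independence} (applied history by history) to obtain the recurrence $a_{\ell,k}\le(1-p_1)\,a_{\ell-1,k}+p_1\,a_{\ell-1,k-1}$, and comparing it with the exact binomial recurrence for $b_{\ell,k}$. Your explicit expansion over histories just spells out the step the paper states directly as its inequality~\eqref{eq:pseudo:Chernoff:Compare:1}, so there is no substantive difference.
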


\begin{claimproof}[]
We will prove the claim by induction on $\ell$. For $\ell=0$ we trivially have $a_{0,k} = 1 = b_{0,k}$ for all $k\ge 0$. % Moreover, $a_{\ell,k}=b_{\ell,k}=0$ if $k>\ell$. 
Now assume that the claim is true for $\ell-1$ and every $k\ge 0$. Now
\[
  a_{\ell,0} \le (1-p_1)a_{\ell-1,0}\le (1-p_1)b_{\ell-1,0} = b_{\ell,0}\,.
\]
As
\[ 
\PP\left[\cA_\ell=1 \,\left|~\parbox{150pt}{ $\cA_{j}=1$ for all $j \in J$ and\\ $\cA_j=0$ for all $j\in [\ell-1]\setminus J$}\right. \right] \ge p_1
\] 
for every $J\subseteq [\ell-1]$ it follows that for $k\ge 1$
\begin{align} \label{eq:pseudo:Chernoff:Compare:1}
a_{\ell,k} &\le a_{\ell-1,k-1} + (a_{\ell-1,k}-a_{\ell-1,k-1})(1-p_1)\, .
\end{align}
This upper bound on $a_{\ell,k}$ implies that for every $k\ge 1$ we have
\begin{align*}
 a_{\ell,k} &\leByRef{eq:pseudo:Chernoff:Compare:1} p_1\cdot a_{\ell-1,k-1} + (1-p_1) \cdot a_{\ell-1,k} \\ 
 &\le p_1\cdot b_{\ell-1,k-1} + (1-p_1)\cdot b_{\ell-1,k}\\ 
 &= p_1 \PP[B_{\ell-1,p_1}\le k-1] + (1-p_1) \PP[B_{\ell-1,p_1} \le k]\\
 &= \PP[B_{\ell-1,p_1}\le k-1] + (1-p_1) \PP[B_{\ell-1,p_1} = k]\\
 &= \PP[B_{\ell,p_1}\le k] = b_{\ell,k}\,.
\end{align*}
Here the second inequality is due to the induction hypothesis. This finishes the induction step and the proof of the claim.
\end{claimproof}

Now the first inequality of the lemma follows immediately. We set $\ell=n$, $k=(1-c)p_1n$ and obtain
\begin{align*}
\PP[\cA\le (1-c)p_1n] = a_{n,(1-c)p_1n}
&\le b_{n,(1-c)p_1n} = \PP[B_{n,p_1}\le (1-c)p_1n] \le \exp\left(-\frac{c^2}3p_1n\right),
\end{align*}
where the last inequality follows by Theorem~\ref{thm:ChernoffBound}.

The second assertion of the lemma follows by an analogous argument: set $a_{\ell,k}=\PP[\sum_{i\le\ell}\cA_i \ge k]$ and $b_{\ell,k}=\PP[B_{\ell,p_2}\ge k]$ and obtain
\begin{align} \label{eq:pseudo:Chernoff:Compare:2}
a_{\ell,k} \le a_{\ell-1,k} + (a_{\ell-1,k-1}-a_{\ell-1,k})p_2\, .
\end{align}
It follows by induction on $\ell$ that
\begin{align*}
 a_{\ell,k} &\leByRef{eq:pseudo:Chernoff:Compare:2} (1-p_2)\cdot a_{\ell-1,k} + p_2\cdot a_{\ell-1,k-1} \\ 
 &\le (1-p_2)\cdot b_{\ell-1,k} +  p_2\cdot b_{\ell-1,k-1}\\ 
 &= \PP[B_{\ell-1,p_2}\ge k] + \PP[B_{\ell-1,p_2} = k-1] p_2\\
 &= \PP[B_{\ell,p_2}\ge k] = b_{\ell,k}\,.
\end{align*}
Once more the second inequality follows from the induction hypothesis. Setting $\ell=n$ and $k=(1+c)p_2n$ and using Theorem~\ref{thm:ChernoffBound} again then finishes the proof.
\end{proof}

% We have now established a concentration result for pseudo-independent 0-1-random variables. Lemma~\ref{lem:pseudo:Chernoff} gives lower and upper bounds for $\cA=\sum_{i\in[n]}\cA_i$. But we are not only interested in the number of positive outcomes among the $\cA_i$. Our applications require an answer to the following question. Let $\cI \subseteq \mathcal{P}([n])\setminus \{\emptyset\}$ be a collection of $m$ disjoint sets with at most $a$ elements each. What can we say about the number of sets $I\in \cI$ with $\cA_i=1$ for all $i\in I$? Lemma~\ref{lem:pseudo:Chernoff:tuple} gives an answer to this question.

In addition we need a similar result with a more complex setup.

\begin{lemNN}[Lemma~\ref{lem:pseudo:Chernoff:tuple}]
Let $0<p$ and $a,m,n\in \N$. Further let $\cI\subseteq\mathcal{P}([n])\setminus\{\emptyset\}$ be a collection of $m$ disjoint sets with at most $a$ elements each. For every $i\in [n]$ let $\cA_i$ be a 0-1-random variable. Further assume that for every $I\in\cI$ and every $k\in I$ we have 
\[
  \PP\left[\cA_k=1 \,\left|~\parbox{150pt}{ $\cA_{j}=1$ for all $j \in J$ and\\ $\cA_j=0$ for all $j\in [k-1]\setminus J$}\right. \right] \ge p
\] 
for every $J\subseteq [k-1]$ with $[k-1]\cap I\subseteq J$. Then
\[
\PP\Big[ \big|\{I\in \mathcal{I}: \cA_{i}=1\text{ for all $i\in
        I$}\}\big|\ge \tfrac{1}{2} p^am\Big] \ge
1-2\exp\Big(-\frac1{12} p^am \Big)\, .
\]
\end{lemNN}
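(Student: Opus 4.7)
The plan is to reduce the statement to Lemma~\ref{lem:pseudo:Chernoff} applied to the $0$-$1$ random variables $Y_I\deff\prod_{k\in I}\cA_k$ for $I\in\cI$. Since $|\{I\in\cI:\cA_i=1\text{ for all }i\in I\}| = \sum_{I\in\cI} Y_I$, everything will follow once I establish the conditional lower bound
\[
  \PP\big[Y_{I_\ell}=1 \,\big|\, Y_{I_j}=1 \text{ for } j\in J',\ Y_{I_j}=0 \text{ for } j\in[\ell-1]\setminus J'\big] \ge p^a
\]
for any ordering $I_1,\dots,I_m$ of $\cI$, any $\ell\in[m]$, and any $J'\subseteq[\ell-1]$. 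With that in hand, Lemma~\ref{lem:pseudo:Chernoff} with $p_1=p^a$, $n=m$, $c=\tfrac12$ directly yields $\PP[\sum_\ell Y_{I_\ell}\le\tfrac12 p^am]\le\exp(-\tfrac1{12}p^am)$, which is (even slightly) stronger than the claimed bound.

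To prove the key inequality I would fix $I = I_\ell = \{k_1<\dots<k_s\}$ (so $s\le a$), set $S\deff I_1\cup\dots\cup I_{\ell-1}$ (which is disjoint from $I$ since $\cI$ consists of pairwise disjoint sets), and decompose the conditioning event as a disjoint union $\bigsqcup_\alpha\mathcal{E}_\alpha$, where each $\mathcal{E}_\alpha$ fixes a complete assignment $(\cA_j)_{j\in S}=\alpha$ compatible with the prescribed $Y_{I_j}$-values. Since $\PP[Y_I=1\mid\cdot]$ on the coarse event is a convex combination of the $\PP[Y_I=1\mid\mathcal{E}_\alpha]$, it suffices to show $\PP[Y_I=1\mid\mathcal{E}_\alpha]\ge p^s$ for each such $\alpha$. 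The chain rule gives
\[
  \PP[Y_I=1\mid\mathcal{E}_\alpha] = \prod_{i=1}^s \PP\big[\cA_{k_i}=1 \,\big|\, \mathcal{E}_\alpha,\ \cA_{k_1}=\dots=\cA_{k_{i-1}}=1\big],
\]
and I would bound each factor by $p$ via one further round of averaging: condition additionally on an arbitrary outcome of the ``missing'' variables $(\cA_j)_{j<k_i,\ j\notin S\cup I}$. On each such refinement the entire prefix $(\cA_j)_{j<k_i}$ is specified with $\cA_j=1$ for every $j\in I\cap[k_i-1]$, so the hypothesis of the lemma (applied with this $I$ and $k=k_i$) gives $\PP[\cA_{k_i}=1\mid\cdot]\ge p$, and convex combinations preserve this bound. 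Multiplying the $s\le a$ factors yields $\PP[Y_I=1\mid\mathcal{E}_\alpha]\ge p^s\ge p^a$.

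The only real obstacle is this nested bookkeeping: the hypothesis is stated for conditionings that specify the \emph{full} prefix $(\cA_j)_{j<k}$, whereas both the event $\{Y_{I_j}=\beta_j\text{ for }j<\ell\}$ and its refinements $\mathcal{E}_\alpha$ only fix $(\cA_j)_{j\in S}$, leaving the indices outside $S\cup I$ unspecified. The two successive convex-combination steps above are exactly what bridges this gap; once the translation is in place, the conclusion is a direct invocation of Lemma~\ref{lem:pseudo:Chernoff}.
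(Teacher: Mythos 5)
The reduction to Lemma~\ref{lem:pseudo:Chernoff} via the indicators $Y_I$ is exactly the route the paper takes (with $\cI$ ordered by largest element), but the conditional bound you need is where your argument breaks, and in fact that bound is false in general. The trouble is that the sets of $\cI$ may interleave in $[n]$: the atom $\mathcal{E}_\alpha$ fixes $\cA_j$ for \emph{all} $j\in S=I_1\cup\dots\cup I_{\ell-1}$, and when you treat the factor for $k_i$, the indices of $S$ exceeding $k_i$ are still in the conditioning after your ``further round of averaging'' over the missing prefix variables. The hypothesis only controls $\PP[\cA_{k_i}=1\mid\cdot]$ when the conditioning specifies values on a subset of the prefix $[k_i-1]$; conditioning on \emph{future} variables is not licensed and can destroy the bound. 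Concretely, take $p=1/10$, $a=2$, $n=3$, $\cI=\{\{2\},\{1,3\}\}$; let $\cA_1$ be Bernoulli$(1/2)$, let $\cA_2=1$ with probability $1-\eta$ if $\cA_1=1$ and with probability $p$ if $\cA_1=0$, and let $\cA_3=1$ always. The hypothesis of the lemma holds, yet for the ordering by largest element ($\{2\}$ before $\{1,3\}$)
\[
\PP\big[Y_{\{1,3\}}=1\,\big|\,Y_{\{2\}}=0\big]=\PP[\cA_1=1\mid\cA_2=0]=\frac{\eta}{\eta+0.9}\,,
\]
which is below $p^2=1/100$ for all $\eta\le 1/200$. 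So neither your claim ``for any ordering'' nor the atom-wise bound $\PP[Y_I=1\mid\mathcal{E}_\alpha]\ge p^{s}$ is true in general. (The unconditional bound $\PP[Y_I=1]\ge p^{|I|}$, obtained by chaining over the elements of $I$ and averaging over full prefixes only, is fine; what fails is conditioning on earlier-listed sets whose elements lie to the right of $k_i$.)

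For comparison: the paper's proof follows the same two-step plan (set indicators $\cB_i$ ordered by largest element, then Lemma~\ref{lem:pseudo:Chernoff}) and proves the analogous conditional claim by summing over complete ``histories'' of the prefix compatible with the conditioning event; in its inductive step it, too, carries the event on the earlier $\cB_j$'s (whose defining indices can lie to the right of the current $i_k$) through the conditioning while invoking the hypothesis, so the difficulty you hit is intrinsic to this reduction rather than to your particular bookkeeping, and the example above applies to that intermediate claim as well. A proof of the lemma that only ever uses the hypothesis in its stated prefix form is a sequential coupling: reveal $\cA_1,\dots,\cA_n$ in index order using independent uniforms $U_1,\dots,U_n$, setting $\cA_i=1$ exactly when $U_i$ is at most the conditional probability of $\cA_i=1$ given the revealed past, and let $B_i=1$ exactly when $U_i\le p$. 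The $B_i$ are i.i.d.\ Bernoulli$(p)$, and for each $I\in\cI$ an induction along the elements of $I$ in increasing order (the hypothesis applies at each step precisely because all earlier elements of the same $I$ are then already $1$) shows that if $B_i=1$ for all $i\in I$ then $\cA_i=1$ for all $i\in I$. Since the sets of $\cI$ are disjoint, the number of all-one sets therefore stochastically dominates a Binomial$(m,p^a)$ variable, and the stated tail bound follows from Theorem~\ref{thm:ChernoffBound} with $c=1/2$.
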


\begin{proof}[of Lemma~\ref{lem:pseudo:Chernoff:tuple}]
Let $p>0$, $a,m,n\in\N$ and $\cI$ be given. We order the elements of $\cI$ as $\cI=\{I_1,\dots,I_m\}$ by their respective largest index. This means, the $I_j$ are sorted such that $j'<j$ implies that there is an index $i_j\in I_j$ with $i<i_j$ for all $i\in I_{j'}$. For $i\in[m]$ we now define events $\cB_i$ as 
\[
  \cB_i \deff \begin{cases} 1 & \text{ if $\cA_j=1$ for all $j\in I_i$,}\\ 0 & \text{ otherwise.}\end{cases}
\]
We claim that the events $\cB_i$ satisfy equation~\eqref{eq:pseudo:Independence} where the probability is bounded from below by $p^a$.

\begin{claim}
For every $i\in[m]$ and $J\subseteq [i-1]$ we have 
\[
  \PP\left[\cB_i=1 \,\left|~\parbox{150pt}{ $\cB_{j}=1$ for all $j \in J$ and\\ $\cB_j=0$ for all $j\in [i-1]\setminus J$}\right. \right] \ge p^a\,.
\]
\end{claim}

\begin{claimproof}[]
 Let $i\in[m]$ and $J\subseteq [i-1]$ be given. We assume that $|I|=a$ for ease of notation. (The proof is just the same if $|I|<a$.) So let $I_i=\{i_1,\dots,i_a\}$ be in ascending order and define $i_0\deff 0$. For $v\in \{0,1\}^{i_k-i_{k-1}-1}$ let $H_k(v)$ be the 0-1-random variable with
\[
  H_k(v) = \begin{cases} 1 & \text{ $\cA_{i_{k-1}+\ell}=v_\ell$ for all $\ell\in[i_k-i_{k-1}-1]$,}\\ 0 & \text{ otherwise.}\end{cases}
\]
The rationale for this definition is the following. The outcome of $\cB_i$ is determined by the outcome of the random variables $\cA_{i_j}$. However, we cannot neglect the random variables $\cA_\ell$ for $\ell \notin I_i$ as the $\cA_\ell$ are not mutually independent. Instead we condition the probability of $\cA_{i_j}=1$ on possible outcomes of $\cA_\ell$ with $\ell< i_j$. Now $H_k(v)=1$ with $v \in \{0,1\}^{i_k-i_{k-1}-1}$ represents one outcome for the $\cA_\ell$ with $i_{k-1}<\ell<i_k$. % Those are exactly the $\cA_\ell$ that are evaluated between $\cA_{i_{k-1}}$ and $\cA_{i_k}$.
We call the $v\in\{0,1\}^{i_k-i_{k-1}-1}$ the \emph{history} between $\cA_{i_{k-1}}$ and $\cA_{i_k}$. It follows from the requirements of Lemma~\ref{lem:pseudo:Chernoff:tuple} that for any tuple $(v_1,\dots,v_k)\in \{0,1\}^{i_1-1}\times \{0,1\}^{i_2-i_1-1}\times \dots \times \{0,1\}^{i_k-i_{k-1}-1}$ we have 
\begin{align} \label{eq:pseudo:Chernoff:tuple:1}
 \PP\left[\cA_{i_k}=1 \left| ~ \parbox{150pt}{ $\cA_{i_j}=1$ for all $j\in[k-1]$ and\\ $H_j(v_j)=1$ for all $j\in[k-1]$} \right. \right] \ge p\,.
\end{align}
However, we are not interested in every possible history $\left(v_1,\dots,v_k\right)$ as some of the histories cannot occur simultaneously with the event $\cB=1$ where 
\begin{align*}
  \cB=1 &\text{ if and only if } \left[\parbox{140pt}{$\cB_j=1$ for all $j\in J$ and\\ $\cB_j=0$ for all $j\in [i-1]\setminus J$}\right]\,. \\
\intertext{For ease of notation we define the following shortcuts}
 \cH(v_1,\dots,v_k)=1 &\text{ if and only if } H_j(v_j)=1 \text{ for all $j\in [k]$,}\\
 \cA^{(k)}=1 &\text{ if and only if } \cA_{i_j}=1 \text{ for all $j\in [k]$.}
% \cB=1 &\text{ if and only if $\cB_j=1$ for all $j\in J$ and $\cB_j=0$ for all $j\in [i-1]\setminus J$.}
\end{align*}
Moreover, we define $C_k$ to be the set of all tuples % $v_k \in \{0,1\}^{i_k-i_{k-1}-1}$ 
$(v_1,\dots,v_k) \in \{0,1\}^{i_1-1}\times \{0,1\}^{i_2-i_1-1}\times \dots \times \{0,1\}^{i_k-i_{k-1}-1}$ with 
\[
  \PP\left[ \cH(v_1,\dots,v_k)=1 \text{ and } \cB=1 \right] >0\,.
\]
In other words, the elements of $C_k$ are those histories that are compatible with the event that we condition on in the claim. Note in particular that $\cB=1$ if and only if there is a $(v_1,\dots,v_a)\in C_a$ with $\cH(v_1,\dots,v_a)=1$. With these definitions we can rewrite the probability in the assertion of our claim as
\begin{align*}
 \PP\left[\cB_i=1 \mid \cB=1 \right] = \sum_{(v_1,\dots,v_a)\in C_a} \PP\left[\parbox{90pt}{ $\cA^{(a)}=1$ and \\$\cH(v_1,\dots,v_a)=1$} \,\Big|~\cB=1~\Big.\right]\,.
\end{align*}
We now prove by induction on $k$ that 
\begin{align}
P_k\deff\sum_{(v_1,\dots,v_k)\in C_k} \PP\left[\parbox{90pt}{ $\cA^{(k)}=1$ and \\$\cH(v_1,\dots,v_k)=1$} \,\Big|~\cB=1~\Big.\right] \ge p^k
\end{align}
for all $k\in[a]$. The induction base $k=1$ is immediate from the requirements of the lemma as
\[
 P_1 = \sum_{v_1\in C_1} \PP\left[\parbox{65pt}{ $\cA^{(1)}=1$ and \\$\cH(v_1)=1$} \,\Big|~\cB=1~\Big.\right] \geByRef{eq:pseudo:Chernoff:tuple:1} p  \sum_{v_1\in C_1} \PP\left[\cH(v_1)=1 \,\big|~\cB=1~\big.\right] = p\,.
\]
The last equality above follows by total probability from the definition of $C_1$. So assume that the induction hypothesis holds for $k-1$. Then
\begin{align*}
P_k &= \sum_{(v_1,\dots,v_k)\in C_k} \PP\left[\parbox{90pt}{ $\cA^{(k)}=1$ and \\$\cH(v_1,\dots,v_k)=1$} \,\Big|~\cB=1~\Big.\right] \\
 &= \sum_{(v_1,\dots,v_k)\in C_k} 
 \PP\left[\cA_{i_k}=1 \, \Big|
\parbox{130pt}{$\cB=1$ and $\cA^{(k-1)}=1$ and \\$\cH(v_1,\dots,v_k)=1$} \Big. \right] \cdot 
\PP\left[\parbox{90pt}{$\cA^{(k-1)}=1$ and \\$\cH(v_1,\dots,v_k)=1$} \Big| \cB=1 \Big. \right] \\
 &\geByRef{eq:pseudo:Chernoff:tuple:1} p \cdot \sum_{(v_1,\dots,v_k)\in C_k}
\PP\left[\parbox{90pt}{$\cA^{(k-1)}=1$ and \\$\cH(v_1,\dots,v_k)=1$} \Big| \cB=1 \Big. \right] \\
 &= p \cdot
 \sum_{(v_1,\dots,v_{k-1})\in C_{k-1}} 
 \PP\left[\parbox{100pt}{$\cA^{(k-1)}=1$ and \\$\cH(v_1,\dots,v_{k-1})=1$} \Big| \cB=1 \Big. \right] \\
 &= p \cdot P_{k-1} \ge p^k\,.
\end{align*}
The claim now follows as 
\[
  \PP[\cB_i=1 \mid \cB=1] = \sum_{(v_1,\dots,v_a)\in C_a} \PP\left[\parbox{90pt}{ $\cA^{(a)}=1$ and \\$\cH(v_1,\dots,v_a)=1$} \,\Big|~\cB=1~\Big.\right] \ge p^a\,.
\]
\end{claimproof} 
We have seen that the $\cB_i$ are pseudo-independent and that they have probability at least $p^a$ each. Thus we can apply Lemma~\ref{lem:pseudo:Chernoff} and derive
\[
  \PP\left[ |\{i\in[m]: \cB_i=1\}| \ge \frac12 p^a m \right] \ge 1-2\exp\left(-\frac1{12}p^a m\right)\,.
\]
\end{proof}

\end{document}